\newtheorem{thmintro}{Theorem}
\newtheorem{corintro}[thmintro]{Corollary}
\theoremstyle{definition}
\newtheorem{rkintro}{Remark}
\newtheorem{expintro}{Example}
\theoremstyle{plain}
\newtheorem{theorem}{Theorem}[section]
\newtheorem{proposition}[theorem]{Proposition}
\newtheorem{lemma}[theorem]{Lemma}
\newtheorem{corollary}[theorem]{Corollary}
\theoremstyle{definition}
\newtheorem{definition}[theorem]{Definition}
\newtheorem{example}[theorem]{Example}
\newtheorem{remark}[theorem]{Remark}
\renewcommand{\phi}{\varphi}
\renewcommand{\epsilon}{\varepsilon}
\newcommand{\NN}{\mathbb{N}}
\newcommand{\ZZ}{\mathbb{Z}}
\newcommand{\proj}{\mathrm{proj}}
\newcommand{\inv}{^{-1}}
\newcommand{\supp}{\mathrm{supp}}
\newcommand{\Pc}{\mathrm{Pc}}
\newcommand{\Aut}{\mathrm{Aut}}
\newcommand{\Fix}{\mathrm{Fix}}
\newcommand{\Stab}{\mathrm{Stab}}
\newcommand{\con}{\mathrm{con}}
\newcommand{\nub}{\mathrm{nub}}
\newcommand{\mc}[1]{\mathcal{#1}}
\newcommand{\la}{\langle}
\newcommand{\ra}{\rangle}
\DeclareMathOperator{\conv}{conv}
\DeclareMathOperator{\dist}{d_{\mathrm{Ch}}}
\DeclareMathOperator{\esupp}{esupp}
\DeclareMathOperator{\geo}{geo+}
\DeclareMathOperator{\Ad}{Ad}
\DeclareMathOperator{\dcat}{d}
\DeclareMathOperator{\ma}{ma}
\DeclareMathOperator{\re}{re}
\DeclareMathOperator{\im}{im}
\DeclareMathOperator{\imp}{im+}
\DeclareMathOperator{\rep}{re+}
\DeclareMathOperator{\map}{ma+}
\DeclareMathOperator{\pma}{pma}
\newcommand{\g}{\mathfrak{g}}
\DeclareMathOperator{\height}{ht}
\DeclareMathOperator{\GL}{GL}
\newcommand{\co}{\colon\thinspace}
\DeclareMathOperator{\sph}{sph}
\newcommand{\hh}{\mathfrak{h}}
\newcommand{\T}{\mathfrak{T}}
\newcommand{\U}{\mathfrak{U}}
\newcommand{\G}{\mathfrak{G}}
\newcommand{\n}{\mathfrak{n}}
\newcommand{\Jaff}{J_{\mathrm{aff}}}
\newcommand{\JaffCox}{J_{\mathrm{aff}}^{\mathrm{Cox}}}
\newcommand{\Jsph}{J_{\mathrm{sph}}}
\newcommand{\Jperp}{J^{\perp}}
\numberwithin{equation}{section} 
\begin{document}

\title{Describing the nub in Maximal Kac--Moody Groups}

\author[S.~Bischof]{Sebastian \textsc{Bischof}}
\address{Universit\"at Paderborn, Institut f\"ur Mathematik, 33098 Paderborn, Germany}
\email{sebastian.bischof@math.uni-paderborn.de}

\author[T.~Marquis]{Timoth\'ee \textsc{Marquis}}
\address{UCLouvain, IRMP, 1348 Louvain-la-Neuve, Belgium}
\email{timothee.marquis@uclouvain.be}

\thanks{SB is supported by a fellowship of the DAAD (57664192) and by the DFG Walter Benjamin project BI 2628/1-1. TM is a F.R.S.-FNRS Research associate, and is supported in part by the FWO and the F.R.S.-FNRS under the EOS programme (project ID 40007542).}

\keywords{Totally disconnected locally compact groups, nub of an automorphism, Kac--Moody groups, RGD systems}

\subjclass[2020]{20G44, 20E42, 22D05}

\begin{abstract}
	Let $G$ be a totally disconnected locally compact (tdlc) group. The contraction group $\mathrm{con}(g)$ of an element $g\in G$ is the set of all $h\in G$ such that $g^n h g^{-n} \to 1_G$ as $n \to \infty$. The nub of $g$ can then be characterized as the intersection $\mathrm{nub}(g)$ of the closures of $\mathrm{con}(g)$ and $\mathrm{con}(g^{-1})$.
	
	Contraction groups and nubs provide important tools in the study of the structure of tdlc groups, as already evidenced in the work of G.~Willis. It is known that $\mathrm{nub}(g) = \{1\}$ if and only if $\mathrm{con}(g)$ is closed. In general, contraction groups are not closed and computing the nub is typically a challenging problem.
	
	Maximal Kac--Moody groups over finite fields form a prominent family of non-discrete compactly generated simple tdlc groups. In this paper we give a complete description of the nub of any element in these groups.
\end{abstract}

\maketitle

\section{Introduction}

One of today's main approaches to study totally disconnected locally compact (tdlc) groups is \emph{scale theory}, initiated by G.~Willis in \cite{Wi94} and \cite{Wi01}. It makes essential use of the classical result of D.~van~Dantzig that any tdlc group has a basis of identity neighborhoods consisting of compact open subgroups. Willis introduced the \emph{scale} $s_G(\alpha)$ of an automorphism $\alpha$ of  a tdlc group $G$ as
\[ s_G (\alpha) = \min\{ [ \alpha(U) : \alpha(U) \cap U] \mid U\leq G \text{ compact and open} \}. \]
A compact open subgroup $U\leq G$ is called \emph{minimizing for $\alpha \in \Aut(G)$} if $s_G(\alpha) = [\alpha(U) : \alpha(U) \cap U]$. In general it is a challenging problem to compute scales or to describe minimizing subgroups. The scale and a minimizing subgroup were computed for automorphisms of $p$-adic Lie groups by H.~Gl\"{o}ckner in \cite{Glockner98} and for inner automorphisms of closed Weyl-transitive subgroups of the automorphism group of a locally finite building by Baumgartner, Parkinson and Ramagge in \cite{BPR19}.

In \cite{BW04}, Baumgartner and Willis introduced for an automorphism $\alpha$ the \emph{nub of $\alpha$}, denoted by $\nub(\alpha)$, as the intersection of all subgroups which are minimizing for $\alpha$. They showed that the nub of $\alpha$ is closely related to its \emph{contraction group} $\con(\alpha) = \{ g\in G \mid \lim_{n\to \infty} \alpha^n(g) = 1 \}$. More precisely, they showed that $\overline{\con(\alpha)} = \con(\alpha) \nub(\alpha)$ and that $\con(\alpha)$ is closed if and only if $\nub(\alpha) = \{1\}$. For instance, it is known that linear groups over local fields have closed contraction groups, hence trivial nubs. Thanks to decomposition results by Glöckner and Willis (\cite{GW10}, \cite{Glockner-Willis21:Decomposition_of_locally_compact_contraction_groups} and \cite{GW21}) closed contraction groups are quite well understood. In particular, having closed contraction groups is a restrictive condition (see also \cite{Caprace_DeMedts2013}).

In general, contraction groups are not closed and it would be interesting to have a concrete description of the nub in terms of a (topological) generating set or, if $G$ acts on some geometric structure, to have a geometric interpretation of the nub. This is for instance motivated by another result of Willis (see \cite[Corollary~$4.1$]{Wi14}) which provides an algorithm to compute the scale and a minimizing subgroup of an automorphism once a concrete description of the nub is known. In particular, obtaining a concrete description of the nub is more difficult than computing the scale or a minimizing subgroup. As we have already mentioned above both are typically difficult problems.

For `well-behaved' compact groups $G$ the nub is known (see \cite{Wi14}). In the non-compact case, to the best of our knowledge, the nub of an automorphism $\alpha$ with $s_G(\alpha) >1$ (this is to avoid uninteresting examples like discrete groups) has only been computed explicitly for the automorphism group of a regular tree, for groups acting on trees satisfying Tits' independence property (P) and variations thereof. But in these cases the computation is straightforward. Let us also mention the recent paper \cite{Reid_24scalefunctionlocallycompact} in which C.~Reid characterizes the set of fixed points of the nub of an automorphism in certain groups acting by isometries on a non-positively curved space.

A prominent family of non-discrete compactly generated simple tdlc groups are maximal Kac--Moody groups obtained as completions of minimal Kac--Moody groups $\mathfrak{G}_A(k)$ (as introduced in \cite{Ti87}) over finite fields. There are a few different ways to construct such completions. The most natural one from an algebraic perspective, which avoids degeneracy issues in small characteristic, is the scheme-theoretic completion $G^{\pma} = \G^{\pma}_A(k)$ where $A = (a_{ij})_{i, j \in I}$ is a generalized Cartan matrix and $k = \mathbb{F}_q$ is a finite field (see \cite[\S 8.5]{Ma18}). It comes with a canonical BN-pair $(B, N)$, with associated Weyl group $W \cong N / (B \cap N)$. We fix a section $\overline{\cdot}: W \to N$ of the quotient map $N \to W$.

Contraction groups in maximal Kac--Moody groups have been studied in \cite{BRR08}. The authors of \emph{loc.\ cit.} showed that when $A$ is of indefinite type, for any element $g$ which is not topologically periodic there exists a conjugate of a root group $U_\alpha$ (see below) which is contracted by $g$ and $g\inv$. As a consequence they obtained that such contraction groups are not closed.

The main goal of this paper is to describe the nub of any element in $G^{\pma}$. It is well-known that topologically periodic elements have trivial contraction group (and hence trivial nub). Thus it suffices to consider elements $g\in G^{\pma}$ which are not topologically periodic. Computing the nub of such $g$ can be further reduced to the case where $g$ is an element of the Weyl group $W$ of $G^{\pma}$ (or rather one of its preimages under the canonical homomorphism $N \to W$).

More precisely, let $S = \{ s_i \mid i \in I \}$ denote the canonical generating set of $W$ (which we will identify with $I$ in the sequel). Let $w\in W$ and let $J = \supp(w)$ be the support of $w$, that is, $J$ is the smallest subset of $S$ with $w\in W_J := \la J \ra$. Let also $\Jaff \subseteq J$ be the union of all affine components of the Dynkin diagram of $J$, and define $\Jperp = \{ s\in S \setminus J \mid \forall j\in J: sj = js \}$. For the computation of the nub we can further assume (see Proposition~\ref{prop: The nub only for straight and standard elements}) that $w$ is straight and standard --- see \S\ref{subsection:CoxSyst} and \S\ref{Subsection: Standard elements} for precise definitions. This implies that there exists a unique maximal spherical subset $\Jsph^w$ of $J \setminus \Jaff$ such that $w$ normalizes $W_{\Jsph^w}$.

As in the classical setting of Cartan matrices, there is a root system $\Delta = \Delta(A)$ associated to $A$ on which $W$ acts. Let $Q = \bigoplus_{i\in I} \ZZ \alpha_i$ be the associated root lattice, where $\Pi = \{ \alpha_i \mid i \in I \}$ is the set of simple roots. Let $\Delta^{\pm} \subseteq Q$ be the set of positive/negative roots so that $\Delta = \Delta^+ \dot{\cup} \Delta^-$. Let also $\Phi = W.\Pi$ be the set of real roots. For $\alpha = \sum_{i\in I} n_i \alpha_i \in Q$ set $\supp(\alpha) := \{ i\in I \mid n_i \neq 0 \}$. For $J \subseteq I$ we let $\Delta^+(J) := \{ \alpha \in \Delta^+ \mid \supp(\alpha) \subseteq J \}$. We further set
$$ \Delta_{w\pm} := \{ \alpha \in \Delta^+ \mid \exists n\in \NN: w^{\pm n} \alpha \in \Delta^- \}. $$
A subset $\Psi \subseteq \Delta$ is called \emph{closed} if $\alpha + \beta \in \Psi$ whenever $\alpha, \beta \in \Psi$ and $\alpha + \beta \in \Delta$. For a closed subset $\Psi \subseteq \Delta^+$, we set $U^{\ma}_{\Psi} := \overline{ \la U_{\alpha} \mid \alpha \in \Psi \ra }$ where $U_{\alpha} \leq G^{\pma}$ denotes the root group associated to the root $\alpha$ (see \S\ref{Section: Maximal Kac--Moody groups}).

\begin{thmintro}\label{Thmintro: Main result}
	Let $w\in W$ be straight and standard and set $J = \supp(w)$. Then
	$$\nub(\overline{w}) = U^{\ma}_{K(w)} \quad \text{where} \quad K(w) = \Delta^+ \setminus \left( \Delta_{w+} \cup \Delta_{w-} \cup \Delta^+(J^{\perp} \cup J_{\mathrm{aff}} \cup \Jsph^w) \right). $$
	In general, if $g \in G^{\pma}$ is not topologically periodic, there exists $w\in W$ straight and standard and $h\in G^{\pma}$ with $\nub(g) = h \nub(\overline{w}) h\inv$.
\end{thmintro}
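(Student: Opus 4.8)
I would establish the two inclusions $U^{\ma}_{K(w)}\subseteq\nub(\overline w)$ and $\nub(\overline w)\subseteq U^{\ma}_{K(w)}$ separately, using the Baumgartner--Willis identity $\nub(\overline w)=\overline{\con(\overline w)}\cap\overline{\con(\overline w^{-1})}$ and the fact that $\overline w\in N$ permutes root groups: $\overline w^{\,n}U_\alpha\overline w^{\,-n}=U_{w^n\alpha}$ for all $n\in\ZZ$. The decisive combinatorial input is the dynamics of $w$ on $\Delta$ for $w$ straight and standard: every $\alpha\in\Delta^+$ lies in $\Delta_{w+}$, lies in $\Delta_{w-}$, has finite $w$-orbit, or satisfies $w^n\alpha\in\Delta^+$ for all $n\in\ZZ$ together with $\height(w^n\alpha)\to\infty$ as $n\to\pm\infty$; moreover the roots with finite $w$-orbit are exactly the roots in $\Delta^+(\Jperp\cup\Jaff\cup\Jsph^w)$ lying in neither $\Delta_{w+}$ nor $\Delta_{w-}$. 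Here one uses that $w$ fixes $Q_{\Jperp}$ pointwise, normalizes the finite group $W_{\Jsph^w}$, and acts on each affine component as a translation, together with the fact that $\Delta^+$ contains only finitely many roots below a given height (so that an infinite $w$-orbit inside $\Delta^+$ necessarily has heights tending to infinity); straightness ensures that once a root becomes negative under a positive power of $w$ it stays negative (in particular $\Delta_{w+}\cap\Delta_{w-}=\emptyset$), and standardness is what makes $\Jsph^w$, hence $K(w)$, well defined.

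\textbf{The inclusion $U^{\ma}_{K(w)}\subseteq\nub(\overline w)$.}
Let $\alpha\in K(w)$. By the dictionary above, $\alpha\notin\Delta_{w+}\cup\Delta_{w-}$ forces $w^n\alpha\in\Delta^+$ for all $n$, while $\supp(\alpha)\not\subseteq\Jperp\cup\Jaff\cup\Jsph^w$ forces the $w$-orbit of $\alpha$ to be infinite, hence $\height(w^n\alpha)\to\infty$ as $n\to\pm\infty$. Since the positive root groups of large height form a neighbourhood basis of $1$ in the open subgroup $U^{\ma+}\leq G^{\pma}$ (open because $k$ is finite), for every $u\in U_\alpha$ we get $\overline w^{\,n}u\overline w^{\,-n}\to 1$ and $\overline w^{\,-n}u\overline w^{\,n}\to 1$, i.e.\ $U_\alpha\subseteq\con(\overline w)\cap\con(\overline w^{-1})\subseteq\nub(\overline w)$. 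As $\nub(\overline w)$ is a closed subgroup of $G^{\pma}$, it contains $U^{\ma}_{K(w)}=\overline{\la U_\alpha\mid\alpha\in K(w)\ra}$.

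\textbf{The inclusion $\nub(\overline w)\subseteq U^{\ma}_{K(w)}$.}
This is the core. For a $w$-stable closed $\Psi\subseteq\Delta$ containing no pair $\{\alpha,-\alpha\}$ --- so that $U^{\ma}_\Psi:=\overline{\la U_\alpha\mid\alpha\in\Psi\ra}$ admits a unique root-group coordinatization --- set $\mc{C}_{\pm}(w):=\{\alpha\in\Delta\mid w^{\pm n}\alpha\in\Delta^+\text{ for all large }n\text{ and }\height(w^{\pm n}\alpha)\to\infty\}$. The plan is to prove $\overline{\con(\overline w)}=U^{\ma}_{\mc{C}_+(w)}$ and $\overline{\con(\overline w^{-1})}=U^{\ma}_{\mc{C}_-(w)}$. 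The inclusions $U^{\ma}_{\mc{C}_+(w)}\subseteq\overline{\con(\overline w)}$ and $U^{\ma}_{\mc{C}_-(w)}\subseteq\overline{\con(\overline w^{-1})}$ follow exactly as in the previous paragraph; for the reverse ones one shows that a contracted element has trivial $U_\alpha$-coordinate whenever $\alpha\notin\mc{C}_+(w)$ --- if $w^n\alpha\in\Delta^-$ for some $n$ then $\overline w^{\,n}h\overline w^{\,-n}$ acquires a coordinate in a fixed negative root group, which the contraction group cannot contain asymptotically, and if $\alpha$ has bounded $w$-orbit then the $U_{w^n\alpha}$-coordinate of $\overline w^{\,n}h\overline w^{\,-n}$ stays away from $1$. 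Note that $\mc{C}_+(w)$ genuinely meets $\Delta^-$ --- for instance when $w$ has affine support the contraction group ``wraps around'' a $w$-string of roots and $\con(\overline w)\not\subseteq U^{\ma+}$ --- which is one source of technical difficulty; here one leans on the structure theory of $G^{\pma}$ and on the analysis of contraction groups in \cite{BRR08}. One then proves the intersection formula $U^{\ma}_{\Psi_1}\cap U^{\ma}_{\Psi_2}=U^{\ma}_{\Psi_1\cap\Psi_2}$ (uniqueness of root-group coordinates, the Birkhoff decomposition controlling the non-positive parts), and checks combinatorially that $\mc{C}_+(w)\cap\mc{C}_-(w)=\Delta^+\setminus(\Delta_{w+}\cup\Delta_{w-}\cup\Delta^+(\Jperp\cup\Jaff\cup\Jsph^w))=K(w)$; together these give
$$\nub(\overline w)=\overline{\con(\overline w)}\cap\overline{\con(\overline w^{-1})}=U^{\ma}_{\mc{C}_+(w)}\cap U^{\ma}_{\mc{C}_-(w)}=U^{\ma}_{K(w)}.$$

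\textbf{Main obstacle, and the general case.}
I expect the hardest step to be the inclusion $\con(\overline w)\subseteq U^{\ma}_{\mc{C}_+(w)}$: one must deduce the vanishing of all ``non-contracting'' root-group coordinates of a contracted element from the single hypothesis $\overline w^{\,n}h\overline w^{\,-n}\to 1$, while $\overline w$-conjugation spreads each coordinate, via the Steinberg commutation relations, over infinitely many (including negative) root groups, so no naive coordinate-wise argument works; the hypotheses \emph{straight} and \emph{standard} enter essentially precisely to control this. For the last assertion of the theorem: if $g\in G^{\pma}$ is topologically periodic then $\con(g)=\{1\}$, so $\nub(g)=\{1\}=\nub(\overline 1)$ with $1\in W$ straight and standard and $K(1)=\Delta^+\setminus\Delta^+(\emptyset^\perp)=\emptyset$ (as $\emptyset^\perp=S$). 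Otherwise, the reduction of the computation of the nub to a Weyl-group element together with Proposition~\ref{prop: The nub only for straight and standard elements} provide $w\in W$ straight and standard and $h\in G^{\pma}$ with $\nub(g)=h\,\nub(\overline w)\,h^{-1}$, using the conjugation-equivariance $\nub(xgx^{-1})=x\,\nub(g)\,x^{-1}$ of the nub. Combined with the first part, this yields the stated description.
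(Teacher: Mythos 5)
Your high-level route differs from the paper's: you propose to compute $\overline{\con(\overline{w})}$ and $\overline{\con(\overline{w}\inv)}$ separately and intersect, whereas the paper sandwiches $\nub(\overline{w})$ between a lower bound (root groups $U_\alpha$ with infinite $w$-orbit are contracted by both $\overline{w}$ and $\overline{w}\inv$; Corollary~\ref{corollary:main_lower_bound_nubKM}) and an upper bound obtained from tidiness of $U^{\map}$ together with Willis's fact that the nub has no proper relatively open $\overline{w}$-stable subgroup (Lemma~\ref{Lemma: Facts about nub}, used in Proposition~\ref{prop: nub contained in Delta_perp, affine, sphw}); the description of $\overline{\con(\overline{w})}$ is then a \emph{consequence} (Theorem~\ref{thm:overlinecon}), not an ingredient. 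Your route is in principle viable, but as written it has two genuine gaps.

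First, and most seriously, the entire weight of the theorem rests on what you call the ``dictionary'': that a positive root fixed by some nonzero power of $w$ must lie in $\Delta^+(\Jperp\cup\Jaff\cup\Jsph^w)$. The ``if'' direction is easy and is all your sketch actually addresses ($w$ fixes $\Delta(\Jperp)$, normalizes $W_{\Jsph^w}$, translates on affine components). The ``only if'' direction for imaginary roots is Theorem~\ref{thm: root groups are contracted}, whose proof (via Lemma~\ref{lemma:contrim}) occupies several pages and uses essential supports, a parallel-residues argument in the Coxeter complex, the inequality $\la\alpha_{\min},\alpha_i^\vee\ra\le 0$ for $\alpha_{\min}\in K_0$, and Kac's classification \cite[Theorem~4.3]{Kac90}; the real-root case (Lemma~\ref{Lemma: root groups in the nub}) needs the parabolic closure and the maximality of $P_w^{\max}$. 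You locate the difficulty in controlling how conjugation spreads root-group coordinates via commutation relations, but that part is comparatively soft (it is handled by the unique decomposition $U^{\map}=U^{\ma}_{K_1}U^{\ma}_{K_2}U^{\ma}_{K_3}$ of Lemma~\ref{Lemma: Unique decomposition}, once one knows the $K_i$ are closed sets of roots); the hard part is precisely the classification of $w$-periodic roots, which you assert without argument.

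Second, your identity $\overline{\con(\overline{w})}=U^{\ma}_{\mc{C}_+(w)}$ involves a set $\mc{C}_+(w)$ meeting $\Delta^-$, and the object $U^{\ma}_\Psi$ is only defined (and only coordinatizable) for closed $\Psi\subseteq\Delta^+$. The correct statement is $\overline{\con(\overline{w})}=U^-_{w+}\cdot U^{\ma}_{K_3(w)}$ with $U^-_{w+}=\la U_\alpha\mid\alpha\in-\Delta_{w+}\ra$ a subgroup of the \emph{discrete} group $U_-$. To make your argument non-circular (the paper's own proof of the inclusion $\subseteq$ in Theorem~\ref{thm:overlinecon} invokes $\overline{\con}=\con\cdot\nub$ and Theorem~\ref{Thmintro: Main result}), you must prove directly that $\con(\overline{w})\subseteq U^-_{w+}\cdot U^{\ma}_{K_3(w)}$ and that this product is closed (it is, being a product of a discrete subgroup with a compact one, but this needs saying), and then compute the intersection of the two closures using $U_-\cap U^{\map}=\{1\}$, $U^-_{w+}\cap U^-_{w-}=\{1\}$ and $U^{\ma}_{\Psi_1}\cap U^{\ma}_{\Psi_2}=U^{\ma}_{\Psi_1\cap\Psi_2}$. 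None of these steps is carried out. The reduction to straight and standard elements in your last paragraph is correct and matches Proposition~\ref{prop: The nub only for straight and standard elements}.
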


The proof of Theorem~\ref{Thmintro: Main result} is given at the end of \S\ref{subsection:UBFTNKM} (see Proposition~\ref{prop: The nub only for straight and standard elements} for the proof of the second statement).

\begin{expintro}\label{Expintro}
	Suppose $(W, S)$ is of indefinite type and let $w\in W$ be straight such that $\supp(w) = S$. Assume, moreover, that $w$ does not normalize any spherical parabolic subgroup (for instance, if $w$ is a Coxeter element, see \cite[Theorem~C(2)]{Ma23}). Then $K(w) = \Delta^+ \setminus \left( \Delta_{w+} \cup \Delta_{w-} \right)$. In other words, $\nub(\overline{w}) = \Fix(L_w) \cap U^{\map}$ (see Corollary~\ref{cor: Main result}), where $L_w=\{w^z \ | \ z\in\ZZ\}$ is viewed as a subset of the fundamental apartment in the building associated to the BN-pair $(B,N)$ (see \S\ref{subsection:RGDsystems}). \\	
	Similarly, if $A = \begin{psmallmatrix}
		2 & a_{12} \\ a_{21} & 2
	\end{psmallmatrix}$ with $a_{12} a_{21} >4$, then for any $w\in W$ of infinite order we have $\nub(\overline{w}) = U^{\ma}_{\Delta^{\imp}} = \Fix(L_w) \cap U^{\map}$, where $\Delta^{\imp} = \Delta^+ \setminus \Phi$.
\end{expintro}

\begin{rkintro}
	The pair $(\mathfrak{G}_A(k), (U_{\alpha})_{\alpha \in \Phi})$ is an \emph{RGD system}. In the present paper we establish upper and lower bounds for the nub in arbitrary completions of RGD systems of non-spherical type with finite root groups. For more details we refer to Theorem~\ref{Theorem: Upper bound for nub} and Proposition~\ref{prop: root groups in the nub}. In some cases our upper bound is already optimal by Theorem~\ref{Thmintro: Main result}.
\end{rkintro}

\begin{rkintro}
	There exist various completions of $\mathfrak{G}_A(k)$ in the literature, obtained as quotients of $G^{\pma}$ by a compact normal subgroup. Note that Theorem~\ref{Thmintro: Main result} also provides a description of the nub of an element in these completions, see Remark~\ref{Remark: nub in quotients}.
\end{rkintro}

As first consequences of Theorem~\ref{Thmintro: Main result} we can characterize when an element has trivial nub and when two elements have the same nub.

\begin{corintro}\label{Corintro: trivial nub}
	Let $w\in W$ be straight, and let $J = \supp(w)$. Then $\nub(\overline{w}) = \{1\}$ if and only if $J$ is a union of affine components of $I$.
\end{corintro}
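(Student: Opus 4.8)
The plan is to deduce the corollary from Theorem~\ref{Thmintro: Main result}. By Proposition~\ref{prop: The nub only for straight and standard elements}, one may reduce to the case where $w$ is in addition standard --- a point I will have to check is that this does not alter whether $J$ is a union of affine components of $I$ (see the final paragraph). Now $U^{\ma}_{\Psi}$ contains the nontrivial finite root group $U_{\alpha}$ whenever $\alpha \in \Psi$, and $U^{\ma}_{\emptyset} = \{1\}$; so by Theorem~\ref{Thmintro: Main result} the assertion $\nub(\overline w) = \{1\}$ is equivalent to $K(w) = \emptyset$, and everything reduces to proving the combinatorial equivalence
\[
  K(w) = \emptyset \quad\Longleftrightarrow\quad J \text{ is a union of affine components of } I .
\]

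For the implication ``$\Leftarrow$'', assume $J$ is a union of affine components of $I$. Then $\Jaff = J$, and $J$ is a union of connected components of $I$, so every vertex outside $J$ commutes with all of $J$; hence $I = J \sqcup \Jperp$. Since $\Jsph^w \subseteq J$, this gives $\Delta^+(\Jperp \cup \Jaff \cup \Jsph^w) = \Delta^+(\Jperp \cup J) = \Delta^+$, so $K(w) = \emptyset$ (independently of $\Delta_{w+}$ and $\Delta_{w-}$).

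For ``$\Rightarrow$'', assume $K(w) = \emptyset$. The crucial point is that no positive imaginary root lies in $\Delta_{w+} \cup \Delta_{w-}$, because $W$ preserves the set of positive imaginary roots; hence every positive imaginary root has support contained in $\Jperp \cup \Jaff \cup \Jsph^w$. I first show $I = J \cup \Jperp$: if some vertex $i$ lies outside $J \cup \Jperp$, pick $j \in J$ not commuting with $i$; then $\gamma := s_j(\alpha_i)$ is a positive root with $\supp(\gamma) = \{i,j\}$, and since $\supp(\gamma) \not\subseteq J$ while $W_J$ permutes $\Delta^+ \setminus \Delta^+(J)$, we get $\gamma \notin \Delta_{w+} \cup \Delta_{w-}$; as $i \notin \Jperp \cup \Jaff \cup \Jsph^w$ this forces $\gamma \in K(w)$, a contradiction. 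Hence $I = J \sqcup \Jperp$ and $J$ is a union of connected components of $I$. It remains to show each component $J_0$ of $J$ is affine. Decompose $w = \prod_\ell w_\ell$ along the direct product $W_J = \prod_\ell W_{J_\ell}$ over the components $J_\ell$ of $J$; additivity of word length forces each $w_\ell$ to be straight, and $\supp(w) = J$ forces $w_0 \neq 1$; as a straight element of finite order is trivial, $W_{J_0}$ is infinite, so $J_0$ is non-spherical. Were $J_0$ not affine, it would be of indefinite type, and there would be a positive imaginary root $\beta$ with $\supp(\beta) = J_0$ (a full-support element of the fundamental imaginary cone). But $J_0$ is a connected component disjoint from $\Jperp$ and from $\Jaff$, and $J_0 \not\subseteq \Jsph^w$ since $\Jsph^w$ is spherical; hence $\supp(\beta) \not\subseteq \Jperp \cup \Jaff \cup \Jsph^w$, contradicting the crucial point. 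So $J_0$ is affine.

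The step I expect to require the most care is the initial reduction to standard $w$: one must verify that the straight standard element produced by Proposition~\ref{prop: The nub only for straight and standard elements} has support a union of affine components of $I$ exactly when $J$ does. I anticipate this follows from the construction there --- the new element being produced inside $W_{\supp(w)}$, with unchanged support --- or else from this property being a conjugacy invariant of straight elements. The only background fact used above is that a connected generalized Cartan matrix of indefinite type admits a positive imaginary root of full support, which is classical (via the fundamental imaginary cone).
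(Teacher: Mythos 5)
Your proof is correct and follows essentially the same route as the paper: reduce to $w$ straight and standard (the paper uses Lemma~\ref{Lemma: infinite order and standard elements}, which conjugates by an element of $W_J$ and hence preserves $J$, exactly as you anticipated), translate $\nub(\overline{w})=\{1\}$ into $K(w)=\varnothing$ via Theorem~\ref{Thmintro: Main result}, and exploit that imaginary roots are $W$-stable together with the existence of full-support imaginary roots on non-finite-type components (Kac, Theorem~5.6). The only cosmetic difference is that the paper extracts both conclusions ($J=\Jaff$ and $I=J\sqcup\Jperp$) in one step from $I^{\infty}\subseteq\Jperp\cup\Jaff$, whereas you prove $I=J\sqcup\Jperp$ separately by a (correct) auxiliary real-root argument.
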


If $w\in W_J$ and $J'$ is a component of $J \subseteq I$, we write $w_{J'}$ for the image of $w$ under the projection $W_J \to W_{J'}$. For $w\in W$ set $L_w = \{ w^z \mid z\in \ZZ \}$ and denote by $\conv(L_w)$ the convex hull of $L_w$ in the Coxeter complex of $(W, S)$ (see Example~\ref{Example: Coxeter building}).

\begin{corintro}\label{Corintro: Same nub}
	Let $w,v\in W$ be straight and standard, and let $J = \supp(w)$ and $K = \supp(v)$. Then the following are equivalent:
	\begin{enumerate}
		\item 
		$\nub(\overline{w}) = \nub(\overline{v})$;
		\item
		$\Jperp \cup \Jaff = K^{\perp} \cup K_{\mathrm{aff}}$ and $J\setminus\Jaff=K\setminus K_{\mathrm{aff}}$ and $\conv(L_{w_{J'}}) = \conv(L_{v_{J'}})$ for every component $J'$ of $J\setminus\Jaff$ with $|J'|\geq 3$.
	\end{enumerate}
\end{corintro}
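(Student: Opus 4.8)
The plan is to use Theorem~\ref{Thmintro: Main result} to replace the equality of nubs by an equality of subsets of $\Delta^+$, and then to read off the three conditions by dissecting $K(w)$ according to the supports of roots. For the first step I would check that $K(w)$ is closed: writing $\Delta_{w\pm}:=\Delta_{w+}\cup\Delta_{w-}$, the set $K(w)$ is the intersection of $\Delta^+\setminus\Delta_{w\pm}$ and $\Delta^+\setminus\Delta^+(\Jperp\cup\Jaff\cup\Jsph^w)$, and both are closed --- the support of a sum of two positive roots is the union of their supports, and $W$ preserves the sign of imaginary roots while never turning a sum of two positive roots into a negative root. Granting the fact that, for a closed subset $\Psi\subseteq\Delta^+$, one has $U_\alpha\leq U^{\ma}_\Psi$ if and only if $\alpha\in\Psi$ (valid in any completion of an RGD system, see \S\ref{Section: Maximal Kac--Moody groups}), the identity $U^{\ma}_{K(w)}=U^{\ma}_{K(v)}$ then becomes $K(w)=K(v)$, and the rest is combinatorics inside $\Delta^+$.

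For the dissection I would use that $\Delta_{w\pm}\subseteq\Delta^+(J)$ since $w\in W_J$, that the Coxeter diagram on $\Jperp\cup\Jaff\cup\Jsph^w$ is the disjoint union of the one on $\Jperp$ and the one on $\Jaff\cup\Jsph^w\subseteq J$ since $\Jperp$ commutes with $J$, and that the diagram on $J$ has no spherical component, since the projection of the straight element $w$ to a spherical component would be a nontrivial element of finite order, keeping $\ell(w^n)$ bounded in that direction, against straightness; so $J=\Jaff\sqcup(J\setminus\Jaff)$ with $J\setminus\Jaff$ a union of indefinite components of rank $\geq2$. Classifying a positive root by whether its (connected) support lies in $J$, in $\Jperp$, or in neither, I would obtain
\[ K(w)=\{\alpha\in\Delta^+\mid\supp(\alpha)\not\subseteq J\cup\Jperp\}\ \sqcup\ \bigsqcup_{J'}\bigl(K(w)\cap\Delta^+(J')\bigr), \]
with $J'$ ranging over the components of $J\setminus\Jaff$; here the first summand depends on $w$ only through $M:=J\cup\Jperp$, the affine components of $J$ contribute nothing to the second (all of $\Delta^+(\Jaff)$ is removed, cf.\ Corollary~\ref{Corintro: trivial nub}), and $K(w)\cap\Delta^+(J')=\Delta^+(J')\setminus\bigl(\Delta_{w\pm}\cup\Delta^+(\Jsph^w\cap J')\bigr)$ depends only on $w_{J'}$. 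From this I would recover $M$ as the support of $\Delta^+\setminus K(w)$, then $J\setminus\Jaff$ as the support of $\{\alpha\in K(w)\mid\supp(\alpha)\subseteq M\}$ (each nonaffine component carries positive imaginary roots of full support, and these lie in $K(w)$), and then $\Jaff\cup\Jperp=M\setminus(J\setminus\Jaff)$. Thus $K(w)=K(v)$ forces $\Jperp\cup\Jaff=K^\perp\cup K_{\mathrm{aff}}$ and $J\setminus\Jaff=K\setminus K_{\mathrm{aff}}$, and conversely these two equalities give $M=K\cup K^\perp$ and make the component lists of $J\setminus\Jaff$ and $K\setminus K_{\mathrm{aff}}$ coincide.

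It would then remain to compare $K(w)\cap\Delta^+(J')$ with $K(v)\cap\Delta^+(J')$ for a single component $J'$ common to $J\setminus\Jaff$ and $K\setminus K_{\mathrm{aff}}$. If $|J'|=2$ both sides equal $\Delta^+(J')\setminus\Phi$ regardless of the element: a straight element of support $J'$ is a power of $s_is_j$, it normalizes no rank-one subgroup (so $\Jsph^{w_{J'}}=\emptyset$), and on the rank-two Coxeter complex --- a line, all of whose walls are crossed by $L_{w_{J'}}$ --- every real root of $J'$ lies in $\Delta_{w_{J'}\pm}$ while no imaginary root does. If $|J'|\geq3$ I would invoke the combinatorial axis of $w_{J'}$: the condition $\alpha\notin\Delta_{w_{J'}\pm}$ says exactly that $\partial\alpha$ does not separate the orbit $L_{w_{J'}}$, so that $\conv(L_{w_{J'}})$ is the intersection of the half-spaces of the roots $\beta\in\Delta^+(J')$ with $\beta\notin\Delta_{w_{J'}\pm}$, while $\Jsph^{w_{J'}}$ is the set of $i$ for which $\partial\alpha_i$ contains the translation axis of $w_{J'}$. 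These descriptions should let me pass between $\conv(L_{w_{J'}})$ and the pair $(\Delta_{w_{J'}\pm},\Jsph^{w_{J'}})$, hence between $\conv(L_{w_{J'}})$ and $K(w_{J'})\cap\Delta^+(J')$, and assembling everything would give the equivalence. The hard part will be precisely this last point when $|J'|\geq3$: in $K(w_{J'})\cap\Delta^+(J')$ the two ingredients $\Delta_{w_{J'}\pm}$ and $\Jsph^{w_{J'}}$ appear only through their union $\Delta_{w_{J'}\pm}\cup\Delta^+(\Jsph^{w_{J'}})$, and one must show that $\Jsph^{w_{J'}}$ --- equivalently the full convex hull $\conv(L_{w_{J'}})$ --- can still be recovered from it, for which the structure theory of straight and standard elements of \S\ref{Subsection: Standard elements} and of \cite{Ma23} is needed.
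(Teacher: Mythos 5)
Your overall strategy coincides with the paper's: reduce, via Theorem~\ref{Thmintro: Main result} and the fact that $U^{\ma}_{\Psi_1}\cap U^{\ma}_{\Psi_2}=U^{\ma}_{\Psi_1\cap\Psi_2}$ for closed $\Psi_i$, to the combinatorial equality $K(w)=K(v)$, and then dissect $\Delta^+\setminus K(w)$ according to supports of roots (your decomposition is essentially the paper's Lemma~\ref{lemma:KwJaffabsorb} combined with Lemmas~\ref{lemma:Lwandessential} and \ref{lemma:conhullcomponents}). The recovery of $J\cup\Jperp$, of $J\setminus\Jaff$ via full-support imaginary roots, and the $|J'|=2$ case are all correct and match the paper. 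But the point you flag at the end as ``the hard part'' is a genuine gap, and it is the heart of the proof: inside a component $J'$ with $|J'|\geq 3$, the set $K(w)\cap\Delta^+(J')$ only sees the \emph{union} $\Delta_{w_{J'}\pm}\cup\Delta^+(\Jsph^{w_{J'}})$, and you do not show how to disentangle the two pieces. Without that, neither implication is established for such components: (2)$\Rightarrow$(1) needs to know that $\conv(L_{w_{J'}})=\conv(L_{v_{J'}})$ forces $\Jsph^{w_{J'}}=\Jsph^{v_{J'}}$ (condition (2) says nothing directly about the $\Jsph$'s), and (1)$\Rightarrow$(2) needs to extract $\conv(L_{w_{J'}})$ from the union.

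The paper closes this gap with two ingredients you would need to supply. First, Lemma~\ref{lemma:descriptionPhiwsph}: for $W_{J'}$ of irreducible indefinite type, $\Delta(\Jsph^{w_{J'}})$ is exactly the set of roots supported in $J'$ whose wall meets \emph{every} $w_{J'}$-essential wall; the nontrivial inclusion rests on \cite[Lemma~9.4]{Ma23} (pairwise disjoint families of pairwise crossing walls cannot both lie in an indefinite parabolic). This shows $\Jsph^{w_{J'}}$ is a function of the set of $w_{J'}$-essential walls, i.e.\ of $\conv(L_{w_{J'}})$, which settles (2)$\Rightarrow$(1) and explains why $\Jsph^w$ is absent from condition (2). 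Second, for (1)$\Rightarrow$(2): since $\Delta^+(\Jsph^{w_{J'}})$ and $\Delta^+(\Jsph^{v_{J'}})$ are \emph{finite}, equality of the unions forces the sets $\{\alpha\in\Delta^{\rep}(J')\mid\alpha\not\supseteq\conv(L_{w_{J'}})\}$ and $\{\alpha\in\Delta^{\rep}(J')\mid\alpha\not\supseteq\conv(L_{v_{J'}})\}$ to differ by at most finitely many roots; and if one root $\alpha$ lay in the symmetric difference, the nested roots $w_{J'}^{nN}\alpha$ ($n\in\NN$, cf.\ \cite[Lemma~2.6]{Caprace-Marquis13}) would give infinitely many, a contradiction. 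Hence these sets agree exactly, the convex hulls coincide, and the $\Jsph$'s then coincide by the first ingredient. Your appeal to ``the structure theory of \S\ref{Subsection: Standard elements} and \cite{Ma23}'' points in the right direction, but as written the argument is not complete.
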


\begin{expintro}
Here are two examples to illustrate Corollary~\ref{Corintro: Same nub}.
\begin{enumerate}
\item
Suppose that (the Dynkin diagram of) $I$ is of the form $I=I'\sqcup\{j\}$ where $I'$ has two components $J,K$ of affine type and both $J\cup\{j\}$ and $K\cup\{j\}$ are connected. Let $w,v\in W$ be straight with $\supp(w)=J$ and $\supp(v)=K$. Then $J=\Jaff$ and $K=K_{\mathrm{aff}}$ are disjoint, but $\Jperp \cup \Jaff = K^{\perp} \cup K_{\mathrm{aff}}$, so $\nub(\overline{w}) = \nub(\overline{v})$. Note that these nubs are non-trivial by Corollary~\ref{Corintro: trivial nub}.
\item
Suppose that (the Dynkin diagram of) $I$ has no components of affine type, and let $w,v\in W$ be straight and such that $I = \supp(w) = \supp(v)$ (thus, $I^\perp\cup I_{\mathrm{aff}}=\varnothing$). Assume moreover that for each component $J'$ of $I$ with $|J'|\geq 3$, there exist nonzero $r,s\in\ZZ$ such that $w_{J'}^r=v_{J'}^s$. Then $\conv(L_{w_{J'}}) = \conv(L_{v_{J'}})$ for each such $J'$. Moreover, up to conjugation, we may assume that $w,v$ are straight and standard (see \S\ref{Subsection: Standard elements}). Hence $\nub(\overline{w}) = \nub(\overline{v})$.
\end{enumerate}
\end{expintro}

Theorem~\ref{Thmintro: Main result} also allows us to compute explicitly closures of contraction groups in $G^{\pma}$. In particular, we obtain the following two corollaries (see \S\ref{subsection:proofCorBC}):

\begin{corintro}\label{corintro:nuboverline}
	Let $g\in G^{\pma}$. Then $\nub(g) = \overline{\con(g) \cap \con(g\inv)}$.
\end{corintro}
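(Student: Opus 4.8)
The plan is to combine Theorem~\ref{Thmintro: Main result} with the Baumgartner--Willis identity $\nub(g)=\overline{\con(g)}\cap\overline{\con(g\inv)}$ recalled in the abstract. If $g$ is topologically periodic then $\con(g)=\{1\}=\nub(g)$ and both sides of the asserted equality are trivial, so assume $g$ is not. Since conjugation by any $h\in G^{\pma}$ is a topological automorphism, $\con(hgh\inv)=h\con(g)h\inv$, and likewise for $g\inv$ and for the nub; hence Proposition~\ref{prop: The nub only for straight and standard elements} (and the reduction carried out in its proof) lets us assume $g=\overline{w}$ with $w\in W$ straight and standard.

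The inclusion $\overline{\con(\overline{w})\cap\con(\overline{w}\inv)}\subseteq\nub(\overline{w})$ is immediate, since $\con(\overline{w})\cap\con(\overline{w}\inv)\subseteq\overline{\con(\overline{w})}\cap\overline{\con(\overline{w}\inv)}=\nub(\overline{w})$ and the nub is closed. For the reverse inclusion, Theorem~\ref{Thmintro: Main result} gives $\nub(\overline{w})=U^{\ma}_{K(w)}=\overline{\la U_\alpha\mid\alpha\in K(w)\ra}$, so --- $\con(\overline{w})\cap\con(\overline{w}\inv)$ being a subgroup --- it suffices to prove $U_\alpha\subseteq\con(\overline{w})\cap\con(\overline{w}\inv)$ for every $\alpha\in K(w)$ and then pass to closures. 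This last inclusion is essentially the content of Proposition~\ref{prop: root groups in the nub}: as $\alpha\in K(w)$ lies in neither $\Delta_{w+}$ nor $\Delta_{w-}$, one has $w^{n}\alpha\in\Delta^+$ for all $n\in\ZZ$; conjugation by $\overline{w}^{\,\pm n}$ carries $U_\alpha$ onto $U_{w^{\pm n}\alpha}\subseteq U^{\map}$; and, since $\alpha\notin\Delta^+(\Jperp\cup\Jaff\cup\Jsph^w)$, the $\la w\ra$-orbit of $\alpha$ is infinite with $\height(w^{\pm n}\alpha)\to\infty$, so $U_{w^{\pm n}\alpha}\to\{1\}$ because the subgroups $U^{\ma}_{\Psi}$, for closed $\Psi\subseteq\Delta^+$ with finite complement, form a neighborhood basis of $1$ in the pro-$p$ group $U^{\map}$.

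The main obstacle is precisely the input that, for $\alpha\in K(w)$, the orbit $(w^{n}\alpha)_{n\in\ZZ}$ is infinite with heights diverging in both directions; equivalently, no $\alpha\in K(w)$ is $w$-periodic, a $w$-periodic real or imaginary root necessarily having support inside $\Jperp\cup\Jaff\cup\Jsph^w$. This reflects the hyperbolic dynamics of the straight element $w$ on the part of the Coxeter complex supported on $J\setminus(\Jaff\cup\Jsph^w)$, and is where the hypotheses on $w$ and the precise definition of $\Jsph^w$ are really used; I would take it from the wall combinatorics and the analysis of $\Delta_{w\pm}$ already developed for Theorem~\ref{Thmintro: Main result} (in particular Proposition~\ref{prop: root groups in the nub}) rather than redo it. A minor point is the topological fact that $U_{\beta_n}\to\{1\}$ whenever $\height(\beta_n)\to\infty$, which follows from the description of $G^{\pma}$ and of $U^{\map}$ recalled in \S\ref{Section: Maximal Kac--Moody groups}.
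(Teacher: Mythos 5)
Your argument is correct and follows essentially the same route as the paper: reduce to a straight and standard $w$ via Proposition~\ref{prop: The nub only for straight and standard elements}, note that the inclusion $\overline{\con(\overline{w})\cap\con(\overline{w}\inv)}\subseteq\nub(\overline{w})$ is formal, and for the reverse use Theorem~\ref{Thmintro: Main result} to reduce to showing $U_{\alpha}\subseteq\con(\overline{w})\cap\con(\overline{w}\inv)$ for every $\alpha\in K(w)$, which holds because no root of $K(w)$ is $w$-periodic and the heights $\height(w^{\pm n}\alpha)$ therefore diverge. The only difference is bookkeeping: the non-periodicity input must cover imaginary roots as well as real ones (you acknowledge this), and the paper packages it --- Lemma~\ref{Lemma: root groups in the nub} together with Theorem~\ref{thm: root groups are contracted} --- into the trichotomy of Proposition~\ref{prop:trichotomy_contraction_root_groups} and cites that, rather than Proposition~\ref{prop: root groups in the nub}, which treats only real roots via the (FPRS) property.
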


\begin{corintro}\label{corintro:congbar}
	Let $w\in W$ be straight and standard. Then
	\[ \overline{\con(\overline{w})} = U^-_{w+}\cdot U^{\ma}_{K'(w)} = \overline{\langle U_{\alpha} \mid U_{\alpha} \leq \con(\overline{w}) \rangle} \]
	where $U^-_{w+}:=\langle U_{\alpha} \ | \ \alpha\in -\Delta_{w+}\rangle$ and $K'(w):=K(w)\cup\Delta_{w-}$.
\end{corintro}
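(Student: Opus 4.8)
The plan is to combine the Baumgartner--Willis decomposition $\overline{\con(\overline{w})} = \con(\overline{w})\cdot\nub(\overline{w})$ (see \cite{BW04}) with the identification $\nub(\overline{w}) = U^{\ma}_{K(w)}$ from Theorem~\ref{Thmintro: Main result}, and to reduce the whole statement to two ingredients: a dynamical classification of the root groups contained in $\con(\overline{w})$, and a factorization of $\con(\overline{w})$ (hence of its closure) as a product $U^-_{w+}\cdot U^{\ma}_{K'(w)}$. Writing $A := \overline{\con(\overline{w})}$, $B := U^-_{w+}\cdot U^{\ma}_{K'(w)}$ and $C := \overline{\la U_\alpha \mid U_\alpha\leq\con(\overline{w})\ra}$, the inclusion $C\subseteq A$ is immediate, since each generator lies in $\con(\overline{w})\subseteq A$ and $A$ is a closed subgroup. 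If I establish (i) that the real roots $\beta$ with $U_\beta\leq\con(\overline{w})$ are exactly those in $-\Delta_{w+}\cup K'(w)$, and (ii) that $\con(\overline{w})\subseteq B$ with $B$ a closed subgroup, then $B\subseteq C$ follows from (i) (both factors of $B$ are closures of groups generated by contracted root groups, and $C$ is a closed group), while $A\subseteq B$ follows from (ii) together with $K(w)\subseteq K'(w)$. The chain $C\subseteq A\subseteq B\subseteq C$ then forces $A=B=C$, which is exactly the assertion, and the second equality in the statement drops out along the way.

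For ingredient (i) I would argue entirely in terms of the $\overline{w}$-dynamics on $\Phi$, using $\overline{w}^n U_\beta\overline{w}^{-n}=U_{w^n\beta}$ and the height filtration of $U^{\map}$: the latter gives $U_\beta\leq\con(\overline{w})$ if and only if $w^n\beta\in\Delta^+$ for all large $n$ with $\height(w^n\beta)\to\infty$, which is precisely the convergence criterion already exploited in the proof of Theorem~\ref{Theorem: Upper bound for nub} and Proposition~\ref{prop: root groups in the nub}. Since $w$ is straight the inversion sets of the powers $w^n$ nest, so the orbit of each real root is eventually monotone and $\Delta_{w+}\cap\Delta_{w-}=\varnothing$. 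Consequently: for $\beta=-\alpha$ with $\alpha\in\Delta_{w+}$ one has $w^n\beta=-w^n\alpha\in\Delta^+$ for large $n$ with height tending to infinity, so $U_\beta\leq\con(\overline{w})$; for $\beta=\alpha\in\Delta_{w-}$ the forward orbit stays in $\Delta^+$ with height tending to infinity; for $\beta=\alpha\in K(w)$ the entire orbit $\{w^n\alpha\}_{n\in\ZZ}$ lies in $\Delta^+$ with $\height(w^n\alpha)\to\infty$ in both directions, so $U_\alpha\leq\con(\overline{w})\cap\con(\overline{w}\inv)$, consistently with Corollary~\ref{corintro:nuboverline}; and for the remaining roots, namely those supported on $\Jperp\cup\Jaff\cup\Jsph^w$ (which is where standardness of $w$ is used, to guarantee the bounded part is exactly this set), the orbit is bounded and no contraction occurs. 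This yields precisely the contracted set $-\Delta_{w+}\cup K'(w)$ with $K'(w)=K(w)\cup\Delta_{w-}$.

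For ingredient (ii) I would use the triangular structure of $G^{\pma}$: an element $g\in\con(\overline{w})$ satisfies $\overline{w}^n g\overline{w}^{-n}\in U^{\map}$ for all large $n$, and decomposing $g$ into its negative and positive unipotent contributions one pushes the negative part into $U^-_{w+}$ and the positive part into the closed subgroup generated by the positive contracted root groups, which by (i) is $U^{\ma}_{K'(w)}$. Here I must first check that $\Delta_{w+}$ and $K'(w)$ are closed subsets of $\Delta$, so that $U^-_{w+}$ and $U^{\ma}_{K'(w)}$ are genuine subgroups; that $U^-_{w+}$ normalizes $U^{\ma}_{K'(w)}$, via the commutation relations of the RGD system; and that $B$ is closed in $G^{\pma}$, by a Birkhoff-type argument showing the multiplication map $U^-_{w+}\times U^{\ma}_{K'(w)}\to B$ is a homeomorphism onto its image. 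Once $B$ is a closed subgroup, the Baumgartner--Willis decomposition gives $A=\con(\overline{w})\cdot U^{\ma}_{K(w)}\subseteq B$, since $\con(\overline{w})\subseteq B$ and $U^{\ma}_{K(w)}\subseteq U^{\ma}_{K'(w)}\subseteq B$.

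The main obstacle I expect is ingredient (ii): proving both the factorization $\con(\overline{w})\subseteq U^-_{w+}\cdot U^{\ma}_{K'(w)}$ and the closedness of $B$. The factorization requires controlling where a contracted element sits relative to the two opposite unipotent subgroups and ruling out contributions from roots with bounded orbit, while the closedness hinges on the delicate interplay between the discrete-type negative part $U^-_{w+}$ and the pro-$p$ part $U^{\ma}_{K'(w)}$; both rely on the closedness of the root sets $\Delta_{w+}$ and $K'(w)$ and on the commutation relations developed earlier in the paper. By contrast, ingredient (i) should be a purely combinatorial consequence of straightness (nesting of inversion sets) and the height filtration of $U^{\map}$.
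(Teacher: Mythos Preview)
Your approach is essentially the paper's: the first equality is proved there as Theorem~\ref{thm:overlinecon}, using exactly your two ingredients, with ingredient~(i) packaged as the trichotomy of Proposition~\ref{prop:trichotomy_contraction_root_groups} (which handles real and imaginary roots uniformly --- your sketch of (i) is phrased for real roots only and should be extended). The one place you make your life harder than necessary is in ingredient~(ii): you do \emph{not} need to prove that $U^-_{w+}$ normalizes $U^{\ma}_{K'(w)}$, nor that $B$ is closed or even a group a~priori. The paper's trick is that once you know $U^-_{w+}\subseteq\con(\overline{w})$ (from ingredient~(i)), then for $g\in\con(\overline{w})$ written as $g=u_-u_+$ with $u_-\in U^-_{w+}$ and $u_+\in U^{\map}$, you get $u_+=u_-^{-1}g\in\con(\overline{w})\cap U^{\map}$; applying the trichotomy to the standard form of $u_+$ forces $u_+\in U^{\ma}_{K'(w)}$, so $\con(\overline{w})\subseteq B$. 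Then $A=\con(\overline{w})\cdot\nub(\overline{w})\subseteq U^-_{w+}\cdot U^{\ma}_{K'(w)}\cdot U^{\ma}_{K(w)}=U^-_{w+}\cdot U^{\ma}_{K'(w)}=B$, where the only fact used is that $U^{\ma}_{K'(w)}$ is a group (i.e.\ that $K'(w)$ is closed, which is part of Proposition~\ref{prop:trichotomy_contraction_root_groups}). The normalization and Birkhoff-type closedness claims you flag as the ``main obstacle'' are true a~posteriori (since $B=A$), but proving them directly would be harder than the corollary itself.
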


Finally, note that for any tdlc group $G$ and $g\in G$, denoting by $\gamma_g\co G \to G$ the conjugation by $g$, the pair $(\nub(g), \gamma_g)$ is an example of a ``dynamical system of algebraic origin'' (see e.g.\ \cite{Sc95}), i.e.\ a compact group and a an automorphism acting ergodically on it (see \cite[Theorem~4.1]{Wi14}). In \cite[Section~5]{Wi14} Willis discusses the ``finite depth condition", under which the nub has a finite decomposition series. For Kac--Moody groups Theorem~\ref{Thmintro: Main result} implies that the nub is either trivial or has infinite depth.

\begin{corintro}\label{Corintro: infinite depth}
	Let $w\in W$ be straight with $\nub(\overline{w}) \neq \{1\}$. Then $(\nub(\overline{w}), \gamma_{\overline{w}})$ does not have finite depth.
\end{corintro}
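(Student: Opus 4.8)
The plan is to combine Theorem~\ref{Thmintro: Main result} with Willis's analysis of the finite depth condition in \cite[\S5]{Wi14}. First I would reduce to the case where $w$ is also standard: by Proposition~\ref{prop: The nub only for straight and standard elements} the dynamical system $(\nub(\overline{w}),\gamma_{\overline{w}})$ is, up to isomorphism, of the form $(\nub(\overline{v}),\gamma_{\overline{v}})$ with $v$ straight and standard and $\nub(\overline{v})\neq\{1\}$, and having finite depth depends only on the isomorphism class of the system. So assume $w$ straight and standard, put $J=\supp(w)$, and recall from Theorem~\ref{Thmintro: Main result} that $\nub(\overline{w})=U^{\ma}_{K(w)}$; by Corollary~\ref{Corintro: trivial nub} the hypothesis $\nub(\overline{w})\neq\{1\}$ means $K(w)\neq\varnothing$, equivalently that $J$ is not a union of affine components of $I$. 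The goal is then to produce an infinite, strictly descending chain of closed $\gamma_{\overline{w}}$-invariant normal subgroups of $\nub(\overline{w})$: this is impossible under the finite depth condition, because the finite decomposition series it provides has $\gamma_{\overline{w}}$-simple successive quotients, and (by a standard Jordan--H\"older type argument, intersecting a given subgroup with the terms of the series) such a series bounds the length of every chain of closed $\gamma_{\overline{w}}$-invariant normal subgroups.

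Second, I would locate a positive imaginary root inside $K(w)$. Since $w$ is straight, no component of $J=\supp(w)$ can be of finite type: the length function is additive over the components of the parabolic $W_J$, so the projection of $w$ to such a component would be straight --- hence trivial, as a straight element of a finite Coxeter group --- contradicting that the component lies in $\supp(w)$. As $J$ is not a union of affine components, it therefore admits a component $J_1$ of indefinite type, and by the theory of imaginary roots (see \cite{Ma18}) there is a positive imaginary root $\alpha$ with $\supp(\alpha)=J_1$. Because $W$ preserves the set $\Delta^{\imp}$ of positive imaginary roots, $w^n\alpha\in\Delta^+$ for all $n\in\ZZ$, so $\alpha\notin\Delta_{w+}\cup\Delta_{w-}$; and since $J_1$ is an indefinite component of $J$, it is disjoint from $\Jperp\cup\Jaff$ and is too large to be contained in the spherical set $\Jsph^w$, so $\supp(\alpha)=J_1\not\subseteq\Jperp\cup\Jaff\cup\Jsph^w$. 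Hence $\alpha\in K(w)$; and since positive multiples of positive imaginary roots are again positive imaginary roots, $k\alpha\in K(w)$ for every integer $k\geq 1$.

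Third, I would build the chain. For $m\geq 1$ set
\[ \Psi_m:=\{\beta\in K(w)\mid \min_{n\in\ZZ}\height(w^n\beta)\geq m\}. \]
This is well defined since $\height(w^n\beta)\geq 1$ for all $n$ (as $w^n\beta\in\Delta^+$ for $\beta\in K(w)$), and $\Psi_1=K(w)$. Using additivity of $\height$ and that $K(w)$ is closed, one checks that each $\Psi_m$ is $w$-invariant, closed, and absorbing in the sense that $\beta\in\Psi_m$, $\gamma\in K(w)$, $\beta+\gamma\in\Delta$ imply $\beta+\gamma\in\Psi_m$. Invoking the structure of $U^{\ma}$ from the preliminary sections --- that for closed $\Psi\subseteq\Delta^+$ one has $U_\beta\leq U^{\ma}_\Psi\iff\beta\in\Psi$, that $\gamma_{\overline{w}}(U^{\ma}_\Psi)=U^{\ma}_{w\Psi}$, and that $[U_\gamma,U_\beta]$ lies in the subgroup generated by the $U_\delta$ with $\delta=a\gamma+b\beta\in\Delta$, $a,b\geq 1$ --- it follows that each $U^{\ma}_{\Psi_m}$ is a closed $\gamma_{\overline{w}}$-invariant subgroup of $\nub(\overline{w})=U^{\ma}_{K(w)}$ normal in it, and that $\Psi\mapsto U^{\ma}_\Psi$ is injective and monotone on closed sets, so that a strict inclusion of closed subsets yields a strict inclusion of subgroups. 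Finally, writing $c:=\min_{n}\height(w^n\alpha)\ (\geq 1)$ for the root $\alpha$ above, we have $\height(w^n(k\alpha))=k\,\height(w^n\alpha)$, hence $k\alpha\in\Psi_m\iff m\leq kc$; thus $\Psi_{kc}\supsetneq\Psi_{kc+1}\supseteq\Psi_{(k+1)c}$ for every $k\geq 1$, so $\Psi_c\supsetneq\Psi_{2c}\supsetneq\Psi_{3c}\supsetneq\cdots$ is a strictly descending chain of nonempty closed $w$-invariant absorbing subsets of $K(w)$, and $U^{\ma}_{\Psi_c}\supsetneq U^{\ma}_{\Psi_{2c}}\supsetneq U^{\ma}_{\Psi_{3c}}\supsetneq\cdots$ is the desired infinite strictly descending chain (note $U^{\ma}_{\Psi_{kc}}\supseteq U_{k\alpha}\neq\{1\}$).

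The step I expect to be the main obstacle --- or at least the one demanding the most care --- is the interface with \cite[\S5]{Wi14}: one must pin down precisely that ``finite depth'' there forces the length of descending chains of closed $\gamma_{\overline{w}}$-invariant normal subgroups to be bounded (equivalently, that the $\gamma_{\overline{w}}$-simple factors of its decomposition series behave like composition factors), and one must confirm that the reduction to the standard case in Proposition~\ref{prop: The nub only for straight and standard elements} really supplies an isomorphism of dynamical systems, not merely an equality of nubs. The remaining ingredients --- the existence of a full-support positive imaginary root for an indefinite component, and the behaviour of the groups $U^{\ma}_\Psi$ under the operations used above --- are standard and essentially contained in the earlier sections.
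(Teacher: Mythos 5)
There is a genuine gap in your second step. From $\nub(\overline{w})\neq\{1\}$, Corollary~\ref{Corintro: trivial nub} gives that $J$ is not a union of affine components \emph{of $I$}; you then infer that $J$ has a component of indefinite type, but that implication is false. A component $J'$ of $J$ can itself be of affine type (so $J'\subseteq\Jaff$, and $J$ could even equal $\Jaff$) while being \emph{properly} contained in an indefinite component $I'$ of $I$ — e.g.\ $I=\{1,2,3\}$ indecomposable of indefinite type with $a_{12}a_{21}=4$ and $J=\supp(w)=\{1,2\}$. In that situation the nub is nontrivial, yet $J$ has no indefinite component and there is no positive imaginary root supported on a component of $J$ lying in $K(w)$: the imaginary roots supported in $J$ are the multiples of the null root of the affine subsystem and are excluded by $\Delta^+(\Jperp\cup\Jaff\cup\Jsph^w)$. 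The witnesses for non-finite depth in this case are imaginary roots whose support is the whole indefinite component $I'$ of $I$ containing $J'$, which is exactly what the paper uses: it picks $\beta\in\Delta^{\imp}(I')$ with $\supp(\beta)=I'$ of minimal height in $W\beta$ (via \cite[Theorem~5.6]{Kac90}) and checks $U_\beta\leq\nub(\widetilde{w})$ via Theorem~\ref{Thmintro: Main result}. Your construction can be repaired by making this replacement (or, more cheaply, by quoting the proof of Corollary~\ref{Corintro: trivial nub}, which shows directly that $K(w)\neq\varnothing$ forces $K(w)\cap\Delta^{\imp}\neq\varnothing$); the rest of your height-growth argument for the multiples $k\alpha$ then goes through unchanged.

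Separately, your interface with \cite[\S 5]{Wi14} — deriving a bound on descending chains of closed $\gamma_{\overline{w}}$-invariant normal subgroups from the finite decomposition series — is an unnecessary detour, and it is precisely the step you could not pin down. The paper instead negates the finite depth condition directly from its definition: finite depth asserts the existence of an open subgroup $V\leq\nub(\widetilde{w})$ with $\bigcap_{z\in\ZZ}\widetilde{w}^zV\widetilde{w}^{-z}=\{1\}$, and since the subgroups $V_n=\nub(\widetilde{w})\cap U^{\ma}_n$ form a basis of open subgroups, it suffices to exhibit, for each $n$, a nontrivial subgroup inside $\bigcap_{z\in\ZZ}\widetilde{w}^zV_n\widetilde{w}^{-z}$ — namely $U_\beta$ for the imaginary root above rescaled so that $\height(\beta)\geq n$, using $\height(w^z\beta)\geq\height(\beta)$ for all $z$. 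This avoids both the Jordan--H\"older-type argument and the verification that the sets $\Psi_m$ are closed and absorbing and that $\Psi\mapsto U^{\ma}_\Psi$ is strictly monotone. I would recommend restructuring along these lines.
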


\subsection*{Overview}

In Section~\ref{Section: Preliminaries} we fix notation and recall known results about tdlc groups, Coxeter systems, buildings and RGD systems. In Section~\ref{section:completionsRGD} we introduce the notions of completions of RGD systems and of unipotent subgroups to treat different completions of groups of Kac--Moody type in a uniform way. In Section~\ref{Section: Reduction steps} we reduce the computation of the nub of an arbitrary element to that of a straight and standard element $w\in W$. The goal of Section~\ref{Section: The nub in completions} is on the one hand to show that compact open unipotent subgroups are minimizing for $w$ and on the other hand to provide general upper and lower bounds for the nub in general completions of RGD systems. In Section~\ref{Section: The nub in Kac--Moody groups} we refine these upper and lower bounds for the scheme-theoretic completion of Kac--Moody groups and show that they coincide, thus proving Theorem~\ref{Thmintro: Main result}. We then deduce the Corollaries~\ref{Corintro: trivial nub}, \ref{Corintro: Same nub}, \ref{corintro:nuboverline}, \ref{corintro:congbar} and \ref{Corintro: infinite depth}.

\subsection*{Acknowledgement}

The first author thanks Colin Reid and George Willis for helpful discussions about the topic. We would also like to thank Pierre-Emmanuel Caprace for his useful comments on an earlier version of the paper.

\section{Preliminaries}\label{Section: Preliminaries}
For basics on totally disconnected locally compact groups, we refer to the reader to e.g.\ \cite{Wi01}, \cite{Wi14}. A standard reference on Coxeter groups, buildings and RGD systems is \cite{AB08}.

\subsection{Totally disconnected locally compact (tdlc) groups}

Let $G$ be a tdlc group. It is a classical result by Van~Dantzig that the set of all compact open subgroups of $G$ forms a basis of neighborhoods of the identity. The \emph{scale function} $s$ of $G$ is defined as follows:
\[ s: G \to \NN, \, g \mapsto \min \{ [ gUg\inv : gUg\inv \cap U ] \mid U\leq G \text{ compact and open} \}. \]
Let $g \in G$ and let $U \leq G$ be a compact open subgroup. Then $U$ is said to be \emph{minimizing for $g$} if $s(g) = [ gUg\inv : gUg\inv \cap U ]$. We say that $U$ is \emph{tidy for $g$} if the following two conditions are satisfied:
\begin{enumerate}[label=(T\Alph*)]
	\item $U = U_{g +} U_{g -}$, where $U_{g+} := \bigcap_{n\geq 0} g^n U g^{-n}$ and $U_{g-} := \bigcap_{n\geq 0} g^{-n} U g^n$;
	
	\item $\widehat{U}_{g +} := \bigcup_{n\geq 0} g^n U_{g +} g^{-n}$ and $\widehat{U}_{g -} := \bigcup_{n\geq 0} g^{-n} U_{g -} g^n$ are closed in $G$.
\end{enumerate}

A compact open subgroup is said to be \emph{tidy above for $g$} it is satisfies (TA), and \emph{tidy below for $g$} if it satisfies (TB). The next theorem shows the connection between minimizing and tidy subgroups.

\begin{theorem}[{\cite[Theorem~3.1]{Wi01}}]\label{Theorem: Theorem 3.1 in Wi01}
	Let $g\in G$ and let $U \leq G$ be a compact open subgroup of $G$. Then $U$ is minimizing for $g$ if and only if $U$ is tidy for $g$.
\end{theorem}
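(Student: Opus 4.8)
The plan is to pivot both implications on a single numerical invariant attached to subgroups satisfying (TA). For a compact open $U$ one always has the elementary identities $gU_{g+}g\inv = \bigcap_{n\geq 1}g^nUg^{-n} \supseteq U_{g+}$ and $gU_{g-}g\inv = gUg\inv \cap U_{g-} \subseteq U_{g-}$, so the \emph{forward index} $\Delta(U) := [gU_{g+}g\inv : U_{g+}]$ is a well-defined positive integer. The first step is to prove that, whenever $U$ satisfies (TA), the displacement index appearing in the definition of $s(g)$ is computed by $\Delta$ alone:
$$[gUg\inv : gUg\inv \cap U] = [gU_{g+}g\inv : U_{g+}] = \Delta(U).$$
This is a coset count: writing $gUg\inv = gU_{g+}g\inv \cdot gU_{g-}g\inv$, using $gU_{g-}g\inv \subseteq U_{g-}\subseteq U$ and the identity $gU_{g+}g\inv \cap U = U_{g+}$, one checks that $gUg\inv \cap U = U_{g+}\cdot gU_{g-}g\inv$, so that the surplus cosets are exactly those of $U_{g+}$ in $gU_{g+}g\inv$.

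The second step is the tidying-above procedure, which yields the implication (minimizing $\Rightarrow$ (TA)). For arbitrary compact open $U$, consider the decreasing chain $U_n := \bigcap_{k=0}^n g^kUg^{-k}$. The key sub-lemma is a monotonicity estimate $[gU_{n+1}g\inv : gU_{n+1}g\inv \cap U_{n+1}] \leq [gU_ng\inv : gU_ng\inv \cap U_n]$, proved by comparing cosets of $U_n$, $gU_ng\inv$ and their intersection via the isomorphism theorems. Since these are positive integers, the chain of indices stabilizes, and the stabilized subgroup satisfies (TA) with displacement index no larger than that of $U$. Consequently, if $U$ is minimizing then equality holds throughout, $U$ itself satisfies (TA), and $\Delta(U) = s(g)$.

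For the implication (tidy $\Rightarrow$ minimizing) I would take $U$ tidy and an arbitrary compact open $V$; after tidying $V$ above (which only lowers its index) I may assume $V$ satisfies (TA), so the task reduces to the lower bound $\Delta(V) \geq \Delta(U)$. This is exactly where condition (TB) for $U$ enters. The point is that (TB) — closedness of $\widehat{U}_{g+} = \bigcup_{n\geq 0}g^nU_{g+}g^{-n}$ and of $\widehat{U}_{g-}$ — is equivalent to $U$ remaining tidy above for \emph{every} power $g^m$ (equivalently $U_{g^m,+}=U_{g+}$ for all $m\geq 1$), so that no collapse occurs in the towers when passing to powers. Using this stability under powers one obtains, by comparing the expanding part of $V$ with the closed subgroup $\widehat{U}_{g+}$ on which conjugation by $g$ acts with constant expansion factor $\Delta(U)$, the desired inequality $\Delta(V)\geq\Delta(U)$; hence $U$ attains the minimum and is minimizing.

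The heart of the matter, and the main obstacle, is the converse (minimizing $\Rightarrow$ (TB)). Assume $U$ is minimizing; by the earlier steps it satisfies (TA) with $\Delta(U)=s(g)$. Suppose (TB) fails, say $\widehat{U}_{g+}$ is not closed. The defect is carried by the subgroup $\mathcal{L}:=\widehat{U}_{g+}\cap\widehat{U}_{g-}$, whose closure $\overline{\mathcal{L}}$ is then strictly larger and is a compact $g$-invariant subgroup; the plan is to use $\overline{\mathcal{L}}$ to modify $U$ into a compact open subgroup $U'$ still satisfying (TA) but with $\Delta(U')<\Delta(U)$, contradicting minimality. The delicate and genuinely technical points — which constitute the real content of Willis's original argument — are (a) showing, by a compactness analysis of the increasing unions $\widehat{U}_{g\pm}$, that the failure of closedness forces a strict collapse in the positive tower for some power $g^m$, and (b) converting this collapse into an explicit improving subgroup $U'$ verified to be a compact open \emph{group} with strictly smaller forward index. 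Once (TB) is forced in this way, combining the four steps yields the equivalence of minimizing and tidy.
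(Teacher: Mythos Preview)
The paper does not prove this theorem at all: it is quoted as \cite[Theorem~3.1]{Wi01} and used as a black box throughout \S\ref{Section: Preliminaries} and beyond. There is therefore nothing to compare your proposal against in this paper.

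Your sketch is a reasonable outline of Willis's original argument in \cite{Wi94,Wi01}, and the overall architecture (reduce to (TA) via a tidying-above step, identify the displacement index with $\Delta(U)$, then handle (TB) via the subgroup $\mathcal{L}=\widehat{U}_{g+}\cap\widehat{U}_{g-}$) is correct in spirit. A couple of points deserve care if you want to turn this into an actual proof. First, the tidying-above step is not quite ``pass to $U_n=\bigcap_{k=0}^n g^kUg^{-k}$ and wait for stabilisation'': one shows instead that some $U_N$ in this chain already satisfies (TA), via a pigeonhole argument on the finite quotients $U/U_n$, and the monotonicity of indices you state requires a separate argument. Second, your equivalence ``(TB) $\Leftrightarrow$ $U$ is tidy above for every power $g^m$'' is not the way (TB) is typically used; the inequality $\Delta(V)\geq\Delta(U)$ is obtained more directly by a commensurability argument between $V_{g+}$ and $U_{g+}$ inside the closed group $\widehat{U}_{g+}$. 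Finally, as you yourself flag, the implication ``minimizing $\Rightarrow$ (TB)'' is where the real work lies, and your description of it remains a plan rather than a proof.
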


The following results are useful in this paper.

\begin{lemma}\label{Lemma: auxiliary results for tidy subgroups}
	Let $g, h \in G$ and let $U \leq G$ be a compact open subgroup of $G$. 
	\begin{enumerate}
		\item $U$ is tidy for $g$ if and only if $[U : U \cap g^{-n} U g^n] = [U : U \cap g^{-1} U g]^n$ holds for all $n \in \NN$.
		
		\item $U$ is tidy for $g$ if and only if $U$ is tidy for $g\inv$.
		
		\item $U$ is tidy for $g$ if and only if $h U h\inv$ is tidy for $hgh\inv$. In particular, if $U$ is tidy for $g$, then $g^z U g^{-z}$ is tidy for $g$ for all $z\in \ZZ$.
	\end{enumerate}
\end{lemma}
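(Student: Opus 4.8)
The plan is to deduce all three parts from the definition of tidiness (conditions (TA) and (TB)) together with Theorem~\ref{Theorem: Theorem 3.1 in Wi01}. Parts~(2) and~(3) are purely formal --- they record symmetries of (TA) and (TB) under the substitution $g\mapsto g\inv$, respectively under conjugation --- whereas part~(1) is the only one requiring an actual index computation, and is therefore the (modest) crux.

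For part~(3), I would first note the conjugation-equivariance of the relevant constructions: since $(hgh\inv)^n(hUh\inv)(hgh\inv)^{-n}=h(g^nUg^{-n})h\inv$ for every $n\in\ZZ$, and since $x\mapsto hxh\inv$ is a homeomorphism (hence commutes with intersections, unions and closures), one gets $(hUh\inv)_{(hgh\inv)\pm}=hU_{g\pm}h\inv$ and $\widehat{(hUh\inv)}_{(hgh\inv)\pm}=h\widehat{U}_{g\pm}h\inv$. Thus $hUh\inv$ satisfies (TA), resp.\ (TB), for $hgh\inv$ if and only if $U$ satisfies (TA), resp.\ (TB), for $g$; the ``in particular'' statement is the case $h=g^z$, using $g^zgg^{-z}=g$. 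For part~(2), I would observe that $U_{g\inv+}=\bigcap_{n\geq 0}g^{-n}Ug^n=U_{g-}$ and, symmetrically, $U_{g\inv-}=U_{g+}$, and likewise $\widehat{U}_{g\inv\pm}=\widehat{U}_{g\mp}$; since for subgroups $H,K\leq U$ one has $HK=U$ if and only if $KH=U$ (apply $x\mapsto x\inv$), condition (TA) for $g$, namely $U=U_{g+}U_{g-}$, is equivalent to $U=U_{g-}U_{g+}=U_{g\inv+}U_{g\inv-}$, i.e.\ (TA) for $g\inv$; and (TB) is visibly symmetric in $g$ and $g\inv$. (One cannot simply quote ``minimizing'' here, since a priori $s(g)\neq s(g\inv)$, so the symmetry must be read off at the level of the tidy conditions themselves.)

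For part~(1), put $b_n:=[U:U\cap g^{-n}Ug^n]$ for $n\geq 0$. Conjugating by $g^n$ gives $b_n=[g^nUg^{-n}:g^nUg^{-n}\cap U]\geq s(g^n)=s(g)^n$ (using that the scale is multiplicative on powers), while $b_1=[gUg\inv:gUg\inv\cap U]\geq s(g)$ and the sequence $(b_n)$ is submultiplicative, $b_{m+n}\leq b_mb_n$. If $U$ is tidy for $g$, then by Theorem~\ref{Theorem: Theorem 3.1 in Wi01} it is minimizing, so $b_1=s(g)$, and the two inequalities $s(g)^n\leq b_n\leq b_1^n=s(g)^n$ force $b_n=b_1^n$ for all $n$. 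Conversely, if $b_n=b_1^n$ for all $n$, then $b_1=\lim_{n}b_n^{1/n}=s(g)$ (the limit of $b_n^{1/n}$ equals $s(g)$ for every compact open $U$), so $U$ is minimizing, hence tidy by Theorem~\ref{Theorem: Theorem 3.1 in Wi01}. The two non-formal inputs here --- submultiplicativity of $(b_n)$ and the limit formula $b_n^{1/n}\to s(g)$ --- are standard in Willis's scale theory, so in the write-up I would cite \cite{Wi94,Wi01} for them rather than reprove them; this is the only point at which the argument is not bookkeeping, and hence the main obstacle.
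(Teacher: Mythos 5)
Your proposal is correct, but it is worth comparing with what the paper actually does, since the paper's ``proof'' is essentially three citations: part (1) is quoted verbatim as \cite[Corollary~3.5]{Mo02}, part (2) is declared immediate from the definition, and part (3) is said to follow from Theorem~\ref{Theorem: Theorem 3.1 in Wi01}. Your treatments of (2) and (3) are the expected ones: for (2) the identities $U_{g\inv\pm}=U_{g\mp}$ and $\widehat{U}_{g\inv\pm}=\widehat{U}_{g\mp}$ together with $HK=U\Leftrightarrow KH=U$ do the job, and for (3) you verify conjugation-equivariance of (TA) and (TB) directly rather than routing through the minimizing--tidy equivalence as the paper suggests; both are trivially correct. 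The real content is your argument for (1), where instead of citing M\"oller you reconstruct his proof: sandwiching $b_n:=[U:U\cap g^{-n}Ug^n]$ between $s(g^n)=s(g)^n$ and $b_1^n$ (via submultiplicativity) for the forward direction, and using the spectral-radius formula $\lim_n b_n^{1/n}=s(g)$ for the converse. This is sound --- in particular $b_{m+n}\le b_mb_n$ does hold, and $s(g^n)=s(g)^n$ is established independently in \cite{Wi94} so there is no circularity --- and it has the virtue of making transparent exactly which two external inputs are needed. One small correction: the limit formula $b_n^{1/n}\to s(g)$ for an \emph{arbitrary} compact open $U$ is due to M\"oller and appears in \cite{Mo02} (the same reference the paper cites for part (1)), not in \cite{Wi94} or \cite{Wi01}; you should adjust that citation in the write-up.
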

\begin{proof}
	Part $(1)$ is \cite[Corollary~3.5]{Mo02}; Part $(2)$ is an immediate consequence of the definition of tidiness; Part $(3)$ can be checked using Theorem~\ref{Theorem: Theorem 3.1 in Wi01}.
\end{proof}

Let $g\in G$. The \emph{contraction group of $g$} is defined as follows:
\[ \con_G(g) := \{ h\in G \mid \lim_{n \to \infty} g^n h g^{-n} = 1_G \}. \]
It is well-known that for $g, h \in G$ and $n\in \NN^*$ we have $\con(g) = \con(g^n)$ and $h\con(g)h\inv = \con( hgh\inv )$. It follows from \cite[Remark~3.1(d)]{Wi14} that the intersection of all subgroups being tidy for $g$ coincides with the intersection of the closures of the contraction groups of $g$ and $g\inv$. This common subgroup is called the \emph{nub of $g$} and is denoted by
\[ \nub_G(g) := \bigcap \{ U \leq G \mid U \text{ is tidy for } g \} = \overline{\con_G(g)} \cap \overline{\con_G(g\inv)}. \]
We simply write $\con(g)$ and $\nub(g)$, if the group $G$ is clear from the context. For $g\in G$ we call a subset $X \subseteq G$ \emph{$g$-stable} if $gXg\inv = X$.

\begin{lemma}[{\cite[Corollary~$4.3$]{Wi14}}]\label{Lemma: Facts about nub}
	Let $g\in G$. Then $\nub(g)$ does not have any proper, relatively open, $g$-stable subgroups.
\end{lemma}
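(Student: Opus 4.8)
The plan is to deduce the statement from the (deeper) fact, referenced in the introduction as \cite[Theorem~4.1]{Wi14}, that $(\nub(g),\gamma_g)$ is a dynamical system of algebraic origin --- in particular, that conjugation by $g$ acts \emph{ergodically} on the compact group $\nub(g)$ with respect to its normalized Haar measure. Granting this, the lemma reduces to a one-line measure-theoretic argument.

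First I would record the routine preliminaries. Set $N := \nub(g)$. Since $N$ is contained in any compact open subgroup tidy for $g$, it is a closed subset of a compact set, hence compact. As $\con(g)$ and $\con(g\inv)$ are $g$-stable, so are their closures, and hence so is $N = \overline{\con(g)}\cap\overline{\con(g\inv)}$; thus conjugation by $g$ restricts to a topological automorphism $\alpha := \gamma_g|_N$ of the compact group $N$. By uniqueness of normalized Haar measure on a compact group, $\alpha$ preserves the Haar probability measure $\mu$ of $N$, so $(N,\mu,\alpha)$ is a measure-preserving system, which is ergodic by the cited theorem.

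Now I would argue by contradiction: suppose $H \leq N$ is proper, relatively open in $N$, and $g$-stable. Being an open subgroup of the compact group $N$, $H$ is also closed in $N$ (its complement is the union of its other cosets, each of which is open), so $H$ is a proper nonempty clopen subset of $N$. Hence $\mu(H) > 0$ and $\mu(N\setminus H) > 0$, that is $0 < \mu(H) < 1$. On the other hand $g$-stability means exactly $\alpha(H) = gHg\inv = H$, so $H$ is an $\alpha$-invariant Borel set of intermediate measure --- contradicting ergodicity of $(N,\mu,\alpha)$. Therefore no such $H$ exists.

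The only genuine obstacle is the ergodicity input itself; everything else is bookkeeping. Unwinding that input, the content is that $N$ admits no nontrivial finite continuous $\gamma_g$-equivariant quotient: given $H$ as above one could first replace it by its core $H_0 = \bigcap_{n\in N} nHn\inv$ --- a finite intersection of conjugates of the open subgroup $H$ (finitely many, since $[N:H]<\infty$ by compactness), hence again open, normal in $N$, of finite index, contained in $H$ (so proper), and still $g$-stable because $gHg\inv = H$ and $gNg\inv = N$. A nontrivial finite quotient $N/H_0$ with the induced automorphism $\bar\alpha$ would then be a finite factor of an ergodic system on which $\bar\alpha$ nonetheless fixes the identity coset, which is absurd. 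Establishing that $(N,\gamma_g)$ has no such finite equivariant quotient is precisely where the tidy-subgroup machinery of \cite[\S4]{Wi14} does its work, and is the step one would have to supply directly were the cited theorem unavailable.
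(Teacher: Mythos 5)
Your proof is correct. The paper gives no argument of its own for this lemma --- it is quoted directly as \cite[Corollary~4.3]{Wi14} --- and your derivation from the ergodicity of $\gamma_g$ on the compact group $\nub(g)$ (a proper open subgroup is clopen, hence an invariant set of Haar measure strictly between $0$ and $1$) is precisely how that corollary is obtained from Theorem~4.1 in Willis's paper, so you have in effect reproduced the intended proof.
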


\begin{remark}\label{Remark: nub in quotients}
	Let $G$ be a tdlc group and $N$ be a compact normal subgroup of $G$. Let $\phi: G \to G/N$ be the canonical projection and let $g\in G$. Then it follows from \cite[Proposition~4.11]{Re16} that $\phi(\nub_G(g)) = \nub_{G/N}(\phi(g))$.
\end{remark}

\subsection{Coxeter systems}\label{subsection:CoxSyst}
Let $(W, S)$ be a Coxeter system, which we will always assume to be of finite rank, that is, with $S$ finite. We shall denote by $\ell=\ell_S$ the corresponding word metric with respect to $S$.

It is well-known that for each $J \subseteq S$ the pair $(W_J := \langle J \rangle, J)$ is a Coxeter system (cf.\ \cite[Ch.\ IV, §$1$ Theorem $2$]{Bo68}). For $s, t \in S$ we denote the order of $st$ in $W$ by $m_{st}$. The \textit{Coxeter diagram} corresponding to $(W, S)$ is the labeled graph $(S, E(S))$, where $E(S) = \{ \{s, t \} \mid m_{st}>2 \}$ and where each edge $\{s,t\}$ is labeled by $m_{st}$ for all $s, t \in S$. The Coxeter system $(W, S)$ is called \emph{irreducible} if the underlying Coxeter diagram is connected. We say that $J \subseteq S$ is \emph{irreducible} if $(W_J, J)$ is irreducible; $J$ is called a \emph{component} of $S$ if it is the vertex set of a connected component of the Coxeter diagram corresponding to $(W, S)$.

A subset $J \subseteq S$ is called \textit{spherical} if $W_J$ is finite. The Coxeter system is called \textit{spherical} if $S$ is spherical. If $(W, S)$ is irreducible, then $(W, S)$ (resp.\ $S$) is said to be of \emph{affine} type if $W$ is infinite and virtually abelian; equivalently, its underlying Coxeter diagram is of type $\tilde{A}_n$, $\tilde{B}_n$, $\tilde{C}_n$, $\tilde{D}_n$, $\tilde{E}_6$, $\tilde{E}_7$, $\tilde{E}_8$, $\tilde{F}_4$ or $\tilde{G_2}$ (see \cite[Ch.\ VI, $\S 4$ Theorem $4$]{Bo68}. The irreducible Coxeter system $(W, S)$ (resp.\ $S$) is said to be of \emph{indefinite} type if it is neither spherical nor affine. For $J \subseteq S$ we let $\JaffCox \subseteq J$ denote the union of all the components of $J$ of affine type. We call $J \subseteq S$ \emph{essential} if $J$ is nonempty and has no spherical component. We further set $J^{\perp} := \{ s\in S \mid m_{sj} = 2 \text{ for all } j \in J \}$.

\begin{lemma}{{\cite[Lemma~$3.3$]{CMR22}}}\label{lemma:prelim_essential}
	Let $J \subseteq S$ be essential. Then $N_W(W_J) = W_J \times W_{J^\perp}$ and, if $w\in W$ is such that $w\inv J w \subseteq S$, then $w\in W_{\Jperp}$.
\end{lemma}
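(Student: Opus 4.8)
The plan is to deduce the normaliser statement from the second assertion, and to reduce the second assertion --- via a length‑minimising coset representative and essentiality --- to a rigidity property of essential parabolic subgroups. Assume for a moment the second assertion. Since each $s\in\Jperp$ commutes with every $j\in J$, the group $W_{\Jperp}$ centralises $W_J$; as $W_J\cap W_{\Jperp}=W_{J\cap\Jperp}=\{1\}$, the product $W_JW_{\Jperp}=W_J\times W_{\Jperp}$ lies in $N_W(W_J)$, and trivially $w\inv Jw=J$ for $w\in W_{\Jperp}$. Conversely, let $g\in N_W(W_J)$ and write $g=hn$ with $h\in W_J$ and $n$ the element of minimal length in the coset $W_Jg$; then $n=h\inv g\in N_W(W_J)$. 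Working in the geometric representation of $(W,S)$ (simple roots $\alpha_s$, $s\in S$): minimality of $n$ gives $n\inv\alpha_j\in\Delta^+$ for every $j\in J$, so $n\inv$ carries each positive root of $W_J$ --- a non‑negative combination of the $\alpha_j$ --- to a positive root of $W$; since $n\inv$ also permutes the roots of $W_J$, it must send each $\alpha_j$ to a simple root, whence $n\inv Jn\subseteq S$. The second assertion then gives $n\in W_{\Jperp}$, and therefore $g=hn\in W_J\times W_{\Jperp}$; this proves the first assertion.

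For the second assertion, suppose $w\inv Jw\subseteq S$ and put $K:=w\inv Jw$, so $w\inv W_Jw=W_K$. Factoring $w=hn$ as above (with $n$ of minimal length in $W_Jw$) we get $n\inv W_Jn=W_K$, and as before $n\inv$ sends each $\alpha_j$ to a simple root $\alpha_{\tau(j)}$ of $W_K$, so that conjugation by $n\inv$ induces a Coxeter‑diagram isomorphism $\tau\co J\to K$. Consequently $n\inv$ restricts to a bijection from the roots of $W_J$ onto those of $W_K$ carrying $\pm$ simple roots to $\pm$ simple roots, and hence the root $w\inv\alpha_j=n\inv(h\inv\alpha_j)$ is $\pm$ a simple root of $W_K$ exactly when $h\inv\alpha_j$ is $\pm$ a simple root of $W_J$; since $w\inv s_jw\in w\inv Jw\subseteq S$, this holds for all $j$, so $h\in W_J$ preserves the set of $\pm$ simple roots of $W_J$. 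Now essentiality enters: each component $W_{J'}$ of $W_J$ is irreducible and infinite, hence centreless and without a nontrivial diagram automorphism induced by conjugation (an infinite Coxeter group has no longest element, which rules out the ``$-$'' case of the relevant sign analysis); so the only element of $W_{J'}$ preserving its $\pm$ simple roots is the identity, and as $h$ respects the component decomposition, $h=1$ and $w=n$. Everything thus reduces to the following claim: \emph{if $n\in W$ satisfies $n\inv\alpha_j\in\Delta^+$ for all $j\in J$ and $n\inv Jn\subseteq S$, with $J$ essential, then $n\in W_{\Jperp}$.}

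I expect this reduced claim to be the main obstacle; it is where essentiality does the real work. Part of it is still elementary: writing $L:=\supp(n)$, one checks that $n$ fixes $\alpha_j$ for every $j\in J\setminus L$ (because $n\inv\alpha_j$ lies in $\alpha_j+\sum_{l\in L}\mathbb{R}\alpha_l$ and is a single simple root, forcing $n\inv\alpha_j=\alpha_j$), and that $n$ fixing an $\alpha_j$ makes $n$ commute with $s_j$ (it preserves the $B$‑orthogonal hyperplane of $\alpha_j$). What remains --- ruling out any nontrivial interaction of $n$ with the components of $J$ that meet $L$, and then concluding $L\subseteq\Jperp$ --- amounts to the statement that the pointwise stabiliser in $W$ of $\sum_{j\in J}\mathbb{R}\alpha_j$ is exactly $W_{\Jperp}$, i.e.\ that an essential standard parabolic subgroup is ``rigid'' in $W$ with normaliser $W_J\times W_{\Jperp}$. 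This I would prove geometrically, using the action of $W$ on its Davis complex: an essential $W_J$ preserves a canonical convex subcomplex, and one computes its stabiliser to be $W_J\times W_{\Jperp}$; alternatively, the claim is precisely the content being recalled and can be quoted from \cite{CMR22}. All the remaining steps are routine manipulations with roots and word lengths, so the difficulty is entirely concentrated in this rigidity of essential parabolic subgroups.
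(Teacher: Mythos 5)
First, a point of reference: the paper does not prove this lemma at all --- it is imported verbatim as \cite[Lemma~3.3]{CMR22} --- so there is no in-paper argument to compare yours against; your proposal has to stand on its own as a proof.

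Your reductions are sound as far as they go: deducing $N_W(W_J)=W_J\times W_{\Jperp}$ from the conjugation statement via the minimal-length representative $n$ of $W_Jg$ is correct, and so is the step showing $h=1$ (an element of an irreducible \emph{infinite} standard parabolic permuting $\{\pm\alpha_j\}$ must be trivial, since otherwise one checks that the simple roots sent to negatives are orthogonal to those sent to positives, forcing all of them negative and hence $W_{J'}$ finite). The genuine gap is that after these reductions you are left with exactly the substance of the lemma --- ``if $n\inv\alpha_j\in\Delta^+$ for all $j\in J$ and $n\inv Jn\subseteq S$ with $J$ essential, then $n\in W_{\Jperp}$'' --- and you do not prove it. The two exits you offer both fail: quoting it from \cite{CMR22} is circular, since \cite[Lemma~3.3]{CMR22} \emph{is} the statement under proof, and the Davis-complex ``rigidity'' sketch is an assertion, not an argument. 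Moreover, the intermediate reformulation is itself unjustified: you establish only that $n$ fixes $\alpha_j$ for $j\in J\setminus\supp(n)$, so the claim has not been reduced to computing the pointwise stabiliser of $\sum_{j\in J}\mathbb{R}\alpha_j$; and even for those $j$, ``$n$ commutes with $s_j$'' is strictly weaker than $n\in W_{\{j\}^{\perp}}$ (centralisers of reflections are in general larger than $\langle s_j\rangle\times W_{\{j\}^{\perp}}$, e.g.\ the central longest element in type $B_2$). To close the gap you would need an actual proof of the rigidity of essential parabolics --- for instance via Deodhar--Howlett's description of $N_W(W_J)$ together with an analysis showing that for essential $J$ the complement $\{n\mid n(\{\alpha_j\}_{j\in J})=\{\alpha_{k}\}_{k\in K}\subseteq\Pi,\ n\inv\alpha_j>0\}$ is contained in $W_{\Jperp}$ --- which is precisely what \cite{CMR22} supplies and what your write-up leaves out.
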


The subgroups of $W$ of the form $W_J$ for some $J\subseteq S$ are called \emph{standard parabolic subgroups}, and their conjugates \emph{parabolic subgroups}. A parabolic subgroup is called \emph{spherical} if it is finite. The intersection $\Pc(w)$ of all parabolic subgroups containing an element $w \in W$ is again a parabolic subgroup, called the \emph{parabolic closure} of $w$. A nontrivial element $w\in W$ is called \emph{straight} if $\ell(w^n)=n\ell(w)$ for all $n\in\NN$. An element $w\in W$ is called \emph{cyclically reduced} if $\ell(w) = \min\{ \ell(vwv\inv) \mid v\in W \}$. We note that straight elements are cyclically reduced by \cite[Lemma~$4.1$]{Ma14b}. For $w\in W$, a decomposition $w = s_1 \cdots s_n$ with $s_1, \ldots, s_n \in S$ and $\ell(w) = n$ is called \emph{reduced}. The \emph{support} $\supp(w)$ of $w\in W$ is the subset $J$ of elements in $S$ appearing in some (equivalent, any) reduced decomposition of $w$.

\begin{lemma}\label{Lemma: straight element J essental}
	Let $w\in W$ be straight and let $J = \supp(w)$. Then $\Pc(w) = W_J$ and $J$ is essential. Moreover, if $J'$ is a component of $J$ and $w=w_1w_2$ with $w_1\in W_{J'}$ and $w_2\in W_{J\setminus J'}$, then $w_1$ is straight and $\supp(w_1) = J'$.
\end{lemma}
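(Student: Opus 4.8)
The plan is to prove the three assertions in order: first $\Pc(w)=W_J$, then that $J$ is essential, and finally the statement about components.

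For the first claim, note that $w \in W_J$ by definition of the support, so $\Pc(w) \subseteq W_J$. Since straight elements are cyclically reduced (\cite[Lemma~4.1]{Ma14b}), and for cyclically reduced elements the parabolic closure is known to be the standard parabolic $W_{\supp(w)}$ — this is a standard fact (see e.g.\ \cite{Ma14b} or \cite{CMR22}) that one can also derive directly: if $P$ is a parabolic subgroup containing $w$, then $P$ contains $\Pc(w)$, and minimality together with the fact that $w$ cyclically reduced forces every generator in a reduced expression of $w$ to lie in (a conjugate realizing) $P$; invoking the characterization of $\Pc$ for cyclically reduced elements gives $\Pc(w) = W_J$. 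So I would simply cite the appropriate lemma here rather than reprove it.

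For the second claim, suppose for contradiction that $J$ has a spherical component $J'$. Write $w = w_1 w_2$ with $w_1 \in W_{J'}$ and $w_2 \in W_{J \setminus J'}$ (these commute since $J'$ is a connected component, hence $m_{st}=2$ for $s\in J'$, $t\in J\setminus J'$), and lengths add: $\ell(w) = \ell(w_1) + \ell(w_2)$. Then $\ell(w^n) = \ell(w_1^n) + \ell(w_2^n)$ for all $n$. Since $W_{J'}$ is finite, $w_1$ has finite order $m$, so $\ell(w_1^{nm}) = 0$ while $\ell(w^{nm}) = nm\,\ell(w)$; comparing gives $\ell(w_2^{nm}) = nm\,\ell(w)$, but also $\ell(w_2^{nm}) \le nm\,\ell(w_2) < nm\,\ell(w)$ (strict because $\ell(w_1) \geq 1$, as $w_1 \neq 1$ — indeed $J' \subseteq \supp(w)$ forces $w_1$ nontrivial). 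This contradiction shows $J$ has no spherical component, i.e.\ $J$ is essential (nonemptiness of $J$ is clear since $w$ is nontrivial).

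For the third claim, let $J'$ be any component of $J$ and write $w = w_1 w_2$ with $w_1 \in W_{J'}$, $w_2 \in W_{J \setminus J'}$; as above these commute and $\ell(w^n) = \ell(w_1^n) + \ell(w_2^n)$. Straightness of $w$ gives $n\ell(w_1) + n\ell(w_2) = n\ell(w) = \ell(w^n) = \ell(w_1^n) + \ell(w_2^n) \le n\ell(w_1) + n\ell(w_2)$, and equality throughout forces $\ell(w_1^n) = n\ell(w_1)$ for all $n$, so $w_1$ is straight (it is nontrivial since $J' \subseteq \supp(w)$ ensures at least one generator of $J'$ appears in a reduced word for $w = w_1 w_2$, and by the length-additive decomposition that generator must appear in $w_1$). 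Finally $\supp(w_1) \subseteq J'$ is immediate, and $\supp(w_1) = J'$: any $s \in J'$ appearing in a reduced decomposition of $w$ appears in the $w_1$-part of the length-additive factorization, so $J' \subseteq \supp(w) \cap J' = \supp(w_1)$.

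The main obstacle is pinning down the first assertion $\Pc(w) = W_J$ cleanly: the cleanest route is to quote the known characterization of the parabolic closure of a cyclically reduced (in particular straight) element, so the real content of the lemma is the elementary length bookkeeping in the second and third parts, which is routine once the commuting factorization along components is set up.
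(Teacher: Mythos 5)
Your proof is correct and follows essentially the same route as the paper: the commuting component factorization with additivity of lengths gives the straightness of $w_1$ and $\supp(w_1)=J'$ (and hence the essentiality of $J$, since a straight element has infinite order and cannot lie in a finite $W_{J'}$), while $\Pc(w)=W_{\supp(w)}$ is quoted from the literature. The paper's precise reference for that last fact is \cite[Proposition~$4.2$(ii)]{CF10}; you should cite that rather than the vaguer pointers you give, and drop your parenthetical sketch of a direct derivation, which is not a proof.
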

\begin{proof}
	The second statement is clear. It also implies that $J$ is essential. The fact that $\Pc(w) = W_J$ follows from \cite[Proposition~$4.2$(ii)]{CF10}.
\end{proof}

\subsection{Standard elements}\label{Subsection: Standard elements}

Suppose $(W, S)$ is of irreducible indefinite type, and let $w\in W$ with $\Pc(w)=W$. By \cite[Theorem~C$(1)$]{Ma23}, there exists a largest spherical parabolic subgroup $P_w^{\max}$ which is normalized by $w$, and we have $P_w^{\max}=P_{w^n}^{\max}$ for all $n\in\NN^*$. Following \cite{Ma23}, the element $w\in W$ is called \emph{standard} if it is cyclically reduced and $P_w^{\max}$ is standard.

Back to a general Coxeter system $(W, S)$, let $w\in W$ be such that $\Pc(w)=W_J$ for some essential $J\subseteq S$. Let $J_1, \ldots, J_k$ be the components of $J$ of indefinite type and let $J_0=\JaffCox$, so that $J=J_0\cup J_1\cup\cdots \cup J_k$. Let $w_i \in W_{J_i}$ such that $w = w_0 \cdots w_k$. We call $w$ \emph{standard} if $w_i$ is standard for each $i= 1,\ldots,k$. In this case we let $I_i \subseteq J_i$ with $P_{w_i}^{\max} = W_{I_i}$ for all $i= 1,\ldots,k$ and define $\Jsph^w := I_1 \cup \cdots \cup I_k$.

\begin{lemma}\label{Lemma: infinite order and standard elements}
	Let $w\in W$ be straight and let $J = \supp(w)$. Then there exists $v\in W_J$ such that $v\inv w v$ is straight and standard.
\end{lemma}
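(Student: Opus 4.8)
The goal is to show that any straight $w\in W$ with $J=\supp(w)$ can be conjugated, by an element of $W_J$, to a straight and standard element. The plan is to work component by component on $J$ and invoke the known results on standard elements recalled in \S\ref{Subsection: Standard elements}. By Lemma~\ref{Lemma: straight element J essental}, $J$ is essential and $\Pc(w)=W_J$, so we may write $J=J_0\cup J_1\cup\cdots\cup J_k$ where $J_0=\JaffCox$ and $J_1,\ldots,J_k$ are the components of indefinite type. Again by the second part of Lemma~\ref{Lemma: straight element J essental}, the corresponding factorization $w=w_0w_1\cdots w_k$ (with $w_i\in W_{J_i}$, the factors commuting since they live in distinct components) has each $w_i$ straight with $\supp(w_i)=J_i$; in particular $w_0$ is straight in the affine Coxeter group $W_{J_0}$, and each $w_i$ ($i\geq 1$) is straight with $\Pc(w_i)=W_{J_i}$ of irreducible indefinite type.

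First I would handle the indefinite components. Fix $i\in\{1,\ldots,k\}$. Since $w_i$ is straight it is cyclically reduced (by \cite[Lemma~4.1]{Ma14b}). We need to further conjugate $w_i$ inside $W_{J_i}$ so that the largest spherical parabolic $P_{w_i}^{\max}$ it normalizes (which exists by \cite[Theorem~C(1)]{Ma23}) becomes standard, while retaining the property of being straight. The point is that conjugating $w_i$ by $v_i\in W_{J_i}$ replaces $P_{w_i}^{\max}$ by $v_iP_{w_i}^{\max}v_i\inv$; since $P_{w_i}^{\max}$ is a spherical parabolic subgroup of $W_{J_i}$, some $W_{J_i}$-conjugate of it is standard, and one must arrange that the conjugating element can be chosen so that the conjugate of $w_i$ is still cyclically reduced — i.e.\ straight. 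This is exactly the content of the existence of standard representatives in \cite{Ma23}: there one shows that within the (finite, by cyclic reducedness) conjugacy-length class of $w_i$ one can find an element whose maximal normalized spherical parabolic is standard. I would either cite the relevant statement from \cite{Ma23} directly, or, if it is only stated for elements with $\Pc=W$, reduce to that case by passing to the Coxeter system $(W_{J_i},J_i)$, which is of irreducible indefinite type and in which $\Pc(w_i)=W_{J_i}$. This yields $v_i\in W_{J_i}$ with $v_i\inv w_iv_i$ straight and standard.

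Then I would assemble the global conjugator. Set $v:=v_1\cdots v_k\in W_J$ (the $v_i$ commute, being supported on disjoint components, and $v_0$ is taken to be $1$ since affine components impose no standardness condition). Then $v\inv wv=w_0\,(v_1\inv w_1v_1)\cdots(v_k\inv w_kv_k)$, and I must check two things: that $v\inv wv$ is straight, and that it is standard with the factorization just exhibited being the canonical one. For standardness, by definition $v\inv wv$ is standard iff each indefinite-type factor $v_i\inv w_iv_i$ is standard, which holds by construction; the affine part $w_0$ plays no role. For straightness, the cleanest argument is that $\ell_S$ restricted to $W_J$ equals $\ell_J$, and $\ell_J$ on a product over distinct components is the sum of the component word-lengths, so $\ell(v^{-n}w^nv^n)=\sum_i\ell_{J_i}((v_i\inv w_iv_i)^n)=\sum_i n\,\ell_{J_i}(v_i\inv w_iv_i)=n\,\ell(v\inv wv)$ using that each $v_i\inv w_iv_i$ is straight in $W_{J_i}$ and $w_0$ is straight in $W_{J_0}$; hence $v\inv wv$ is straight. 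Note also $\supp(v\inv wv)=J$ is unchanged since conjugation by $v\in W_J$ keeps us inside $W_J$ and $\Pc(v\inv wv)=v\inv W_Jv=W_J$.

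The main obstacle is the indefinite-component step: one needs that a straight (equivalently cyclically reduced) element of an irreducible indefinite Coxeter group admits a cyclically-reduced conjugate whose maximal normalized spherical parabolic subgroup is standard. This is not formal — it relies on the structure theory of \cite{Ma23} (the largest spherical parabolic normalized by $w$, its behaviour under powers, and the existence of standard representatives within the cyclic-shift / conjugacy-length class). Everything else — the component decomposition of straight elements, the additivity of word-length over components, and the bookkeeping with $\Jsph^w$ — is routine once that input is in hand. I would make sure to phrase the citation so that it is clear whether \cite{Ma23} already provides a \emph{straight} standard representative or only a cyclically reduced one; since for straight $w$ all conjugates in its conjugacy-length class are automatically straight (being cyclically reduced of the same minimal length), the two notions coincide here and no extra work is needed.
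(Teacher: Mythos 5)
Your proposal is correct and follows essentially the same route as the paper: reduce to the irreducible indefinite components (the paper compresses your explicit component decomposition into a ``WLOG''), invoke \cite[Theorem~C(2)]{Ma23} to obtain a standard conjugate by an element of $W_{J_i}$, and conclude straightness from the fact that a cyclically reduced conjugate of a straight element is again straight. The only point to tighten is that last fact: your parenthetical justification (``being cyclically reduced of the same minimal length'') is not quite a proof --- this is precisely \cite[Lemma~4.2]{Ma14b}, which the paper cites explicitly, and note also that your earlier aside ``straight (equivalently cyclically reduced)'' is false in general and should be restricted, as you do later, to conjugates of a straight element.
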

\begin{proof}
	WLOG, we may assume that $J$ is of irreducible indefinite type. By \cite[Theorem~C$(2)$]{Ma23}, there exists $v\in W_J$ such that $v\inv w v$ is standard. In particular, it is cyclically reduced and, by \cite[Lemma~$4.2$]{Ma14b} it is straight, too.
\end{proof}

\subsection{Buildings}

A \textit{building of type $(W, S)$} is a pair $\mc{B} = (\mc{C}, \delta)$ where $\mc{C}$ is a nonempty set and where $\delta: \mc{C} \times \mc{C} \to W$ is a \textit{distance function} satisfying the following axioms, where $x, y\in \mc{C}$ and $w = \delta(x, y)$:
\begin{enumerate}[label=(Bu\arabic*)]
	\item $w = 1_W$ if and only if $x=y$;
	
	\item if $z\in \mc{C}$ satisfies $s := \delta(y, z) \in S$ then $\delta(x, z) \in \{w, ws\}$, and if, furthermore, $\ell(ws) = \ell(w) +1$ then $\delta(x, z) = ws$;
	
	\item if $s\in S$, there exists $z\in \mc{C}$ such that $\delta(y, z) = s$ and $\delta(x, z) = ws$.
\end{enumerate}
The elements of $\mc{C}$ are called \textit{chambers}. Two chambers $x, y \in \mc{C}$ are called \textit{adjacent} if $\delta(x, y) = s$ for some $s\in S$ (in which case $x$ and $y$ are said to be \textit{$s$-adjacent}). A \textit{gallery} from $x$ to $y$ is a sequence of chambers $(x = x_0, \ldots, x_k = y)$ such that for each $1 \leq l \leq k$, the chambers $x_{l-1}$ and $x_l$ are $s_l$-adjacent for some $s_l\in S$. The number $k$ is then called the \textit{length} of the gallery and the tuple $(s_1,\ldots,s_k)$ its \emph{type}. A gallery from $x$ to $y$ of length $k$ is called \textit{minimal} if there is no gallery from $x$ to $y$ of length $<k$; in this case we write $\dist(x, y) = k$. Assigning to each gallery its type yields a 1--1 correspondence between minimal galleries from $x$ to $y$ and reduced decompositions of $w=\delta(x,y)$  (cf.\ \cite[Exercise~5.20]{AB08}); in particular, $\dist(x, y) = \ell(\delta(x, y))$ for all $x, y \in \mc{C}$.

\subsection{Projections and residues}\label{Subsection: Projections and residues}

Let $\mc{B} = (\mc{C}, \delta)$ be a building of type $(W, S)$. Given a subset $J \subseteq S$ and $x\in \mc{C}$, the \textit{$J$-residue of $x$} is the set $R_J(x) := \{y \in \mc{C} \mid \delta(x, y) \in W_J \}$. Each $J$-residue is a building of type $(W_J, J)$ with the distance function induced by $\delta$ (cf.\ \cite[Corollary~$5.30$]{AB08}). A \textit{residue} is a subset $R$ of $\mc{C}$ such that there exist $J \subseteq S$ and $x\in \mc{C}$ with $R = R_J(x)$. The subset $J$ is uniquely determined by $R$ and is called the \textit{type} of $R$; the cardinality of $J$ is the \textit{rank} of $R$. A residue is called \textit{spherical} if its type is a spherical subset of $S$. A \textit{panel} is a residue of rank $1$. An \textit{$s$-panel} is a panel of type $\{s\}$ for $s\in S$. For $c\in \mc{C}$ and $s\in S$ we put $\mc{P}_s(c) := R_{\{s\}}(c)$. The building $\mc{B}$ is called \textit{thick}, if each panel of $\mc{B}$ contains at least three chambers.

Given $x\in \mc{C}$ and a residue $R \subseteq \mc{C}$, there exists a unique chamber $z\in R$ such that $\dist(x, y) = \dist(x, z) + \dist(z, y)$ for each $y\in R$ (cf.\ \cite[Proposition~$5.34$]{AB08}). The chamber $z$ is called the \textit{projection of $x$ onto $R$} and is denoted by $\proj_R x$. Moreover, if $z = \proj_R x$ we have $\delta(x, y) = \delta(x, z) \delta(z, y)$ for each $y\in R$.

For two residues $R$ and $T$ of $\mc{B}$ we define $\proj_T R := \{ \proj_T r \mid r \in R \}$. The residues $R$ and $T$ are called \emph{parallel} if $\proj_T R = T$ and $\proj_R T = R$. Let $R_J$ and $R_K$ be two residues of $\mc{B}$ of type $J$ and $K$, respectively. Then $\proj_{R_K} R_J$ is again a residue (contained in $R_K$) and is of type $K \cap w\inv J w$ for some $w\in W$. If, additionally, $R_J$ and $R_K$ are parallel, then $K = w\inv J w$ for some $w\in W$ (cf.\ \cite[Proposition~$5.37$]{AB08}).

A subset $\Sigma \subseteq \mc{C}$ is called \textit{convex} if for any two chambers $c, d \in \Sigma$ and any minimal gallery $(c_0 = c, \ldots, c_k = d)$ we have $c_i \in \Sigma$ for all $i=0,\ldots,k$. We note that residues are convex (cf.\ \cite[Example~$5.44(b)$]{AB08}). We denote the convex hull of a set $X \subseteq \mc{C}$ by $\conv(X)$. A subset $\Sigma \subseteq \mc{C}$ is called \textit{thin} if $P \cap \Sigma$ contains exactly two chambers for each panel $P \subseteq \mc{C}$ which meets $\Sigma$. An \textit{apartment} is a nonempty subset $\Sigma \subseteq \mc{C}$, which is convex and thin.

\subsection{Automorphisms}

Let $\mc{B} = (\mc{C}, \delta)$ and $\mc{B}' = (\mc{C}', \delta')$ be two buildings of type $(W, S)$. An \emph{isometry} between $\mc{X} \subseteq \mc{C}$ and $\mc{X}' \subseteq \mc{C}'$ is a bijection $\phi: \mc{X} \to \mc{X}'$ such that $\delta'(\phi(x), \phi(y)) = \delta(x, y)$ for all $x, y \in \mc{X}$. In this case $\mc{X}$ and $\mc{X}'$ are called \emph{isometric}. An isometry $\phi: \mc{C} \to \mc{C}$ is called a \emph{type-preserving automorphism} of $\mc{B}$. We denote their set by $\Aut_0(\mc{B})$.

Let $G$ be a group acting on $\mc{B} = (\mc{C}, \delta)$ by type-preserving automorphisms. We say that this action is \emph{Weyl-transitive} if for each $w\in W$, the group $G$ acts transitively on the set of ordered pairs $(c, d) \in \mc{C} \times \mc{C}$ with $\delta(c, d) = w$.

\begin{example}\label{Example: Coxeter building}
	We define $\delta: W \times W \to W, (x, y) \mapsto x^{-1}y$. Then $\Sigma(W, S) := (W, \delta)$ is a building of type $(W, S)$, which we call the \emph{Coxeter building} of type $(W,S)$ (or \emph{Coxeter complex} of $(W, S)$). The group $W$ acts faithfully on $\Sigma(W, S)$ by multiplication from the left, i.e.\ $W \leq \Aut_0(\Sigma(W, S))$. Moreover, the stabilizers of residues under this action are precisely the parabolic subgroups of $W$. The chamber distance $\dist$ coincides with the usual distance in the Cayley graph of $(W, S)$.
\end{example}

\begin{lemma}\label{Lemma: projection and isometry commute}
	Let $\mc{B} = (\mc{C}, \delta)$ be a building of type $(W, S)$ and let $\phi \in \Aut_0(\mc{B})$. Then $\phi(\proj_R x) = \proj_{\phi(R)} \phi(x)$ for all $x\in \mc{C}$ and each residue $R \subseteq \mc{C}$. In particular, if $\phi$ fixes $x, y \in \mc{C}$, then it fixes each chamber on a minimal gallery from $x$ to $y$.
\end{lemma}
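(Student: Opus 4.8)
The plan is to deduce both statements from the characterization of the projection recalled above (following \cite[Proposition~5.34]{AB08}): for $x\in\mc{C}$ and a residue $R\subseteq\mc{C}$, the chamber $z=\proj_R x$ is the \emph{unique} element of $R$ satisfying $\dist(x,y)=\dist(x,z)+\dist(z,y)$ for all $y\in R$. First I would record two elementary facts about $\phi$. Since $\phi$ is an isometry, $\delta(\phi(x),\phi(y))=\delta(x,y)$ for all $x,y\in\mc{C}$, whence $\dist(\phi(x),\phi(y))=\ell(\delta(\phi(x),\phi(y)))=\ell(\delta(x,y))=\dist(x,y)$: thus $\phi$ preserves chamber distances. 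Moreover $\phi$ maps $J$-residues to $J$-residues, namely $\phi(R_J(c))=R_J(\phi(c))$ for every $c\in\mc{C}$ and $J\subseteq S$, because $\phi$ is a bijection and $\delta(c,y)\in W_J\iff\delta(\phi(c),\phi(y))\in W_J$; in particular $\phi(R)$ is again a residue, of the same type as $R$.

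For the first assertion, put $z=\proj_R x$. Applying $\phi$ to the defining equality $\dist(x,y)=\dist(x,z)+\dist(z,y)$ (valid for every $y\in R$) and using that $\phi$ preserves distances yields $\dist(\phi(x),\phi(y))=\dist(\phi(x),\phi(z))+\dist(\phi(z),\phi(y))$. Since $\phi$ restricts to a bijection $R\to\phi(R)$, this says precisely that $\phi(z)\in\phi(R)$ satisfies $\dist(\phi(x),y')=\dist(\phi(x),\phi(z))+\dist(\phi(z),y')$ for all $y'\in\phi(R)$. The uniqueness clause in the characterization of the projection then forces $\phi(z)=\proj_{\phi(R)}\phi(x)$, as claimed.

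For the ``in particular'' part, suppose $\phi$ fixes $x$ and $y$ and argue by induction on $k=\dist(x,y)=\ell(\delta(x,y))$, the case $k=0$ being trivial. Fix a minimal gallery $(x=c_0,\dots,c_k=y)$, of type $(s_1,\dots,s_k)$, and let $P=\mc{P}_{s_k}(c_{k-1})=\mc{P}_{s_k}(y)$ be the $s_k$-panel through $c_{k-1}$ and $y$. Minimality gives $\dist(x,c_{k-1})=k-1<k=\dist(x,y)$; since $\proj_P x$ is the unique chamber of $P$ at minimal distance from $x$ (any two distinct chambers of the rank-one residue $P$ being at chamber-distance $1$, so all chambers of $P$ other than $\proj_P x$ lie at distance $\dist(x,\proj_P x)+1$), this forces $c_{k-1}=\proj_P x$. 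As $\phi$ fixes $y$ and preserves types, $\phi(P)=\mc{P}_{s_k}(\phi(y))=\mc{P}_{s_k}(y)=P$, so by the first assertion $\phi(c_{k-1})=\phi(\proj_P x)=\proj_{\phi(P)}\phi(x)=\proj_P x=c_{k-1}$. Thus $\phi$ fixes $c_{k-1}$, and since $(c_0,\dots,c_{k-1})$ is a minimal gallery from $x$ to $c_{k-1}$ with both endpoints fixed and of length $k-1<k$, the induction hypothesis shows $\phi$ fixes $c_0,\dots,c_{k-1}$; together with $\phi(c_k)=\phi(y)=c_k$ we conclude. There is no genuine obstacle in this argument: the only point needing a moment's care is the elementary observation that on a panel the projection is the unique nearest chamber, which is immediate from its characterization (one could alternatively replace $P$ by the residue $R_J(y)$ with $J=\supp(\delta(x,y))$, but the panel version keeps the induction cleanest).
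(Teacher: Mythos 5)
Your proof is correct and follows exactly the route the paper sketches: the first assertion from the uniqueness clause in the gate characterization of $\proj_R x$ (after noting that $\phi$ preserves chamber distance and maps residues to residues of the same type), and the second by induction on $\dist(x,y)$ via the projection onto the last panel of a minimal gallery. The paper's proof is just a two-line version of this same argument, so there is nothing to add.
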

\begin{proof}
	The first part is a consequence of the uniqueness of projections onto residues. The second part follows from the first by induction on $\dist(x, y)$.
\end{proof}

\begin{lemma}\label{Lemma: Fixator of orthogonal residues}
	Let $\mc{B} = (\mc{C}, \delta)$ be a building of type $(W, S)$ and let $c\in \mc{C}$. For $J \subseteq S$ and $J' \subseteq \Jperp$, we have $\Fix_{\Aut_0(\mc{B})}(R_J(c)) \cap \Fix_{\Aut_0(\mc{B})}(R_{J'}(c)) \subseteq \Fix_{\Aut_0(\mc{B})}(R_{J \cup J'}(c))$.
\end{lemma}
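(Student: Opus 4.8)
The plan is to reduce the statement to the second part of Lemma~\ref{Lemma: projection and isometry commute}, which says that a type-preserving automorphism fixing two chambers fixes every chamber on a minimal gallery between them. So let $\phi$ lie in the intersection of the two fixators, and let $d \in R_{J \cup J'}(c)$; I want to show $\phi(d) = d$. Write $w = \delta(c,d) \in W_{J \cup J'}$. Since $J' \subseteq \Jperp$, every element of $W_{J'}$ commutes with every element of $W_J$, so $W_{J \cup J'} = W_J \times W_{J'}$ and we may factor $w = w_J w_{J'}$ uniquely with $w_J \in W_J$ and $w_{J'} \in W_{J'}$; moreover $\ell(w) = \ell(w_J) + \ell(w_{J'})$ because concatenating a reduced word for $w_J$ with one for $w_{J'}$ is reduced (the letters in $J$ and those in $J'$ commute past each other, and no cancellation is possible across the blocks as $J \cap J' = \varnothing$).

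Next I would produce a convenient chamber $e$ with $\delta(c,e) = w_J$ and $\delta(e,d) = w_{J'}$: take $e = \proj_{R_J(c)} d$. By the properties of projections recalled in \S\ref{Subsection: Projections and residues}, $\delta(c,d) = \delta(c,e)\delta(e,d)$ with $e \in R_J(c)$, so $\delta(c,e) \in W_J$ and, comparing with the factorization $w = w_J w_{J'}$ and the additivity of lengths, we get $\delta(c,e) = w_J$ and $\delta(e,d) = w_{J'} \in W_{J'}$; in particular $e \in R_J(c)$ and $d \in R_{J'}(e)$. Now $\phi$ fixes $c$ and fixes $R_J(c)$ pointwise, hence fixes $e$. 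I then want $\phi$ to fix $d$. For this, note that $\phi$ also fixes $c$ and fixes $R_{J'}(c)$ pointwise; I would like to deduce that $\phi$ fixes $R_{J'}(e)$ pointwise as well. This follows because $R_{J'}(e)$ and $R_{J'}(c)$ are parallel residues: indeed $\delta(c,e) = w_J \in W_J$ and $W_J$ centralizes $W_{J'}$, so conjugation by $w_J$ fixes $W_{J'}$, which (via \cite[Proposition~$5.37$]{AB08}) is exactly the condition making the two $J'$-residues parallel; equivalently, one checks directly that $\proj_{R_{J'}(e)}$ restricts to a bijection $R_{J'}(c) \to R_{J'}(e)$ commuting with $\delta$. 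Since $\phi$ fixes $R_{J'}(c)$ pointwise and commutes with projections (first part of Lemma~\ref{Lemma: projection and isometry commute}), it fixes $\proj_{R_{J'}(e)} R_{J'}(c) = R_{J'}(e)$ pointwise; in particular $\phi(d) = d$, as desired.

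The main obstacle is the parallelism step: making precise that fixing $R_{J'}(c)$ forces fixing $R_{J'}(e)$, using only that $\delta(c,e)$ lies in $W_J$ and $J \perp J'$. An alternative, perhaps cleaner, route that avoids invoking parallelism is purely gallery-theoretic: since $\ell(w_J w_{J'}) = \ell(w_J) + \ell(w_{J'})$, there is a minimal gallery from $c$ to $d$ passing through $e$; its first segment (from $c$ to $e$) lies in $R_J(c)$ and its second segment (from $e$ to $d$) has type supported in $J'$. The chamber $\phi(d)$ then satisfies $\delta(c,\phi(d)) = w$ and $\delta(e,\phi(d)) = \delta(\phi(e),\phi(d)) = \delta(e,d) = w_{J'}$ since $\phi$ fixes $c$ and $e$; so $\phi(d) \in R_{J'}(e)$ and $\phi(d) \in R_{J \cup J'}(c)$. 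One then argues that $d$ is the unique chamber of $R_{J'}(e)$ at $W_J w_{J'}$-distance... — actually the slickest finish is: $\phi$ fixes $c$ and fixes every chamber of $R_{J'}(c)$, these together with $e$ generate (under the convex-hull / minimal-gallery operation) all of $R_{J \cup J'}(c)$, so Lemma~\ref{Lemma: projection and isometry commute} applies chamber by chamber. I would write up whichever of these two finishes is shortest; both hinge on the same elementary commuting-blocks computation $W_{J\cup J'} = W_J \times W_{J'}$ with additive lengths.
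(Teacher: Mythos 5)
Your first route is correct and complete, and it shares its setup with the paper's proof but finishes differently. Both arguments begin by projecting $d$ onto $R_J(c)$ (your $e$ is the paper's $d_J$) and exploiting $W_{J\cup J'}=W_J\times W_{J'}$ with additive lengths. The paper then also forms $d_{J'}:=\proj_{R_{J'}(c)}d$ and observes that the concatenation of minimal galleries $d_J\to d\to d_{J'}$ has reduced type (since $\delta(d_J,d)\in W_{J'}$ and $\delta(d,d_{J'})\in W_J$), hence is minimal; as $\phi$ fixes both endpoints, Lemma~\ref{Lemma: projection and isometry commute} fixes $d$ at once. You instead prove that $R_{J'}(c)$ and $R_{J'}(e)$ are parallel and transport the pointwise fixing along the projection. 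That does work: the minimal element of $\delta(R_{J'}(c),R_{J'}(e))=w_JW_{J'}$ is $w_J$, which centralizes $J'$ elementwise, so $\proj_{R_{J'}(e)}R_{J'}(c)$ is a $J'$-residue inside $R_{J'}(e)$ and hence equals it --- note that the version of \cite[Proposition~$5.37$]{AB08} you need is the one with $w$ the minimal element of the double coset, since for an arbitrary representative $w_Ju$ ($u\in W_{J'}$) one has $u\inv J'u\neq J'$ in general; your proposed direct check of the bijection is the safer formulation. The paper's finish is more elementary (only the gate property and Lemma~\ref{Lemma: projection and isometry commute}), while yours isolates a reusable fact about orthogonal translates of residues being parallel. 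Your second, gallery-theoretic route is essentially the paper's idea, but as written it is not complete: the assertion that $c$, $e$ and $R_{J'}(c)$ ``generate'' $R_{J\cup J'}(c)$ under convex hulls is left unjustified, and the missing ingredient is precisely the second projection $d_{J'}$, which places $d$ on a minimal gallery between two chambers already known to be fixed.
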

\begin{proof}
	Let $d\in R_{J \cup J'}(c)$. Set $d_J:=\proj_{R_J(c)}d$ and $d_{J'}:=\proj_{R_{J'}(c)}d$. Since $\delta(d_J,d)\in W_{J'}$ and $\delta(d,d_{J'})\in W_J$, the concatenation of a minimal gallery from $d_J$ to $d$ with a minimal gallery from $d$ to $d_{J'}$ has reduced type (as $W_{J\cup J'}=W_J\times W_{J'}$) and is thus minimal. The claim then follows from Lemma~\ref{Lemma: projection and isometry commute}.
\end{proof}

\subsection{Roots}

A \textit{reflection} is an element of $W$ that is conjugate to an element of $S$. For $s\in S$ we let $\alpha_s := \{ w\in W \mid \ell(sw) > \ell(w) \}$ be the \textit{simple root} corresponding to $s$. A \textit{root} is a subset $\alpha \subseteq W$ such that $\alpha = v\alpha_s$ for some $v\in W$ and $s\in S$. We denote the set of all roots by $\Phi := \Phi(W, S)$. The set $\Phi_+ = \{ \alpha \in \Phi \mid 1_W \in \alpha \}$ is the set of all \textit{positive roots} and $\Phi_- = \{ \alpha \in \Phi \mid 1_W \notin \alpha \}$ is the set of all \textit{negative roots}. For each root $\alpha \in \Phi$ we set $-\alpha := W \setminus \alpha\in\Phi$ and we denote the unique reflection interchanging $\alpha$ and $-\alpha$ by $r_{\alpha} \in W \leq \Aut_0(\Sigma(W, S))$: if $\alpha = v\alpha_s$ for some $v\in W$ and $s\in S$, then $r_{\alpha}=vsv\inv$. For a root $\alpha \in \Phi$ we define the \emph{support of $\alpha$} as $\supp(\alpha) := \supp(r_{\alpha})$. For $\alpha \in \Phi$ we denote by $\partial \alpha$ the set of all panels of $\Sigma(W, S)$ stabilized by $r_{\alpha}$ and call it the \emph{wall} associated with $\alpha$.

\begin{lemma}\label{Lemma: support of a root irreducible}
	Let $\alpha \in \Phi$ be a root and let $J \subseteq S$. Then the following hold:
	\begin{enumerate}
		\item $\supp(\alpha) \subseteq J$ if and only if $\alpha = w \alpha_s$ for some $w \in W_J$ and $s\in J$.
		
		\item $\supp(\alpha)$ is irreducible.
	\end{enumerate}
\end{lemma}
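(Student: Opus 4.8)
The plan is to read everything off from the geometry of walls in the Coxeter complex $\Sigma(W,S)$, establishing (1) first and then deducing (2) from it. The ``if'' direction of (1) is immediate: if $\alpha=w\alpha_s$ with $w\in W_J$ and $s\in J$, then $r_\alpha=wsw\inv\in W_J$ by the formula recalled above, and since every element of $W_J$ admits a reduced decomposition with letters in $J$, we get $\supp(\alpha)=\supp(r_\alpha)\subseteq J$.

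For the ``only if'' direction I would first reduce to the case $1_W\in\alpha$: one has $\supp(-\alpha)=\supp(r_{-\alpha})=\supp(r_\alpha)=\supp(\alpha)$, and if $-\alpha=w\alpha_s$ with $w\in W_J$, $s\in J$, then $\alpha=W\setminus w\alpha_s=w(-\alpha_s)=(ws)\alpha_s$ with $ws\in W_J$. So assume $1_W\in\alpha$ and $\supp(r_\alpha)\subseteq J$, and fix a reduced decomposition $r_\alpha=s_1\cdots s_m$, so that $s_1,\dots,s_m\in\supp(r_\alpha)\subseteq J$ and $(c_0,\dots,c_m)$ with $c_i:=s_1\cdots s_i$ is a minimal gallery from $c_0=1_W$ to $c_m=r_\alpha$ in $\Sigma(W,S)$. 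Since $r_\alpha$ interchanges $\alpha$ and $-\alpha$ and $1_W\in\alpha$, the chamber $c_m=r_\alpha\cdot 1_W$ lies in $-\alpha$; hence the wall $\partial\alpha$ separates $c_0$ from $c_m$, and, the gallery being minimal, it crosses $\partial\alpha$ exactly once, say between $c_{i-1}$ and $c_i=c_{i-1}s_i$. The reflection stabilizing the panel $\{c_{i-1},c_i\}$ is $c_{i-1}s_ic_{i-1}\inv$, whence $r_\alpha=c_{i-1}s_ic_{i-1}\inv$; setting $w:=c_{i-1}=s_1\cdots s_{i-1}\in W_J$ and $s:=s_i\in J$, it follows that $\alpha$ is one of the two roots $w\alpha_s,\ (ws)\alpha_s$ of the wall $\partial\alpha$. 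Since $s_1\cdots s_i$ is reduced, $\ell(sw\inv)=i>i-1=\ell(w\inv)$, i.e.\ $w\inv\in\alpha_s$, i.e.\ $1_W\in w\alpha_s$, which forces $\alpha=w\alpha_s$ and proves (1). (Alternatively, the same conclusion follows more quickly by invoking the standard fact that the reflections of $W$ lying in $W_J$ are precisely the reflections of the Coxeter system $(W_J,J)$, and then reading off the two roots of the wall.)

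For (2), set $K:=\supp(\alpha)$ and apply (1) with $J=K$ to write $r_\alpha=wsw\inv$ with $w\in W_K$ and $s\in K$. If $K$ were reducible, say $K=K_1\sqcup K_2$ with $W_K=W_{K_1}\times W_{K_2}$, $K_1,K_2\neq\varnothing$, and (WLOG) $s\in K_1$, then writing $w=w_1w_2$ with $w_i\in W_{K_i}$ and using that $w_2$ commutes with both $w_1$ and $s$ yields $r_\alpha=w_1(w_2sw_2\inv)w_1\inv=w_1sw_1\inv\in W_{K_1}$, contradicting the minimality of $K=\supp(r_\alpha)$. Hence $K$ is irreducible.

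The only genuine step is the ``only if'' part of (1) --- extracting from a reduced word of $r_\alpha$ a conjugator that lies in $W_J$ --- and it rests on two standard facts about Coxeter complexes: a minimal gallery crosses each wall at most once, and the wall crossed at a given step is the one fixed by the corresponding partial conjugate of the step's simple reflection. Once (1) is available, part (2) is a two-line computation.
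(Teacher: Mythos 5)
Your proof is correct and follows essentially the same geometric route as the paper: both arguments locate a boundary panel of the wall $\partial\alpha$ inside the $J$-residue of $1_W$ and read off $\alpha=w\alpha_s$ from it, and your part (2) is the paper's argument verbatim (in contrapositive form). The only cosmetic difference lies in how that panel is produced --- you walk along the gallery of a reduced word for $r_\alpha$, whereas the paper notes that $W_J$ must meet both $\alpha$ and $-\alpha$ and hence contains such a panel; your final length computation pinning down which of the two roots of the wall equals $\alpha$ is not needed for the statement, since both roots of that wall have the form $w'\alpha_s$ with $w'\in W_J$.
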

\begin{proof}
	(1) Write $\alpha = w\alpha_s$ for some $w\in W_J$ and $s\in J$, so that  $r_{\alpha} = w s w\inv \in W_J$. Then $\supp(\alpha) = \supp(r_{\alpha}) \subseteq J$. Conversely, assume that $\supp(\alpha) \subseteq J$. 
	
	By definition, $r_{\alpha} \in W_J$. Let $\beta \in \{\alpha, -\alpha\}$ with $W_J \cap \beta \neq \emptyset$. Note that $W_J \not\subseteq \beta$, for otherwise we would have $r_{\beta}=r_{\alpha}\in \beta$. But $r_{\beta} = r_{\beta} \cdot 1_W \in r_{\beta} \beta = -\beta$ and hence $r_{\beta} \in \beta \cap (-\beta) = \emptyset$, a contradiction. Thus there is a panel $P \in \partial \beta$ with $P\subseteq W_J$. In particular, there exist $w \in W_J$ and $s\in J$ with $\alpha = w\alpha_s$.
	
	(2) Let $J = \supp(\alpha)$. By $(1)$ there exist $w\in W_J$ and $s\in J$ with $\alpha = w\alpha_s$. Let $I \subseteq J$ be the component of $J$ with $s\in I$. Let $w' \in W_I$ and $w'' \in W_{J \setminus I}$ with $w = w' w''$. Then $r_{\alpha} = wsw^{-1} = w' s (w')^{-1} \in W_I$. This implies that $J = \supp(\alpha) \subseteq I$ and hence that $J$ is irreducible.
\end{proof}

For a subset $Z \subseteq W$ we denote by $\conv(Z)$ its convex hull (with respect to the chamber distance $\dist$); equivalently, $\conv(Z)$ is the intersection of all roots containing $Z$ (see \cite[Proposition~3.94]{AB08}).

\subsection{RGD systems}\label{subsection:RGDsystems}

An \textit{RGD system of type $(W, S)$} is a pair $\mathcal{D} = \left( G, \left( U_{\alpha} \right)_{\alpha \in \Phi}\right)$ consisting of a group $G$ together with a family of subgroups $U_{\alpha}$ (called \textit{root groups}) indexed by the set of roots $\Phi = \Phi(W, S)$, satisfying some axioms (for the precise axioms we refer the reader to \cite[Sections~7.8 and 8.6]{AB08}). We will only need certain properties of RGD systems which we now discuss.

Let $\mathcal{D} = (G, (U_{\alpha})_{\alpha \in \Phi})$ be an RGD system of type $(W, S)$. We set $T := \bigcap_{\alpha \in \Phi} N_G(U_{\alpha})$, $U_{\pm} := \langle U_{\alpha} \mid \alpha \in \Phi_{\pm} \rangle$ and $B := T U_+ = T \ltimes U_+$. There exists a subgroup $N$ of $G$ normalizing $T$ such that $N/T \cong W$ and such that $(B,N)$ is a \emph{BN-pair} for $G$ (see \cite[Section~6.2.6]{AB08}) --- when no confusion is possible, we will often make the slight abuse of notation of viewing $W$ as a subgroup of $G$. In particular, there exists a thick building $\mc{B}=\mc{B}(\mathcal{D})$ with chamber set $G/B$ on which $G$ acts Weyl-transitively, by left multiplication (see \cite[Chapter~$8$]{AB08} for more information). From now on we will relax notation and also denote by $\mc{B}$ the set of chambers of $\mc{B}$. The trivial coset $c:=1_GB$ in $G/B$ is called the \emph{fundamental chamber} (thus $B = \Fix_G(c)$). The \emph{fundamental apartment} $\Sigma:=\{ wc \ | w\in W\}\subseteq \mc{B}$ of $\mc{B}$ can be $W$-equivariantly identified with $\Sigma(W,S)$ (with $c\in\Sigma$ corresponding to $1_W\in\Sigma(W,S)$), and we have $N = \Stab_G(\Sigma)$ and $T = \Fix_G(\Sigma)$. We will then also refer to the roots of $(W,S)$ as \emph{roots of $\Sigma$}, and keep the notation $\Phi$ to denote their set. The action of the root groups on $\mc{B}$ has the following properties (cf.\ \cite[Section~$8.6.4$]{AB08}), where $\alpha \in \Phi$:
\begin{itemize}
	\item $U_{\alpha}$ fixes the root $\alpha$ of $\Sigma$ as well as each \emph{interior panel} $P$ of $\alpha$, i.e.\ each panel $P$ with the property $\vert P \cap \alpha \vert = 2$.
	
	\item For each \emph{boundary panel} $P$ of $\alpha$ (i.e.\ a panel $P$ with the property $\vert P \cap \alpha \vert = 1$) the action of the root group on $P \setminus \{p\}$ is simply transitive, where $p$ is the unique chamber contained in $P \cap \alpha$.
\end{itemize}

\section{Completions of RGD systems}\label{section:completionsRGD}

Throughout this section, we fix an RGD system $\mc{D}=(G,(U_{\alpha})_{\alpha\in\Phi})$ of type $(W,S)$, with associated building $\mc{B}=\mc{B}(\mc{D})$, fundamental chamber $c$ and fundamental apartment $\Sigma$. We always equip the automorphism group $\Aut_0(\mc{B})$ with the permutation topology: a basis of identity neighborhoods is given by pointwise stabilizers of finite sets of chambers.
We moreover assume that the root groups $U_{\alpha}$ are finite, to ensure that ``completing'' $G$ yields a tdlc group, for which the notion of nub of an automorphism is defined. Depending on the context, there are actually several natural ``completions'' of $G$ that can be considered (see Examples~\ref{Example: Building completion} and \ref{Example: Scheme-theoretic completion} below), and we start by isolating their common characteristics so as to be able to treat them in a uniform way.

\begin{definition}
	We call a topological group $H$ a (\emph{positive}) \emph{completion} of $\mc{D}$ if $H$ is a first-countable tdlc group with a continuous action $H \to \Aut_0(\mc{B})$, and if there exists an injective group morphism $\iota\co G\to H$ such that the diagram
	\[
	\begin{tikzcd}
		G \arrow{r}{\iota} \arrow[swap]{dr}{} & H \arrow{d}{} \\
		& \Aut_0(\mc{B})
	\end{tikzcd}
	\]
	is commutative (we then identify $G$ with a subgroup of $H$). We set $H_d:=\Fix_H(d)$ for any chamber $d$ of $\mc{B}$.
\end{definition}

\begin{definition}
	Let $H$ be a completion of $\mc{D}$. We call a subgroup $V$ of $H$ a \emph{unipotent subgroup of $H$} if it satisfies the following two conditions:
	\begin{enumerate}
		\item[(US1)] $U_+\leq V\leq H_c$ and $V$ is normalized by $T$;
		\item[(US2)] $V=U_{\alpha_s}\ltimes V_{(s)}$ for all $s\in S$, where $V_{(s)}:=V\cap sVs\inv$.
	\end{enumerate}
	If, in addition, $V$ is compact and open, then we call $V$ an \emph{admissible subgroup of $H$}.
\end{definition}

The terminology ``unipotent'' is motivated by the following observation.

\begin{lemma}\label{lemma:basic_prop_unipotent}
Let $V$ be a unipotent subgroup of $H$ and $s\in S$. Then
$$V_{(s)}=\Fix_V(\{c,sc\})=\Fix_V(\mc{P}_s(c)).$$ 
In particular, if $v\in V$ fixes two chambers of $\mc{P}_s(c)$, then it fixes $\mc{P}_s(c)$ pointwise.
\end{lemma}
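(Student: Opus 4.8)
The plan is to prove the chain of equalities $V_{(s)}=\Fix_V(\{c,sc\})=\Fix_V(\mc{P}_s(c))$, starting from the definition $V_{(s)}=V\cap sVs\inv$ and working towards the geometric description. First I would establish $V_{(s)}\subseteq\Fix_V(\{c,sc\})$: an element $v\in V_{(s)}$ lies in $V\leq H_c$, so it fixes $c$; and since $v\in sVs\inv$, we have $s\inv vs\in V\leq H_c$, hence $s\inv vs$ fixes $c$, i.e. $v$ fixes $sc$. (Here I am using the identification of $W$ with a subgroup of $G\leq H$ via the chosen section, and that $s$ acts on $\mc{B}$ sending $c$ to $sc$; this is exactly the setup recalled in \S\ref{subsection:RGDsystems}.) For the reverse inclusion $\Fix_V(\{c,sc\})\subseteq V_{(s)}$, if $v\in V$ fixes both $c$ and $sc$, then $s\inv vs$ fixes $s\inv c=sc$ (wait — rather $s\inv vs$ fixes $s\inv c$, and since $s\inv c = sc$ as $s^2=1$ in $W$, this is $sc$; more carefully $s\inv v s$ fixes $s\inv\cdot c$, but what I want is that it fixes $c$): since $v$ fixes $sc$, $s\inv vs$ fixes $s\inv(sc)=c$, so $s\inv vs\in\Fix_H(c)=H_c$. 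But I also need $s\inv vs\in V$, not merely in $H_c$ — this is where (US2) enters.

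The key step, and the one I expect to be the main obstacle, is upgrading ``$s\inv vs\in H_c$'' to ``$s\inv vs\in V$'', equivalently showing $\Fix_V(\{c,sc\})\subseteq sVs\inv$. The tool is (US2): $V=U_{\alpha_s}\ltimes V_{(s)}$. Write $v=uv'$ with $u\in U_{\alpha_s}$ and $v'\in V_{(s)}$. By the first half of the argument, $v'$ fixes $\{c,sc\}$, hence $u=v(v')\inv$ fixes $\{c,sc\}$ as well. Now $u\in U_{\alpha_s}$ already fixes $c$ automatically (it fixes every chamber of the root $\alpha_s$, and $c$, corresponding to $1_W\in\alpha_s$, is such a chamber — in fact $c$ is an interior chamber of the panel $\mc{P}_s(c)$ relative to $\alpha_s$; more precisely, by \S\ref{subsection:RGDsystems}, on the boundary panel $\mc{P}_s(c)$ the action of $U_{\alpha_s}$ on $\mc{P}_s(c)\setminus\{c\}$ is \emph{simply transitive}). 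Since $u$ additionally fixes $sc\neq c$, simple transitivity of $U_{\alpha_s}$ on $\mc{P}_s(c)\setminus\{c\}$ forces $u=1$. Hence $v=v'\in V_{(s)}$, as desired. This simultaneously gives $\Fix_V(\{c,sc\})\subseteq V_{(s)}$.

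Finally, for $\Fix_V(\mc{P}_s(c))$: the inclusion $\Fix_V(\mc{P}_s(c))\subseteq\Fix_V(\{c,sc\})$ is trivial since $c,sc\in\mc{P}_s(c)$. For the reverse, by the equality just proved it suffices to show $V_{(s)}$ fixes $\mc{P}_s(c)$ pointwise; equivalently, I show $\Fix_V(\{c,sc\})\subseteq\Fix_V(\mc{P}_s(c))$ directly. Let $v\in\Fix_V(\{c,sc\})=V_{(s)}$ and let $d\in\mc{P}_s(c)$ with $d\neq c$. By the simple transitivity of $U_{\alpha_s}$ on $\mc{P}_s(c)\setminus\{c\}$, there is a unique $u\in U_{\alpha_s}\leq V$ with $u(sc)=d$. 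Then $u\inv v u$ fixes $c$ (all three elements fix $c$) and fixes $sc$: indeed $uv u\inv$ would send $sc\mapsto d\mapsto d\mapsto sc$ — let me instead argue that $u\inv vu$ fixes $sc$ because $v$ fixes $d=u(sc)$, so $u\inv v u$ fixes $u\inv(d)=sc$. Moreover $u\inv vu\in V$ (as $u,v\in V$), so by the equality already established $u\inv vu\in V_{(s)}$. But then $v$ fixes $u(sc)=d$ directly from $v=u(u\inv vu)u\inv$ and the fact that $u\inv vu$ fixes $sc$ while $u$ moves $sc$ to $d$: $v(d)=u(u\inv vu)(u\inv d)=u(u\inv vu)(sc)=u(sc)=d$. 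Since $d\in\mc{P}_s(c)\setminus\{c\}$ was arbitrary and $v$ fixes $c$, we conclude $v\in\Fix_V(\mc{P}_s(c))$. The last sentence of the statement (``if $v\in V$ fixes two chambers of $\mc{P}_s(c)$, then it fixes $\mc{P}_s(c)$ pointwise'') follows: if $v$ fixes $c$ and one other chamber of $\mc{P}_s(c)$, it lies in $\Fix_V(\mc{P}_s(c))$ by what precedes (after using simple transitivity of $U_{\alpha_s}$ to conjugate that other chamber to $sc$); and if $v$ fixes two chambers both different from $c$, pick $u\in U_{\alpha_s}$ carrying one of them to $c$ — but $U_{\alpha_s}$ fixes $c$, so it cannot move a chamber to $c$; instead note that $v$ fixing two distinct chambers $d_1,d_2$ of $\mc{P}_s(c)$ means, after choosing $u\in U_{\alpha_s}$ with $u(d_1)=c$ — again impossible — so one argues directly: the stabilizer in $V$ of two distinct chambers of a panel equals $\Fix_V(\mc{P}_s(c))$ by transitivity considerations, using that $V$ acts on $\mc{P}_s(c)$ through $U_{\alpha_s}$-translates, which I will make precise in the writeup. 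I expect the bookkeeping around which chamber plays the role of the ``fixed'' vertex to be the only delicate point; the mathematical content is entirely the interplay of (US1), (US2), and simple transitivity of $U_{\alpha_s}$ on $\mc{P}_s(c)\setminus\{c\}$.
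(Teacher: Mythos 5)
Your argument is correct and follows essentially the same route as the paper: decompose $v=uv'$ via (US2), kill the $U_{\alpha_s}$-part by simple transitivity of $U_{\alpha_s}$ on $\mc{P}_s(c)\setminus\{c\}$, and reach an arbitrary chamber $d$ of the panel by conjugating with the element of $U_{\alpha_s}$ sending $sc$ to $d$ (the paper packages all of this into the single computation $gd=u_{\alpha_s}d$ for $g=u_{\alpha_s}v$ and $d=u_ssc$). Two spots in your write-up need tightening, and both have one-line fixes. First, your justification that $u\inv vu\in V_{(s)}$ is circular as stated: you deduce that $u\inv vu$ fixes $sc$ from ``$v$ fixes $d$'', which is the goal. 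Instead, observe that (US2) reads $V=U_{\alpha_s}\ltimes V_{(s)}$, so $V_{(s)}$ is \emph{normal} in $V$ and $u\inv vu\in V_{(s)}$ for free; then $u\inv vu$ fixes $sc$ by the inclusion $V_{(s)}\subseteq\Fix_V(\{c,sc\})$ you already proved, and $v(d)=u\,(u\inv vu)(sc)=u(sc)=d$. Second, the case distinction at the end about which of the two fixed chambers is $c$ is a non-issue: by (US1) every element of $V$ fixes $c$, so an element of $V$ fixing two chambers of $\mc{P}_s(c)$ in particular fixes $c$ and some $d\neq c$, and conjugating by the $u\in U_{\alpha_s}$ with $u(sc)=d$ (again using normality of $V_{(s)}$) places it in $V_{(s)}=\Fix_V(\mc{P}_s(c))$.
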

\begin{proof}
Let $g\in V$, and write $g=u_{\alpha_s}v$ with $u_{\alpha_s}\in U_{\alpha_s}$ and $v\in V_{(s)}$. Let $d\in \mc{P}_s(c)\setminus\{c\}$, and write $d=u_ssc$ with $u_s\in U_{\alpha_s}$ (recall that $U_{\alpha_s}$ acts simply transitively on $\mc{P}_s(c)\setminus\{c\}$). Then $gd=u_{\alpha_s}d$ as $u_s$ and $s$ normalize $V_{(s)}\subseteq H_c$ by assumption. Hence $gd=d$ if and only if $u_{\alpha_s}=1$ if and only if $g\in V_{(s)}$. We conclude that $V_{(s)} = \Fix_V(\{c, d\})$ for all $d\in \mc{P}_s(c) \setminus \{c\}$. The lemma follows.
\end{proof}

Here is an additional property of unipotent subgroups of $H$.
\begin{lemma}\label{Lemma: U_+ cap G_n.c}
Let $V$ be a unipotent subgroup of $H$ and $w\in W$. Then $\Fix_V(wc) \leq w V w\inv$.
\end{lemma}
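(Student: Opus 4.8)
The plan is to exploit Lemma~\ref{lemma:basic_prop_unipotent}: fixing a chamber of a panel $\mathcal{P}_s(c)$ is equivalent to fixing the whole panel and, for the panel through $c$, to lying in $V_{(s)}$. So I would like to ``slide'' the hypothesis $v(wc)=wc$ along a reduced gallery from $c$ to $wc$ down to the level of the panels through $c$.

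First I would set up an induction on $\ell(w)$. The base case $w=1_W$ is trivial since then $wVw\inv=V$. For the inductive step, write $w=sw'$ with $s\in S$ and $\ell(w')=\ell(w)-1$, so that $sc$ lies on a minimal gallery from $c$ to $wc$. Given $v\in\Fix_V(wc)$, I would first argue that $v$ fixes $sc$: indeed $v$ fixes $c$ and $wc=s(w'c)$, hence by Lemma~\ref{Lemma: projection and isometry commute} it fixes every chamber on a minimal gallery from $c$ to $wc$, in particular $sc$. Then $v$ fixes two chambers of $\mathcal{P}_s(c)$, namely $c$ and $sc$, so by Lemma~\ref{lemma:basic_prop_unipotent} we get $v\in V_{(s)}=V\cap sVs\inv$; write $v=s v_1 s\inv$ with $v_1\in V$. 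Now $v_1=s\inv v s$ fixes $s\inv(wc)=w'c$, so $v_1\in\Fix_V(w'c)$, and the inductive hypothesis applied to $w'$ gives $v_1\in w'V(w')\inv$. Therefore $v=sv_1s\inv\in sw'V(w')\inv s\inv=wVw\inv$, completing the induction.

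The only genuine subtlety — and the step I would be most careful about — is the conjugation bookkeeping in (US2): $V_{(s)}=V\cap sVs\inv$ is defined using the element $s\in W$ viewed inside $H$ (via the section $\overline{\cdot}$ and the identification of $W$ with a subgroup of $G\leq H$), so one must make sure that ``$v$ fixes $sc$'' is exactly what allows the extraction $v\in sVs\inv$, and that $s\inv v s$ really fixes $w'c$. Both of these are immediate once one notes that $s$ acts on $\mathcal{B}$ as the corresponding element of $\Aut_0(\mathcal{B})$ and the action $H\to\Aut_0(\mathcal{B})$ is a homomorphism, so no independence-of-representative issues arise. Everything else is a routine application of the two cited lemmas.
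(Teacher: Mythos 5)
Your proof is correct and follows essentially the same route as the paper's: induction on $\ell(w)$, using Lemma~\ref{Lemma: projection and isometry commute} to see that $v$ fixes $sc$, Lemma~\ref{lemma:basic_prop_unipotent} to place $v$ in $V_{(s)}=V\cap sVs\inv$, and then conjugating by $s$ to apply the induction hypothesis to $sw$. The only cosmetic difference is that you extract $v_1=s\inv vs\in\Fix_V(w'c)$ explicitly, whereas the paper phrases the same step as $sus\inv\in\Fix_V(swc)\leq swV(sw)\inv$.
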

\begin{proof}
We prove the claim by induction on $\ell := \ell(w)$. If $\ell = 0$, there is nothing to show. Assume now that $\ell >0$ and let $u \in \Fix_V(wc)$. Let $s\in S$ with $\ell(sw) < \ell(w)$. Then $sc$ is on a minimal gallery from $c$ to $wc$. Since $u$ fixes $c$ and $wc$ by assumption, it fixes $sc$ by Lemma~\ref{Lemma: projection and isometry commute}. Hence $u\in V_{(s)}$ by Lemma~\ref{lemma:basic_prop_unipotent}. In particular, $sus\inv\in V_{(s)}\subseteq V$. We then deduce from the induction hypothesis that 
\[ sus\inv \in V \cap s\Fix_V(wc)s\inv = \Fix_V(swc) \leq sw V (sw)\inv,\]
so that $u\in wVw\inv$, as desired.
\end{proof}

Here is the main source of examples of unipotent subgroups of $H$ that we will want to consider (see Examples~\ref{Example: Building completion} and \ref{Example: Scheme-theoretic completion} below).
\begin{lemma}\label{lemma:criterion_unipotent}
	Let $V \leq H$ be a subgroup satisfying (US1). Suppose that for each $s\in S$ there exists $N_s \trianglelefteq V$ such that $sN_s s\inv = N_s$ and $V = U_{\alpha_s} N_s$. Then $V$ is a unipotent subgroup of $H$.
		
	In particular, the group $U_+$ and its closure $\overline{U_+}$ in $H$ are unipotent subgroups of $H$.
\end{lemma}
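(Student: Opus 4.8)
The plan is to verify axiom (US2), since (US1) is assumed. Fix $s\in S$. We are given $N_s\trianglelefteq V$ with $sN_ss\inv=N_s$ and $V=U_{\alpha_s}N_s$. I want to show $V=U_{\alpha_s}\ltimes V_{(s)}$ with $V_{(s)}=V\cap sVs\inv$, which amounts to proving (a) $V_{(s)}\trianglelefteq V$ (so that the product is a semidirect product once we know it is a product), (b) $V=U_{\alpha_s}V_{(s)}$, and (c) $U_{\alpha_s}\cap V_{(s)}=\{1\}$. The key intermediate claim will be that $N_s=V_{(s)}$; once this is established, (a), (b) follow immediately from the hypotheses ($N_s\trianglelefteq V$ and $V=U_{\alpha_s}N_s$), and (c) will follow from the simple transitivity of $U_{\alpha_s}$ on $\mc{P}_s(c)\setminus\{c\}$ as in the proof of Lemma~\ref{lemma:basic_prop_unipotent}.

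First I would show $N_s\subseteq V_{(s)}$. Since $N_s\subseteq V\subseteq H_c$, every element of $N_s$ fixes $c$; and since $sN_ss\inv=N_s\subseteq H_c$, every element of $N_s$ also fixes $sc$, hence lies in $sH_cs\inv$. Combined with $N_s\subseteq V$ this gives $N_s\subseteq V\cap sVs\inv=V_{(s)}$ once we also know $N_s\subseteq sVs\inv$; but $N_s=sN_ss\inv\subseteq sVs\inv$, so indeed $N_s\subseteq V_{(s)}$. For the reverse inclusion $V_{(s)}\subseteq N_s$: take $v\in V_{(s)}$, so $v$ fixes $c$ and (being in $sVs\inv\subseteq sH_cs\inv$) fixes $sc$; write $v=u_{\alpha_s}n$ with $u_{\alpha_s}\in U_{\alpha_s}$ and $n\in N_s$. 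Since $n$ fixes $sc$ (as $N_s\subseteq V_{(s)}$ from the previous step, or directly $n\in N_s=sN_ss\inv\subseteq H_c$... wait, that gives $n$ fixes $c$; but $sns\inv\in N_s\subseteq H_c$ gives $n\in sH_cs\inv$, so $n$ fixes $sc$). Therefore $u_{\alpha_s}=vn\inv$ fixes $sc$; writing the generic chamber of $\mc{P}_s(c)\setminus\{c\}$ and using that $U_{\alpha_s}$ fixes $c$ and acts simply transitively on $\mc{P}_s(c)\setminus\{c\}$, the element $u_{\alpha_s}$ fixing $sc\in\mc{P}_s(c)\setminus\{c\}$ forces $u_{\alpha_s}=1$. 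Hence $v=n\in N_s$, proving $V_{(s)}=N_s$.

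With $V_{(s)}=N_s$ in hand: $V_{(s)}\trianglelefteq V$ and $V=U_{\alpha_s}V_{(s)}$ are exactly the hypotheses, and $sV_{(s)}s\inv=V_{(s)}$ is also given; moreover $U_{\alpha_s}\cap V_{(s)}=\{1\}$ since an element of this intersection is a $u_{\alpha_s}\in U_{\alpha_s}$ fixing $sc$, hence trivial by simple transitivity. So $V=U_{\alpha_s}\ltimes V_{(s)}$ and (US2) holds; thus $V$ is a unipotent subgroup of $H$.

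For the last assertion, apply the criterion to $V=U_+$ and to $V=\overline{U_+}$. The RGD axioms give, for each $s\in S$, a decomposition $U_+=U_{\alpha_s}\ltimes U_+^{(s)}$ where $U_+^{(s)}:=\langle U_\beta\mid \beta\in\Phi_+,\ \beta\neq\alpha_s\rangle$, with $U_+^{(s)}\trianglelefteq U_+$ and $sU_+^{(s)}s\inv=U_+^{(s)}$ (conjugation by $s$ permutes $\Phi_+\setminus\{\alpha_s\}$); so taking $N_s=U_+^{(s)}$ shows $U_+$ is unipotent. Passing to closures in $H$, set $N_s:=\overline{U_+^{(s)}}$; continuity of the group operations and of the $H$-action gives $N_s\trianglelefteq\overline{U_+}$, $sN_ss\inv=N_s$, and (US1) for $\overline{U_+}$ is immediate since $U_+\leq\overline{U_+}\leq H_c$ ($H_c$ being closed) and $T$ normalizes $\overline{U_+}$ by continuity. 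It remains to check $\overline{U_+}=U_{\alpha_s}\cdot N_s$: the inclusion $\supseteq$ is clear, and for $\supseteq$... I mean for $\subseteq$, note $U_{\alpha_s}$ is finite hence compact, $N_s$ is closed, so $U_{\alpha_s}N_s$ is closed and contains $U_{\alpha_s}U_+^{(s)}=U_+$, whence $U_{\alpha_s}N_s\supseteq\overline{U_+}$. Applying the criterion finishes the proof. The main obstacle is the bookkeeping in identifying $V_{(s)}$ with $N_s$; everything else is a direct unwinding of the hypotheses together with the simple transitivity property of root group actions on boundary panels.
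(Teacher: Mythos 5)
Your proposal is correct and follows essentially the same route as the paper: both arguments hinge on $U_{\alpha_s}\cap\Fix_V(sc)=\{1\}$ (simple transitivity on $\mc{P}_s(c)\setminus\{c\}$), the inclusion $N_s\subseteq V_{(s)}$, and the factorization $V=U_{\alpha_s}N_s$ to obtain (US2), and for the last assertion both take $N_s$ to be (the closure of) the subgroup generated by the $U_\beta$ with $\beta\in\Phi_+\setminus\{\alpha_s\}$ and their $U_{\alpha_s}$-conjugates. The only cosmetic difference is that you identify $V_{(s)}=N_s$ directly from the geometric action, whereas the paper deduces that $U_{\alpha_s}$ normalizes $V_{(s)}$ by an algebraic computation; the conclusions coincide.
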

\begin{proof}
	Let $s\in S$. We have $U_{\alpha_s}\cap V_{(s)}\subseteq U_{\alpha_s}\cap\Fix_V(sc)=\{1\}$. To see that $U_{\alpha_s}$ normalizes $V_{(s)}$, note first that $N_s \subseteq V_{(s)}$. Let now $v\in V_{(s)}$ and $u_s\in U_{\alpha_s}$. Then $u_svu_s\inv=u_{\alpha_s}n_s$ for some $u_{\alpha_s}\in U_{\alpha_s}$ and $n_s\in N_s$, and hence $u_s\inv u_{\alpha_s}u_s=v\cdot u_s\inv n_s\inv u_s\in U_{\alpha_s}\cap V_{(s)}=\{1\}$. Thus $u_{\alpha_s}=1$ and $u_svu_s\inv = n_s \in N_s\subseteq V_{(s)}$, as desired. Since $V = U_{\alpha_s} N_s\subseteq U_{\alpha_s}V_{(s)}$, we conclude that $V=U_{\alpha_s}\ltimes V_{(s)}$, and hence $V$ is a unipotent subgroup of $H$.
	
	For $s\in S$ we define $N(s) := \langle u_{\alpha_s} u_{\alpha} u_{\alpha_s}^{-1} \mid \alpha \in \Phi_+ \setminus \{ \alpha_s \}, u_{\alpha} \in U_{\alpha}, u_{\alpha_s} \in U_{\alpha_s} \rangle$. If $V = U_+$, then $N_s := N(s) \trianglelefteq U_+$ is normalized by $s$ (see e.g.\ \cite[Lemme~6.2.1(iv)]{Re03}) and satisfies $V = U_{\alpha_s} N_s$. If $V = \overline{U_+}$, then $N_s := \overline{N(s)} \trianglelefteq \overline{U_+}$ is also normalized by $s$ and satisfies $V = U_{\alpha_s} N_s$. Hence $U_+$ and $\overline{U_+}$ are unipotent subgroups of $H$.
\end{proof}

\begin{example}[Geometric completion]\label{Example: Building completion}
The most natural completion of a general RGD system $\mc{D}$ is its geometric completion, that is, its completion with respect to the building topology (see \cite{CR09}). More precisely, denote for each $n\in\NN$ by $K_n$ the pointwise stabilizer in $U_+$ of the ball centered at $c$ of radius $n$ in $\mc{B}$. The (positive) \emph{geometric completion} $G^{\geo}$ of $G$ is the Hausdorff completion of $G$ with respect to the filtration $(K_n)_{n\in\NN}$ (see e.g.\ \cite[Exercise~8.5]{Ma18}). In other words, $G^{\geo}$ is the group of equivalence classes of Cauchy sequences in $G$, where $(g_m)_{m\in\NN}\subseteq G$ is a \emph{Cauchy sequence} if for all $M\in\NN$ we have $g_m\inv g_n\in K_M$ for all large enough $m,n$.\\
Since $\bigcap_{n\in\NN}K_n=\{1\}$, the natural map $\iota\co G\to H:=G^{\geo}$ is injective. If the root groups of $\mc{D}$ are finite, the group $H$ is a first countable tdlc group with basis of identity neighbourhoods its compact open subgroups $\overline{K_n}$, $n\in\NN$ (see \cite[Exercises~8.5, 8.6]{Ma18}). By Lemma~\ref{lemma:criterion_unipotent}, the subgroup $V:=\overline{U_+}=\overline{K_0}$ of $H$ is an admissible subgroup of $H$. If, in addition, the torus $T$ is finite, then $H_c$ is compact open.
\end{example}

\begin{example}[Maximal Kac--Moody groups]\label{Example: Scheme-theoretic completion}
When $(G,(U_{\alpha})_{\alpha\in\Phi})$ is the RGD system of a (minimal) Kac--Moody group $G=\G_A(k)$ over a finite field $k$, there are several natural ``completions'' of $G$, called maximal Kac--Moody groups (see \cite[Chapter~8]{Ma18}). For the purpose of computing the nub of an element, however, it is sufficient to focus on the \emph{scheme-theoretic completion} $\G^{\pma}_A(k)$ of $G$ (see \S\ref{Section: Maximal Kac--Moody groups}), as well as on the closure $\overline{G}$ of $G$ in $\G^{\pma}_A(k)$ (if the characteristic of $k$ is small, $G$ is not necessarily dense in $\G^{\pma}_A(k)$). Indeed, all other completions are obtained from these ones as quotients under an open continuous morphism with compact kernel (see \cite[Theorem~8.95]{Ma18}), and one can then use Remark~\ref{Remark: nub in quotients} to compute the nub.\\
The group $\G^{\pma}_A(k)$ is an amalgamated product of $G$ with a group $\U^{\map}_A(k)$ (see \S\ref{Section: Maximal Kac--Moody groups}) which is an admissible subgroup of $\G^{\pma}_A(k)$. Similarly, the closure $\overline{U_+}\subseteq \U^{\map}_A(k)$ of $U_+$ in $\G^{\pma}_A(k)$ is an admissible subgroup of $\overline{G}$ (see Remark~\ref{remark: schematic-completion is Kac--Moody completion}). Moreover, chamber stabilizers in both $\G^{\pma}_A(k)$ and $\overline{G}$ are compact open.
\end{example}

We conclude this section with a general observation ensuring that completions of RGD systems with compact chamber stabilizers are transitive on the set of apartments of $\mc{B}$ (for readers who are more familiar with the simplicial approach to buildings, the set of apartments as defined in \S\ref{Subsection: Projections and residues} is the \emph{complete apartment system} of $\mc{B}$). 

\begin{proposition}\label{Proposition: Stabilizer by U_+ fixes apartment}
Let $H$ be a completion of $\mc{D}$ and assume that $H_c$ is compact. Let $\Sigma'$ be an apartment containing $c$. Then there exists $u \in \overline{\langle U_{\alpha} \mid \alpha\supseteq \Sigma\cap\Sigma' \rangle}$ with $u\Sigma'=\Sigma$.
\end{proposition}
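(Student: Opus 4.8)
The plan is to transform $\Sigma'$ into $\Sigma$ one chamber at a time and to obtain $u$ as a limit of the elements performing these transformations. The main ingredient will be the following \emph{one-step lemma}: if $\Sigma''$ is any apartment containing $c$ and $x$ is a chamber of $\Sigma$ with $x\notin\Sigma''$, then there exist a root $\alpha$ of $\Sigma$ with $\alpha\supseteq\Sigma\cap\Sigma''$ and an element $u\in U_\alpha$ such that $\Sigma\cap(u\Sigma'')\supsetneq\Sigma\cap\Sigma''$ while $\dist(x,\Sigma\cap(u\Sigma''))<\dist(x,\Sigma\cap\Sigma'')$. Note that such a $u$ lies in $H_c$, since $c\in\Sigma\cap\Sigma''\subseteq\alpha$ and $U_\alpha$ fixes $\alpha$ pointwise.

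I would prove the lemma as follows. Put $M:=\Sigma\cap\Sigma''$; it is a convex subset of $\Sigma$ containing $c$, hence gated: taking $y_0\in M$ with $\dist(x,y_0)$ minimal, the exchange condition forces $\dist(x,z)=\dist(x,y_0)+\dist(y_0,z)$ for all $z\in M$ (otherwise $\delta(x,y_0)\delta(y_0,z)$ would be non-reduced, producing a chamber of $M$ adjacent to $y_0$ and strictly closer to $x$, by convexity of $M$). Since $x\notin M$ we have $\dist(x,y_0)\ge 1$; fix a minimal gallery $y_0=c_0,c_1,\dots,c_t=x$ (contained in $\Sigma$ by convexity) and let $\alpha$ be the root of $\Sigma$ with $y_0\in\alpha$ whose wall is crossed at the step $c_0\to c_1$, so that $c_1=r_\alpha(y_0)$. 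For any $z\in M$, concatenating a minimal gallery from $z$ to $y_0$ with $c_0,\dots,c_t$ yields a minimal gallery from $z$ to $x$ passing through $y_0$ and then $c_1$; as it crosses $\partial\alpha$ at the step $y_0\to c_1$, it cannot cross $\partial\alpha$ earlier, so $z\in\alpha$. Thus $M\subseteq\alpha$, and in particular $\alpha\supseteq\Sigma\cap\Sigma''$. Let $P_0$ be the panel of $\mc{B}$ containing $y_0$ and $c_1$; it lies on $\partial\alpha$ with $P_0\cap\alpha=\{y_0\}$, and since $y_0\in\Sigma''$, thinness gives $P_0\cap\Sigma''=\{y_0,y_0''\}$ for some chamber $y_0''$. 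If $y_0''=c_1$, then $c_1\in\Sigma\cap\Sigma''=M$, impossible as $\dist(x,c_1)=\dist(x,y_0)-1<\dist(x,M)$; hence $y_0''\notin\Sigma$. As $U_\alpha$ acts simply transitively on $P_0\setminus\{y_0\}$, there is $u\in U_\alpha$ with $u(y_0'')=c_1$. Then $u$ fixes $\alpha\supseteq M$ pointwise, so $M\subseteq u\Sigma''$, while $c_1=u(y_0'')\in u\Sigma''$ lies in $\Sigma\setminus M$ (because $c_1\in-\alpha$); therefore $\Sigma\cap(u\Sigma'')\supseteq M\cup\{c_1\}\supsetneq M$ and $\dist(x,\Sigma\cap(u\Sigma''))\le\dist(x,c_1)<\dist(x,M)$, as wanted.

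Granting the lemma, I would enumerate the countable chamber set of $\Sigma$ as $x_1,x_2,\dots$ and construct elements $g_0=1,g_1,g_2,\dots$, each a finite product of elements as in the lemma, with $x_1,\dots,x_n\in g_n\Sigma'$ for all $n$. At stage $n$, starting from $g_{n-1}$, I apply the lemma with $\Sigma''=g_{n-1}\Sigma'$ and $x=x_n$ repeatedly; each application strictly decreases the integer $\dist(x_n,\Sigma\cap\Sigma'')$ while enlarging $\Sigma\cap\Sigma''$, so after finitely many applications $x_n$ enters the current transform of $\Sigma'$, and $x_1,\dots,x_{n-1}$ stay inside it since every element produced fixes pointwise the current intersection with $\Sigma$, which always contains $\Sigma\cap\Sigma'$ and only grows. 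Consequently every root used contains $\Sigma\cap\Sigma'$ (and contains $c$), so $g_n\in\langle U_\alpha\mid\alpha\supseteq\Sigma\cap\Sigma'\rangle\cap H_c$. The sequence $(g_n)$ lies in the compact group $H_c$, which is metrizable since $H$ is a first-countable topological group; hence a subsequence $g_{n_k}$ converges to some $u\in\overline{\langle U_\alpha\mid\alpha\supseteq\Sigma\cap\Sigma'\rangle}$. For a fixed chamber $x_m$ of $\Sigma$ and indices with $m\le n_l\le n_k$, one has $g_{n_k}=hg_{n_l}$ with $h$ fixing $x_1,\dots,x_{n_l}$, so $g_{n_k}^{-1}x_m=g_{n_l}^{-1}x_m\in\Sigma'$; thus $g_{n_k}^{-1}x_m$ is eventually constant, and since inversion and the action on the discrete set $\mc{B}$ are continuous, $u^{-1}x_m\in\Sigma'$, i.e.\ $x_m\in u\Sigma'$. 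Hence $\Sigma\subseteq u\Sigma'$, and since an apartment contained in another apartment must coincide with it (every panel meeting $\Sigma$ already has both of its chambers of $u\Sigma'$ in $\Sigma$, by thinness), we conclude $u\Sigma'=\Sigma$.

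The hard part is the one-step lemma, and within it the two geometric claims $M\subseteq\alpha$ and $y_0''\notin\Sigma$: both rest on the gate property of the convex set $M$ and on the simple transitivity of $U_\alpha$ on the chambers of a boundary panel other than the one it fixes (part of the RGD axioms recalled in \S\ref{subsection:RGDsystems}). The remaining ingredients --- the bookkeeping ensuring that the accumulated elements remain in $\langle U_\alpha\mid\alpha\supseteq\Sigma\cap\Sigma'\rangle\cap H_c$ without disturbing previously corrected chambers, the extraction of a convergent subsequence from the compact $H_c$, and ``an apartment inside an apartment is the whole apartment'' --- are routine.
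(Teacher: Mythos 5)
Your proof is correct and follows essentially the same strategy as the paper's: enlarge $\Sigma\cap(\text{transform of }\Sigma')$ one chamber at a time using elements of root groups $U_\alpha$ with $\alpha$ containing the current intersection (which always contains $\Sigma\cap\Sigma'$), and then extract a limit in the compact group $\overline{\langle U_\alpha \mid \alpha\supseteq\Sigma\cap\Sigma'\rangle}\leq H_c$ via continuity of the action. Your ``one-step lemma'' (gate of the convex intersection, first wall crossed, simple transitivity of $U_\alpha$ on a boundary panel) is just a more detailed rendering of the paper's one-line folding step, and the distance-decreasing enumeration of chambers of $\Sigma$ replaces the paper's induction on balls $B(c,n)\cap\Sigma$ to the same effect.
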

\begin{proof}
Set for short $\sigma:=\Sigma\cap\Sigma'$ and $U_{\sigma}:=\langle U_{\alpha} \mid \alpha\supseteq \Sigma\cap\Sigma' \rangle$. Reasoning inductively, it is sufficient to show that if $d$ is a chamber of $\Sigma'$ adjacent to a chamber of $\sigma$, then there exists $u\in U_{\sigma}$ such that $u\Sigma'\cap\Sigma\supseteq \sigma\cup\{d\}$. Indeed, this will provide a sequence $(u_n)_{n\in\NN}\subseteq U_{\sigma}$ such that $u_n\Sigma'\cap\Sigma\supseteq B(c,n)\cap\Sigma$ for all $n\in\NN$, where $B(c, r) = \{ x\in \mc{B} \mid \dist(c, x) \leq r \}$. Since $\overline{U_{\sigma}} \leq H_c$ is compact, $(u_n)_{n \in \NN}$ then subconverges to some $u \in \overline{U_{\sigma}}$. As the action $H \to \Aut_0(\mc{B})$ is continuous, $u$ is as desired.

WLOG, we may assume that $d\notin \sigma$. Since $\sigma$ is convex, and hence an intersection of roots by \cite[Proposition~3.94]{AB08}, there is a root $\alpha\supseteq\sigma$ such that $d\in -\alpha$, and we choose $u\in U_{\alpha}$ such that $ud\in\Sigma$, as desired.
\end{proof}

\section{Computing the nub from straight and standard elements of $W$}\label{Section: Reduction steps}
	
Let $\mc{D} = (G, (U_{\alpha})_{\alpha \in \Phi})$ be an RGD system of type $(W, S)$ with finite root groups. Let $H$ be a completion of $\mc{D}$ acting on $\mc{B} := \mc{B}(\mc{D})$ with compact open chamber stabilizers. As before, we let $c$ be the fundamental chamber and $\Sigma$ be the fundamental apartment of $\mc{B}$. Fix a section $\overline{\cdot}: W \to N$ of the quotient map $N \to W$ (see \S\ref{subsection:RGDsystems}). In this section we show that it is enough to compute the nub of elements $\overline{w}\in N$ where $w\in W$ is straight and standard.

\begin{remark}\label{Remark: topologically periodic elements have trivial nub}
	Suppose $g\in H$ stabilizes a spherical residue $R$ of $\mc{B}$. Then, as $\Stab_H(R)$ is compact, $g$ is \emph{topologically periodic}, i.e.\ $\overline{\la g \ra}$ is compact. It is a well-known fact that topologically periodic elements have trivial contraction group (cf.\ \cite[Lemma~$3.5$]{BRR08}), hence $\nub_H(g) = \{1\}$. Thus it suffices to consider elements which do not stabilize any spherical residue.
\end{remark}

The next lemma is a generalization of \cite[Lemma~$3.5$]{BW04}.

\begin{lemma}\label{Lemma: Generalization of Lemma 3.5 in BW04}
	Let $\Gamma$ be a topological group (not necessarily Hausdorff). Let $d, v \in \Gamma$ and suppose that $\langle d^n v d^{-n} \mid n\in \NN \rangle$ is relatively compact. Then $\con(dv) = \con(d)$.
\end{lemma}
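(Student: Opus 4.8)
The plan is to relate the powers of $g:=dv$ to those of $d$ via a ``bounded cocycle''. First I would record the elementary identity, proved by an immediate induction on $n$:
$$(dv)^n=c_n\,d^n\qquad\text{where}\qquad c_n:=(dvd\inv)(d^2vd^{-2})\cdots(d^nvd^{-n}).$$
Set $K:=\overline{\la d^kvd^{-k}\mid k\in\NN\ra}$. By hypothesis this is a \emph{compact} subgroup of $\Gamma$ (the closure of a subgroup is a subgroup, with no separation needed), and since each $c_n$ is a finite product of elements $d^kvd^{-k}$ with $k\in\NN$, all the $c_n$ (and all the $c_n\inv$) lie in the one fixed compact set $K$. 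This is the crucial structural point supplied by the relative compactness assumption.

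The second ingredient I would use is the standard fact, valid in any topological group without separation hypotheses: for every compact subset $C\subseteq\Gamma$ and every identity neighbourhood $U$, there is an identity neighbourhood $V$ with $cVc\inv\subseteq U$ for all $c\in C$. This follows from the tube lemma applied to the continuous conjugation map $\Gamma\times C\to\Gamma$, $(x,c)\mapsto cxc\inv$, which sends $\{1\}\times C$ into $U$.

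With these in hand the two inclusions are symmetric. For $\con(d)\subseteq\con(dv)$: given $h\in\con(d)$ and an identity neighbourhood $U$, I choose $V$ as above for $C:=K$; since $d^nhd^{-n}\to 1$ we have $d^nhd^{-n}\in V$ for all large $n$, and then $g^nhg^{-n}=c_n(d^nhd^{-n})c_n\inv\in c_nVc_n\inv\subseteq U$ because $c_n\in K$; hence $h\in\con(g)$. For $\con(dv)\subseteq\con(d)$: given $h\in\con(g)$, the same $V$ works, $g^nhg^{-n}\in V$ for large $n$, and $d^nhd^{-n}=c_n\inv(g^nhg^{-n})c_n\subseteq U$ since $c_n\inv\in K$ (here I use that $K$ is a group, not merely a set); hence $h\in\con(d)$. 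Combining gives $\con(dv)=\con(d)$.

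I expect the only genuinely delicate step to be the verification that the conjugating elements $c_n$ all lie in a single compact set — which is precisely what the hypothesis on $\la d^kvd^{-k}\mid k\in\NN\ra$ is designed to give — together with the uniform-continuity-of-conjugation lemma above. Once those are secured, the non-Hausdorff setting causes no extra difficulty, because convergence to $1$ is only ever tested against identity neighbourhoods and never relies on uniqueness of limits.
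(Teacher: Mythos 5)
Your proof is correct and follows essentially the same route as the paper: you write $(dv)^n=c_nd^n$ with all $c_n$ (and $c_n^{-1}$) in the fixed relatively compact set generated by the $d^kvd^{-k}$, and then invoke uniform continuity of conjugation over a compact set (the paper cites \cite[Theorem~4.9]{HR79} for exactly this, where you argue via the tube lemma). The only cosmetic difference is that you give the cocycle $c_n$ by an explicit product formula where the paper defines it recursively.
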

\begin{proof}
	We set for short $C := \langle d^n v d^{-n} \mid n\in \NN \rangle$. We first show by induction on $m \in \NN$ that there exists $v_m \in C$ with $(dv)^m = v_m d^m$. For $m=0$ choose $v_0 = 1_G \in C$. Suppose now $m>0$. We have $(dv)^m = dv v_{m-1} d^{m-1} = (dv v_{m-1} d^{-1}) d^m$ by induction hypothesis. Since $vv_{m-1} \in C$, the claim follows with $v_m := d v v_{m-1} d\inv\in C$.
	
	Now we prove the lemma. Let $g \in \con(d)$ and let $\mathcal{O}$ be an open neighborhood of $1_G$. By \cite[Theorem~$4.9$]{HR79} there exists an open neighborhood $\mc{O}' \subseteq \mc{O}$ of $1_G$ such that $c \mc{O}' c\inv \subseteq \mc{O}$ for all $c\in C$. For all large enough $n \in \NN$, we then have
	\[ (dv)^n g (dv)^{-n} = v_n d^n g d^{-n} v_n^{-1} \in v_n \mathcal{O}' v_n^{-1} \subseteq \mathcal{O}. \]
	Hence $g\in \con(dv)$. For the reverse inclusion let $g\in \con(dv)$ and let $\mc{O}$ be an open neighborhood of $1_G$. As before, the following holds for all $n\in \NN$ large enough:
	\[ d^n g d^{-n} = v_n\inv v_n d^n g d^{-n} v_n\inv v_n = v_n\inv (dv)^n g (dv)^{-n} v_n \in v_n\inv \mc{O}' v_n \subseteq \mc{O}. \]
	Hence $g\in \con(d)$. This concludes the proof.
\end{proof}

\begin{proposition}\label{prop: The nub only for straight and standard elements}
	Let $g\in H$. Suppose that $g$ does not stabilize any spherical residue. Then there exists $w\in W$ straight and standard and $h\in H$ such that
	\[ \con(g) = h \con(\overline{w}) h\inv \quad \text{and} \quad \con(g\inv) = h \con(\overline{w}\inv) h\inv. \]
	In particular, $\nub(g) = h \nub(\overline{w}) h\inv$.
\end{proposition}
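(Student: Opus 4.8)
The plan is to reduce $g$ in three successive steps to a straight and standard element of $W$, using at each stage either a conjugation (which transports contraction groups equivariantly, since $\con(hgh\inv) = h\con(g)h\inv$) or Lemma~\ref{Lemma: Generalization of Lemma 3.5 in BW04} (which discards a relatively compact ``error term'' without changing the contraction group). First I would use the fact that $g$ acts on the building $\mc{B}$ by a type-preserving automorphism and does not stabilize any spherical residue to conclude, by Remark~\ref{Remark: topologically periodic elements have trivial nub}, that $g$ is not topologically periodic; in particular its image in $\Aut_0(\mc{B})$ is a hyperbolic isometry (this is where non-sphericity is genuinely used). Such an isometry translates along a combinatorial axis, and up to replacing $g$ by a conjugate $h_1 g h_1\inv$ we may assume that this axis meets the fundamental apartment $\Sigma$, and in fact that $g$ maps the fundamental chamber $c$ into $\Sigma$ along a minimal gallery realizing the translation. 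Writing $\delta(c, gc) = w_0 \in W$, we then have $gc = \overline{w_0}bc$ for a suitable $b\in H_c$, i.e.\ $g = \overline{w_0} b n$ for some $n\in H_c$; absorbing the $H_c$-part into an error term is the heart of the reduction.

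The second step is to replace $w_0$ by a straight element. The translation length of $g$ on the axis is $\ell(w_0^n)/n \to \ell_\infty$, and one knows from Coxeter-group theory (via \cite{Ma14b}, already cited in the excerpt) that a hyperbolic isometry translating along an axis can be conjugated so that the ``period'' $w$ read off along the axis is straight, i.e.\ $\ell(w^n) = n\ell(w)$; equivalently, after a further conjugation and after passing to the relevant power if needed, $g$ acts like $\overline{w}$ times an element of the chamber stabilizer $H_c$ where $w\in W$ is straight. More precisely, I would show $g = h_2 \overline{w} v h_2\inv$ with $w$ straight, $v\in H_{c}$, and — crucially — $\langle (\overline{w})^n v (\overline{w})^{-n} \mid n \in\NN\rangle$ relatively compact: this compactness holds because $v$ fixes $c$, the chambers $(\overline{w})^n c = w^n c$ all lie on the fixed axis which is contained in a single apartment translated by $w$, and straightness of $w$ guarantees these conjugates all fix larger and larger balls around points of the axis, so they lie in a common compact chamber stabilizer's worth of data — formally one invokes that $\overline{\langle (\overline{w})^n v(\overline{w})^{-n}\rangle}$ is contained in a compact subgroup because $H_c$ is compact and the relevant fixed-point sets stabilize. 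Then Lemma~\ref{Lemma: Generalization of Lemma 3.5 in BW04} (applied in $H$, with $d = \overline{w}$) gives $\con(\overline{w}v) = \con(\overline{w})$, and simultaneously, since the same hypothesis is symmetric, $\con((\overline{w}v)\inv) = \con((\overline w)\inv) = \con(\overline{w}\inv)$ — here one uses $\con(x\inv)$ depends only on the reverse dynamics and that $\langle \overline{w}^{-n} v' \overline{w}^n\rangle$ is likewise relatively compact, or one re-runs Lemma~\ref{Lemma: Generalization of Lemma 3.5 in BW04} with $d = \overline{w}\inv$ after rewriting $(\overline w v)\inv = \overline w\inv \cdot (\overline w v\inv \overline w\inv)$. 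Combining with the conjugation from the first step yields $\con(g) = h\con(\overline w)h\inv$ and $\con(g\inv) = h\con(\overline w\inv)h\inv$ with $h = h_1 h_2$.

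The third and final step is to upgrade ``straight'' to ``straight and standard.'' By Lemma~\ref{Lemma: straight element J essental}, if $w$ is straight then $J := \supp(w)$ is essential and $\Pc(w) = W_J$; by Lemma~\ref{Lemma: infinite order and standard elements} there is $v\in W_J$ such that $v\inv w v$ is straight and standard. Set $w' := v\inv w v$; then conjugating by $\overline{v}\in N$ (and noting $\overline{v}\overline w\overline v\inv$ differs from $\overline{w'}$ only by an element of $T\subseteq H_c$, which can again be absorbed via Lemma~\ref{Lemma: Generalization of Lemma 3.5 in BW04} since $T$ is finite, or directly since the section $\overline{\cdot}$ can be chosen compatibly on $W_J$), we get $\con(\overline w) = \overline{v}\,\con(\overline{w'})\,\overline{v}\inv$ up to the same bounded correction, and similarly for inverses. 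Replacing $h$ by $h\overline v$ finishes the proof, and the last assertion $\nub(g) = h\nub(\overline w)h\inv$ follows immediately from $\nub(x) = \overline{\con(x)}\cap\overline{\con(x\inv)}$ and the equivariance of closures under the homeomorphism $y\mapsto hyh\inv$.

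I expect the main obstacle to be the second step: carefully identifying the element $v\in H_c$ so that $g$ (or a conjugate) equals $\overline{w}v$ with $w$ straight, and verifying that the conjugates $\overline{w}^n v \overline{w}^{-n}$ are relatively compact. This requires translating the geometric statement ``$g$ is a hyperbolic isometry whose axis can be conjugated into $\Sigma$ with straight period'' into a clean algebraic decomposition, and leaning on the combinatorial structure of the building (parallel residues, convexity of axes, the fact proved in \cite{Ma14b,Ma23} that cyclically reduced / straight representatives exist in a conjugacy class) together with compactness of chamber stabilizers. The other steps are essentially bookkeeping around Lemma~\ref{Lemma: Generalization of Lemma 3.5 in BW04} and the cited structure theory of Coxeter groups.
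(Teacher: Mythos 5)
Your overall strategy---conjugate $g$ into a good position, write it as $\overline{w}\cdot(\text{error})$ with the error having relatively compact $\overline{w}$-conjugacy orbit, absorb the error via Lemma~\ref{Lemma: Generalization of Lemma 3.5 in BW04}, and finally straighten and standardize via Lemma~\ref{Lemma: infinite order and standard elements}---is the strategy of the paper, and your third step (including the observation that $\overline{v}\,\overline{w}\,\overline{v}\inv$ and $\overline{w'}$ differ only by a torus element that can again be absorbed) is fine. However, there is a genuine gap at the geometric heart of your second step. You assert that a non-topologically-periodic $g$ ``translates along a combinatorial axis'' and can be conjugated so that $\delta(c,gc)$ is (after straightening) a straight element $w$ with $\delta(c,g^nc)=w^n$; this is not true at the level of chambers and of $g$ itself. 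What is true, and what the paper imports from \cite[Theorems~4.7 and 5.5]{BPR19}, is that there is a \emph{spherical residue} $R$ (possibly of positive rank) such that $R$ and $g^iR$ are parallel for all $i$ and such that $x:=\delta(r,\proj_{gR}r)$ is straight; the element $g$ may in addition ``rotate'' inside $R$, so that $\delta(r,gr)=x\cdot y$ with a nontrivial spherical factor $y$ and $\delta(r,g^nr)\neq x^n$ in general. The paper resolves this by pigeonhole on the finite residue $R$: there is $i\in\{1,\dots,\vert R\vert\}$ with $\delta(r,g^ir)=x^i$, and one then works with $g^i$ throughout, using $\con(g)=\con(g^i)$. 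You mention ``passing to the relevant power if needed'' only parenthetically, but without the residue-level statement you have neither a candidate for that power nor a way to produce the decomposition you need.

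The second, related, gap is the verification that $\langle\overline{w}^nv\overline{w}^{-n}\mid n\in\NN\rangle$ is relatively compact. Knowing only that $v$ fixes $c$ is not enough: the conjugates $\overline{w}^nv\overline{w}^{-n}$ then fix the distinct chambers $w^nc$ and need not lie in any common compact subgroup. What is needed is that $v$ fixes the entire orbit $L=\{w^zc\mid z\in\ZZ\}$, so that every conjugate lies in the compact group $\Fix_H(L)\leq H_c$. The paper arranges this explicitly: since $x^i$ is straight, $L=\{g^{iz}r\mid z\in\ZZ\}$ lies in an apartment $\Sigma'$ by \cite[Theorem~5.73]{AB08}; chamber-transitivity together with Proposition~\ref{Proposition: Stabilizer by U_+ fixes apartment} gives $u\in H$ with $u\Sigma=\Sigma'$ and $uc=r$; and then $v:=g^{-i}(u\overline{x}^iu\inv)$ fixes $L$ pointwise because both factors send $g^{iz}r$ to $g^{i(z\pm1)}r$. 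A second element $v'$ is produced the same way to handle $\con(g\inv)$, rather than the somewhat muddled rewriting you propose. So the architecture of your argument matches the paper's, but the two points you yourself flag as ``the main obstacle'' are exactly where the substance lies, and neither is supplied.
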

\begin{proof}
	As $g$ does not stabilize any spherical residue, there exists a spherical residue $R$ such that $R$ and $g^i R$ are parallel for all $i\in\ZZ$ and such that $x := \delta(r, \proj_{gR} r)$ ($r\in R$) is straight (cf.\ \cite[Theorem~$4.7$ and Theorem~$5.5$]{BPR19}). Let $r\in R$. Then there exists $i \in \{1, \ldots, \vert R \vert\}$ with $\delta(r, g^i r) = x^i$.
	
	Since $x^i$ is straight, $L := \{ g^{iz} r \mid z\in \ZZ \}$ is contained in an apartment $\Sigma'$ by \cite[Theorem~$5.73$]{AB08}. Since $H$ is transitive on chambers, Proposition~\ref{Proposition: Stabilizer by U_+ fixes apartment} yields some $u\in H$ with $u\Sigma = \Sigma'$ and $uc = r$. Set $n:=\overline{x}^i\in N$. Then $$un^{\pm 1}u\inv \cdot g^{iz} r=un^{\pm 1} \cdot x^{iz}c=u \cdot x^{i(z\pm 1)}c=g^{i(z\pm 1)}c,$$ and hence $v:=g^{-i} (unu\inv)$ and $v':=g^i(un\inv u\inv)$ belong to $\Fix_H(L)$. Note that $g^{iz} \Fix_H(L) g^{-iz} = \Fix_H(L)$ for each $z\in \ZZ$, which is compact. By Lemma~\ref{Lemma: Generalization of Lemma 3.5 in BW04}, we deduce that $\con(g^i) = \con(g^i v) = \con(u n u\inv) = u \con(n) u\inv$ and $\con(g^{-i}) = \con(g^{-i} v') = \con(u n\inv u\inv) =u \con(n\inv)u\inv$. By Lemma~\ref{Lemma: infinite order and standard elements} there exists $v\in W$ such that $w:=v\inv x v$ is straight and standard. For $h := u \overline{v} \in H$ we conclude that
	\[ \con(g^{\pm 1}) = \con( g^{\pm i} ) = u \con(\overline{x}^{\pm i}) u\inv = h \con(\overline{w}^{\pm i}) h\inv = h \con(\overline{w}^{\pm 1}) h\inv. \qedhere \]
\end{proof}

\section{The nub in completions of RGD systems}\label{Section: The nub in completions}

Throughout this section, we fix an RGD system $\left( G, \left( U_{\alpha} \right)_{\alpha \in \Phi} \right)$ of type $(W, S)$ with finite root groups and we set $T = \bigcap_{\alpha \in \Phi} N_G(U_{\alpha})$. As before, we write $\mc{B} := \mc{B}(\mc{D})$ for the associated building of type $(W, S)$. Let $c\in \mc{B}$ be the fundamental chamber and let $\Sigma$ be the fundamental apartment of $\mc{B}$. We also fix a section $\overline{\cdot}: W \to N$ of the quotient map $N \to W$. Finally, we let $H$ denote a completion of $\mc{D}$ acting on $\mc{B}$ with compact open chamber stabilizers, and we let $V$ be an admissible subgroup of $H$.

\subsection{Tidyness of $V$}

We want to show that for $w\in W$ straight the subgroup $V$ is tidy for $\overline{w}$. To do so we first show that $H_c$ is tidy and deduce that $V$ is minimizing.

As $G$ acts Weyl-transitively on $\mc{B}$, for each $s\in S$, all $s$-panels of $\mc{B}$ have the same cardinality. Moreover, as all root groups are finite and act simply transitively on panels, all panels are finite. If $P$ is an $s$-panel, we set $q_s := \vert P \vert -1=|U_{\alpha_s}|$.

The following lemma is analoguous to \cite[Theorem~4.7]{BPR19} and its proof follows essentially the same lines. However, in \cite{BPR19} the completion $H$ is assumed to be a subgroup of $\Aut_0(\mc{B})$. Moreover, they show that the stabilizer of a suitable spherical residue is tidy, while we show that the stabilizer of a chamber is tidy.

\begin{lemma}\label{lemma: Stabilizer is tidy}
	Let $w\in W$ be straight. Then $H_c$ is tidy for $\overline{w}$. Moreover, if $w=s_1\cdots s_k$ is a reduced decomposition of $w$, then $s(\overline{w}) = q_{s_1} \cdots q_{s_k}$.
\end{lemma}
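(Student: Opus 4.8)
The plan is to verify tidiness of $H_c$ for $\overline{w}$ directly via the combinatorial characterization in Lemma~\ref{Lemma: auxiliary results for tidy subgroups}(1): it suffices to show that $[H_c : H_c \cap \overline{w}^{-n} H_c \overline{w}^{n}] = [H_c : H_c \cap \overline{w}^{-1} H_c \overline{w}]^n$ for all $n \in \NN$. First I would identify the index $[H_c : H_c \cap \overline{w}^{-1} H_c \overline{w}]$ geometrically. Since $H_c = \Fix_H(c)$ and $\overline{w}^{-1} H_c \overline{w} = \Fix_H(\overline{w}^{-1} c) = \Fix_H(w^{-1}c)$ (using the $W$-equivariant identification of $\Sigma$ with $\Sigma(W,S)$), the intersection $H_c \cap \overline{w}^{-1} H_c \overline{w}$ is the pointwise stabilizer of $\{c, w^{-1}c\}$, and the index $[H_c : \Fix_H(\{c, w^{-1}c\})]$ equals the size of the $H_c$-orbit of $w^{-1}c$. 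Because $H$ acts Weyl-transitively (indeed $H \supseteq G$ and $G$ is already Weyl-transitive) and all panels of type $s$ have $q_s+1$ chambers, a standard counting argument along a minimal gallery of type $(s_1,\dots,s_k)$ from $c$ to $w^{-1}c$ shows this orbit has size $q_{s_1}\cdots q_{s_k}$; concretely, $H_c$ acts transitively on the chambers $d$ with $\delta(c,d)=w^{-1}$, and the number of such chambers is $\prod_i q_{s_i}$ by building axiom (Bu2)–(Bu3) applied successively (at each step the minimal-gallery extension adds a factor $q_{s_i}$ since the gallery type is reduced).

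Next I would upgrade this to the power statement. The key point is that $w$ is straight, so $\ell(w^n) = n\ell(w)$ and the concatenation of a reduced decomposition of $w$ with itself $n$ times is a reduced decomposition of $w^n$; hence a minimal gallery from $c$ to $w^{-n}c$ has type $(s_1,\dots,s_k)^n$. Running the same orbit-counting argument for $\overline{w}^n$ in place of $\overline{w}$, I get $[H_c : H_c \cap \overline{w}^{-n} H_c \overline{w}^n] = (q_{s_1}\cdots q_{s_k})^n = [H_c : H_c \cap \overline{w}^{-1} H_c \overline{w}]^n$. By Lemma~\ref{Lemma: auxiliary results for tidy subgroups}(1) this proves $H_c$ is tidy for $\overline{w}$, and the computation of the orbit size simultaneously gives $s(\overline{w}) \leq q_{s_1}\cdots q_{s_k}$; but tidiness of $H_c$ means $H_c$ is minimizing (Theorem~\ref{Theorem: Theorem 3.1 in Wi01}), so $s(\overline{w}) = [\overline{w} H_c \overline{w}^{-1} : \overline{w} H_c \overline{w}^{-1} \cap H_c] = [H_c : H_c \cap \overline{w}^{-1}H_c\overline{w}] = q_{s_1}\cdots q_{s_k}$, giving the "moreover" part.

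The main obstacle I anticipate is the transitivity claim: that $H_c$ (equivalently $G_c = B$) acts transitively on the set of chambers at Weyl-distance $w^{-1}$ from $c$, and that pushing a minimal gallery forward one step always multiplies the orbit size by exactly $q_{s_i}$ rather than something smaller. For the first, Weyl-transitivity of $G$ gives that $G$ is transitive on ordered pairs at distance $w^{-1}$, and stabilizing $c$ then gives transitivity of $B$ on the target chambers. For the step-by-step count, the subtlety is that extending along a reduced type genuinely realizes the full panel (by axiom (Bu3) there is a chamber in each direction, and by simple transitivity of the root group $U_{\alpha_{s_i}}$ on a boundary panel minus a point there are exactly $q_{s_i}$ chambers extending the gallery minimally); I would phrase this as an induction on $k=\ell(w)$, at each stage intersecting with the fixator of the next chamber and invoking Lemma~\ref{lemma:basic_prop_unipotent}-style reasoning (or directly the $U_{\alpha_{s_i}}$-action on panels recalled in \S\ref{subsection:RGDsystems}) to identify the index of the refinement as exactly $q_{s_i}$. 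Verifying (TB) separately is unnecessary since Lemma~\ref{Lemma: auxiliary results for tidy subgroups}(1) packages both (TA) and (TB) into the single multiplicativity identity.
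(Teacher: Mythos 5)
Your proposal is correct and follows essentially the same route as the paper: both reduce tidiness to the multiplicativity identity of Lemma~\ref{Lemma: auxiliary results for tidy subgroups}(1), use straightness to see that concatenated reduced words for $w^n$ stay reduced, and identify the index $[H_c:H_c\cap\overline{w}^{-n}H_c\overline{w}^n]$ with the number $(q_{s_1}\cdots q_{s_k})^n$ of chambers at the relevant Weyl-distance via Weyl-transitivity. The only (cosmetic) difference is that you phrase the count as a single orbit--stabilizer argument on the sphere of Weyl-radius $w^{-n}$, whereas the paper telescopes the index along the minimal gallery and exhibits explicit coset representatives at each panel.
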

\begin{proof}
	Lemma~\ref{Lemma: auxiliary results for tidy subgroups}$(1,2)$ implies that $H_c$ is tidy for $\overline{w}$ (equivalently, for $\overline{w}\inv$) if and only if
	\[ [H_c: H_c \cap  H_{\overline{w}^nc}] = [H_c: H_c \cap \overline{w}^n H_c \overline{w}^{-n}] \overset{!}{=} [H_c: H_c \cap \overline{w} H_c \overline{w}^{-1}]^n = [H_c: H_c \cap H_{\overline{w}c}]^n \]
	holds for all $n\in \NN$.  Let $(c_0 := c, \ldots, c_k := \overline{w}c)$ be a minimal gallery. As $w$ is straight, the concatenation of the minimal galleries $(\overline{w}^ic_0, \ldots, \overline{w}^ic_k)$, $0 \leq i \leq n-1$, is a minimal gallery from $c$ to $\overline{w}^nc$, which we denote $(d_0, \ldots, d_{kn})$. By Lemma~\ref{Lemma: projection and isometry commute},
	\begin{align*}
		\left[ H_c: H_c \cap H_{\overline{w}^nc} \right] = \left[ H_{d_0}: \bigcap\limits_{i=0}^{kn} H_{d_i} \right] = \prod_{i=1}^{kn} \left[ \bigcap\limits_{j=0}^{i-1} H_{d_j} : \bigcap\limits_{j=0}^i H_{d_j} \right].
	\end{align*}
	Let $P_i$ be the panel containing $d_{i-1}$ and $d_i$ and let $s_i \in S$ be the type of $P_i$. Then $d_{i-1} = \proj_{P_i} d_0$. 
	
	We claim that $\left[ \bigcap_{j=0}^{i-1} H_{d_j}: \bigcap_{j=0}^i H_{d_j} \right] =q_{s_i}$. Indeed, let $p \in P_i\setminus\{d_{i-1}\}$. As $\delta(d_0, d_i) = \delta(d_0, d_{i-1}) s_i = \delta(d_0, p)$ and as $H$ acts Weyl-transitively on $\mc{B}$, there exists $h=h_p\in H_{d_0}$ with $hd_i = p$. Lemma~\ref{Lemma: projection and isometry commute} implies that $hd_{i-1} = h \left( \proj_{P_i} d_0 \right) = \proj_{hP_i} (hd_0) = \proj_{P_i} d_0 = d_{i-1}$. Again by Lemma~\ref{Lemma: projection and isometry commute} we deduce that $h\in \bigcap_{j=0}^{i-1} H_{d_j}$. Note, moreover, that if $p' \in P_i\setminus\{d_{i-1}\}$ is such that $h_{p'}H_{d_i}=h_pH_{d_i}$, then $p=p'$. Thus  $\{h_p \ | \ p \in P_i\setminus\{d_{i-1}\}\}$ is a set of coset representatives for $\bigcap_{j=0}^i H_{d_j}$ in $\bigcap_{j=0}^{i-1} H_{d_j}$, yielding the claim.
	
	We conclude that
	\allowdisplaybreaks
	\begin{align*}
		\prod_{i=1}^{kn} \left[ \bigcap_{j=0}^{i-1} H_{d_j}: \bigcap_{j=0}^i H_{d_j} \right] &= \prod_{i=1}^{kn} q_{s_i} = \left( q_{s_1} \cdots q_{s_k} \right)^n = \left[ H_c: H_c \cap H_{\overline{w}c} \right]^n.
	\end{align*}
	Note that the proof also shows that $s(\overline{w}\inv) = [H_c : H_c \cap H_{\overline{w}c}] =  q_{s_1} \cdots q_{s_k}$, where $(s_1, \ldots, s_k)$ is the type of a minimal gallery from $c$ to $\overline{w}c$. We deduce that $s(\overline{w}) = q_{s_k} \cdots q_{s_1} = q_{s_1} \cdots q_{s_k}$.
\end{proof}

\begin{proposition}\label{prop: Upper bound for nub}
	Let $w\in W$ be straight. Then $V$ is tidy for $\overline{w}$ and we have $\nub(\overline{w}) \leq \bigcap_{z\in \ZZ} \overline{w}^z V \overline{w}^{-z}$.
\end{proposition}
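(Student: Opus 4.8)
The plan is to verify tidiness of $V$ via the index criterion of Lemma~\ref{Lemma: auxiliary results for tidy subgroups}(1), adapting the argument in the proof of Lemma~\ref{lemma: Stabilizer is tidy} with $H_c$ replaced by $V$. The new input needed is the identity $V\cap\overline{w}^{-n}V\overline{w}^n=\Fix_V(\overline{w}^{-n}c)$ for all $n\in\NN$: the inclusion ``$\subseteq$'' is immediate since $\overline{w}^{-n}V\overline{w}^n\leq\overline{w}^{-n}H_c\overline{w}^n=H_{\overline{w}^{-n}c}$, while ``$\supseteq$'' is exactly Lemma~\ref{Lemma: U_+ cap G_n.c} applied to $w^{-n}\in W$. (For $H_c$ this identity is trivial, but $\overline{w}^{-n}V\overline{w}^n$ may be strictly smaller than $V\cap H_{\overline{w}^{-n}c}$, which is why Lemma~\ref{Lemma: U_+ cap G_n.c} is essential here.)

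Next I would reproduce the geometric setup of Lemma~\ref{lemma: Stabilizer is tidy}: as $w$, and hence $w^{-1}$, is straight, concatenating the translates $\overline{w}^{-i}\gamma_0$ for $i=0,\dots,n-1$ of a fixed minimal gallery $\gamma_0$ from $c$ to $\overline{w}^{-1}c$ --- which lies in $\Sigma$ by convexity --- yields a minimal gallery $(d_0,\dots,d_{nk})$ in $\Sigma$ from $c$ to $\overline{w}^{-n}c$ whose type is that of $\gamma_0$ repeated $n$ times (here $k=\ell(w)$). Since $V\leq H_c$, an element of $V$ fixing $\overline{w}^{-n}c$ fixes every $d_i$ by Lemma~\ref{Lemma: projection and isometry commute}, so $\Fix_V(\overline{w}^{-n}c)=\Fix_V(\{d_0,\dots,d_{nk}\})$, and the index $[V:V\cap\overline{w}^{-n}V\overline{w}^n]$ factors as the telescoping product $\prod_{i=1}^{nk}[\Fix_V(\{d_0,\dots,d_{i-1}\}):\Fix_V(\{d_0,\dots,d_i\})]$.

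The crux is to show each factor equals $q_{s_i}$, where $s_i$ is the type of the panel $P_i$ spanned by $d_{i-1},d_i$. The bound ``$\leq q_{s_i}$'' is routine, since $\Fix_V(\{d_0,\dots,d_{i-1}\})$ fixes $d_{i-1}\in P_i$ and hence has $d_i$-orbit inside $P_i\setminus\{d_{i-1}\}$. For the reverse inequality --- where the proof of Lemma~\ref{lemma: Stabilizer is tidy} invoked Weyl-transitivity of $H$, which is unavailable for $V$ --- I would instead locate a \emph{positive} root group inside $\Fix_V(\{d_0,\dots,d_{i-1}\})$: take $\alpha_i$ to be the root of $\Sigma$ with $P_i\in\partial\alpha_i$ and $c\in\alpha_i$, so that $\alpha_i\in\Phi_+$ and $U_{\alpha_i}\leq U_+\leq V$. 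Because $(d_0,\dots,d_{nk})$ is a minimal gallery in $\Sigma$, it crosses the wall $\partial\alpha_i$ exactly once, necessarily between $d_{i-1}$ and $d_i$ (as $\proj_{P_i}c=d_{i-1}$ lies on the same side of $\partial\alpha_i$ as $c$); hence $d_0,\dots,d_{i-1}\in\alpha_i$, all of which are fixed by $U_{\alpha_i}$. Since $P_i$ is moreover a boundary panel of $\alpha_i$, the group $U_{\alpha_i}$ acts simply transitively on $P_i\setminus\{d_{i-1}\}\ni d_i$ by the RGD axioms recalled in \S\ref{subsection:RGDsystems}. Thus $U_{\alpha_i}\leq\Fix_V(\{d_0,\dots,d_{i-1}\})$ has $d_i$-orbit equal to $P_i\setminus\{d_{i-1}\}$, giving the $i$-th factor $q_{s_i}$. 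I expect this step --- in particular checking that the relevant root group is positive (so that it lies in $V$) and fixes all earlier chambers of the gallery --- to be the main obstacle. Assembling the factors gives $[V:V\cap\overline{w}^{-n}V\overline{w}^n]=(q_{s_1}\cdots q_{s_k})^n$ for a reduced decomposition $w=s_1\cdots s_k$; this common value is $s(\overline{w})$ by Lemma~\ref{lemma: Stabilizer is tidy} (so $V$ is in fact minimizing), and the criterion of Lemma~\ref{Lemma: auxiliary results for tidy subgroups}(1) yields that $V$ is tidy for $\overline{w}$.

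Finally, tidiness of $V$ gives $\nub(\overline{w})\leq V$ by definition of the nub, and Lemma~\ref{Lemma: auxiliary results for tidy subgroups}(3) makes each conjugate $\overline{w}^zV\overline{w}^{-z}$ tidy for $\overline{w}$ as well, so $\nub(\overline{w})\leq\overline{w}^zV\overline{w}^{-z}$ for every $z\in\ZZ$; intersecting over $z\in\ZZ$ completes the proof.
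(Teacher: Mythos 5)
Your proof is correct, but it certifies tidiness differently from the paper. The paper's own proof only needs the case $n=1$: it quotes Lemma~\ref{lemma: Stabilizer is tidy} for the value $s(\overline{w}\inv)=q_{s_1}\cdots q_{s_k}$, uses Lemma~\ref{Lemma: U_+ cap G_n.c} to identify $V\cap \overline{w}V\overline{w}\inv$ with $V\cap H_{\overline{w}c}$, and then bounds $[V:V\cap H_{\overline{w}c}]$ from above by the cardinality of $\{d\in\mc{B}\mid \delta(c,d)=\delta(c,\overline{w}c)\}$ via the injective map $h(V\cap H_{\overline{w}c})\mapsto h\overline{w}c$; since this upper bound equals the scale, $V$ is minimizing and hence tidy by Theorem~\ref{Theorem: Theorem 3.1 in Wi01}. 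You instead verify the index identity of Lemma~\ref{Lemma: auxiliary results for tidy subgroups}(1) for all $n$, which forces you to compute $[V:V\cap\overline{w}^{-n}V\overline{w}^{n}]$ exactly; your replacement of the Weyl-transitivity step by the positive root groups $U_{\alpha_i}$ is sound (each $\alpha_i$ contains $c$, hence is positive and $U_{\alpha_i}\leq U_+\leq V$; minimality of the gallery in the convex thin set $\Sigma$ puts $d_0,\dots,d_{i-1}$ in $\alpha_i$, so $U_{\alpha_i}$ fixes them and acts simply transitively on $P_i\setminus\{d_{i-1}\}$), and the rest of the telescoping argument is the standard orbit--stabilizer count. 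What your route buys is independence from the minimizing-versus-tidy equivalence and an exact index formula $(q_{s_1}\cdots q_{s_k})^n$ for all $n$ directly for $V$; what it costs is redoing the gallery computation that Lemma~\ref{lemma: Stabilizer is tidy} already encapsulates. The final step (tidiness of the conjugates via Lemma~\ref{Lemma: auxiliary results for tidy subgroups}(3) and intersecting) coincides with the paper's.
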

\begin{proof}
	By Lemma~\ref{lemma: Stabilizer is tidy} we know that $H_c$ is tidy for $\overline{w}$ and $s(\overline{w}) = q_{s_1} \cdots q_{s_k}$, where $w=s_1\ldots s_k$ is a reduced decomposition of $w$. To show that $V$ is tidy for $\overline{w}$ (equivalently, for $\overline{w}\inv$) it thus suffices by Theorem~\ref{Theorem: Theorem 3.1 in Wi01} to show that $$[V : V\cap \overline{w} V \overline{w}\inv]=[\overline{w}\inv V \overline{w} : \overline{w}\inv V \overline{w} \cap V] \leq s(\overline{w}\inv) = q_{s_1} \cdots q_{s_k}.$$ By Lemma~\ref{Lemma: U_+ cap G_n.c} we have $V \cap H_{\overline{w}c} = V \cap \overline{w} V \overline{w}^{-1}$.
	To conclude, note that the map 
	$$V/(V \cap H_{\overline{w}c})\to \{ d\in \mc{B} \mid \delta(c, d) = \delta(c, \overline{w}c) \}: h(V \cap H_{\overline{w}c})\mapsto h\overline{w}c$$
	is injective, and that $\vert \{ d\in \mc{B} \mid \delta(c, d) = \delta(c, \overline{w}c) \} \vert = q_{s_1} \cdots q_{s_k}$. The second assertion follows from Lemma~\ref{Lemma: auxiliary results for tidy subgroups}(3).
\end{proof}

\subsection{Upper bound for the nub}\label{subsection:upperbound_generalRGD}

The purpose of this subsection is to give an upper bound for the nub of an element $\overline{w}$ with $w\in W$ straight.

\begin{lemma}\label{lemma:centralizerfixres}
Let $w\in W$ be straight and $I\subseteq S$. Suppose that there exists $N\in\NN^*$ such that $w^N$ centralizes $W_I$. Then $\nub(\overline{w}) \leq \Fix_H( R_I(c) )$.
\end{lemma}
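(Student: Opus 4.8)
The plan is to prove this in two steps: first show that $\nub(\overline{w})$ fixes every chamber of $R_I(c)$ lying in the fundamental apartment $\Sigma$, and then propagate this to the whole residue $R_I(c)$ using the semidirect-product structure (US2) of the admissible subgroup $V$. Since $\con(\overline{w})=\con(\overline{w}^N)$, we have $\nub(\overline{w})=\nub(\overline{w}^N)$, so I will work directly with the element $\overline{w}^N\in N$, which is a lift of $w^N$. Write $P:=\nub(\overline{w})=\nub(\overline{w}^N)$; by Proposition~\ref{prop: Upper bound for nub}, $P\leq V\leq H_c$, so $P$ fixes $c$.

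\textbf{Step 1: $P$ is normalized by $\overline{v}$ for every $v\in W_I$, hence $P$ fixes $R_I(c)\cap\Sigma$.} Fix $v\in W_I$. Since $w^N$ centralizes $W_I$, the elements $\overline{v}\,\overline{w}^N\overline{v}\inv$ and $\overline{w}^N$ of $N$ have the same image $w^N=vw^Nv\inv$ in $W=N/T$, so $\overline{v}\,\overline{w}^N\overline{v}\inv=\overline{w}^N t$ for some $t\in T$. As $N$ normalizes $T$ and $T\leq H_c$ is relatively compact, the subgroup $\langle (\overline{w}^N)^n t (\overline{w}^N)^{-n}\mid n\in\NN\rangle\subseteq T$ is relatively compact, so Lemma~\ref{Lemma: Generalization of Lemma 3.5 in BW04} gives $\con(\overline{w}^N t)=\con(\overline{w}^N)$; writing $(\overline{w}^N t)\inv=\overline{w}^{-N}t'$ with $t':=\overline{w}^N t\inv\overline{w}^{-N}\in T$ and arguing the same way gives $\con((\overline{w}^N t)\inv)=\con((\overline{w}^N)\inv)$. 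Hence $\nub(\overline{v}\,\overline{w}^N\overline{v}\inv)=\nub(\overline{w}^N t)=\nub(\overline{w}^N)=P$, and since $h\nub(g)h\inv=\nub(hgh\inv)$ we conclude $\overline{v}P\overline{v}\inv=P$. As $P$ fixes $c$, for $g\in P$ the element $\overline{v}\inv g\overline{v}\in P$ fixes $c$, so $g$ fixes $\overline{v}c$; letting $v$ range over $W_I$ shows that $P$ fixes every chamber of $R_I(c)\cap\Sigma=\{\overline{u}c\mid u\in W_I\}$.

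\textbf{Step 2: $P$ fixes all of $R_I(c)$.} I argue by induction on $\dist(c,d)$ that $P$ fixes each $d\in R_I(c)$, the case $d=c$ being clear. Given $d\in R_I(c)$ with $\dist(c,d)\geq 1$, let $d'$ be the penultimate chamber of a minimal gallery from $c$ to $d$, so $d'$ is $s$-adjacent to $d$ for some $s\in I$, $\delta(c,d')=v'\in W_I$, and $\dist(c,d')<\dist(c,d)$; by induction $P$ fixes $d'=\overline{v'}c$. Let $g\in P$. Since $g\in V$ fixes $\overline{v'}c$, Lemma~\ref{Lemma: U_+ cap G_n.c} gives $g\in\Fix_V(\overline{v'}c)\leq\overline{v'}V\overline{v'}\inv$, say $g=\overline{v'}h\overline{v'}\inv$ with $h\in V$. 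Then $h=\overline{v'}\inv g\overline{v'}$ fixes $c$, and since $g$ fixes $\overline{v'}(sc)\in R_I(c)\cap\Sigma$ (fixed by Step~1), $h$ also fixes $sc$. Thus $h$ fixes two chambers of $\mc{P}_s(c)$, so $h\in V_{(s)}$ by Lemma~\ref{lemma:basic_prop_unipotent}, i.e.\ $h$ fixes $\mc{P}_s(c)$ pointwise; hence $g=\overline{v'}h\overline{v'}\inv$ fixes $\overline{v'}\mc{P}_s(c)=\mc{P}_s(d')$ pointwise, and in particular fixes $d\in\mc{P}_s(d')$. This completes the induction, so $\nub(\overline{w})=P\leq\Fix_H(R_I(c))$.

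\textbf{Expected main obstacle.} The delicate point is Step~1: the section $\overline{\cdot}$ is not a homomorphism and $T$ need not be central, so $\overline{v}$ does not literally commute with $\overline{w}^N$, and one must check that multiplying $\overline{w}^N$ by an element of $T$ does not change its nub. This is exactly what the combination of Lemma~\ref{Lemma: Generalization of Lemma 3.5 in BW04} with the relative compactness of $T$ (and the fact that $T$ is normalized by $N$) delivers; once Step~1 is available, Step~2 is a routine geometric propagation along the residue using (US2).
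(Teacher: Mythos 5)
Your Step 1 is correct and is essentially the technique the paper itself uses elsewhere (in the proof of Proposition~\ref{Proposition: nub contained in fixators}(3)): combining Lemma~\ref{Lemma: Generalization of Lemma 3.5 in BW04} with the compactness of $T$ shows that $\nub(\overline{w})$ is normalized by each $\overline{v}$, $v\in W_I$, and hence fixes $R_I(c)\cap\Sigma$ pointwise. But that only gives the weaker containment $\nub(\overline{w})\leq\Fix_H(R_I(c)\cap\Sigma)$, and Step 2 does not bridge the gap. The concrete error is the identification $d'=\overline{v'}c$: in a thick building the penultimate chamber of a minimal gallery from $c$ to an arbitrary $d\in R_I(c)$ is one of many chambers at Weyl distance $v'$ from $c$ and need not lie in $\Sigma$. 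Your argument genuinely shows that $g$ fixes the panel $\overline{v'}\mc{P}_s(c)=\mc{P}_s(\overline{v'}c)$ pointwise, but $d$ lies in $\mc{P}_s(d')$, which is disjoint from $\mc{P}_s(\overline{v'}c)$ whenever $d'\neq\overline{v'}c$. Moreover, the obstruction is not just in the write-up: the implication you need --- that an element of $V$ fixing $R_I(c)\cap\Sigma$ pointwise must fix $R_I(c)$ pointwise --- is false in general. In the Kac--Moody case $\Fix(\Sigma)\cap U^{\map}=U^{\ma}_{\Delta^{\imp}}$, so every imaginary root group $U_{\beta}$ with $\supp(\beta)\subseteq I$ fixes $R_I(c)\cap\Sigma$ without fixing $R_I(c)$ pointwise in general; this distinction is exactly why Theorem~\ref{Theorem: Upper bound for nub} keeps the full residue fixator for $J^{\perp}\cup\Jsph^w$ but only $\Fix_H(\Sigma\cap R_{\JaffCox}(c))$ for the affine part. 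Hence no argument using only ``$P\leq V$ and $P$ fixes $R_I(c)\cap\Sigma$'' can work.

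The missing idea is to upgrade the centralizing hypothesis from the Weyl group to the group level. Since $w^N$ centralizes $W_I$, one has $w^{2N}\alpha_s=\alpha_s$ for all $s\in I$, so $\overline{w}^{2N}$ normalizes each root group $U_{\alpha}$ with $\supp(\alpha)\subseteq I$ (Lemma~\ref{Lemma: support of a root irreducible}(1)), and since these root groups are finite of uniformly bounded size, some power $\overline{w}^{2MN}$ centralizes $G_I:=\langle U_{\alpha}\mid\supp(\alpha)\subseteq I\rangle$. Consequently $gH_cg\inv$ is tidy for $g\overline{w}^{2MN}g\inv=\overline{w}^{2MN}$ for every $g\in G_I$ (Lemmas~\ref{Lemma: auxiliary results for tidy subgroups}(3) and \ref{lemma: Stabilizer is tidy}), and as $R_I(c)=G_Ic$ one gets $\nub(\overline{w})=\nub(\overline{w}^{2MN})\leq\bigcap_{g\in G_I}gH_cg\inv=\Fix_H(R_I(c))$. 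This reaches all chambers of the residue, not only those in the fundamental apartment.
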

\begin{proof}
Write $G_I:=\langle U_{\alpha} \ | \ \supp(\alpha)\subseteq I\rangle\leq G$, so that $R_I(c)=G_Ic$. Note that if $s\in I$, then as $w^Nsw^{-N}=s$ by assumption we have $w^N\alpha_s\in\{\pm\alpha_s\}$ and hence $w^{2N}\alpha_s=\alpha_s$. In particular, $\overline{w}^{2N}$ normalizes each $U_{\alpha}$ with $\supp(\alpha)\subseteq I$ by Lemma~\ref{Lemma: support of a root irreducible}(1). Since the root groups $U_{\alpha}$ are by assumption finite (and of uniformly bounded size), there thus exists $M\in\NN^*$ such that $\overline{w}^{2MN}$ centralizes $G_I$. Hence $gH_cg\inv$ is tidy for $\overline{w}^{2MN}$ for each $g\in G_I$ by Lemmas~\ref{Lemma: auxiliary results for tidy subgroups}(c) and \ref{lemma: Stabilizer is tidy}. As $\nub(\overline{w})=\nub(\overline{w}^{2MN})$ and $\Fix_H( R_I(c) )=\bigcap_{g\in G_I}gH_cg\inv$, the lemma follows.
\end{proof}

\begin{lemma}\label{Lemma: affine translations}
	Let $(W, S)$ be irreducible and affine. Let $w\in W$ be straight. 
	\begin{enumerate}
	\item
	There exist $r\geq 1$ and $t \in W$ with $W = \conv( \{t^z \mid z\in \ZZ\} )$ such that $t$ and $w^r$ commute. 
	\item
	If $\alpha\in\Phi_+$ is such that $w^n\alpha=\alpha$ for some $n\in\NN^*$, then $(W,S)$ is of rank $\geq 3$, there is some $N\in\NN^*$ depending only on $(W,S)$ such that $w^N\alpha=\alpha$, and $\alpha\supseteq \{w^z \ | \ z\in\ZZ\}$.
	\end{enumerate}
\end{lemma}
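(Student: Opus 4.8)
The underlying picture is that an irreducible affine Coxeter group $W$ acts on a Euclidean space $V$ (its \emph{Tits cone realization}), with $W=W_0\ltimes\Lambda$ where $W_0$ is the finite linear part and $\Lambda$ a full-rank translation lattice. A straight element is exactly one that acts with no bounded orbit on the walls it crosses, hence its linear part $\bar w\in W_0$ has $1$ as an eigenvalue; modulo a fixed power $r$ (bounded in terms of $|W_0|$) we may assume $w$ itself is a translation $t_\lambda$ for some $0\neq\lambda\in\Lambda$. This is the content of part (1) once one checks that a nonzero translation $t_\lambda$ satisfies $\conv(\{t_\lambda^z\mid z\in\ZZ\})=W$: the convex hull of a line $\{z\lambda\mid z\in\ZZ\}$ in $V$ meets every half-space whose bounding hyperplane is not parallel to $\RR\lambda$, and one rules out walls parallel to $\RR\lambda$ using that $w$ (hence the line) has full support $S$, so no wall of $\Sigma(W,S)$ is disjoint from the line. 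So the first thing I would do is recall the realization, reduce to the translation case, and carry out this half-space chase; I would cite \cite[Ch.~VI]{Bo68} or \cite{AB08} for the affine realization and the structure of $W$.

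For part (2), let $w^r=t_\lambda$ as above (with $r$ bounded by $(W,S)$). If $\alpha\in\Phi_+$ satisfies $w^n\alpha=\alpha$ for some $n\geq1$, then $w^{nr}=t_{n\lambda}$ fixes the root $\alpha$, so its wall $\partial\alpha$ — a hyperplane $H$ in $V$ — is stabilized by the translation by $n\lambda$, which forces $n\lambda$ to be a direction vector of $H$, i.e.\ $\RR\lambda\subseteq H$. Since $H$ has codimension $1$ and $V$ has dimension $\geq2$ (as $W$ is infinite irreducible affine), and since $\RR\lambda$ is a proper subspace of $H$ exactly when $\dim V\geq 3$ — but more relevantly, the existence of a wall \emph{containing} the line $\RR\lambda$ already forces the affine diagram to have rank $\geq 3$ (in rank $2$, i.e.\ type $\tilde A_1$, every wall is a point and no translation fixes a root). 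So the rank bound drops out. Next, since $t_\lambda$ (a translation) fixes $\alpha$ \emph{setwise} and acts on $V$ without fixed points, and the line $\{w^z c\mid z\in\ZZ\}$ — whose image in $V$ is (a discrete subset of) an affine line directed by $\lambda\in H$ — cannot cross the wall $H$ (crossing $H$ would move along a transverse direction), the whole line lies on one side of $H$; because $t_\lambda$ fixes $\alpha$ rather than $-\alpha$ and $c\in\alpha$ (as $\alpha\in\Phi_+$), that side is $\alpha$, giving $\{w^z\mid z\in\ZZ\}\subseteq\alpha$. Finally, $w^N\alpha=\alpha$ for a uniform $N$: take $N$ a common multiple of $r$ and of the exponent of $W_0$ acting on the finitely many $W_0$-orbits of walls, so that $t_{N\lambda/r}$ genuinely stabilizes $H$ — concretely one may just take $N=r\cdot|W_0|!$ or any fixed bound depending only on the diagram.

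The organizing principle is therefore: \textbf{(i)} set up the Euclidean realization and the translation normal form $w^r=t_\lambda$; \textbf{(ii)} prove the convex-hull statement in (1) by a half-space argument using $\supp(w)=S$; \textbf{(iii)} in (2), translate ``$w^n$ fixes $\alpha$'' into ``$\RR\lambda\subseteq\partial\alpha$'', extract the rank bound and the uniform $N$; \textbf{(iv)} deduce $L_w\subseteq\alpha$ by noting the line cannot cross the wall it is parallel to and must lie on the $c$-side. I expect step (ii) — ruling out that the line is ``trapped'' by walls parallel to it — to be the only genuinely delicate point, but the hypothesis $\supp(w)=S$ (part of straightness via Lemma~\ref{Lemma: straight element J essental}) is exactly what prevents this: if every wall met by the line were parallel to it, the reflections in those walls would not generate $W_S$, contradicting $\Pc(w)=W$. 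Everything else is bookkeeping with the affine realization, and I would keep it brief, citing \cite{Bo68} and \cite{AB08} freely.
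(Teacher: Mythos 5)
Your part (2) follows essentially the paper's route (pass to $w^N\in T_0$, the finite-index translation subgroup, observe that a translation stabilizing a wall must translate along it, and read off the rank bound and the uniform $N$). The remaining imprecision there --- $L_w=\{w^zc\mid z\in\ZZ\}$ is \emph{not} a subset of a single line directed by $\lambda$ unless $w$ itself is a translation, so you still have to rule out that the $N$ sub-progressions $\{w^{j+Nz}c\}_{z}$ lie on different sides of $\partial\alpha$ --- is easily patched: from $w^N\alpha=\alpha$ and the disjointness of $\Delta_{w+}$ and $\Delta_{w-}$ one gets $w^z\alpha\in\Phi_+$ for all $z\in\ZZ$, which is exactly $\{w^z\mid z\in\ZZ\}\subseteq\alpha$.

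The real problem is part (1). You take $t=w^r=t_\lambda$ and assert that every nonzero translation satisfies $\conv(\{t_\lambda^z\mid z\in\ZZ\})=W$, justified by ``$\supp(w)=S$, so no wall is disjoint from the line''. This is false: full support does not prevent the translation axis from being parallel to, and disjoint from, some walls. Concretely, in the standard Euclidean realization of type $\tilde{A}_2$ take $\lambda=\alpha_1^\vee+2\alpha_2^\vee$; then $\langle\alpha_1,\lambda\rangle=0$, so the axis of $t_\lambda$ is parallel to every wall $\{\langle\alpha_1,\cdot\rangle=k\}$, the orbit of a point under $t_\lambda$ is trapped in a slab between two consecutive such walls, and $\conv(L_{t_\lambda})$ is contained in a single root --- yet $t_\lambda$ is straight with support $S$ (nontrivial translations satisfy $\ell(t_{z\lambda})=|z|\,\ell(t_\lambda)$, and proper standard parabolics of an irreducible affine group are finite). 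Your fallback argument, that otherwise ``the reflections in those walls would not generate $W$, contradicting $\Pc(w)=W$'', conflates $\Pc(w)$ with the group generated by the reflections in \emph{all} walls: $\Pc(w)=W$ is perfectly compatible with the axis being parallel to some walls, since the reflections in the transverse walls can already generate $W$ (as they do in the example above). This is precisely why the lemma asks for \emph{some} $t$ with $\conv(\{t^z\mid z\in\ZZ\})=W$ that commutes with $w^r$, rather than $t=w^r$: the paper picks a translation $t$ whose axis is parallel to no wall at all (possible since there are only finitely many wall directions while the translation lattice has full rank) and gets $[t,w^r]=1$ for free from the commutativity of $T_0$. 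Without that change of element, the statement you actually prove in part (1) is not true.
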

\begin{proof}
Since $W$ is of affine type, the geometric realization of $\Sigma(W,S)$ is a Euclidean space, and the (abelian) subgroup $T_0\leq \Aut_0(\Sigma(W,S))$ of translations of $W$ has finite index in $W$ (see \cite[Chapter~10]{AB08}). 

(1) Pick any translation $t\in T_0$ whose axes are not parallel to any wall of $\Sigma(W,S)$. Then $W = \conv( \{t^z \mid z\in \ZZ\} )$ because convex subcomplexes of $\Sigma(W,S)$ are intersections of roots by \cite[Proposition~3.94]{AB08}. Taking $r$ such that $w^r\in T_0$ yields (1).

(2) Let $N:=[W:T_0]$, so that $w^N$ is a translation. For $\alpha\in\Phi_+$, either $\partial\alpha$ contains a translation axis of $w^N$ (in which case $(W,S)$ has rank $\geq 3$ and $w^N\alpha=\alpha$ and $\alpha\supseteq \{w^z \ | \ z\in\ZZ\}$) or $\partial\alpha$ intersects any translation axis of $w^N$ in a single point (in which case  $w^n\alpha\neq\alpha$ for all $n\in\NN^*$). This proves (2).
\end{proof}

\begin{proposition}\label{Proposition: nub contained in fixators}
	Let $w\in W$ be straight and let $J = \supp(w)$. Then:
	\begin{enumerate}
		\item $\nub(\overline{w}) \leq \Fix_H( R_{J^{\perp}}(c) )$.
		
		\item $\nub(\overline{w}) \leq \Fix_H( R_I(c) )$ for all spherical $I \subseteq S$ with $w \in N_W(W_I)$.
		
		\item $\nub(\overline{w}) \leq \Fix_H(\Sigma \cap R_{\JaffCox}(c))$.
	\end{enumerate}
\end{proposition}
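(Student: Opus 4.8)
The plan is to deduce parts (1) and (2) directly from Lemma~\ref{lemma:centralizerfixres}, and to concentrate the real work on part (3). For (1): each $s\in J^{\perp}$ satisfies $m_{sj}=2$ for all $j\in J$, hence commutes with every generator of $W_J$, and thus with $w\in W_J$ itself; so $w$ centralises $W_{J^{\perp}}$, and Lemma~\ref{lemma:centralizerfixres} applied with $I=J^{\perp}$ and $N=1$ gives $\nub(\overline{w})\leq\Fix_H(R_{J^{\perp}}(c))$. For (2): since $W_I$ is finite and $w\in N_W(W_I)$, conjugation by $w$ is a finite-order automorphism of $W_I$, so $w^{N}$ centralises $W_I$ for some $N\in\NN^{*}$, and Lemma~\ref{lemma:centralizerfixres} again applies. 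I expect these two to be essentially immediate.

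For part (3), first I would set up the affine part. By Lemma~\ref{Lemma: straight element J essental}, $J$ is essential, so $\JaffCox$ is a disjoint union $J_1'\sqcup\cdots\sqcup J_m'$ of affine components of $J$; decomposing $w$ along the components of $J$, the factor $w_i'\in W_{J_i'}$ is straight with support $J_i'$ for each $i$ (again Lemma~\ref{Lemma: straight element J essental}). Applying Lemma~\ref{Lemma: affine translations}(1) to $(W_{J_i'},J_i')$ and $w_i'$ yields $r_i\geq 1$ and a translation $t_i\in W_{J_i'}$ with $W_{J_i'}=\conv(\{t_i^{z}\mid z\in\ZZ\})$ and $[t_i,(w_i')^{r_i}]=1$. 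Setting $r:=r_1\cdots r_m$ and $t:=t_1\cdots t_m\in W_{\JaffCox}$, one checks that $t$ commutes with the $r$-th power of every component-factor of $w$, hence with $w^{r}$, and that $\conv(\{t^{z}\mid z\in\ZZ\})=W_{\JaffCox}$: the roots of $\Sigma(W_{\JaffCox},\JaffCox)=\prod_i\Sigma(W_{J_i'},J_i')$ are the preimages of roots of the factors, and none of them contains $\{t^{z}\mid z\in\ZZ\}$, since the $i$-th coordinate of this set equals $\{t_i^{z}\mid z\in\ZZ\}$, which lies in no proper root of $\Sigma(W_{J_i'},J_i')$.

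The key point, and the step I expect to be the main obstacle, is to show that \emph{$\overline{t}$ normalises $\nub(\overline{w})$}. The naive bound $\nub(\overline{w})\leq\bigcap_{z\in\ZZ}\overline{w}^{z}H_c\overline{w}^{-z}$ only forces the nub to fix the chambers $\overline{w}^{z}c$, whose convex hull may be a proper ``slab'' inside the affine residue, so one genuinely has to replace $w$ by the generic translation $t$ --- which only commutes with a power of $w$, and only inside $W$. To bridge this gap, note that since $t$ and $w^{r}$ commute in $W$, the element $g:=\overline{t}^{-1}\overline{w}^{r}\overline{t}\,\overline{w}^{-r}$ lies in the kernel $T$ of $N\to W$, hence fixes $\Sigma$ pointwise; moreover every conjugate $\overline{w}^{rm}g\overline{w}^{-rm}$ ($m\in\ZZ$) again fixes $\Sigma$ pointwise, and $\Fix_H(\Sigma)$ is a closed --- hence compact --- subgroup of $H_c$. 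Exactly as in the proof of Proposition~\ref{prop: The nub only for straight and standard elements}, Lemma~\ref{Lemma: Generalization of Lemma 3.5 in BW04} applied to $\overline{w}^{r}$ (and to $\overline{w}^{-r}$) then gives $\con(\overline{t}^{-1}\overline{w}^{\pm r}\overline{t})=\con(\overline{w}^{\pm r})$, so that $\overline{t}^{-1}\nub(\overline{w})\overline{t}=\nub(\overline{t}^{-1}\overline{w}^{r}\overline{t})=\nub(\overline{w}^{r})=\nub(\overline{w})$.

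Finally I would conclude as follows. By Proposition~\ref{prop: Upper bound for nub} (and $V\leq H_c$), $\nub(\overline{w})\leq V\leq H_c$; since $\overline{t}$ normalises $\nub(\overline{w})$, for every $z\in\ZZ$ we get $\nub(\overline{w})=\overline{t}^{z}\nub(\overline{w})\overline{t}^{-z}\leq\overline{t}^{z}H_c\overline{t}^{-z}=H_{\overline{t}^{z}c}=H_{t^{z}c}$, whence $\nub(\overline{w})\leq\Fix_H(\{t^{z}c\mid z\in\ZZ\})$. A type-preserving automorphism fixing two chambers fixes every chamber on a minimal gallery between them (Lemma~\ref{Lemma: projection and isometry commute}), so $\Fix_H(\{t^{z}c\mid z\in\ZZ\})\leq\Fix_H(\conv(\{t^{z}c\mid z\in\ZZ\}))$. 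As $\{t^{z}c\mid z\in\ZZ\}\subseteq\Sigma\cap R_{\JaffCox}(c)$, which is convex in $\Sigma$ and identifies with $\Sigma(W_{\JaffCox},\JaffCox)$, and as $\conv(\{t^{z}\mid z\in\ZZ\})=W_{\JaffCox}$ there, we obtain $\conv(\{t^{z}c\mid z\in\ZZ\})=\Sigma\cap R_{\JaffCox}(c)$, and therefore $\nub(\overline{w})\leq\Fix_H(\Sigma\cap R_{\JaffCox}(c))$, as desired.
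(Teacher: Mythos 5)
Your proof is correct and follows essentially the same route as the paper: parts (1) and (2) via Lemma~\ref{lemma:centralizerfixres}, and part (3) via the generic translation $t$ from Lemma~\ref{Lemma: affine translations}(1), the commutator $[\overline{w}^r,\overline{t}^z]\in T$, compactness of $\Fix_H(\Sigma)$, and Lemma~\ref{Lemma: Generalization of Lemma 3.5 in BW04} to conclude that $\nub(\overline{w})$ lies in every conjugate $\overline{t}^zH_c\overline{t}^{-z}$. The only (cosmetic) differences are that the paper treats one affine component at a time and glues with Lemma~\ref{Lemma: Fixator of orthogonal residues}, and phrases the endgame as tidiness of $\overline{t}^{-z}H_c\overline{t}^z$ for $\overline{w}^rh_z$ rather than as $\overline{t}$ normalising the nub.
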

\begin{proof}
	(1) and (2) readily follow from Lemma~\ref{lemma:centralizerfixres}. To prove (3), it suffices by Lemma~\ref{Lemma: Fixator of orthogonal residues} to show that $\nub(\overline{w}) \leq \Fix_H(\Sigma \cap R_{J'}(c))$ for each component $J'$ of $\JaffCox$. Let thus $J'$ be an affine component of $J$. By Lemma~\ref{Lemma: straight element J essental} there exist $w' \in W_{J'}$ straight and $w'' \in W_{J \setminus J'}$ with $w = w' w''$ and $\supp(w') = J'$. By Lemma~\ref{Lemma: affine translations} there exist $t\in W_{J'}$ with $W_{J'} = \conv(\{t^z \mid z\in \ZZ\})$ and $r \geq 1$ with $[t,(w')^r]=1$ in $W$. As $J \setminus J' \subseteq (J')^{\perp}$, we have $[t,w^r]=1$. In particular, $h_z := [\overline{w}^r, \overline{t}^z] \in N \cap \Fix(c) = T$ for all $z\in \ZZ$, hence $\overline{t}^{-z} \overline{w}^r \overline{t}^z = \overline{w}^r h_z$. As $\Fix_H(\Sigma) \leq H_c$ is compact, Lemma~\ref{Lemma: Generalization of Lemma 3.5 in BW04} implies $\con(\overline{w}^r) = \con(\overline{w}^r h_z)$ for all $z\in\ZZ$. As $T \trianglelefteq N$, we have $h_z' := \overline{w}^r h_z\inv \overline{w}^{-r} \in T$ and $(\overline{w}^r h_z)\inv = \overline{w}^{-r} h_z'$. Applying Lemma~\ref{Lemma: Generalization of Lemma 3.5 in BW04} again, we infer $\nub(\overline{w}^r) = \nub(\overline{w}^r h_z)$.

	On the other hand, $\overline{t}^{-z} H_c \overline{t}^z$ is tidy for $\overline{t}^{-z} \overline{w}^r \overline{t}^z = \overline{w}^r h_z$ for all $z\in\ZZ$ by Lemmas~\ref{Lemma: auxiliary results for tidy subgroups}(3) and \ref{lemma: Stabilizer is tidy}. As $\nub(\overline{w}) = \nub(\overline{w}^r) = \nub(\overline{w}^r h_z)$ and $\Fix_H( \Sigma \cap R_{J'}(c)  )=\bigcap_{z\in\ZZ}\overline{t}^zH_c\overline{t}^{-z}$, the claim follows.
\end{proof}

\begin{theorem}\label{Theorem: Upper bound for nub}
	Let $w\in W$ be straight and standard, and let $J = \supp(w)$. Then:
	\[ \nub(\overline{w}) \leq \left( \bigcap_{z\in \ZZ} \overline{w}^z V \overline{w}^{-z} \right) \cap \Fix_H( R_{J^{\perp} \cup \Jsph^w}(c) ) \cap \Fix_H( \Sigma \cap R_{\JaffCox}(c) ).  \]
\end{theorem}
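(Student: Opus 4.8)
The plan is to combine the three containments already established, namely Proposition~\ref{prop: Upper bound for nub} (which gives $\nub(\overline{w}) \leq \bigcap_{z\in\ZZ} \overline{w}^z V \overline{w}^{-z}$) and Proposition~\ref{Proposition: nub contained in fixators}(1) and (3) (which give $\nub(\overline{w}) \leq \Fix_H(R_{J^\perp}(c))$ and $\nub(\overline{w}) \leq \Fix_H(\Sigma \cap R_{\JaffCox}(c))$). Since $\nub(\overline{w})$ is contained in each of these, it is contained in their intersection. The only remaining work is to upgrade $\Fix_H(R_{J^\perp}(c))$ to $\Fix_H(R_{J^\perp \cup \Jsph^w}(c))$, which by Lemma~\ref{Lemma: Fixator of orthogonal residues} (applied with the sets $J^\perp$ and $\Jsph^w$) would follow once we know $\nub(\overline{w}) \leq \Fix_H(R_{\Jsph^w}(c))$ and that $\Jsph^w \subseteq (J^\perp)^\perp$.

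First I would verify the orthogonality claim $\Jsph^w \subseteq (J^\perp)^\perp$: by definition $\Jsph^w = I_1 \cup \cdots \cup I_k$ with $I_i \subseteq J_i$, the $J_i$ being the indefinite components of $J$, so $\Jsph^w \subseteq J$; and every element of $J$ commutes with every element of $J^\perp$ by definition of $J^\perp$, hence $\Jsph^w \subseteq (J^\perp)^\perp$. Thus Lemma~\ref{Lemma: Fixator of orthogonal residues} applies with the pair $(J^\perp, \Jsph^w)$ and yields $\Fix_H(R_{J^\perp}(c)) \cap \Fix_H(R_{\Jsph^w}(c)) \leq \Fix_H(R_{J^\perp \cup \Jsph^w}(c))$.

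Next I would show $\nub(\overline{w}) \leq \Fix_H(R_{\Jsph^w}(c))$. Since $R_{\Jsph^w}(c) = \prod_i R_{I_i}(c)$ in the appropriate sense and the $I_i$ lie in distinct components, a second application of Lemma~\ref{Lemma: Fixator of orthogonal residues} reduces this to showing $\nub(\overline{w}) \leq \Fix_H(R_{I_i}(c))$ for each $i$. Fix such an $i$ and write $w = w_0 w_1 \cdots w_k$ with $w_j \in W_{J_j}$; since the components are orthogonal, $W_{I_i} = W_{P_{w_i}^{\max}}$ is normalized by $w_i$, hence by $w$, and moreover $I_i$ is spherical. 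Now $w$ need not lie in $N_W(W_{I_i})$ via centralization, only normalization, but since $W_{I_i}$ is finite, a suitable power $w^N$ centralizes $W_{I_i}$; then Lemma~\ref{lemma:centralizerfixres} (or equivalently Proposition~\ref{Proposition: nub contained in fixators}(2)) gives $\nub(\overline{w}) \leq \Fix_H(R_{I_i}(c))$. Combining everything, $\nub(\overline{w})$ lies in $\big(\bigcap_{z\in\ZZ}\overline{w}^z V \overline{w}^{-z}\big) \cap \Fix_H(R_{J^\perp \cup \Jsph^w}(c)) \cap \Fix_H(\Sigma \cap R_{\JaffCox}(c))$.

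I do not expect any serious obstacle here: this theorem is essentially a bookkeeping assembly of the already-proven pieces, and the main point requiring a moment's care is the orthogonality verification $\Jsph^w \subseteq (J^\perp)^\perp$ together with the two applications of Lemma~\ref{Lemma: Fixator of orthogonal residues} to first split off the distinct components of $\Jsph^w$ and then merge $\Jsph^w$ with $J^\perp$. One should also make sure that Proposition~\ref{Proposition: nub contained in fixators}(2) applies as stated, i.e.\ that $w \in N_W(W_{I_i})$ --- this is exactly the content of $w$ being standard with $P_{w_i}^{\max} = W_{I_i}$, so it is immediate from the definitions in \S\ref{Subsection: Standard elements}.
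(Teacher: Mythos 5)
Your proof is correct and follows essentially the same route as the paper: both assemble Proposition~\ref{prop: Upper bound for nub} with Proposition~\ref{Proposition: nub contained in fixators}(1)--(3) and merge the $J^\perp$ and $\Jsph^w$ fixators via Lemma~\ref{Lemma: Fixator of orthogonal residues}. The only (harmless) difference is that the paper applies Proposition~\ref{Proposition: nub contained in fixators}(2) directly to $I=\Jsph^w$, noting that $W_{\Jsph^w}$ is spherical and normalized by $w$, whereas you split $\Jsph^w$ into the components $I_i$ and treat each separately before reassembling.
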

\begin{proof}
	Note that $W_{\Jsph^w}$ is spherical and normalized by $w$. The claim thus follows from Propositions~\ref{prop: Upper bound for nub} and \ref{Proposition: nub contained in fixators}, since $\Fix( R_{\Jperp}(c) ) \cap \Fix( R_{\Jsph^w}(c) ) \subseteq \Fix( R_{\Jperp \cup \Jsph^w}(c) )$ by Lemma~\ref{Lemma: Fixator of orthogonal residues}.
\end{proof}

\subsection{Contraction of root groups}\label{subsection:contractionrootgroups}

We next provide a lower bound for the nub of an element $\overline{w}$ with $w\in W$ straight and standard.

\begin{lemma}\label{Lemma: root groups in the nub}
	Let $w\in W$ be straight and standard, and let $J = \supp(w)$. Let $\alpha \in \Phi$ be a root. Then one of the following holds:
	\begin{enumerate}
		\item $\supp(\alpha) \subseteq J^{\perp} \cup \Jsph^w \cup \JaffCox$;
		
		\item $w^n\alpha\neq\alpha$ for all $n\in\NN^*$.
	\end{enumerate}
\end{lemma}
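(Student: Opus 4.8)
The plan is to contrapose: assume that $w^n\alpha=\alpha$ for some $n\in\NN^*$, and show that $\supp(\alpha)\subseteq J^\perp\cup\Jsph^w\cup\JaffCox$. Since $r_{w^n\alpha}=w^n r_\alpha w^{-n}$, the hypothesis says that $w^n$ normalizes $\langle r_\alpha\rangle$, i.e.\ $w^n$ commutes with $r_\alpha$ (as $w^n r_\alpha w^{-n}\in\{r_\alpha\}$ — a reflection has infinite... no: $r_\alpha$ has order $2$ and $w^nr_\alpha w^{-n}=r_{w^n\alpha}=r_{\pm\alpha}=r_\alpha$). Set $K:=\supp(\alpha)$, which is irreducible by Lemma~\ref{Lemma: support of a root irreducible}(2). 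The first step is to understand how $K$ sits relative to $J=\supp(w)$. Decompose $J$ into its components and $w$ accordingly as $w=w_0w_1\cdots w_k$ with $w_0\in W_{\JaffCox}$ and $w_1,\dots,w_k$ the indefinite-type pieces (using the setup of \S\ref{Subsection: Standard elements} and Lemma~\ref{Lemma: straight element J essental}). I would argue that $r_\alpha$, commuting with all of $w$, must be ``block diagonal'' with respect to the partition of $S$ into $J^\perp$, the components of $J$, and $S\setminus(J\cup J^\perp)$: more precisely, I want to show $K\subseteq J^\perp$, or $K\subseteq\JaffCox$, or $K\subseteq J_i$ for a single indefinite component $J_i$.

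For this, the key observation is: if $K$ meets a component $J'$ of $J$ then $K\subseteq J'$. Indeed, suppose $s\in K\cap J'$. Since $K$ is irreducible and $w$ centralizes $r_\alpha$, projecting to the component $W_{J'}$ I get that $w_{J'}$ (the $J'$-part of $w$) commutes with the $W_{J'}$-part of $r_\alpha$; but more usefully, since $w$ is cyclically reduced with $\Pc(w)=W_J$ (Lemma~\ref{Lemma: straight element J essental}), and a straight element ``fills up'' its support, one can show $w$ does not centralize any nontrivial reflection whose support properly straddles a component boundary inside $J$. The cleanest route is: if $K\not\subseteq J\cup J^\perp$, pick $s\in K\setminus(J\cup J^\perp)$; then $s$ does not commute with some $j\in J$, but $r_\alpha$ commuting with $w$ forces (via Lemma~\ref{lemma:prelim_essential} applied to the essential set $J$, after noting $w\inv K w$-type constraints) a contradiction. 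So $K\subseteq J\cup J^\perp$, and then irreducibility of $K$ forces $K\subseteq J^\perp$ or $K$ contained in a single component of $J$ — either $K\subseteq$ (a component of) $\JaffCox$, giving conclusion (1), or $K\subseteq J_i$ for some indefinite component.

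In the remaining case $K\subseteq J_i$ with $J_i$ of indefinite type, I must show $K\subseteq\Jsph^w=I_1\cup\cdots\cup I_k$, i.e.\ $K\subseteq I_i$ where $W_{I_i}=P^{\max}_{w_i}$. Here is where standardness and the affine case both enter. Working inside $W_{J_i}$: $w_i$ is standard with $\Pc(w_i)=W_{J_i}$, and $w_i^n$ normalizes $\langle r_\alpha\rangle=\langle r_{\alpha_K}\rangle$ where $\alpha_K$ is $\alpha$ viewed in $W_{J_i}$. Then $w_i^n$ normalizes the parabolic closure $\Pc(r_{\alpha_K})$, which is a spherical (rank-one) parabolic unless $r_{\alpha_K}$ generates something bigger — but $\Pc(r_{\alpha_K})$ is exactly the standard-up-to-conjugacy rank... actually $\Pc(r_{\alpha_K})$ is a spherical parabolic (generated by one reflection, so dihedral or of type $A_1$, hence finite), and being normalized by $w_i$, by \cite[Theorem~C(1)]{Ma23} it is contained in $P^{\max}_{w_i}=W_{I_i}$; then $K=\supp(r_{\alpha_K})\subseteq\supp(\Pc(r_{\alpha_K}))\subseteq I_i\subseteq\Jsph^w$, giving (1). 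The one subtlety is that Lemma~\ref{Lemma: affine translations}(2) is phrased for the affine case and is presumably needed to handle affine components separately: if $K$ landed in an affine component $J'$ of $\JaffCox$ we are already done by (1), but Lemma~\ref{Lemma: affine translations}(2) also tells us that a root fixed by a power of a straight element in an affine Coxeter group has rank-$\geq 3$ support containing the axis — consistent with, and needed for, the bookkeeping when $w_0\neq 1$.

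\textbf{Main obstacle.} The delicate point is the ``block structure'' step: showing that a reflection $r_\alpha$ commuting with the straight element $w$ cannot have support straddling the boundary between a component of $J$ and its orthogonal complement or exterior. This needs that $w$, being straight with $\Pc(w)=W_J$, is ``generic enough'' in $W_J$ — one uses that $\supp(w)=J$ and the essentiality of $J$ (Lemmas~\ref{Lemma: straight element J essental} and \ref{lemma:prelim_essential}), plus the fact that if $r_\alpha$ commutes with $w$ then in particular $r_\alpha$ normalizes $\Pc(w)=W_J$, so $r_\alpha\in N_W(W_J)=W_J\times W_{J^\perp}$, whence $K=\supp(\alpha)\subseteq J\cup J^\perp$ immediately. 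That last observation actually makes the obstacle mild: from $r_\alpha\in W_J\times W_{J^\perp}$ and irreducibility of $K$, either $K\subseteq J^\perp$ or $K\subseteq J'$ for one component $J'$ of $J$, and then the standard/affine dichotomy finishes it via \cite[Theorem~C(1)]{Ma23} and Lemma~\ref{Lemma: affine translations}. So the real work is just assembling these pieces carefully, keeping track of the component decomposition of $w$.
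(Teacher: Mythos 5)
Your proposal is correct and, once the ``Main obstacle'' paragraph collapses the block-structure step to the observation that $r_\alpha$ normalizes $\Pc(w^n)=W_J$ and hence lies in $N_W(W_J)=W_J\times W_{J^\perp}$, it is essentially the paper's proof: irreducibility of $\supp(\alpha)$ confines it to $J^\perp$ or a single component of $J$, affine components land in case (1), and for an indefinite component standardness plus \cite[Theorem~C(1)]{Ma23} forces $\langle r_\alpha\rangle\leq P^{\max}_{w'}=W_{I}\leq W_{\Jsph^w}$. (Your side remark that Lemma~\ref{Lemma: affine translations}(2) is needed here is the only inaccuracy, but it is harmless since the affine case is already disposed of by conclusion (1).)
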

\begin{proof}
	Suppose $\supp(\alpha) \not\subseteq J^{\perp} \cup \Jsph^w \cup \JaffCox$. Assume for a contradiction that $w^n \alpha = \alpha$ for some $n \in \NN^*$. Then $r_{\alpha}$ commutes with $w^n$ and hence $r_{\alpha}$ normalizes $\Pc(w^n)$. As $w$ is straight and $J = \supp(w)$, the element $w^n$ is straight and $J = \supp(w^n)$. Now Lemma~\ref{Lemma: straight element J essental} yields that $\Pc(w^n) = W_J$ and that $J$ is essential. Lemma~\ref{lemma:prelim_essential} then implies that $r_{\alpha} \in N_W(W_J) = W_J \times W_{J^{\perp}}$. Hence $\supp(\alpha) \subseteq J \cup J^{\perp}$. Recall that by Lemma~\ref{Lemma: support of a root irreducible} the support of a root is irreducible. Using our assumption, we have $\supp(\alpha) \subseteq J$.  Let $J_\alpha \subseteq J$ be the component of $J$ with $\supp(\alpha) \subseteq J_\alpha$. Again, our assumption implies that $J_\alpha$ is non-affine. As $J$ is essential, $J_\alpha$ is indefinite. Let $w' \in W_{J_\alpha}$ and $w'' \in W_{J \setminus J_\alpha}$ with $w = w' w''$. Note that $\alpha = w^n \alpha = (w')^n (w'')^n \alpha = (w')^n \alpha$. As $w$ is standard, it follows that $w'$ is standard (cf.\ \S\ref{Subsection: Standard elements}). Let $P_{w'}^{\max}=P_{(w')^n}^{\max} = W_{I}$ ($I \subseteq J_\alpha$) be the largest spherical parabolic subgroup inside $W_{J_\alpha}$ which is normalized by $(w')^n$. As $(w')^n \alpha = \alpha$, the spherical parabolic subgroup $\langle r_{\alpha} \rangle$ is normalized by $(w')^n$. Hence $r_{\alpha} \in W_{I} \leq W_{\Jsph^w}$, as $I \subseteq \Jsph^w$. But $\supp(\alpha) \not\subseteq \Jsph^w$, which is a contradiction.
\end{proof}

\begin{definition}\label{definition:FPRS}
	We say that the completion $H$ of $\mc{D}$ has the \emph{(FPRS) property} if for any sequence of roots $(\alpha_n)_{n\geq 0}$ of $\Phi$ such that  $\lim_{n\to +\infty} \dist(1_W,\alpha_n)=+\infty$, we have $U_{-\alpha_n}\stackrel{n\to\infty}{\to}1$, i.e.\ for each open neighborhood $U$ of $1_H$ there exists $N_U \in \NN$ such that for all $n \geq N_U$ we have $U_{-\alpha_n} \subseteq U$.
\end{definition}

\begin{remark}
	Note that if $H$ is the geometric completion of the RGD system $\mc{D}$, then $H$ has the (FPRS) property if and only if $\mc{D}$ has the (FPRS) property in the sense of \cite[\S$2.1$]{CR09} (where this property was first introduced).
\end{remark}

\begin{remark}
	Property (FPRS) for RGD systems is a natural condition which is satisfied in a lot of cases. Let us mention the examples of RGD systems of Kac--Moody groups, those constructed in \cite{RR06} and almost all $2$-spherical RGD systems, where an RGD system of type $(W, S)$ is \emph{$2$-spherical}, if $m_{st} < \infty$ for all $s, t \in S$. We refer to \cite[Proposition~$4$]{CR09} for more details. However, there are examples of RGD systems which do not satisfy Property (FPRS) (cf.\ \cite[Remark before Lemma~$5$]{CR09} and \cite[Corollary~B]{BiRGD}, and also \cite[Theorem~G]{BiUncountablymanyRGD-systems} for the $2$-spherical case).
\end{remark}

\begin{proposition}\label{prop: root groups in the nub}
Assume that $H$ has the (FPRS) property. Let $w\in W$ be straight and standard, and let $J = \supp(w)$. Let $\alpha \in \Phi$ be such that $\{ w^z \mid z\in \ZZ \} \subseteq \alpha$. If $\supp(\alpha) \not\subseteq \Jperp \cup \Jsph^w \cup \JaffCox$, then  $U_{\alpha} \leq \con(\overline{w}) \cap \con(\overline{w}\inv) \leq \nub(\overline{w})$.
\end{proposition}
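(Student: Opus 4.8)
The plan is to show that $U_\alpha \le \con(\overline w)$ and $U_\alpha \le \con(\overline w\inv)$ separately; by symmetry (the hypotheses on $\alpha$ are invariant under replacing $w$ by $w\inv$, since $\{w^z\}=\{(w\inv)^z\}$ and $\supp(w)=\supp(w\inv)$) it suffices to treat $\con(\overline w)$, i.e.\ to prove that $\overline w^n U_\alpha \overline w^{-n} \to 1$ as $n\to\infty$. Since $U_\alpha$ is finite, it is enough to show that for each $u\in U_\alpha$ and each basic open neighbourhood of $1_H$ — namely $\Fix_H(F)$ for a finite set of chambers $F$ — we have $\overline w^n u \overline w^{-n}\in \Fix_H(F)$ for all large $n$. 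Equivalently, one shows $\overline w^n U_\alpha \overline w^{-n}\le H_d$ eventually for each fixed chamber $d$, which will follow once we understand which chambers $\overline w^n U_\alpha \overline w^{-n}$ fixes.

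The key geometric input is the following: since $\{w^z\mid z\in\ZZ\}\subseteq\alpha$, for every $n\in\NN$ we have $\overline w^{-n}$ sending $c$ into the root $\alpha$ (identifying $\Sigma$ with $\Sigma(W,S)$, the chamber $w^{-n}\in\alpha$), so $\overline w^{-n}c\in\alpha$; in fact I expect more, namely that $\dist(\overline w^{-n}c,\ -\alpha)\to\infty$, i.e.\ $\overline w^{-n}c$ moves deep into the interior of $\alpha$ as $n\to\infty$. This is where straightness of $w$ and the hypothesis $\supp(\alpha)\not\subseteq\Jperp\cup\Jsph^w\cup\JaffCox$ enter: by Lemma~\ref{Lemma: root groups in the nub}, the wall $\partial\alpha$ is not stabilized by any power of $w$, so the $w$-orbit $\{w^z c\}$ crosses from $-\alpha$ into $\alpha$ and then recedes from the wall, giving $\dist(w^{-n}c,-\alpha)\to\infty$ as $n\to+\infty$. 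Now conjugating a root group by an apartment element: $\overline w^n U_\alpha \overline w^{-n} = U_{w^n\alpha}$ up to the torus (more precisely $\overline w^n U_\alpha \overline w^{-n}=U_{w^n\alpha}$ since $N$ normalizes root groups appropriately via the action of $W$ on $\Phi$). The boundary panels of $w^n\alpha$ are the $w^n$-translates of those of $\alpha$; the chamber $c$ lies on the $\alpha$-side, and since $w^{-n}c$ recedes from $-\alpha$, the chamber $c$ recedes from $-(w^n\alpha)$, i.e.\ $\dist(c,-(w^n\alpha))\to\infty$. The group $U_{w^n\alpha}$ fixes the root $w^n\alpha$ pointwise and fixes every interior panel of $w^n\alpha$; hence it fixes the ball $B(c,r)$ as soon as $B(c,r)$ is contained in the set of chambers $d$ with $d$ in $w^n\alpha$ and no panel of $d$ a boundary panel of $w^n\alpha$ — which holds once $\dist(c,-(w^n\alpha))> r+1$. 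Therefore $U_{w^n\alpha}\le K_r'$ for $n$ large, where $K_r'$ is the pointwise stabilizer of $B(c,r)$.

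Finally I invoke the (FPRS) property to convert ``fixes a large ball'' into ``close to $1_H$'': one could argue directly that $\bigcap_r K_r'=\{1\}$ and that the $K_r'$ form a neighbourhood basis, but the clean route is to apply (FPRS) to the sequence of roots $-(w^n\alpha)$: we have just shown $\dist(1_W,\ -(w^n\alpha))=\dist(c,-(w^n\alpha))\to\infty$, hence $U_{(w^n\alpha)}=U_{-(-(w^n\alpha))}\to 1_H$ by Definition~\ref{definition:FPRS}, and since $\overline w^n u\overline w^{-n}\in U_{w^n\alpha}$ for $u\in U_\alpha$, this gives $\overline w^n u\overline w^{-n}\to 1_H$, i.e.\ $u\in\con(\overline w)$. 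Running the same argument with $w$ replaced by $w\inv$ yields $U_\alpha\le\con(\overline w\inv)$, and the inclusion $\con(\overline w)\cap\con(\overline w\inv)\le\nub(\overline w)=\overline{\con(\overline w)}\cap\overline{\con(\overline w\inv)}$ is immediate. The main obstacle I anticipate is the purely Coxeter-geometric claim that $\dist(w^{-n}c,-\alpha)\to\infty$ (equivalently that the number of walls separating $w^{-n}c$ from $-\alpha$ grows); this needs straightness of $w$ together with Lemma~\ref{Lemma: root groups in the nub} to ensure $\partial\alpha$ is not a ``periodic wall'' for $w$, and one must rule out the degenerate possibility that the orbit stays at bounded distance from the wall on the $\alpha$-side.
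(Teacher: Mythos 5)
Your proposal follows essentially the same route as the paper: invoke Lemma~\ref{Lemma: root groups in the nub} to rule out $w^{n}\alpha=\alpha$, deduce that $\dist(1_W,-w^{\pm n}\alpha)\to\infty$, and conclude by applying (FPRS) to the sequence of roots $-(w^{\pm n}\alpha)$ together with $\overline{w}^{n}U_{\alpha}\overline{w}^{-n}=U_{w^{n}\alpha}$. The one step you explicitly leave open --- that $\dist(w^{-n}c,-\alpha)\to\infty$ --- is settled in the paper by a counting argument rather than the dynamical picture you sketch: since $w^{n}\alpha\neq\alpha$ for all $n\in\NN^*$, the roots $w^{n}\alpha$ ($n\in\NN$) are pairwise distinct, and they are all positive because $w^{-n}\in\{w^z\mid z\in\ZZ\}\subseteq\alpha$ gives $1_W\in w^{n}\alpha$; as only finitely many walls of $\Sigma(W,S)$ meet a given ball around $1_W$, for each $d$ there are only finitely many roots $\beta$ with $\dist(1_W,-\beta)\leq d$, whence $\dist(1_W,-w^{n}\alpha)\to\infty$. (Your phrase that the orbit ``crosses from $-\alpha$ into $\alpha$'' is inaccurate --- the orbit lies entirely in $\alpha$ by hypothesis --- and straightness is not needed here beyond its role in Lemma~\ref{Lemma: root groups in the nub}; but nothing else in your argument depends on this.) With that step filled in as above, your proof is correct and coincides with the paper's.
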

\begin{proof}
Let $\epsilon\in\{\pm 1\}$. By Lemma~\ref{Lemma: root groups in the nub},  $w^{\epsilon n}\alpha\neq\alpha$ for all $n\in\NN^*$. As $w^{\epsilon n}\alpha\in\Phi_+$ for all $n\in\NN$ by assumption, $\lim_{n\to +\infty} \dist(1_W,-w^{\epsilon n}\alpha)=+\infty$. Hence $\overline{w}^{\epsilon n}U_{\alpha}\overline{w}^{-\epsilon n}=U_{w^{\epsilon n}\alpha}\to 1$ as $n\to\infty$ by the (FPRS) property, that is, $U_{\alpha}\leq\con(\overline{w}^{\epsilon})$.
\end{proof}

\section{The nub in maximal Kac--Moody groups}\label{Section: The nub in Kac--Moody groups}

In this section, we determine precisely the nub in $H$ of an element $\overline{w}$ with $w\in W$ straight when $H$ is the scheme-theoretic completion of a (minimal) Kac--Moody group over a finite field. We start with some preliminaries on Kac--Moody algebras and groups. General references for Kac--Moody algebras and groups are \cite{Kac90} and \cite{Ma18}.

\subsection{Preliminaries on Kac--Moody algebras and groups}\label{Section: Maximal Kac--Moody groups}

\subsection*{Generalized Cartan matrices}\label{subsubsection:GCM}

Throughout, we fix a generalized Cartan matrix (GCM) $A=(a_{ij})_{i,j\in I}$, as well as a realization $(\hh,\Pi,\Pi^{\vee})$ of $A$ in the sense of \cite[\S 1.1]{Kac90}, with set of \emph{simple roots} $\Pi=\{\alpha_i \ | \ i\in I\}$ and set of \emph{simple coroots} $\Pi^{\vee}=\{\alpha^{\vee}_i \ | \ i\in I\}$.

The \emph{Weyl group} $W = W(A)$ of $A$ is the subgroup of $\GL(\hh^*)$ generated by the \emph{simple reflections} $s_i$ ($i\in I$) defined by
$$s_i\co\hh^*\to\hh^*: \alpha\mapsto \alpha-\la\alpha,\alpha_i^{\vee}\ra\alpha_i.$$
The pair $(W, S:=\{s_i \ | \ i\in I\})$ is then a Coxeter system. When convenient, we will  identify a subset $J\subseteq I$ with $\{s_i \ | \ i\in J\}\subseteq S$ (writing for instance $W_J$ for the parabolic subgroup generated by $\{s_i \ | \ i\in J\}$).

We call $A$ \emph{indecomposable} if $(W, S)$ is irreducible. An indecomposable GCM $A$ can be of \emph{finite}, \emph{affine} or \emph{indefinite} type (see \cite[Chapter~4]{Kac90}). Accordingly, we call a subset $J\subseteq I$ of \emph{finite}, \emph{affine} or \emph{indefinite} type if the GCM $A_J:=(a_{ij})_{i,j\in J}$ is of that type. As before, we define the \emph{components} of $J\subseteq I$ as the subsets $J_1,\ldots,J_r\subseteq J$ such that $W_J=W_{J_1}\times\cdots\times W_{J_r}$ with $J_i$ irreducible for each $i$. Given $J\subseteq I$, we denote by $\Jaff$ the union of all the components of $J$ of affine type.

\begin{remark}\label{remark:JaffvsJaffCox}
	The GCM $A$ is of affine/indefinite type if and only if $(W,S)$ is of that type (in the sense of \S\ref{subsection:CoxSyst}), with one exception: when $A=\begin{psmallmatrix}2&a_{12}\\ a_{21}&2\end{psmallmatrix}$ with $a_{12}a_{21}>4$, then $A$ is of indefinite type, while $W$ is the infinite dihedral group, hence of affine type. Thus, for a subset $J\subseteq I$, we always have $\Jaff\subseteq\JaffCox$ but the inclusion might be proper.
\end{remark}

\subsection*{Kac--Moody root systems}\label{subsubsection:KMRS}
The \emph{Kac--Moody algebra} $\g(A)$ associated with $A$ admits a root space decomposition $\g(A)=\hh\oplus\bigoplus_{\alpha\in\Delta}\g_{\alpha}$ with respect to the adjoint action of the Cartan subalgebra $\hh$, with corresponding root spaces $$\g_{\alpha}:=\{x\in\g(A) \ | \ [h,x]=\alpha(h)x \ \forall h\in\hh\}$$ and root system $\Delta:=\{\alpha\in\hh^*\setminus\{0\} \ | \ \g_{\alpha}\neq\{0\}\}$. We also set $\g_A:=[\g(A),\g(A)]=\hh'\oplus \bigoplus_{\alpha\in\Delta}\g_{\alpha}$, where $\hh':=\sum_{i\in I}\mathbb{C}\alpha_i^{\vee}\subseteq\hh$.

Set $Q_+:=\bigoplus_{i\in I}\NN\alpha_i$. We let $\Delta^+:=\Delta\cap Q_+$ denote the set of \emph{positive roots}, so that $\Delta=\Delta^+\dot{\cup}(-\Delta^+)$. The \emph{height} of a root $\alpha=\pm\sum_{i\in I}n_i\alpha_i$ is $\height(\alpha):=\pm\sum_{i\in I}n_i\in\ZZ$. We also define the \emph{support} 
$$\supp(\alpha):=\{i\in I \ | \ n_i\neq 0\}\subseteq I$$
of an element $\alpha \in \pm Q_+$ (see also Remark~\ref{Remark: Two notions of support}). The support of a root is always connected. For a subset $J\subseteq I$, we set
$$\Delta(J):=\Delta\cap \bigoplus_{i\in J}\ZZ\alpha_i, \quad \Delta^+(J):=\Delta^+\cap\Delta(J), \quad \textrm{and}\quad \Delta^+_J:=\Delta^+\setminus\Delta^+(J).$$

The Weyl group $W$ stabilizes $\Delta$. One defines the sets of \emph{real} and \emph{imaginary} roots, respectively, as $$\Delta^{\re}:=W.\{\alpha_i \ | \ i\in I\}\subseteq\Delta\quad\textrm{and}\quad \Delta^{\im}:=\Delta\setminus\Delta^{\re}.$$
One also sets $\Delta^{\rep}:=\Delta^{\re}\cap\Delta^+$ and $\Delta^{\imp}:=\Delta^{\im}\cap\Delta^+$, as well as $\Delta^{\imp}(J) := \Delta^{\imp} \cap \Delta(J)$ for each $J \subseteq I$. 

\begin{remark}\label{Remark: Two notions of support}
Set $\Phi:=\Phi(W,S)$. There is a $W$-equivariant bijection $\Delta^{\re}\to\Phi:\alpha_i\mapsto \alpha_{s_i}$ mapping $\Delta^{\rep}$ onto $\Phi_+$. Identifying $\Delta^{\re}$ and $\Phi$, the two notions of support of a (real) root that we defined then coincide (cf.\ \cite[Exercise~$6.3$]{Ma18} and Lemma~\ref{Lemma: support of a root irreducible}).
\end{remark}

The set of positive imaginary roots can be described as $\Delta^{\imp}=\bigcup_{w\in W}wK_0$, where $$K_0:=\{\alpha\in Q_+ \setminus \{0\} \ | \ \textrm{$\supp(\alpha)$ is connected and} \ \la\alpha,\alpha_i^{\vee}\ra\leq 0 \ \forall i\in I\}.$$
Moreover, for any $\alpha\in\Delta^{\imp}$, there is a unique root $\alpha_{\min}\in W\alpha\cap K_0$, namely, $\alpha_{\min}$ is the unique element of minimal height in $W\alpha$. If $\alpha\in\Delta^{\imp}$, then $\supp(\alpha)$ is of affine or indefinite type. If $\alpha\in K_0$, then $\supp(\alpha)$ is of affine type if and only if
\begin{equation}\label{eqn:prelim:affine_roots}
	\la\alpha,\alpha_i^{\vee}\ra=0 \quad\textrm{for all $i\in\supp(\alpha)$.}
\end{equation}

\subsection*{Kac--Moody groups}\label{subsubsection:KMG}

Let $k=\mathbb{F}_q$ be a finite field of order $q$ and characteristic $p$. Let $\G_A$ be the constructive Tits functor of simply connected type associated to $A$ introduced by Tits (\cite{Ti87}), and let $G:=\G_{A}(k)$ be the corresponding \emph{minimal Kac--Moody group} over $k$. Thus $G$ is an amalgamated product of a (finite) \emph{torus} $T:=\T(k)$ exponentiating $\hh'$, and of the \emph{real root groups} $U_{\alpha}:=\U_{\alpha}(k)\cong (k,+)$ ($\alpha\in\Delta^{\re}$) exponentiating the real root spaces $\g_{\alpha}$. Then $\mc{D}_A := ( G, (U_{\alpha})_{\alpha \in \Delta^{\re}} )$ is an RGD system of type $(W,S)$, and $T=\bigcap_{\alpha\in\Delta^{\re}}N_G(U_{\alpha})$. We keep the notations $U_{\pm}=\langle U_{\alpha} \ | \ \alpha\in\Delta^{\mathrm{re}\pm}\rangle$ and $N$ from \S\ref{subsection:RGDsystems}, and denote as before by $\mc{B}=\mc{B}(\mc{D}_A)$ the associated building, with fundamental chamber $c$ and fundamental apartment $\Sigma\approx\Sigma(W,S)$. The kernel of the action map $\rho\co G\to\Aut_0(\mc{B})$ coincides with the center $\mathcal{Z}(G)\subseteq T$ of $G$.

We also consider the \emph{maximal Kac--Moody group} $G^{\pma}:=\G_{A}^{\pma}(k)$ of simply connected type associated to $A$, which coincides with the scheme-theoretic completion of $G$ (see \cite[\S 8.5]{Ma18}). Thus $G^{\pma}$ is a first-countable tdlc group constructed as an amalgamated product of $G$ with a certain completion $U^{\map}:=\U^{\map}_A(k)$ of $U_+$. The group $U^{\map}$ is topologically generated by the real root groups $U_{\alpha}$ ($\alpha\in\Delta^{\rep}$), together with the \emph{imaginary root groups} $U_{\beta}$ ($\beta\in\Delta^{\imp}$) exponentiating the subalgebras $\oplus_{n\in\NN^*}\g_{n\beta}$. 

More precisely, let $\g_{\ZZ}=\hh'_{\ZZ}\oplus\bigoplus_{\alpha\in\Delta}\g_{\alpha\ZZ}$ be the $\ZZ$-form of $\g_A$ defined in \cite[Definition~7.5]{Ma18}, and fix a $\ZZ$-basis $\mc{B}_{\ZZ}$ of $\n^+_{\ZZ}=\bigoplus_{\alpha\in\Delta^+}\g_{\alpha\ZZ}$  consisting of homogeneous elements, that is, of elements $x\in\g_{\alpha\ZZ}$ for some $\alpha\in\Delta^+$ (in which case we set $\deg(x):=\alpha$). Choose a total order on $\mc{B}_{\ZZ}$. Then each element $g$ of $U^{\map}$ can be written in a unique way as a product of ``twisted exponentials'' $[\exp](\lambda_xx)$ with $\lambda_x\in k$ and $x\in \mc{B}_{\ZZ}$ (see \cite[Theorem~8.51]{Ma18}):
$$g=\prod_{x\in\mc{B}_{\ZZ}}[\exp](\lambda_xx)\quad\textrm{for some $\lambda_x\in k$},$$
where the product is taken in the given order on $\mc{B}_{\ZZ}$ (this is called the \emph{standard form} of $g$).

More generally, if $\Psi\subseteq\Delta^+$ is a \emph{closed} set of roots (i.e.\ such that $\alpha+\beta\in\Psi$ whenever $\alpha,\beta\in\Psi$ and $\alpha+\beta\in\Delta$), then $\bigoplus_{\alpha\in\Psi}\g_{\alpha\ZZ}$ is a subalgebra of $\n^+_{\ZZ}$ with $\ZZ$-basis $\mc{B}_{\Psi}:=\{x\in \mc{B}_{\ZZ} \ | \ \deg(x)\in\Psi\}$, and the subset $U^{\ma}_{\Psi} := \U^{\ma}_{\Psi}(k)$ of $U^{\map}$ consisting of those elements $g$ of the form $g=\prod_{x\in\mc{B}_{\Psi}}[\exp](\lambda_xx)$ for some $\lambda_x\in k$ is a closed subgroup of $U^{\map}$. For instance, if $\beta\in\Delta^{\imp}$, then $\Psi=\NN^*\delta$ is closed and we set $U_{\beta}:=U^{\ma}_{\NN^*\beta}$, while if $\alpha\in \Delta^{\rep}$, then $\Psi=\{\alpha\}$ is closed and we have $U_{\alpha}=U^{\ma}_{\{\alpha\}}$. In particular, for any closed set $\Psi \subseteq \Delta^+$, the group $U^{\ma}_{\Psi}$ is topologically generated by the root groups $U_{\alpha}$ with $\alpha \in \Psi$. Note also that for $\Psi_1, \Psi_2 \subseteq \Delta^+$ closed we have $U^{\ma}_{\Psi_1} \cap U^{\ma}_{\Psi_2} = U^{\ma}_{\Psi_1 \cap \Psi_2}$.

\begin{lemma}[{\cite[Lemma~$8.58(2)$]{Ma18}}]\label{Lemma: Unique decomposition}
	Let $\Psi' \subseteq \Psi \subseteq \Delta^+$ be closed sets of roots. If $\Psi \setminus \Psi'$ is closed, then we have a unique decomposition $U^{\ma}_{\Psi} = U^{\ma}_{\Psi'} \cdot U^{\ma}_{\Psi \setminus \Psi'}$.
\end{lemma}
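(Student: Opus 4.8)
The plan is to derive the statement directly from the uniqueness of the standard form of elements of $U^{\map}$, together with the intersection identity $U^{\ma}_{\Psi_1}\cap U^{\ma}_{\Psi_2}=U^{\ma}_{\Psi_1\cap\Psi_2}$ (for $\Psi_1,\Psi_2$ closed) recorded just above. The first step is to recall that each of the groups $U^{\ma}_\Psi$, $U^{\ma}_{\Psi'}$ and $U^{\ma}_{\Psi\setminus\Psi'}$ is the closed subgroup of $U^{\map}$ topologically generated by the root groups $U_\alpha$ indexed by the corresponding subset of $\Delta^+$, and hence does not depend on the chosen total order on $\mc{B}_{\ZZ}$. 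Replacing this order by another one if necessary — which changes none of these three subgroups — I may therefore assume that every $x\in\mc{B}_{\ZZ}$ with $\deg(x)\in\Psi'$ precedes every $y\in\mc{B}_{\ZZ}$ with $\deg(y)\in\Psi\setminus\Psi'$; for instance, list the elements of $\mc{B}_{\Psi'}$ first, then those of $\mc{B}_{\Psi\setminus\Psi'}$, then the remaining basis vectors in any order. With such an order, $\mc{B}_\Psi$ is the concatenation of $\mc{B}_{\Psi'}$ followed by $\mc{B}_{\Psi\setminus\Psi'}$.

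Next I would check the factorization $U^{\ma}_\Psi=U^{\ma}_{\Psi'}\cdot U^{\ma}_{\Psi\setminus\Psi'}$. The inclusion ``$\supseteq$'' is immediate: since $\Psi'$ and $\Psi\setminus\Psi'$ are closed subsets of $\Psi$, both $U^{\ma}_{\Psi'}$ and $U^{\ma}_{\Psi\setminus\Psi'}$ are subgroups of the group $U^{\ma}_\Psi$, so their product lies in $U^{\ma}_\Psi$. For ``$\subseteq$'', let $g\in U^{\ma}_\Psi$ and write its standard form $g=\prod_{x\in\mc{B}_\Psi}[\exp](\lambda_x x)$ with $\lambda_x\in k$; splitting this ordered product at the boundary between $\mc{B}_{\Psi'}$ and $\mc{B}_{\Psi\setminus\Psi'}$ gives $g=g_1 g_2$ with $g_1=\prod_{x\in\mc{B}_{\Psi'}}[\exp](\lambda_x x)$ and $g_2=\prod_{x\in\mc{B}_{\Psi\setminus\Psi'}}[\exp](\lambda_x x)$, and each factor is written precisely in the standard form defining the subgroup it belongs to, so $g_1\in U^{\ma}_{\Psi'}$ and $g_2\in U^{\ma}_{\Psi\setminus\Psi'}$.

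Uniqueness of the decomposition is then formal: if $g_1 g_2=g_1' g_2'$ with $g_1,g_1'\in U^{\ma}_{\Psi'}$ and $g_2,g_2'\in U^{\ma}_{\Psi\setminus\Psi'}$, then $(g_1')\inv g_1=g_2'(g_2)\inv$ lies in $U^{\ma}_{\Psi'}\cap U^{\ma}_{\Psi\setminus\Psi'}=U^{\ma}_{\Psi'\cap(\Psi\setminus\Psi')}=U^{\ma}_{\varnothing}=\{1\}$, whence $g_1=g_1'$ and $g_2=g_2'$ (alternatively, with the chosen order $g_1 g_2$ is already the standard form of $g$, so this follows at once from the uniqueness of standard forms). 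The only step calling for genuine care is the legitimacy of reordering $\mc{B}_{\ZZ}$, which is why I would emphasize the order-independent description of the subgroups $U^{\ma}_{(-)}$ at the outset; granting this, together with the existence and uniqueness of standard forms in $U^{\map}$ (part of its construction, \cite[Theorem~8.51]{Ma18}), no further obstacle remains.
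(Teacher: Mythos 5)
Your argument is correct. The paper itself gives no proof of this lemma --- it is quoted verbatim from \cite[Lemma~8.58(2)]{Ma18} --- and your reconstruction is essentially the argument used there: choose the total order on $\mc{B}_{\ZZ}$ so that $\mc{B}_{\Psi'}$ precedes $\mc{B}_{\Psi\setminus\Psi'}$, split the standard form, and get uniqueness from $U^{\ma}_{\Psi'}\cap U^{\ma}_{\Psi\setminus\Psi'}=U^{\ma}_{\varnothing}=\{1\}$. The one point genuinely needing care, which you correctly flag, is that $U^{\ma}_{\Psi}$ does not depend on the chosen order; this is covered by the order-free characterization of $U^{\ma}_{\Psi}$ as the closed subgroup topologically generated by the $U_{\alpha}$, $\alpha\in\Psi$, which the paper records just before the lemma, so no gap remains.
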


The group $U^{\map}$ is compact open in $G^{\pma}$, and its normal subgroups $U^{\ma}_n:=\U^{\ma}_{\Psi(n)}(k)$ ($n\in\NN$), where $\Psi(n):=\{\alpha\in\Delta^+ \ | \ \height(\alpha)\geq n\}$, form a basis of identity neighbourhoods in $G^{\pma}$. The torus $T$ normalizes each root group $U_{\alpha}$ ($\alpha\in\Delta^{\re}\cup\Delta^{\imp}$). The couple $(B^{\map}:=TU^{\map},N)$ is a BN-pair for $G^{\pma}$, with same associated building $\mc{B}$. The action map $G^{\pma}\to\Aut_0(\mc{B})$ is continuous and restricts to $\rho\co G\to\Aut_0(\mc{B})$.

The closure $\overline{G}$ of $G$ in $G^{\pma}$ coincides with $G^{\pma}$ 
as soon as 
$$p>M_A:=\max_{i\neq j}|a_{ij}|,$$
but may be properly contained in $G^{\pma}$ otherwise.

\begin{remark}\label{remark: schematic-completion is Kac--Moody completion}
	The groups $G_A^{\pma}$ and $\overline{G}$ are completions of the RGD system $\mc{D}_A$ in the sense of Section~\ref{section:completionsRGD}. Moreover, $U^{\map}$ is an admissible subgroup of $G^{\pma}$ (see Lemma~\ref{lemma:criterion_unipotent} with $N_s = U^{\ma}_{\Delta^+ \setminus \{ \alpha_s \}}$ and Remark~\ref{Remark: Two notions of support}). Similarly, the closure $\overline{U_+}$ of $U_+$ in $U^{\map}$ is an admissible subgroup of $\overline{G}$ (see Lemma~\ref{lemma:criterion_unipotent}). 
	Note also that both $G^{\pma}$ and $\overline{G}$ have the (FPRS) property from Definition~\ref{definition:FPRS}, by definition of the topology on $G^{\pma}$. 
\end{remark}

\begin{remark}\label{remark:wtildeconjugationaction}
	We fix a section $\widetilde{\cdot}: W \to N, w \mapsto \widetilde{w}$ of the homomorphism $N \to W$ as in \cite[Definition~$7.58$]{Ma18}: for each $i\in I$, it maps $s_i$ to the element $\widetilde{s}_i\in U_{\alpha_i}U_{-\alpha_i}U_{\alpha_i}$ from \cite[Equation~$7.34$ on page $135$]{Ma18}, and is such that $\widetilde{w} := \widetilde{s}_{i_1} \cdots \widetilde{s}_{i_r}$ is independent of the choice of reduced decomposition $w = s_{i_1} \cdots s_{i_r}$ for $w\in W$. The conjugation action of $N=\langle \{\widetilde{s}_i \ | \ i\in I\}\cup T\rangle\leq G$ on root groups is then compatible with the $W$-action on roots:
	\begin{equation}\label{eqn:wUalphawinv}
		\widetilde{w}U_{\alpha}\widetilde{w}\inv=U_{w\alpha}\quad\textrm{for all $w\in W$ and $\alpha\in \Delta^{\re}\cup\Delta^{\imp}$.}
	\end{equation}
	More precisely, $N$ admits an adjoint action $\Ad_k\co N\to\Aut(\g_k):\widetilde{w}\mapsto w^*$ on $\g_k:=\g_{\ZZ}\otimes_{\ZZ}k$, such that
	\begin{equation}
		w^*\g_{\alpha k}=\g_{w\alpha k}\quad\textrm{for all $w\in W$ and $\alpha\in \Delta$,}
	\end{equation}
	where $\g_{\alpha k}:=\g_{\alpha\ZZ}\otimes_{\ZZ}k$. Moreover, if $\alpha\in\Delta^+$ and $x\in \mc{B}_{\ZZ}\cap \g_{\alpha\ZZ}$, then for all $\lambda_x\in k$ we have
	\begin{equation}
	\widetilde{w}[\exp](\lambda_xx)\widetilde{w}\inv=[\exp](\lambda_xw^*x)\quad\textrm{for all $w\in W$}
	\end{equation}
	(note that if $\alpha\in\Delta^+$ is such that $w\alpha\in\Delta^-$, then necessarily $\alpha\in \Delta^{\rep}$ and the notation $[\exp](\lambda_xw^*x)\in U_{w\alpha}$ still makes sense for $x\in\g_{\alpha\ZZ}$, as in this case $k\to U_{w\alpha}:\lambda\mapsto [\exp](\lambda w^*x)$ is just a parametrisation of $U_{w\alpha}$ as in \cite[\S7.4.3 equation (7.26)]{Ma18}). 
	
	Finally, note that if $K \subseteq I$, then $U^{\ma}_{\Delta^+_K}$ is normalized by $\widetilde{w}$ for any $w\in W_K$ by (\ref{eqn:wUalphawinv}). As the $K$-residue $R_K(c)$ of $\mathcal B$ is given by $R_K(c) = U^{\ma}_{\Delta^+(K)}W_K c$, and as $U^{\ma}_{\Delta^+_K}$ is normalized by $U^{\ma}_{\Delta^+(K)}$  (see \cite[Lemma~$8.58(4)$]{Ma18}), we deduce that
	\begin{equation}\label{equation:UmaDelta+subK fixes R_K(c)}
		U^{\ma}_{\Delta^+_K} \leq \Fix(R_K(c)).
	\end{equation}
\end{remark}

\begin{example}\label{example:Deltawdecomposition}
For $w\in W$, the sets $\Delta_w:=\{\alpha\in\Delta^+ \ | \ w\inv \alpha\in\Delta^-\}$ and $\Delta^+\setminus\Delta_w$ are both closed. In particular, there is a unique decomposition $U^{\map}=U^{\ma}_{\Delta_w}\cdot U^{\ma}_{\Delta^+\setminus\Delta_w}$ by Lemma~\ref{Lemma: Unique decomposition}. In view of Remark~\ref{remark:wtildeconjugationaction} and the fact that $U_- \cap U^{\map} = \{1\}$ (see \cite[Theorem~8.78 and Definition~B.30 axiom (RT3)]{Ma18}), we then have
\begin{equation}
U^{\ma}_{\Delta^+\setminus\Delta_w}=U^{\map}\cap \widetilde{w} U^{\map}\widetilde{w}\inv.
\end{equation}
\end{example}

\subsection{Lower bound for the nub}
We start by obtaining a lower bound for the nub of an element $\widetilde{w}$ with $w\in W$ straight, by strengthening the results of \S\ref{subsection:contractionrootgroups}, taking this time into account imaginary root groups as well.

By Selberg's lemma, there exists a torsion-free finite index normal subgroup $W_0$ of $W$, which we fix in the sequel. For $w\in W$ we define the \emph{essential support} of $w$ as
\[ \esupp(w) := \bigcap_{n\in \NN} \supp(w^n) \subseteq S. \]

\begin{lemma}\label{lemma:essssup}
	Let $w_0\in W_0$ be straight, and let $w\in W$ be conjugate to $w_0$. Then:
	\begin{enumerate}
		\item $\supp(w_0)\subseteq \esupp(w)$.
		
		\item $\esupp(w)$ is essential and there is some $N\in\NN^*$ such that $\esupp(w)=\supp(w^{Nn})$ for all $n\in\NN^*$.
	\end{enumerate}
\end{lemma}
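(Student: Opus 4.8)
The plan is to settle (2) first (it does not use (1)), splitting it into a combinatorial stabilization and a short torsion argument, and then to deduce (1) from a rigidity property of essential parabolic subgroups. For the stabilization: since $S$ is finite and $\supp(w^{ab})\subseteq\supp(w^{a})$ for all $a,b\in\NN^{*}$ (as $w^{ab}=(w^{a})^{b}$), I would choose $N\in\NN^{*}$ with $\supp(w^{N})$ minimal for inclusion among the sets $\supp(w^{n})$, $n\in\NN^{*}$. For any $n$, the set $\supp(w^{\mathrm{lcm}(N,n)})$ is contained in both $\supp(w^{N})$ and $\supp(w^{n})$, so by minimality it equals $\supp(w^{N})$; hence $\supp(w^{N})\subseteq\supp(w^{n})$ for all $n$. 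Intersecting over $n$ gives $\esupp(w)=\supp(w^{N})$, and then $\supp(w^{Nn})=\supp(w^{N})=\esupp(w)$ for every $n\in\NN^{*}$ (using $\supp(w^{N})\subseteq\supp(w^{Nn})\subseteq\supp(w^{N})$). This is the ``$\exists N$'' part of (2).

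For essentiality: as $W_{0}\trianglelefteq W$ and $w$ is conjugate to $w_{0}\in W_{0}$, the element $w$ lies in the torsion-free group $W_{0}$ and is nontrivial, so $w$ has infinite order; hence $w^{N}\neq 1$ and $\esupp(w)\neq\varnothing$. If some component $K'$ of $K:=\esupp(w)$ were spherical, I would write $w^{N}=w_{1}w_{2}$ with commuting factors $w_{1}\in W_{K'}$ and $w_{2}\in W_{K\setminus K'}$; taking $m:=|W_{K'}|$ we get $w^{Nm}=w_{2}^{m}$, so $\supp(w^{Nm})\subseteq K\setminus K'\subsetneq K$, contradicting $\supp(w^{Nm})=\esupp(w)=K$. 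Thus $\esupp(w)$ is essential, which completes (2).

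Now for (1): fix $g\in W$ with $w=gw_{0}g\inv$. Since $w_{0}$ is straight, so is $w_{0}^{n}$, with $\supp(w_{0}^{n})=J:=\supp(w_{0})$, $\Pc(w_{0}^{n})=W_{J}$, and $J$ essential (Lemma~\ref{Lemma: straight element J essental}). By $W$-equivariance of the parabolic closure, $\Pc(w^{n})=gW_{J}g\inv$; since $w^{n}\in W_{\supp(w^{n})}$ forces $\Pc(w^{n})\subseteq W_{\supp(w^{n})}$, intersecting over $n$ gives $gW_{J}g\inv\subseteq W_{\esupp(w)}$. So (1) follows from the statement: \emph{if $J\subseteq S$ is essential and $gW_{J}g\inv\subseteq W_{K}$ for some $K\subseteq S$, then $J\subseteq K$}.

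Proving this rigidity statement is the main obstacle. The approach I would take: $gW_{J}g\inv$ is a parabolic subgroup of $W$ contained in $W_{K}$, hence a parabolic subgroup of the Coxeter system $(W_{K},K)$, so $gW_{J}g\inv=hW_{L}h\inv$ for some $h\in W_{K}$ and $L\subseteq K$; in particular $W_{J}$ is conjugate to $W_{L}$ in $W$, say $xW_{J}x\inv=W_{L}$ with $x$ of minimal length in $xW_{J}$. The canonical simple system of the reflection subgroup $xW_{J}x\inv$ is then $xJx\inv$, which must coincide with the canonical simple system $L$ of $W_{L}$; thus $x\inv$ conjugates $J$ into $S$, and Lemma~\ref{lemma:prelim_essential} applied to the essential set $J$ yields $x\inv\in W_{J^{\perp}}$. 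Hence $x$ centralizes $W_{J}$, so $W_{L}=xW_{J}x\inv=W_{J}$ and $J=L\subseteq K$. (Both ``a parabolic subgroup of $W$ contained in $W_{K}$ is a parabolic subgroup of $W_{K}$'' and the conjugation-equivariance of canonical simple systems of reflection subgroups are standard facts; alternatively one may hope to quote the needed rigidity for essential parabolics directly from \cite{CMR22}.) This completes (1), and with it the lemma.
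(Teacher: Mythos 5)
Your proof of part (2) is correct and essentially the paper's argument: you extract a power $N$ with $\supp(w^N)=\esupp(w)$ (via minimality of $\supp(w^N)$ and an $\operatorname{lcm}$ trick, where the paper takes $N=M!$ for a suitable $M$ — the same idea), and you kill a putative spherical component by raising to the power $|W_{K'}|$, exactly as in the paper; you even make explicit the nonemptiness of $\esupp(w)$ (via torsion-freeness of $W_0$), which the paper leaves implicit. Part (1), however, is a genuinely different route. The paper cites \cite[Corollary~B and Remark~1]{Ma14b}: since $w_0^n$ is straight, hence cyclically reduced, a reduced word for $w_0^n$ is obtained from one for $w^n$ by cyclic shifts, braid moves and $ss$-cancellations, none of which introduce new letters, so $\supp(w_0)=\supp(w_0^n)\subseteq\supp(w^n)$ immediately. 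You instead pass through parabolic closures: $\Pc(w^n)=g\Pc(w_0^n)g\inv=gW_Jg\inv\subseteq W_{\supp(w^n)}$ by Lemma~\ref{Lemma: straight element J essental} and conjugation-equivariance of $\Pc$, and then invoke a rigidity statement — an essential standard parabolic conjugated into $W_K$ must have its type contained in $K$ — which you derive from Lemma~\ref{lemma:prelim_essential} together with two facts not established in the paper (a parabolic of $W$ contained in $W_K$ is a parabolic of $(W_K,K)$; the canonical simple system of $xW_Jx\inv$ for $x$ minimal in $xW_J$ is $xJx\inv$). Both facts are indeed standard (Deodhar/Dyer/Howlett on reflection and parabolic subgroups), and the coset/double-coset issue is harmless here since $xW_J=W_Lx$, so your argument is sound; the trade-off is that the paper's route is a one-line citation tailored to cyclically reduced elements, whereas yours is independent of \cite{Ma14b} but imports outside structure theory of parabolic subgroups that would need a precise reference (as you note, \cite{CMR22} is the natural place to look).
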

\begin{proof}
	(1) Let $n\in\NN$ and $w\in W$ be conjugate to $w_0$. Note that $w_0^n$ is straight, hence cyclically reduced. In the terminology of \cite{Ma14b}, it then follows from \cite[Corollary~B and Remark~1]{Ma14b} that a reduced decomposition of $w_0^n$ can be obtained from a reduced decomposition of $w^n$ by performing cyclic shifts, braid relations and $ss$-cancellations. In particular, $\supp(w_0)=\supp(w_0^n)\subseteq\supp(w^n)$, proving $(a)$.
	
	(2) Let $M\in\NN^*$ such that $\esupp(w)=\bigcap_{k=1}^M\supp(w^k)$ and set $N:=M!$. Then $\supp(w^{Nn})\subseteq\supp(w^N)\subseteq \esupp(w)$ and hence $\esupp(w)=\supp(w^{Nn})$ for all $n\in\NN^*$. Assume now for a contradiction that $ \esupp(w)=\supp(w^{N})$ has a spherical component $J$, and let $N_J$ be the order of $W_J$. Then $\esupp(w)=\supp(w^{N_JN})$ intersects $J$ trivially, a contradiction.
\end{proof}

\begin{lemma}\label{lemma:contrim}
	Let $w_0\in W_0$ be straight and let $J = \supp(w_0)$. Let $\alpha\in\Delta^{\imp}$. Let $v\in W$ such that $\alpha=v\alpha_{\min}$, and set $J_v:=\esupp(v\inv w_0v)$. Then $J_v\supseteq J$, and one of the following holds:
	\begin{enumerate}
		\item
		$w_0^n\alpha\neq\alpha$ for all $n\in\NN^*$.
		\item
		$\supp(\alpha_{\min})\subseteq J_v^\perp\subseteq J^\perp$.
		\item
		$\supp(\alpha_{\min})$ is an affine component of $J_v$. In addition, $\supp(\alpha_{\min})$ is either a component of $J$ or contained in $J^\perp$.
	\end{enumerate}
\end{lemma}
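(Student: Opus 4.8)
The plan is to first settle the inclusion $J_v\supseteq J$ and then analyse the situation in which $w_0$ \emph{does} fix $\alpha$. Since $W_0\trianglelefteq W$, the element $v\inv w_0v$ lies in $W_0$ and is conjugate to the straight element $w_0$, so Lemma~\ref{lemma:essssup}(1) applied to $w_0$ and $w:=v\inv w_0v$ gives $J=\supp(w_0)\subseteq\esupp(v\inv w_0v)=J_v$. Now suppose we are not in case~(1), i.e.\ $w_0^n\alpha=\alpha$ for some $n\in\NN^*$; conjugating by $v$ this reads $(v\inv w_0v)^n\alpha_{\min}=\alpha_{\min}$. Using Lemma~\ref{lemma:essssup}(2), fix $N\in\NN^*$ with $\supp((v\inv w_0v)^{Nm})=J_v$ for all $m\geq1$ and set $u:=(v\inv w_0v)^{Nn}=v\inv w_0^{Nn}v$. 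Then $u\alpha_{\min}=\alpha_{\min}$, $\supp(u)=\esupp(u)=J_v$, and $u$ is conjugate to the straight element $w_0^{Nn}$. Write $K:=\supp(\alpha_{\min})$ and $\alpha_{\min}=\sum_{i\in K}c_i\alpha_i$ with all $c_i\geq1$.

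The key input is that $-\alpha_{\min}$ is a dominant element of $\hh^*$ (because $\alpha_{\min}\in K_0$ means $\la\alpha_{\min},\alpha_i^\vee\ra\leq 0$ for all $i$), so by the standard description of stabilizers of dominant weights (see \cite{Kac90}) one has $\Stab_W(\alpha_{\min})=W_{I_0}$ with $I_0:=\{i\in I\mid\la\alpha_{\min},\alpha_i^\vee\ra=0\}$. Hence $u\in W_{I_0}$ and $J_v=\supp(u)\subseteq I_0$. Two remarks on $I_0$: first $K^\perp\subseteq I_0$, and $I_0\cap K=K$ precisely when $K$ is of affine type, by~\eqref{eqn:prelim:affine_roots}; second, every $i\in I\setminus K$ adjacent to $K$ satisfies $\la\alpha_{\min},\alpha_i^\vee\ra=\sum_{k\in K}c_k\la\alpha_k,\alpha_i^\vee\ra<0$ (a sum of non-positive terms, one of which is negative), so $i\notin I_0$. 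It follows that no connected subset of $I_0$ can meet both $K$ and $I\setminus(K\cup K^\perp)$; in particular, each component of $J_v$ is either contained in $K$, or disjoint from $K$ and not adjacent to $K$.

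If no component of $J_v$ meets $K$, then $J_v$ contains no vertex of $K$ and no vertex adjacent to $K$, so $K\subseteq J_v^\perp$; as $J\subseteq J_v$ yields $J_v^\perp\subseteq J^\perp$, we are in case~(2). Otherwise some component $J_v'$ of $J_v$ satisfies $\varnothing\neq J_v'\subseteq K$. Put $\beta:=\sum_{i\in J_v'}c_i\alpha_i$ and $\gamma:=\alpha_{\min}-\beta$; since $J_v'\subseteq I_0$, for $j\in J_v'$ we get $\la\beta,\alpha_j^\vee\ra=-\la\gamma,\alpha_j^\vee\ra=-\sum_{i\in K\setminus J_v'}c_i\la\alpha_i,\alpha_j^\vee\ra\geq 0$, so $\beta$ is dominant for $W_{J_v'}$ and has full support $J_v'$. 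I claim that, $J_v'$ being essential (it is a component of the essential set $J_v$), this forces $J_v'$ to be of affine type with $\beta$ a positive multiple of the null root of $W_{J_v'}$. Granting this, $\la\beta,\alpha_j^\vee\ra=0$ for all $j\in J_v'$; were $J_v'\subsetneq K$, connectedness of $K$ would give $i\in K\setminus J_v'$ adjacent to some $j\in J_v'$, whence $\la\beta,\alpha_j^\vee\ra\geq c_i\cdot(-\la\alpha_i,\alpha_j^\vee\ra)>0$, a contradiction; thus $J_v'=K$ and $K$ is an affine component of $J_v$, the first assertion of~(3). For the ``in addition'' clause: each component of $J$ is connected, hence contained in a single component of $J_v$ (as $J\subseteq J_v$), so $J\cap K$ is a union of components of $J$ lying inside $K$; but $K$ is connected of affine type, so every proper connected subdiagram of $K$ is spherical, while components of $J$ are non-spherical ($J$ essential), whence any component of $J$ inside $K$ equals $K$. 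Therefore either $K$ is a component of $J$, or $J\cap K=\varnothing$, in which case $K$ (a component of $J_v$ containing no vertex of $J\subseteq J_v$) has no neighbour in $J$, i.e.\ $K\subseteq J^\perp$.

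The main obstacle is the claim highlighted above, namely that an essential irreducible diagram carrying a full-support dominant element of its root lattice must be of affine type with that element proportional to the null root. The affine case reduces to the familiar statement that a dominant element of the root lattice of an affine type has level $0$ and is therefore a positive multiple of $\delta$; the genuine difficulty is to exclude the indefinite case, and I expect this to follow from the fact that the $J_v'$-component of $u$ is an infinite-order element conjugate to a straight element which nevertheless fixes the nonzero weight $\beta$ --- incompatible with the ``hyperbolic'' behaviour of straight elements of indefinite type on their geometric representation (no nonzero fixed vectors outside the radical) --- a point for which one may invoke the structure theory of straight elements developed in \cite{Ma23}.
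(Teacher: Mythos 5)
Your overall strategy is close to the paper's: both arguments first establish that $\langle\alpha_{\min},\alpha_i^\vee\rangle=0$ for all $i\in J_v$ (you do this via the stabilizer theorem for dominant weights applied to $-\alpha_{\min}\in K_0$, the paper via the identity $0=\alpha_{\min}-w^n\alpha_{\min}=\sum_t\langle\alpha_{\min},\alpha_{i_t}^\vee\rangle\beta_t$ for a reduced word of $w^n=(v\inv w_0v)^n$; both are valid), and then force $\supp(\alpha_{\min})$ to be contained in $J_v^\perp$ or to be an affine component. Your observation that $I_0\setminus K\subseteq K^\perp$, the resulting dichotomy on components of $J_v$, and your derivation of the ``in addition'' clause of (3) (every proper connected subdiagram of an affine diagram is spherical, while components of $J$ are non-spherical) are all correct.

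There is, however, a genuine gap at the step you yourself flag: the claim that an irreducible essential $J_v'$ carrying $\beta=\sum_{i\in J_v'}c_i\alpha_i$ with all $c_i>0$ and $\langle\beta,\alpha_j^\vee\rangle\geq 0$ for all $j\in J_v'$ must be of affine type with $\langle\beta,\alpha_j^\vee\rangle=0$ is left unproven, and the route you sketch for excluding the indefinite case (hyperbolicity of straight elements, via \cite{Ma23}) is both unnecessary and shaky: $u=v\inv w_0^{Nn}v$ is only \emph{conjugate} to a straight element, so its $W_{J_v'}$-component need not be straight, and it is not clear that this component fixes $\beta$ (only $u$ fixes $\alpha_{\min}$). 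The claim is in fact an immediate consequence of the trichotomy of \cite[Theorem~4.3]{Kac90}, which is exactly the tool the paper uses at the corresponding step: writing $c=(c_i)_{i\in J_v'}$, the condition $\langle\beta,\alpha_j^\vee\rangle\geq 0$ reads $A_{J_v'}c\geq 0$ with $c>0$; for an indecomposable GCM of indefinite type this forces $c=0$, a contradiction; the finite-type case is excluded since $J_v'$ is a component of the essential set $J_v$; and in the affine case $A_{J_v'}c\geq 0$ forces $A_{J_v'}c=0$, i.e.\ $\langle\beta,\alpha_j^\vee\rangle=0$ for all $j$. With that citation your argument closes; without it, the point of the lemma where affineness actually comes from is missing.
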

\begin{proof}
	Let $\tilde{J}\in \{J,J_v\}$. Note first that $J_v\supseteq J$ by Lemma~\ref{lemma:essssup}(1).  Hence also $J_v^\perp\subseteq J^\perp$. We abbreviate $\beta := \alpha_{\min}$. Assume that $w_0^n\alpha=\alpha$ for some $n\in\NN^*$ and that $\supp(\beta)\not\subseteq \tilde{J}^\perp$ (hence $\supp(\beta)\not\subseteq J_v^\perp$). Set $w:=v\inv w_0v$, so that our assumptions can be rewritten as $w^n\beta=\beta$ and $\supp(\beta)\not\subseteq \tilde{J}^\perp$.
	
	Let $w^n=s_{i_1}\cdots s_{i_d}$ be a reduced decomposition of $w^n$. For each $t\in\{1,\ldots,d\}$, set $\beta_t:=s_{i_1}\cdots s_{i_{t-1}}\alpha_{i_t}\in\Delta^{\rep}$, so that $$\{\beta_t \ | \ 1\leq t\leq d\}=\Delta^+\cap w^n\Delta^-$$ (see e.g.\ \cite[Exercise~4.33]{Ma18}). Then $$0=\beta-w^n\beta=\sum_{t=1}^d\la\beta,\alpha_{i_t}^{\vee}\ra \beta_t$$ (see e.g.\ \cite[Exercise~4.34]{Ma18}). 
	Hence $\la\beta,\alpha_{i}^{\vee}\ra=0$ for all $i\in\supp(w^n)$ since $\la \beta, \alpha_i^\vee \ra \leq 0$ (because $\beta \in K_0$) and $\beta_t \in Q_+$ for all $t = 1, \ldots, d$. In particular, as $J_v = \esupp(w) \subseteq \supp(w^n)$, we deduce that
	\begin{equation}\label{eqn:aaivee}
		\la\beta,\alpha_{i}^{\vee}\ra=0\quad\textrm{for all $i\in J_v$ (hence for all $i \in \tilde{J}$).}
	\end{equation}
	
	Note that $J_{\beta}:=\tilde{J}\cap\supp(\beta)$ is nonempty: otherwise, writing $\beta = \sum_{j\in \supp(\beta)} n_j \alpha_j$ with $n_j \in \NN^*$, (\ref{eqn:aaivee}) would yield $\sum_{j\in \supp(\beta)} n_j a_{ij} = \la \beta, \alpha_i^\vee \ra = 0$ with $a_{ij} \leq 0$ for all $i \in \tilde{J}$, and hence $\supp(\beta)\subseteq \tilde{J}^\perp$, a contradiction. We can thus write $$\beta=\beta_J+\beta'$$ for some nonzero $\beta_J\in Q_+$ with $\supp(\beta_J) = J_{\beta}$ and some $\beta'\in Q_+$ with $\supp(\beta')\cap \tilde{J}=\varnothing$. Note that $\la \beta',\alpha_i^{\vee}\ra\leq 0$ for all $i\in \tilde{J}$ as $a_{ij}\leq 0$ whenever $i\neq j$. Hence (\ref{eqn:aaivee}) implies that
	\begin{equation}\label{eqn:aJivee}
		\la \beta_J,\alpha_i^{\vee}\ra\geq 0\quad\textrm{for all $i\in \tilde{J}$.}
	\end{equation}
	In particular, as $\la \beta_J, \alpha_i^{\vee} \ra \leq 0$ for all $i \in \tilde{J} \setminus J_{\beta}$, we infer that
	\begin{equation}\label{eqn:aJiveebis}
		\la \beta_J,\alpha_i^{\vee}\ra= 0\quad\textrm{for all $i\in \tilde{J}\setminus J_{\beta}$.}
	\end{equation}
	On the other hand, (\ref{eqn:aJivee}) and \cite[Theorem~4.3]{Kac90} imply that either $J_{\beta}$ is of spherical type, or 
	\begin{equation}\label{eqn:aJiveeter}
		\la\beta_J,\alpha_i^{\vee}\ra=0\quad\textrm{for all $i\in J_{\beta}$.}
	\end{equation}
	But if $J_{\beta}$ is spherical, then as $\tilde{J}$ has no spherical components (see Lemma~\ref{Lemma: straight element J essental} and Lemma~\ref{lemma:essssup}$(2)$), there exists some $i\in \tilde{J}\setminus J_{\beta}$ such that $\la\beta_J,\alpha_i^{\vee}\ra<0$, contradicting (\ref{eqn:aJiveebis}). Hence 
	\begin{equation}\label{eqn:aprimeaivee}
		\la \beta',\alpha_i^{\vee}\ra=\la \beta_J,\alpha_i^{\vee}\ra= 0\quad\textrm{for all $i\in \tilde{J}$}
	\end{equation}
	by (\ref{eqn:aaivee}), (\ref{eqn:aJiveebis}) and (\ref{eqn:aJiveeter}). In particular, $\supp(\beta')\subseteq \tilde{J}^\perp$, and hence $\beta'=0$ because $\supp(\beta)$ is connected and $\supp(\beta_J) \subseteq \tilde{J}$. Moreover, $\supp(\beta)=\supp(\beta_J)\subseteq \tilde{J}$ is of affine type by \cite[Theorem~4.3]{Kac90}. Finally, (\ref{eqn:aaivee}) implies that $\tilde{J}':=\tilde{J}\setminus \supp(\beta)$ is contained in $\supp(\beta)^\perp$. In particular, $\supp(\beta)$ is an affine component of $\tilde{J}$.
	
	The case $\tilde{J} = J_v$ thus implies that if (1) and (2) do not hold, then $\supp(\beta)$ is an affine component of $J_v$. Moreover, the case $\tilde{J} = J$ implies that if (1) does not hold, then $\supp(\beta)$ is either contained in $\Jperp$ or is an affine component of $J$. Thus, if (1) and (2) do not hold, then (3) holds, as desired.
\end{proof}

\begin{theorem}\label{thm: root groups are contracted}
	Let $w \in W$ be straight and let $J = \supp(w)$. Let $\alpha\in\Delta^{\imp}$. Then exactly one of the following holds:
	\begin{enumerate}
		\item $w^n\alpha\neq\alpha$ for all $n\in\NN^*$.
		
		\item $\supp(\alpha)\subseteq J^\perp$.
		
		\item $\alpha\in K_0$ and $\supp(\alpha)$ is an affine component of $J$.
	\end{enumerate}
	In particular, either $w^n\alpha\neq\alpha$ for all $n\in\NN^*$ or $W_J \alpha=\{\alpha\}$.
\end{theorem}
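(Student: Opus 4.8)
The strategy is to reduce to the case $w\in W_0$, extract from Lemma~\ref{lemma:contrim} structural information on the minimal conjugate $\alpha_{\min}$ of $\alpha$, and then run a sign analysis of the GCM $A$ to transfer this information to $\alpha$ itself. For the reduction: since $w$ is straight, so is $w^m$ for every $m\geq 1$, with $\supp(w^m)=J$; picking $m$ with $w^m\in W_0$, I may replace $w$ by $w^m$, which changes nothing, because assertions (2) and (3) do not mention $w$, the property ``$w^n\alpha\neq\alpha$ for all $n\in\NN^*$'' holds for $w$ if and only if it holds for $w^m$, and $W_J\alpha$ is unchanged. So assume $w\in W_0$; if $w^n\alpha\neq\alpha$ for all $n\in\NN^*$ we are in case (1), so assume from now on that $w^n\alpha=\alpha$ for some $n\geq1$.

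\emph{Passing from $\alpha_{\min}$ to $\alpha$.} Choose $v\in W$ with $\alpha=v\alpha_{\min}$, set $J_v:=\esupp(v\inv wv)$, and apply Lemma~\ref{lemma:contrim}: its case (1) fails, so we are in its case (2) or (3), and in both cases $W_{J_v}$ fixes $\alpha_{\min}$. Indeed, in case (2) every generator of $W_{J_v}$ commutes with $\supp(\alpha_{\min})$ and therefore fixes $\alpha_{\min}$; in case (3) we have $W_{J_v}=W_{\supp(\alpha_{\min})}\times W_{J_v\setminus\supp(\alpha_{\min})}$ since $\supp(\alpha_{\min})$ is a component of $J_v$, the second factor fixes $\alpha_{\min}$ as before, and the first factor fixes it because $\alpha_{\min}\in K_0$ with affine support gives $\la\alpha_{\min},\alpha_i^\vee\ra=0$ for all $i\in\supp(\alpha_{\min})$ by~(\ref{eqn:prelim:affine_roots}). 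Now enlarge $n$ to a multiple of the integer $N$ furnished by Lemma~\ref{lemma:essssup}(2) for the conjugate $v\inv wv$, so that $v\inv w^nv\in W_{J_v}$. As $w^n$ is straight with support $J$, Lemma~\ref{Lemma: straight element J essental} gives $W_J=\Pc(w^n)\subseteq vW_{J_v}v\inv\subseteq v\Stab_W(\alpha_{\min})v\inv=\Stab_W(\alpha)$, so $\la\alpha,\alpha_i^\vee\ra=0$ for all $i\in J$.

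\emph{Endgame and conclusion.} Write $\alpha=\sum_{i\in I}n_i\alpha_i$ and $M:=\supp(\alpha)\cap J$. If $M=\varnothing$: for each $j\in J$ the vanishing of $\la\alpha,\alpha_j^\vee\ra=\sum_{i\in\supp(\alpha)}n_ia_{ji}$, a sum of non-positive terms, forces $a_{ji}=0$ (hence $a_{ij}=0$) for all $i\in\supp(\alpha)$, so $\supp(\alpha)\subseteq J^\perp$ and we are in case (2). If $M\neq\varnothing$: split $\alpha=\alpha_M+\alpha'$ with $\supp(\alpha_M)=M$ and $\supp(\alpha')=\supp(\alpha)\setminus J$; testing $\la\alpha,\alpha_j^\vee\ra=0$ against $j\in J\setminus M$ (both summands non-positive) shows $M$ is a union of components of $J$; each such component is non-spherical since $J$ is essential, and non-indefinite by Kac's Theorem~4.3 (it satisfies $A_{J'}(n_i)_{i\in J'}\geq0$ with $(n_i)_{i\in J'}>0$), hence affine, and the affine part of that theorem yields $\la\alpha_M,\alpha_j^\vee\ra=0$ for $j\in M$. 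Then $\la\alpha',\alpha_j^\vee\ra=0$ for $j\in M$, so (non-positive terms again) $\supp(\alpha')$ has no edge to $M$; since $\supp(\alpha)$ is connected and $M\neq\varnothing$, this forces $\alpha'=0$, so $\supp(\alpha)=M$ is a single affine component of $J$, and $\la\alpha,\alpha_i^\vee\ra\leq0$ for all $i\in I$ then gives $\alpha\in K_0$: we are in case (3). The three cases are mutually exclusive, since in cases (2) and (3) $W_J$ fixes $\alpha$ (using~(\ref{eqn:prelim:affine_roots}) in case (3)) so case (1) fails, and $\varnothing\neq\supp(\alpha)\subseteq J$ is incompatible with $\supp(\alpha)\subseteq J^\perp$ because $J\cap J^\perp=\varnothing$; the ``in particular'' statement is immediate since $W_J$ fixes $\alpha$ in cases (2) and (3).

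The main obstacle will be the endgame: extracting from ``$\la\alpha,\alpha_i^\vee\ra=0$ for all $i\in J$'' the clean dichotomy between $\supp(\alpha)\perp J$ and $\supp(\alpha)$ being an affine component of $J$ requires careful sign bookkeeping for the entries $a_{ij}$ together with both halves (affine and indefinite) of Kac's Theorem~4.3. A secondary technical point is the enlargement of $n$ needed to ensure $v\inv w^nv\in W_{J_v}$, which is exactly what Lemma~\ref{lemma:essssup}(2) provides.
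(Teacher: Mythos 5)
Your proof is correct, but in its second half it takes a genuinely different route from the paper's. Both arguments start the same way: reduce to $w\in W_0$, apply Lemma~\ref{lemma:contrim} to rule out its case (1), and observe that in its cases (2) and (3) the group $W_{J_v}$ fixes $\alpha_{\min}$. The paper then chooses $v$ of \emph{minimal length} with $\alpha=v\alpha_{\min}$ and invests most of its effort in proving that $v\in W_{\Jperp}$, via a geometric argument in the Coxeter complex (projections of the residues $R_J$ and $R_{J_v}$, parallelism, \cite[Proposition~5.37]{AB08} and \cite[Proposition~21.19(i)]{MPW15}); the conclusion for $\alpha$ is then read off from the conclusion for $\alpha_{\min}$ by applying $v$. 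You bypass this entirely: from $v\inv w^n v\in W_{J_v}$ (which Lemma~\ref{lemma:essssup}(2) indeed guarantees after enlarging $n$) you get $W_J=\Pc(w^n)\subseteq vW_{J_v}v\inv\subseteq\Stab_W(\alpha)$, hence $\la\alpha,\alpha_i^\vee\ra=0$ for all $i\in J$, and you then rerun the sign analysis of the GCM \emph{on $\alpha$ itself} rather than on $\alpha_{\min}$ --- this works because that analysis only needs $\alpha\in Q_+$ with connected support, $a_{ij}\leq 0$ off the diagonal, $J$ essential, and both halves of \cite[Theorem~4.3]{Kac90}, not the a priori membership $\alpha\in K_0$. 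Your parabolic-closure step is a clean shortcut that avoids the building-theoretic machinery; the price is that you essentially duplicate the endgame of Lemma~\ref{lemma:contrim} for $\alpha$ instead of quoting its statement, whereas the paper's detour through $v\in W_{\Jperp}$ yields the extra information $\alpha=\alpha_{\min}$ in case (3) for free. I checked the delicate points --- that $\supp(w^m)=\supp(w)$ and $w^m$ is straight, that the enlarged exponent $n$ can be taken to satisfy both $w^n\alpha=\alpha$ and $v\inv w^nv\in W_{J_v}$, and that $M$ being a union of non-spherical, non-indefinite (hence affine) components forces $\alpha'=0$ and $M$ connected --- and they all hold.
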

\begin{proof}
	Set $\beta := \alpha_{\min}$. Let $v\in W$ be of minimal length such that $\alpha=v\beta$. Up to replacing $w$ by a power of $w$, we may assume that $w\in W_0$ and that $J_v:=\esupp(w_0)=\supp(w_0)$, where $w_0:=v\inv wv$ (see Lemma~\ref{lemma:essssup}(2)). Assume that $w^n \alpha = \alpha$ for some $n \in \NN^*$ (i.e.\ (1) does not hold), and let us show that (2) or (3) holds.

	Note first that $v$ is the unique element of minimal length in $v W_{J_v}$ (see \cite[Proposition~2.20]{AB08}): by Lemma~\ref{lemma:contrim}, either $\supp(\beta)\subseteq J_v^\perp$ or $\supp(\beta)$ is an affine component of $J_v$. In any case, $ W_{J_v}$ fixes $\beta$. Hence $\alpha=v\beta=vu\beta$ for all $u\in W_{J_v}$, so that $\ell(v)\leq\ell(vu)$ for all $u\in W_{J_v}$.

	Let $R_J:=R_J(1_W)$ and $R_{J_v}:=R_{J_v}(1_W)$ be the standard residues of type $J$ and $J_v$ in the Coxeter building $\Sigma(W,S)$, respectively. Thus $w$ stabilizes $R_J$, and hence $w_0$ stabilizes $v\inv R_J$ and $R_{J_v}$. In particular, $w_0$ stabilizes the projection $R'_J$ of $v\inv R_J$ on $R_{J_v}$ (cf.\ Lemma~\ref{Lemma: projection and isometry commute}). Let $v' := \proj_{R_{J_v}} v\inv \in R_J'$. Then $\ell(v) = \ell( \delta(v\inv, 1_W) ) = \ell( \delta(v\inv, v') ) + \ell( \delta(v', 1_W) )=\ell(vv')+\ell(v')$. Since $v' \in R_{J_v}$ (that is, $v'\in W_{J_v}$) and since $v$ is of minimal length in $v W_{J_v}$, we have $\ell(v)\leq\ell(vv')$ and hence $v' = 1$. Thus $R'_J$ is a standard residue. 

	On the other hand, as $v\inv R_J$ is a residue of minimal rank stabilized by $w_0$ (because $\Pc(w_0)=v\inv  W_J v$, and hence $w_0$ cannot belong to a parabolic subgroup $uW_Ku\inv$ with $|K|<|J|$ by \cite[Lemma~2.25]{AB08}) and as projections do not increase the rank of a residue (see \S\ref{Subsection: Projections and residues}), the residues $R'_J$ and $\proj_{v\inv R_J}R'_J$ (stabilized by $w_0$) have also rank $|J|$. In particular, $R'_J$ and $v\inv R_J$ are parallel by \cite[Proposition~5.37]{AB08}. Moreover, the type $K$ of $R'_J$ is conjugate to $J$ (see \S\ref{Subsection: Projections and residues}). Since $J$ is essential (cf.\ Lemma~\ref{Lemma: straight element J essental}), we then deduce from Lemma~\ref{lemma:prelim_essential} that $K=J$, so that $R_J'=R_J$.

	By \cite[Proposition~21.19(i)]{MPW15}, two residues are parallel if and only if they have the same stabilizer. As $R_J$ and $v\inv R_J$ are parallel residues, they have the same stabilizer $W_J$. Thus $v\in N_{ W}(W_J)=W_J\times W_{J^\perp}$ (see Lemma~\ref{lemma:prelim_essential}). As $ W_J\subseteq W_{J_v}$ by Lemma~\ref{lemma:essssup}(1) and as $v$ has minimal length in $vW_{J_v}$, we conclude that $v\in W_{J^\perp}$. 

	Finally, by Lemma~\ref{lemma:contrim}, either $\supp(\beta)\subseteq J^\perp$, in which case $\supp(\alpha)=\supp(v\beta)\subseteq J^\perp$ (this is case (2)), or $\supp(\beta)$ is an affine component of $J$, in which case $\alpha=\beta$ (this is case (3)).
\end{proof}

\begin{corollary}\label{cor: imaginary root groups are contracted}
	Let $w\in W$ be straight and let $J = \supp(w)$. Let $\alpha \in \Delta^{\imp}$ with $\supp(\alpha) \not\subseteq K:=\Jperp \cup \Jaff$. Then $U_{\alpha} \leq \con(\widetilde{w}) \cap \con(\widetilde{w}\inv)$. In particular, $$U^{\ma}_{\Delta^{\imp} \cap \Delta^+_{K}} \leq \nub(\widetilde{w}).$$
\end{corollary}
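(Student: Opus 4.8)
The plan is to run the same argument as in Proposition~\ref{prop: root groups in the nub}, replacing the (FPRS) property — which only controls real root groups, through the chamber distance — by the height filtration $(U^{\ma}_m)_{m\in\NN}$ of $G^{\pma}$, which also governs the imaginary root groups. The first step is to feed the hypothesis into Theorem~\ref{thm: root groups are contracted}: since $\supp(\alpha)\not\subseteq K=\Jperp\cup\Jaff$, its alternative~(2) (which would force $\supp(\alpha)\subseteq\Jperp\subseteq K$) and its alternative~(3) (which would make $\supp(\alpha)$ an affine component of $J$, hence $\subseteq\Jaff\subseteq K$) are both ruled out. Thus alternative~(1) holds, namely $w^n\alpha\neq\alpha$ for all $n\in\NN^*$, and therefore $w^n\alpha\neq\alpha$ for every nonzero integer $n$.

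The second step is to prove that, for each fixed $\epsilon\in\{+1,-1\}$, one has $\height(w^{\epsilon n}\alpha)\to+\infty$ as $n\to+\infty$. Indeed, the set of positive imaginary roots $\Delta^{\imp}=\bigcup_{v\in W}vK_0$ is manifestly $W$-stable, so every $w^{\epsilon n}\alpha$ is again a \emph{positive} imaginary root and hence has height $\geq 1$; moreover, $\{\beta\in\Delta^+\mid\height(\beta)\leq m\}$ is finite for each $m$ (as $I$ is finite). If the heights did not tend to $+\infty$, they would be bounded by some fixed $m_0$ along an infinite set of exponents $n$, and the pigeonhole principle would yield $n_1<n_2$ with $w^{\epsilon n_1}\alpha=w^{\epsilon n_2}\alpha$, hence $w^{\epsilon(n_2-n_1)}\alpha=\alpha$ with $n_2-n_1$ a nonzero integer, contradicting the first step.

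With this, the conclusion is routine. By~(\ref{eqn:wUalphawinv}) we have $\widetilde{w}^{\epsilon n}U_\alpha\widetilde{w}^{-\epsilon n}=U_{w^{\epsilon n}\alpha}$, and since $U_\beta=U^{\ma}_{\NN^*\beta}\subseteq U^{\ma}_m$ as soon as $\height(\beta)\geq m$ while the $U^{\ma}_m$ form a basis of identity neighbourhoods in $G^{\pma}$, the second step gives $\widetilde{w}^{\epsilon n}U_\alpha\widetilde{w}^{-\epsilon n}\to 1$, i.e. $U_\alpha\leq\con(\widetilde{w}^{\epsilon})$; doing this for both signs yields $U_\alpha\leq\con(\widetilde{w})\cap\con(\widetilde{w}\inv)$. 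For the final assertion, note that $\Delta^{\imp}\cap\Delta^+_K=\{\alpha\in\Delta^{\imp}\mid\supp(\alpha)\not\subseteq K\}$, so the above applies to every $\alpha$ in this set and places $U_\alpha$ inside $\con(\widetilde{w})\cap\con(\widetilde{w}\inv)\subseteq\overline{\con(\widetilde{w})}\cap\overline{\con(\widetilde{w}\inv)}=\nub(\widetilde{w})$; as $\nub(\widetilde{w})$ is a closed subgroup of $G^{\pma}$ and $U^{\ma}_{\Delta^{\imp}\cap\Delta^+_K}$ is topologically generated by these root groups, we deduce $U^{\ma}_{\Delta^{\imp}\cap\Delta^+_K}\leq\nub(\widetilde{w})$. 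I do not anticipate a real obstacle, since Theorem~\ref{thm: root groups are contracted} does the heavy lifting; the only point deserving care is the upgrade from ``heights unbounded'' to ``heights $\to+\infty$'' in the second step, which rests on the $W$-stability of the set of positive imaginary roots together with the absence of repetitions in the $w$-orbit of $\alpha$.
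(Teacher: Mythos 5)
Your proposal is correct and follows essentially the same route as the paper: invoke Theorem~\ref{thm: root groups are contracted} to exclude the fixed-root alternatives, deduce from the finiteness of the set of positive roots of bounded height (and the $W$-invariance of $\Delta^{\imp}$) that $\height(w^z\alpha)\to\infty$ as $z\to\pm\infty$, and conclude via $\widetilde{w}^zU_\alpha\widetilde{w}^{-z}=U_{w^z\alpha}\to 1$ and the topological generation of $U^{\ma}_{\Delta^{\imp}\cap\Delta^+_K}$ by these root groups. The only difference is that you spell out the pigeonhole step that the paper leaves implicit.
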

\begin{proof}
	Let $\alpha \in \Delta^{\imp}$ with $\supp(\alpha) \not\subseteq K$. Then Theorem~\ref{thm: root groups are contracted} implies that $w^n \alpha \neq \alpha$ for all $n\in \NN^*$. In particular, $w^z \alpha \neq \alpha$ and $w^z \alpha \in \Delta^{\imp}$ for all nonzero $z\in \ZZ$. Since there are only finitely many positive roots of any given height, we deduce that $\height(w^z\alpha)\to\infty$ as $z\to \pm\infty$. Hence
	$$\widetilde{w}^z U_{\alpha} \widetilde{w}^{-z}=U_{w^z\alpha} \to 1 \text{ as } z \to \pm \infty,$$
	that is, $U_{\alpha}\subseteq \con(\widetilde{w}) \cap \con(\widetilde{w}\inv) $. In particular,
	\[ U^{\ma}_{\Delta^{\imp} \cap \Delta^+_{K}} = \overline{\la U_{\alpha} \mid \alpha \in \Delta^{\imp}, \, \supp(\alpha) \not\subseteq K \ra} \leq \nub(\widetilde{w}). \qedhere \]
\end{proof}

\begin{corollary}\label{corollary:main_lower_bound_nubKM}
	Let $w\in W$ be straight and standard and let $J = \supp(w)$. Set $L_w := \{ w^zc \mid z\in \ZZ \}\subseteq\mc{B}$ and $K := \Jperp \cup \Jsph^w \cup \Jaff$. Then
	\[ \Fix_{U^{\map}}(L) \cap U^{\ma}_{\Delta^+_K}\leq \nub(\widetilde{w}). \]
\end{corollary}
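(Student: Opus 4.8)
The plan is to sandwich the group in question between $U^{\ma}_{\Theta}$, where $\Theta:=\Delta^+\setminus(\Delta_{w+}\cup\Delta_{w-}\cup\Delta^+(K))$ (which is exactly the set $K(w)$ of Theorem~\ref{Thmintro: Main result}): I would establish
\[
\Fix_{U^{\map}}(L_w)\cap U^{\ma}_{\Delta^+_K}\leq U^{\ma}_{\Theta}\leq\nub(\widetilde w).
\]
For the second inclusion it suffices to place each root group $U_\alpha$ with $\alpha\in\Theta$ inside $\nub(\widetilde w)$, since $U^{\ma}_{\Theta}$ is topologically generated by those root groups (see \S\ref{Section: Maximal Kac--Moody groups}) and $\nub(\widetilde w)$ is closed.

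For the first inclusion I would argue as follows. For each $z\in\ZZ$ we have $w^zc=\widetilde{w^z}c$, so Lemma~\ref{Lemma: U_+ cap G_n.c} applied to the unipotent subgroup $U^{\map}$ of $G^{\pma}$ gives $\Fix_{U^{\map}}(w^zc)\leq\widetilde{w^z}\,U^{\map}\,\widetilde{w^z}\inv$, whence $\Fix_{U^{\map}}(w^zc)\leq U^{\map}\cap\widetilde{w^z}\,U^{\map}\,\widetilde{w^z}\inv=U^{\ma}_{\Delta^+\setminus\Delta_{w^z}}$ by Example~\ref{example:Deltawdecomposition}. Intersecting over all $z$ (which is legitimate by uniqueness of the standard form of an element of $U^{\map}$) and using that $\Delta^{\imp}$ is $W$-invariant --- so that $\bigcup_{z\in\ZZ}\Delta_{w^z}$ consists only of real roots and equals $\Delta_{w+}\cup\Delta_{w-}$ --- I would obtain $\Fix_{U^{\map}}(L_w)\leq U^{\ma}_{\Delta^+\setminus(\Delta_{w+}\cup\Delta_{w-})}$. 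Intersecting with $U^{\ma}_{\Delta^+_K}=U^{\ma}_{\Delta^+\setminus\Delta^+(K)}$ and using $U^{\ma}_{\Psi_1}\cap U^{\ma}_{\Psi_2}=U^{\ma}_{\Psi_1\cap\Psi_2}$ (after checking that $\Theta$ is closed, being the intersection of the two closed sets $\Delta^+\setminus(\Delta_{w+}\cup\Delta_{w-})$ and $\Delta^+_K$) then finishes the first inclusion.

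For the second inclusion I would fix $\alpha\in\Theta$, so $\supp(\alpha)\not\subseteq K=\Jperp\cup\Jsph^w\cup\Jaff$. If $\alpha$ is imaginary, then since $\Jperp\cup\Jaff\subseteq K$ we have $\supp(\alpha)\not\subseteq\Jperp\cup\Jaff$, and Corollary~\ref{cor: imaginary root groups are contracted} gives $U_\alpha\leq\con(\widetilde w)\cap\con(\widetilde w\inv)\leq\nub(\widetilde w)$. If $\alpha$ is real, then $\alpha\notin\Delta_{w+}\cup\Delta_{w-}$ translates into $\{w^z\mid z\in\ZZ\}\subseteq\alpha$ (viewing $\alpha$ as a root of $\Sigma(W,S)$), and I want to invoke Proposition~\ref{prop: root groups in the nub}; the one remaining point is to check $\supp(\alpha)\not\subseteq\Jperp\cup\Jsph^w\cup\JaffCox$. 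Since $\supp(\alpha)$ is connected and $\Jperp$, the components of $\Jsph^w$ and the components of $\JaffCox$ are pairwise non-adjacent in the Dynkin diagram of $I$, $\supp(\alpha)$ lies in exactly one of these pieces; the first two cases force $\supp(\alpha)\subseteq K$, and by Remark~\ref{remark:JaffvsJaffCox} the only component of $\JaffCox$ avoiding $K$ is an exceptional $J_0=\{i,j\}$ with $a_{ij}a_{ji}>4$ and $W_{J_0}$ infinite dihedral. Here I would write $w=w_{J_0}w'$ with $w_{J_0}\in W_{J_0}$ (straight with support $J_0$, hence of infinite order in $W_{J_0}$, by Lemma~\ref{Lemma: straight element J essental}) and $w'\in W_{J\setminus J_0}$ centralizing $W_{J_0}$; then $\{w^z\mid z\in\ZZ\}\subseteq\alpha$ would force $\{w_{J_0}^z\mid z\in\ZZ\}$ into the proper root $\alpha\cap W_{J_0}$ of $W_{J_0}$, which is impossible because the powers of an infinite-order element of $W_{J_0}$ are unbounded in both directions of the one-dimensional Coxeter complex $\Sigma(W_{J_0})$ and so lie in no proper root. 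Hence $\supp(\alpha)\not\subseteq\Jperp\cup\Jsph^w\cup\JaffCox$, Proposition~\ref{prop: root groups in the nub} applies, and $U_\alpha\leq\con(\widetilde w)\cap\con(\widetilde w\inv)\leq\nub(\widetilde w)$.

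The hard part will be the last step of the real-root case: one must exploit the possible discrepancy between $\Jaff$ (components of $J$ affine as GCM) and $\JaffCox$ (components affine as Coxeter systems), and rule out real roots supported on an exceptional rank-$2$ component, which is precisely where the explicit geometry of the infinite dihedral group enters. Everything else --- closedness of the various subsets of $\Delta^+$, the behaviour of the standard form under intersections, and the separation between real and imaginary root groups --- is routine bookkeeping.
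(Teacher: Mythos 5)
Your proposal is correct, but it reaches the conclusion by a genuinely different route from the paper, so a comparison is worthwhile. The paper does not pass through $U^{\ma}_{K(w)}$ at this stage: given $g\in\Fix(L_w)\cap U^{\ma}_{\Delta^+_K}$, it applies Proposition~\ref{Proposition: Stabilizer by U_+ fixes apartment} to the apartment $g\Sigma$ to produce $u\in\overline{\langle U_{\alpha}\mid\alpha\in\Delta^{\rep},\ L_w\cup(\Sigma\cap R_K(c))\subseteq\alpha\rangle}$ with $ug\in\Fix(\Sigma)\cap U^{\map}=U^{\ma}_{\Delta^{\imp}}$, and then shows separately that $u$ and $ug$ lie in the nub --- the real factor via Proposition~\ref{prop: root groups in the nub} and the imaginary factor via Corollary~\ref{cor: imaginary root groups are contracted}. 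You instead localize $g$ algebraically, using Lemma~\ref{Lemma: U_+ cap G_n.c} together with the decomposition $U^{\map}\cap\widetilde{w}^zU^{\map}\widetilde{w}^{-z}=U^{\ma}_{\Delta^+\setminus\Delta_{w^z}}$ of Example~\ref{example:Deltawdecomposition} and the uniqueness of standard forms, to get $\Fix_{U^{\map}}(L_w)\cap U^{\ma}_{\Delta^+_K}\leq U^{\ma}_{K(w)}$ directly; this is essentially the inclusion "$\subseteq$" of the paper's later Lemma~\ref{lemma:FixLDeltawpm} combined with the intersection formula for closed sets of roots, proved here by intersecting over all $z$ rather than by the three-factor decomposition the paper uses there. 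Your approach buys the sharper containment in $U^{\ma}_{K(w)}$ immediately (anticipating Corollary~\ref{cor: Main result} and Lemma~\ref{lemma:FixLDeltawpm}), at the cost of having to justify the infinite intersection $\bigcap_zU^{\ma}_{\Delta^+\setminus\Delta_{w^z}}=U^{\ma}_{\bigcap_z(\Delta^+\setminus\Delta_{w^z})}$ and the closedness of $K(w)$ (both routine, and the paper performs analogous infinite intersections in Proposition~\ref{prop: nub contained in Delta_perp, affine, sphw}); the paper's geometric route avoids this bookkeeping but requires the apartment-transitivity proposition. The second half of your argument --- reducing to individual root groups, splitting into real and imaginary cases, and in the real case ruling out supports in $\JaffCox\setminus\Jaff$ by the translation argument in the exceptional rank-two infinite dihedral components --- coincides with the paper's treatment of the factor $u$, including the key observation that a connected support contained in the pairwise orthogonal union $\Jperp\cup\Jsph^w\cup\JaffCox$ must sit in a single piece; your verification that $w^z\in\alpha$ is equivalent to $w_{J_0}^z\in\alpha\cap W_{J_0}$ is the same point the paper phrases as $c\in w^z\alpha=w_{\alpha}^z\alpha$ for all $z$.
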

\begin{proof}
	Let $g \in \Fix(L) \cap U^{\ma}_{\Delta^+_K}$. By Proposition~\ref{Proposition: Stabilizer by U_+ fixes apartment} (applied to $\Sigma':=g\Sigma$), there exists an element $u \in \overline{\langle U_{\alpha} \mid \alpha \in \Delta^{\rep}, \ L \cup ( \Sigma \cap R_K(c) ) \subseteq \alpha \rangle} \leq U^{\ma}_{\Delta^+_K}$ (see (\ref{equation:UmaDelta+subK fixes R_K(c)})) with $ug \in \Fix(\Sigma) \cap U^{\map} = U^{\ma}_{\Delta^{\imp}}$ (cf.\ \cite[Exercise~$8.112(3)$]{Ma18}). 
	
	Note that if $\alpha\in\Delta^{\rep}$ satisfies $L \cup ( \Sigma \cap R_K(c) ) \subseteq \alpha$, then $\supp(\alpha)\not\subseteq \Jperp \cup \Jsph^w \cup \Jaff$ and $\alpha\supseteq L$. We claim that in this case $\supp(\alpha)\not\subseteq \Jperp \cup \Jsph^w \cup \JaffCox$. Otherwise, since $\supp(\alpha)$ is connected, we would have $\supp(\alpha)\subseteq \JaffCox$, and hence $\supp(\alpha)\subseteq J_{\alpha}\subseteq \JaffCox\setminus\Jaff$ for some component $J_{\alpha}$ of $\JaffCox\setminus\Jaff$ (recall that $\Jsph^w \cup \JaffCox\subseteq J$ and that $\Jsph^w\subseteq (\JaffCox)^\perp$). In particular, $|J_{\alpha}|=2$ and $W_{J_{\alpha}}$ is the infinite dihedral group (see Remark~\ref{remark:JaffvsJaffCox}), acting on the simplicial line $\Sigma(W_{J_{\alpha}},J_{\alpha})\subseteq\Sigma$. Write $w=w_{\alpha}w'$ with $w_{\alpha}\in W_{J_{\alpha}}$ and $w'\in W_{J\setminus J_{\alpha}}$. Since $\alpha\supseteq L$, the root $w^z\alpha=w_{\alpha}^z\alpha$ contains $c$ for all $z\in\ZZ$, contradicting the fact that $w_{\alpha}$ is a translation in $W_{J_{\alpha}}$ (as it has infinite order by Lemma~\ref{Lemma: straight element J essental}).
	
	Since $\nub(\widetilde{w})$ is closed, Proposition~\ref{prop: root groups in the nub} (whose hypotheses are satisfied by Remark~\ref{remark: schematic-completion is Kac--Moody completion}) now implies that $u \in \nub(\widetilde{w})$. As $g = u\inv (ug)$, it remains to show that $ug \in \nub(\widetilde{w})$. As $\Delta^{\imp} \cap \Delta^+_K=\Delta^{\imp}\cap \Delta^+_{K'}$ where $K':=\Jperp \cup \Jaff$, the conclusion then follows from Corollary~\ref{cor: imaginary root groups are contracted}  as it implies that 
		\[ ug \in U^{\ma}_{\Delta^{\imp}} \cap U^{\ma}_{\Delta^+_K} = U^{\ma}_{\Delta^{\imp} \cap \Delta^+_K} =U^{\ma}_{\Delta^{\imp} \cap \Delta^+_{K'}}\leq \nub(\widetilde{w}). \qedhere \]
\end{proof}

\subsection{Upper bound for the nub}\label{subsection:UBFTNKM}
We next show that the lower bound for the nub that we obtained in Corollary~\ref{corollary:main_lower_bound_nubKM} is also an upper bound, building on the results of \S\ref{subsection:upperbound_generalRGD}.

\begin{lemma}\label{Lemma: Coxeter group element with support J commutes with imaginary root with support in Jperp}
	Let $J \subseteq I$ and $w\in W_J$. Then $\widetilde{w}$ centralizes $U_{\alpha}$ for all $\alpha \in \Delta^{+}(\Jperp)$.
\end{lemma}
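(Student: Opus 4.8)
The plan is to reduce the assertion to a vanishing of brackets in the Kac--Moody algebra $\g(A)$. Fix $\alpha\in\Delta^+(\Jperp)$, so that $U_{\alpha}=U^{\ma}_{\Psi}$ with $\Psi=\{\alpha\}$ if $\alpha\in\Delta^{\rep}$ and $\Psi=\NN^*\alpha$ if $\alpha\in\Delta^{\imp}$; in either case every $\beta\in\Psi$ has $\supp(\beta)=\supp(\alpha)\subseteq\Jperp$. By the standard form, an arbitrary element of $U_{\alpha}$ is a product of twisted exponentials $[\exp](\lambda_xx)$ with $x\in\mc{B}_{\ZZ}$ homogeneous of degree in $\Psi$, and by the conjugation formula of Remark~\ref{remark:wtildeconjugationaction} we have $\widetilde{w}[\exp](\lambda_xx)\widetilde{w}\inv=[\exp](\lambda_xw^*x)$ with $w^*:=\Ad_k(\widetilde{w})$. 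So I would first reduce the lemma to showing that $w^*$ acts as the identity on $\g_{\beta k}$ for every $\beta\in\Delta^+$ with $\supp(\beta)\subseteq\Jperp$.

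For the second step I would write $w=s_{j_1}\cdots s_{j_r}$ as a reduced word, so that $\widetilde{w}=\widetilde{s}_{j_1}\cdots\widetilde{s}_{j_r}$ with each $j_{\ell}\in\supp(w)\subseteq J$ and $\widetilde{s}_{j_{\ell}}\in U_{\alpha_{j_{\ell}}}U_{-\alpha_{j_{\ell}}}U_{\alpha_{j_{\ell}}}$. Since $\Ad_k$ is multiplicative and extends the adjoint action of the real root groups on $\g_k$ --- under which an element of $U_{\pm\alpha_j}$ acts as a twisted exponential of $\mathrm{ad}\,e$ for a suitable $e\in\g_{\pm\alpha_j k}$ --- it then suffices to check that $\mathrm{ad}\,e$ kills $\g_{\beta k}$ whenever $j\in J$ and $\supp(\beta)\subseteq\Jperp$, i.e.\ that $[\g_{\pm\alpha_j},\g_{\beta}]=0$ for all such $j,\beta$.

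This last point is pure root combinatorics: $[\g_{\pm\alpha_j},\g_{\beta}]\subseteq\g_{\pm\alpha_j+\beta}$, and I would argue $\pm\alpha_j+\beta\notin\Delta$. Indeed $J\cap\Jperp=\varnothing$ (if $j\in\Jperp$ then $m_{jj}=2$, which is absurd), so $j\notin\supp(\beta)$ and the $\alpha_j$-coordinate of $\beta$ vanishes; hence $-\alpha_j+\beta$ has a strictly negative $\alpha_j$-coordinate and non-negative coordinates elsewhere, so it lies in neither $Q_+$ nor $-Q_+$ and is not a root, while $\alpha_j+\beta\in Q_+$ has support $\{j\}\cup\supp(\beta)$, which is disconnected because $m_{ij}=2$, i.e.\ $a_{ij}=a_{ji}=0$, for every $i\in\supp(\beta)\subseteq\Jperp$ (as $j\in J$) --- and supports of roots are connected. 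Thus $\g_{\pm\alpha_j+\beta}=0$, which yields the lemma.

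I expect the hard part to be the second step: one has to make sure that the adjoint action of a real root group element on $\g_k$ genuinely is a twisted exponential of the corresponding $\mathrm{ad}$-operator --- so that it fixes pointwise every root space annihilated by that operator, also in small characteristic --- and that this is compatible with the section $\widetilde{\cdot}$ and with the homomorphism $\Ad_k$ of Remark~\ref{remark:wtildeconjugationaction}. Everything else reduces to the elementary root computation above (note also that $w$ could instead be handled via the commutator relations in $G$, since $[U_{\pm\alpha_j},U_{\gamma}]=1$ for $\gamma\in\NN^*\alpha$ as no root of the form $\pm a\alpha_j+b\gamma$ with $a,b\geq 1$ exists, giving directly that each $\widetilde{s}_j$ centralizes $U_{\alpha}$).
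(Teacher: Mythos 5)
Your argument is correct, but it runs at the level of the Lie algebra whereas the paper stays at the level of the group. Both proofs share the same skeleton: reduce to a simple reflection $s_j$ with $j\in J$ via $\widetilde{w}=\widetilde{s}_{j_1}\cdots\widetilde{s}_{j_r}$, and exploit $\widetilde{s}_j\in U_{\alpha_j}U_{-\alpha_j}U_{\alpha_j}$. The paper then argues purely group-theoretically: $U_{\alpha_j}$ centralizes $U_\alpha$ because $U^{\ma}_{\Delta^+(J\cup\Jperp)}=U^{\ma}_{\Delta^+(J)}\times U^{\ma}_{\Delta^+(\Jperp)}$ (a citation to \cite[Lemma~8.58(4)]{Ma18}), and $U_{-\alpha_j}$ then centralizes $U_\alpha$ by conjugating this relation by $\widetilde{s}_j$, using $\widetilde{s}_jU_{-\alpha_j}\widetilde{s}_j\inv=U_{\alpha_j}$ and $s_j\alpha=\alpha$. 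You instead push everything down to $\Ad_k$ and the vanishing $[\g_{\pm\alpha_j},\g_\beta]=0$, which you establish by the (correct) observation that $\pm\alpha_j+\beta$ is never a root when $j\in J$ and $\supp(\beta)\subseteq\Jperp$. The technical point you flag as the ``hard part'' does check out in \cite{Ma18}: $s_j^{*}=\Ad_k(\widetilde{s}_j)$ is by construction the product $[\exp](\ad e_j)[\exp](-\ad f_j)[\exp](\ad e_j)$ of twisted exponentials on $\g_{\ZZ}\otimes k$, and since $(\ad e_j)^n\g_{\beta}\subseteq\g_{\beta+n\alpha_j}=0$ already over $\mathbb{C}$ for all $n\geq 1$, every divided power $(\ad e_j)^{[n]}$ kills $\g_{\beta\ZZ}$, so there is no small-characteristic issue. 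Your route is more self-contained (it replaces the appeal to the direct product decomposition by an elementary root computation) at the cost of unwinding the definition of the adjoint action; the paper's route is shorter but outsources the real content to the cited decomposition, which is itself proved from the same commutator/bracket considerations.

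One small caveat: your closing parenthetical, that each $\widetilde{s}_j$ centralizes $U_\alpha$ ``directly via the commutator relations in $G$'', is too quick for the negative root group $U_{-\alpha_j}$. When $\alpha$ is imaginary, the pair $\{-\alpha_j,\gamma\}$ with $\gamma\in\NN^{*}\alpha$ is not prenilpotent (no element of $W$ sends a positive imaginary root to a negative root), so the commutator relations of the RGD system, and the defining relations of $U^{\map}$ (which involve only positive roots), do not apply to it as stated. This is precisely the point where the paper resorts to the conjugation trick and where your main argument resorts to $\Ad_k$; the shortcut only works for $U_{+\alpha_j}$.
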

\begin{proof}
Let $\alpha \in \Delta^{+}(\Jperp)$ and $j\in J$. We have to show that $\widetilde{s}_j$ centralizes $U_{\alpha}$.  As $U^{\ma}_{\Delta^+(J \cup \Jperp)} = U^{\ma}_{\Delta^+(J)} \times U^{\ma}_{\Delta^+(\Jperp)}$ by \cite[Lemma~$8.58(4)$]{Ma18}, $U_{\alpha_j}$ centralizes $U_{\alpha}$. Hence $U_{-\alpha_j}$ also centralizes $U_{\alpha}$ since $\widetilde{s}_jU_{-\alpha_j}\widetilde{s}_j\inv=U_{\alpha_j}$ centralizes $\widetilde{s}_jU_{\alpha}\widetilde{s}_j\inv=U_{s_j\alpha}=U_{\alpha}$. As  $\widetilde{s}_j\in U_{\alpha_j}U_{-\alpha_j}U_{\alpha_j}$, the claim follows.
\end{proof}

\begin{lemma}\label{Lemma: Coxeter group element commutes with imaginary root group}
	Let $J\subseteq I$ and let $K\subseteq J$ be an affine component of $J$. Then there exists $N\in\NN^*$ such that $\widetilde{w}^N$ centralizes $U_{\alpha}$ for all $w\in W_J$ and $\alpha \in \Delta^{\imp}(K)$.
\end{lemma}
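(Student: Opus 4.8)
The plan is to peel the affine component $K$ off $J$, reduce to the case $w\in W_K$, and then exploit that an affine Weyl group $W_K$ fixes the null root $\delta_K$ and acts on the imaginary root spaces supported on $K$ through its finite quotient $W_K/T_0$, with $T_0$ the translation subgroup.

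\textbf{Step 1 (reduction to $W_K$).} Write $w=w_Kw'$ with $w_K\in W_K$ and $w'\in W_{J\setminus K}$. Since $K$ is a component of $J$ we have $J\setminus K\subseteq K^\perp$, equivalently $K\subseteq(J\setminus K)^\perp$, and $\ell(w)=\ell(w_K)+\ell(w')$; concatenating reduced decompositions of $w_K$ and $w'$ (in both orders) yields $\widetilde w=\widetilde{w_K}\,\widetilde{w'}=\widetilde{w'}\,\widetilde{w_K}$, so $\widetilde{w_K}$ and $\widetilde{w'}$ commute and $\widetilde w^{\,N}=\widetilde{w_K}^{\,N}\,\widetilde{w'}^{\,N}$ for all $N\in\NN^*$. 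As $\Delta^{\imp}(K)\subseteq\Delta^+(K)\subseteq\Delta^+((J\setminus K)^\perp)$, Lemma~\ref{Lemma: Coxeter group element with support J commutes with imaginary root with support in Jperp} (applied with $J\setminus K$ in place of $J$) shows that $\widetilde{w'}$, hence $\widetilde{w'}^{\,N}$, centralizes $U_\alpha$ for all $\alpha\in\Delta^{\imp}(K)$. It thus suffices to find $N\in\NN^*$, depending only on $K$, such that $\widetilde v^{\,N}$ centralizes $U_\alpha$ for all $v\in W_K$ and all $\alpha\in\Delta^{\imp}(K)$.

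\textbf{Step 2 (the $W_K$-action on imaginary root spaces).} Since $A_K$ is of affine type, $\Delta^{\imp}(K)=\{\,n\delta_K\mid n\in\NN^*\,\}$, where $\delta_K\in Q_+$ is the null root of $A_K$, with $\supp(\delta_K)=K$ and $\la\delta_K,\alpha_i^\vee\ra=0$ for all $i\in K$ (cf.\ \S\ref{subsubsection:KMRS}, (\ref{eqn:prelim:affine_roots}) and \cite[Ch.~5--6]{Kac90}); in particular $s_i\delta_K=\delta_K$ for $i\in K$, so $W_K$ fixes every $n\delta_K$. Hence each $\widetilde v$ ($v\in W_K$) normalizes $U_{n\delta_K}=U^{\ma}_{\NN^*n\delta_K}$ by (\ref{eqn:wUalphawinv}), and by Remark~\ref{remark:wtildeconjugationaction} its conjugation action on $U_{n\delta_K}$ is governed by the linear map $v^*=\Ad_k(\widetilde v)$ on the finite-dimensional spaces $\g_{\ell\delta_K k}=\g_{\ell\delta_K\ZZ}\otimes_\ZZ k$, $\ell\in\NN^*$. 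I would then observe that $v\mapsto v^*|_{\bigoplus_{\ell\geq 1}\g_{\ell\delta_K k}}$ is a \emph{group homomorphism} $W_K\to\prod_{\ell\geq 1}\GL(\g_{\ell\delta_K k})$: indeed $\Ad_k$ is a homomorphism $N\to\Aut(\g_k)$, and the failure of $v\mapsto\widetilde v$ to be multiplicative on $W_K$ takes values in the subtorus $T_K:=T\cap\la U_{\alpha_i},U_{-\alpha_i}\mid i\in K\ra$, which acts trivially on each $\g_{\ell\delta_K k}$ because $\la\delta_K,\alpha_i^\vee\ra=0$ for $i\in K$.

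\textbf{Step 3 (finiteness and conclusion).} By the standard realization of affine Kac--Moody algebras (via (twisted) loop algebras, \cite[Ch.~6--8]{Kac90}), the translation subgroup $T_0\leq W_K$ has finite index $N:=[W_K:T_0]$ depending only on the type of $K$, and $T_0$ acts trivially on every imaginary root space $\g_{\ell\delta_K}$ over $\mathbb C$, hence over $\ZZ$, hence over $k$. Therefore the homomorphism of Step~2 factors through the finite group $W_K/T_0$, so $(v^*|_{\g_{\ell\delta_K k}})^N=\mathrm{id}$ for all $v\in W_K$ and $\ell\geq 1$; equivalently $\Ad_k(\widetilde v^{\,N})$ is the identity on every $\g_{\ell\delta_K k}$. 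Then $\widetilde v^{\,N}$ fixes every twisted exponential $[\exp](\lambda_xx)$ with $x\in\mc{B}_{\ZZ}\cap\g_{\ell\delta_K\ZZ}$ and $\lambda_x\in k$, and hence centralizes $U_{n\delta_K}$ for all $n\in\NN^*$, i.e.\ $U_\alpha$ for all $\alpha\in\Delta^{\imp}(K)$. Combined with Step~1, $\widetilde w^{\,N}$ centralizes $U_\alpha$ for all $w\in W_J$ and $\alpha\in\Delta^{\imp}(K)$, as desired. The crux I expect is the \emph{uniformity in $\alpha$}: there are infinitely many imaginary roots $n\delta_K$, and the argument succeeds only because $W_K$ fixes the whole ray $\NN^*\delta_K$ pointwise and acts on the sum of the associated root spaces through the \emph{finite} quotient $W_K/T_0$ (the translation part acting trivially by affine structure theory). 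The only technical wrinkle is that $\widetilde{\cdot}$ is a section rather than a homomorphism; this is harmless since its multiplicative defect on $W_K$ lands in the subtorus $T_K$, which is annihilated by every character $\ell\delta_K$.
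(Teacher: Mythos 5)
Your proof is correct, but it takes a genuinely different route from the paper's. Both arguments begin with the same two observations: $\Delta^{\imp}(K)=\NN^*\delta$ for the null root $\delta$ of $A_K$, and all of $W_J$ (not just $W_K$) fixes $\delta$, so that each $w^*=\Ad_k(\widetilde{w})$ preserves every $\g_{n\delta k}$ --- for this reason your Step~1, peeling off $W_{J\setminus K}$ via Lemma~\ref{Lemma: Coxeter group element with support J commutes with imaginary root with support in Jperp}, is correct but not actually needed. Where you diverge is in producing a uniform exponent $N$. The paper exploits finiteness of $k$: by \cite[Theorem~8.7]{Kac90} the multiplicities $\dim\g_{n\delta}$ are bounded by some $M$, so each $w^*$ permutes a set of at most $|k|^M$ elements and $N:=(|k|^M)!$ works for all $w\in W_J$ and all $n$ simultaneously, with no need for $v\mapsto v^*$ to be multiplicative. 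You instead show that $v\mapsto v^*|_{\oplus_{\ell}\g_{\ell\delta k}}$ is a genuine homomorphism on $W_K$ (the defect of the section $\widetilde{\cdot}$ lying in a subtorus annihilated by the character $\ell\delta$) and that it factors through the finite quotient $W_K/T_0$ because translations act trivially on imaginary root spaces. This buys an exponent $N=[W_K:T_0]$ independent of $k$, but at the price of invoking the (twisted) loop realization of affine algebras: the assertion that $T_0$ kills every $\g_{\ell\delta}$ is true (in the loop picture a lift of a translation acts on $x\otimes t^n$ with $x\in\g_{0,\beta}$ by the scalar $t^{\langle\beta,\cdot\rangle}$, which is trivial on the Cartan part $\beta=0$), but it is the real content of your argument and deserves an explicit computation or a precise reference rather than a blanket citation of \cite{Kac90}; the paper's finite-field counting argument avoids this entirely.
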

\begin{proof}
	Let $K':=K^{\perp}\cap J$ be the union of all components of $J$ different from $K$, so that $J=K\cup K'$. Let $\delta\in\Delta^{\imp}$ such that $\Delta^{\imp}(K)=\NN^*\delta$ (see \cite[Proposition~6.14(2)]{Ma18}). Set $M:=\max_{n\in\NN^*}\dim\mathfrak{g}_{n\delta}\in\NN$ (see \cite[Theorem~8.7]{Kac90}) and $N:=(|k|^M)!$. Let $w\in W_J=W_K\times W_{K'}$. Then $w\delta=\delta$, and hence $w^*\g_{n\delta}=\g_{n\delta}$ for all $n\in\NN$. In particular, $(w^*)^N$ is the identity on $\g_{n\delta}$ for all $n\in\NN^*$. On the other hand, setting $\Psi:=\NN^*\delta$, an element $g\in U_{\alpha}$ with $\alpha\in\Delta^{\imp}(K)$ has standard form $g=\prod_{x\in \mc{B}_{\Psi}}[\exp](\lambda_xx)$ for some $\lambda_x\in k$, and hence
	\[ \widetilde{w}^Ng\widetilde{w}^{-N}=\prod_{x\in \mc{B}_{\Psi}}[\exp]((w^*)^N(\lambda_xx))=\prod_{x\in \mc{B}_{\Psi}}[\exp](\lambda_xx)=g. \qedhere \]
\end{proof}

\begin{lemma}\label{Lemma: Kac--Moody invariant}
	Let $w\in W$ and $K\subseteq I$ such that $w\inv W_Kw=W_K$. Let $g\in U^{\ma}_{\Delta^+_{K}}$ with $\widetilde{w}\inv g \widetilde{w} \in U^{\map}$. Then $\widetilde{w}\inv g \widetilde{w} \in U^{\ma}_{\Delta^+_{K}}$.
\end{lemma}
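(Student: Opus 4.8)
The plan is to reduce the statement to the decomposition of $U^{\map}$ induced by the pair $(K, \Delta^+_K)$ and to track it under conjugation by $\widetilde{w}$. First I would observe that since $w\inv W_K w = W_K$, the element $w$ normalizes the root subsystem $\Delta(K)$, hence also $\Delta^+(K)$ and $\Delta^+_K = \Delta^+ \setminus \Delta^+(K)$ (note $w$ permutes $\Delta^+$ only up to the finitely many real roots sent to $\Delta^-$, but those lie in $\Delta^{\rep}$ and their supports are contained in $K$ by Lemma~\ref{Lemma: support of a root irreducible}, so they never leave $\Delta^+(K)$; thus $w\Delta^+_K = \Delta^+_K$ as subsets of $\Delta$). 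Consequently, by Remark~\ref{remark:wtildeconjugationaction} (equation~(\ref{eqn:wUalphawinv}) applied rootwise, together with the fact that $U^{\ma}_{\Delta^+_K}$ is topologically generated by the $U_\alpha$ with $\alpha\in\Delta^+_K$), conjugation by $\widetilde{w}\inv$ maps $U^{\ma}_{\Delta^+_K}$ into the subgroup of $U^{\map}$ topologically generated by $\{U_{w\inv\alpha}\mid \alpha\in\Delta^+_K\}=\{U_\beta\mid\beta\in\Delta^+_K\}$, i.e. back into $U^{\ma}_{\Delta^+_K}$ — but here one must be careful, because for the finitely many $\alpha\in\Delta^{\rep}$ with $w\inv\alpha\in\Delta^-$ the conjugate $\widetilde{w}\inv U_\alpha\widetilde{w}$ lies in $U_-$, not in $U^{\map}$.

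The cleaner route, which I would take, is to use the unique decomposition from Lemma~\ref{Lemma: Unique decomposition}: since $\Delta^+(K)$, $\Delta^+_K$ are both closed and their union is $\Delta^+$, we have $U^{\map}=U^{\ma}_{\Delta^+(K)}\cdot U^{\ma}_{\Delta^+_K}$ uniquely, and moreover (by Remark~\ref{remark:wtildeconjugationaction}, specifically the observation there that $U^{\ma}_{\Delta^+_K}$ is normalized by $U^{\ma}_{\Delta^+(K)}$ via \cite[Lemma~$8.58(4)$]{Ma18}) this is a semidirect decomposition with $U^{\ma}_{\Delta^+_K}$ normal. Now write $h:=\widetilde{w}\inv g\widetilde{w}\in U^{\map}$ in standard form and decompose $h=h_1h_2$ with $h_1\in U^{\ma}_{\Delta^+(K)}$ and $h_2\in U^{\ma}_{\Delta^+_K}$. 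The goal is to show $h_1=1$. Conjugating back, $g=\widetilde{w}h_1\widetilde{w}\inv\cdot\widetilde{w}h_2\widetilde{w}\inv$. Since $w$ normalizes $W_K$, we have $\widetilde{w}\in\langle\widetilde{s}_i, T\mid i\in I\rangle$ with $w\in N_W(W_K)$, and conjugation by $\widetilde{w}$ preserves the $K$-residue $R_K(c)$ up to translation inside $\Sigma$; more concretely, $\widetilde{w}U^{\ma}_{\Delta^+(K)}\widetilde{w}\inv$ is the product of root groups $U_{w\alpha}$ over $\alpha\in\Delta^+(K)$, which have support in $K$ — but some of these roots may be negative. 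This is where the hypothesis $g\in U^{\ma}_{\Delta^+_K}$ is used: $g$ has trivial $U^{\ma}_{\Delta^+(K)}$-component (and trivial negative-root part, as $U_-\cap U^{\map}=\{1\}$), so matching components forces $\widetilde{w}h_1\widetilde{w}\inv$ to have trivial projection onto $U^{\ma}_{\Delta^+(K)}$ modulo $U^{\ma}_{\Delta^+_K}$. Since $h_1\in U^{\ma}_{\Delta^+(K)}$ and $w$ normalizes this group's "defining" root set $\Delta^+(K)$ inside $\Delta(K)$ (reals in $\Delta(K)$ may flip sign, imaginaries in $\Delta^{\imp}(K)$ stay positive because $w$ normalizes $W_K$ hence fixes $K_0\cap\Delta(K)$ up to $W_K$, and imaginary roots stay in $\Delta^{\imp}$), one concludes $\widetilde{w}h_1\widetilde{w}\inv$, after correcting by the finitely many flipped real root groups (which land in $U_-\cap U^{\map}=\{1\}$ — impossible unless trivial), must equal $h_1'$ with support in $\Delta^+(K)$; comparing with $g\in U^{\ma}_{\Delta^+_K}$ and uniqueness of the decomposition $U^{\map}=U^{\ma}_{\Delta^+(K)}\ltimes U^{\ma}_{\Delta^+_K}$ gives $h_1'=1$, hence $h_1=1$ and $h=h_2\in U^{\ma}_{\Delta^+_K}$.

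The main obstacle I anticipate is the sign bookkeeping for real roots: conjugation by $\widetilde{w}$ does not literally preserve $\Delta^+$, and one must argue that the real roots $\alpha$ with $w\inv\alpha\in\Delta^-$ all have $\supp(\alpha)\subseteq K$ (equivalently $\alpha\in\Delta(K)$) — this follows because such $\alpha$ lie in $\Delta^+\cap w\Delta^-$, a finite set whose elements are positive roots made negative by $w\in N_W(W_K)$, and by the theory of the length function such roots are precisely the ones "crossed" by $w$, all of which have support in the support of $w$, hence in $K$ — and then to note that the corresponding conjugate root groups, being in $U_-\cap U^{\map}=\{1\}$ when we insist the total conjugate lies in $U^{\map}$, impose the vanishing. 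Once the real-root flips are confined to $\Delta(K)$, the argument becomes a clean application of the uniqueness in Lemma~\ref{Lemma: Unique decomposition} together with the $W_K$-invariance of $\Delta^{\imp}(K)$; I would present the proof by first establishing $w\Delta^+_K=\Delta^+_K$ carefully, then invoking these uniqueness statements. An alternative, possibly shorter, write-up: deduce from $w\Delta^+_K=\Delta^+_K$ directly that $\widetilde{w}\inv U^{\ma}_{\Delta^+_K}\widetilde{w}\cap U^{\map}\subseteq U^{\ma}_{\Delta^+_K}$ using Example~\ref{example:Deltawdecomposition}-style reasoning ($U^{\ma}_{\Delta^+_K}=U^{\map}\cap\overline{\langle U_\alpha\mid\alpha\in\Delta^+_K\rangle}$ and this is stable under any $\widetilde{w}$ normalizing the set $\Delta^+_K$, up to the $U_-$-part which is killed by intersecting with $U^{\map}$), which is exactly the hypothesis $\widetilde{w}\inv g\widetilde{w}\in U^{\map}$.
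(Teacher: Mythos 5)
There is a genuine gap: your pivotal claim that $w\Delta^+_K=\Delta^+_K$ is false in general, and the justification you give for it is precisely where the error lies. You assert that the positive real roots sent to $\Delta^-$ by $w\inv$ (the set $\Delta_w$ of Example~\ref{example:Deltawdecomposition}) all have support contained in $K$. What is true is that they have support contained in $\supp(w)$; but the hypothesis is only that $w$ \emph{normalizes} $W_K$, not that $w\in W_K$, so $\supp(w)$ need not be contained in $K$. In the intended application (Proposition~\ref{prop: nub contained in Delta_perp, affine, sphw}) one has $\supp(w)=J$ and $K=\Jperp\cup\Jsph^w\cup\Jaff$; if, say, $J$ is irreducible of indefinite type with $\Jsph^w=\varnothing$, then $K=\Jperp$ is disjoint from $J$, every $\alpha\in\Delta_w$ has $\supp(\alpha)\subseteq J$ and hence lies in $\Delta^+_K$, and $w\inv\alpha\in\Delta^-$ --- so $w\inv\Delta^+_K\not\subseteq\Delta^+$. (Note also that if $w\Delta^+_K=\Delta^+_K$ held, the hypothesis $\widetilde{w}\inv g\widetilde{w}\in U^{\map}$ would be superfluous; its presence in the statement signals that it must be used to discard exactly these roots.) Your main argument via the decomposition $U^{\map}=U^{\ma}_{\Delta^+(K)}\ltimes U^{\ma}_{\Delta^+_K}$ inherits the same problem and is moreover only sketched at the crucial ``matching components'' step, where neither factor $\widetilde{w}h_i\widetilde{w}\inv$ is known individually to lie in $U^{\map}$.

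The paper's proof is essentially your ``alternative shorter write-up'', carried out correctly. The hypothesis gives $g\in U^{\map}\cap\widetilde{w}U^{\map}\widetilde{w}\inv=U^{\ma}_{\Delta^+\setminus\Delta_w}$ by Example~\ref{example:Deltawdecomposition}, whence $g\in U^{\ma}_{\Delta^+_K\cap(\Delta^+\setminus\Delta_w)}$: the problematic flipped roots are removed at the outset rather than ``killed'' afterwards. It then suffices to check $w\inv\bigl(\Delta^+_K\cap(\Delta^+\setminus\Delta_w)\bigr)\subseteq\Delta^+_K$, and the correct invariance statement to use is $w\inv\Delta(K)\subseteq\Delta(K)$ (which does follow from $w\inv W_Kw=W_K$ via Lemma~\ref{Lemma: support of a root irreducible} and linearity), not $w\inv\Delta^+_K\subseteq\Delta^+_K$: for $\alpha\in\Delta^+_K\cap(\Delta^+\setminus\Delta_w)$ one has $w\inv\alpha\in\Delta^+$ and $w\inv\alpha\notin\Delta(K)$, hence $w\inv\alpha\in\Delta^+_K$. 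If you replace your claim ``$w\Delta^+_K=\Delta^+_K$'' by this two-step argument, your write-up becomes correct.
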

\begin{proof}
In view of Example~\ref{example:Deltawdecomposition}, we have $g\in U^{\ma}_{\Delta^+_K}\cap U^{\ma}_{\Delta^+\setminus\Delta_w}=U^{\ma}_{\Delta^+_K\cap \Delta^+\setminus\Delta_w}$. It thus remains to see that $w\inv(\Delta^+_K\cap \Delta^+\setminus\Delta_w)\subseteq\Delta^+_K$.
Since $w\inv W_Kw=W_K$, we have $w\inv \alpha_i\in\Delta(K)$ for all $i\in K$ (see Lemma~\ref{Lemma: support of a root irreducible}) and hence $w\inv \Delta(K)\subseteq\Delta(K)$. Thus if $\alpha\in \Delta^+_K\cap \Delta^+\setminus\Delta_w$, then $\alpha\in \Delta\setminus\Delta(K)$ and $w\inv\alpha\in\Delta^+$, so that $w\inv\alpha\in \Delta^+\setminus\Delta(K)=\Delta^+_K$, as desired.
\end{proof}

 \begin{proposition}\label{prop: nub contained in Delta_perp, affine, sphw}
	Let $w\in W$ be straight and standard and let $J = \supp(w)$. Set $K := \Jperp \cup \Jsph^w \cup \Jaff$. Then $\nub(\widetilde{w}) \leq U^{\ma}_{\Delta^+_K}$.
\end{proposition}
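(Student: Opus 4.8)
Throughout, take the section $\overline{\cdot}$ fixed in Section~\ref{Section: The nub in completions} to be the section $\widetilde{\cdot}$ of Remark~\ref{remark:wtildeconjugationaction}, and abbreviate $L:=\Jperp\cup\Jsph^w$, so that $K=L\cup\Jaff$ with $L$ and $\Jaff$ pairwise non-adjacent (the components of $\Jaff$ are components of $J$ distinct from those meeting $\Jsph^w$, and $\Jperp$ commutes with $J$). The plan is to first extract from Theorem~\ref{Theorem: Upper bound for nub} the bound $\nub(\widetilde w)\leq U^{\ma}_{\Delta^{\imp}(\Jaff)\cup\Delta^+_K}$, and then to kill the extra factor $U^{\ma}_{\Delta^{\imp}(\Jaff)}$ using the ergodicity of the conjugation action on the nub.

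For the first step, since $\Jaff\subseteq\JaffCox$, Theorem~\ref{Theorem: Upper bound for nub} (applied with $H=G^{\pma}$ and $V=U^{\map}$) gives $\nub(\widetilde w)\leq \Fix_H(R_{L}(c))\cap\Fix_H(\Sigma\cap R_{\Jaff}(c))\cap U^{\map}$. I would then identify, within $U^{\map}$, the two fixators: first $\Fix_{U^{\map}}(R_{L}(c))=U^{\ma}_{\Delta^+_{L}}$ (the inclusion $\supseteq$ is \eqref{equation:UmaDelta+subK fixes R_K(c)}; for $\subseteq$, decompose $U^{\map}=U^{\ma}_{\Delta^+(L)}\cdot U^{\ma}_{\Delta^+_L}$ and use that $U^{\ma}_{\Delta^+(L)}$ acts faithfully on the residue $R_L(c)$, which is the building of $\mc{D}_{A_L}$), and second $\Fix_{U^{\map}}(\Sigma\cap R_{\Jaff}(c))=U^{\ma}_{\Delta^{\imp}(\Jaff)\cup\Delta^+_{\Jaff}}$ (reducing, as before, to $\Fix_{U^{\ma}_{\Delta^+(\Jaff)}}(\Sigma\cap R_{\Jaff}(c))=U^{\ma}_{\Delta^{\imp}(\Jaff)}$, which is \cite[Exercise~$8.112(3)$]{Ma18} in type $A_{\Jaff}$). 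Using that $L,\Jaff$ are non-adjacent one computes $\Delta^+_L\cap\Delta^+_{\Jaff}=\Delta^+_K$ and $\Delta^{\imp}(\Jaff)\subseteq\Delta^+_L$, whence $\Delta^+_L\cap(\Delta^{\imp}(\Jaff)\cup\Delta^+_{\Jaff})=\Delta^{\imp}(\Jaff)\cup\Delta^+_K$; since $U^{\ma}_{\Psi_1}\cap U^{\ma}_{\Psi_2}=U^{\ma}_{\Psi_1\cap\Psi_2}$ for closed $\Psi_i$, this yields $\nub(\widetilde w)\leq U^{\ma}_{\Delta^{\imp}(\Jaff)\cup\Delta^+_K}$.

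For the second step, note that $\Delta^{\imp}(\Jaff)\subseteq\Delta^+(K)$, so by Lemma~\ref{Lemma: Unique decomposition} and \cite[Lemma~$8.58(4)$]{Ma18} we have $U^{\ma}_{\Delta^{\imp}(\Jaff)\cup\Delta^+_K}=U^{\ma}_{\Delta^{\imp}(\Jaff)}\cdot U^{\ma}_{\Delta^+_K}$ with $U^{\ma}_{\Delta^+_K}$ a closed normal subgroup. Set $\mc{N}:=\nub(\widetilde w)\cap U^{\ma}_{\Delta^+_K}$; it is closed and normal in $\nub(\widetilde w)$, and $\nub(\widetilde w)/\mc{N}$ embeds into the pro-$p$ group $U^{\ma}_{\Delta^{\imp}(\Jaff)}$. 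Since $w$ normalizes $W_K$ ($w$ centralizes $W_{\Jperp}$, normalizes $W_{\Jsph^w}$ as $w$ is standard, and stabilizes $W_{\Jaff}$ as $\Jaff$ is a union of components of $J=\supp(w)$), Lemma~\ref{Lemma: Kac--Moody invariant}, applied to $w$ and to $w\inv$, shows $\widetilde w^{\pm1}\mc{N}\widetilde w^{\mp1}\subseteq\mc{N}$ (using $\widetilde w^{\pm1}g\widetilde w^{\mp1}\in\nub(\widetilde w)\leq U^{\map}$ for $g\in\mc{N}$), so $\gamma_{\widetilde w}$ descends to an automorphism $\bar\gamma$ of the compact group $\nub(\widetilde w)/\mc{N}$. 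By Lemma~\ref{Lemma: Coxeter group element commutes with imaginary root group} (applied to each affine component of $J$) there is $N\in\NN^*$ with $\widetilde w^N$ centralizing $U^{\ma}_{\Delta^{\imp}(\Jaff)}$; writing $g\in\nub(\widetilde w)$ as $g=g_1g_2$ with $g_1\in U^{\ma}_{\Delta^{\imp}(\Jaff)}$ and $g_2\in U^{\ma}_{\Delta^+_K}$, we get $\widetilde w^Ng\widetilde w^{-N}=g_1(\widetilde w^Ng_2\widetilde w^{-N})$ with $\widetilde w^Ng_2\widetilde w^{-N}=g_1\inv(\widetilde w^Ng\widetilde w^{-N})\in U^{\map}$, hence in $U^{\ma}_{\Delta^+_K}$ by Lemma~\ref{Lemma: Kac--Moody invariant}; so $\widetilde w^Ng\widetilde w^{-N}\equiv g\pmod{\mc{N}}$ and $\bar\gamma^N=\mathrm{id}$. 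Since $\gamma_{\widetilde w}$ acts ergodically on $\nub(\widetilde w)$ by \cite[Theorem~$4.1$]{Wi14}, the factor $\bar\gamma$ acts ergodically on $\nub(\widetilde w)/\mc{N}$; but an ergodic topological automorphism of finite order of a compact group forces that group to be trivial (reduce to a nontrivial abelian quotient and dualise: $\bar\gamma^N=\mathrm{id}$ would make every character periodic), so $\nub(\widetilde w)=\mc{N}\leq U^{\ma}_{\Delta^+_K}$.

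The heart of the matter---and the step I expect to be the main obstacle---is the affine contribution: Theorem~\ref{Theorem: Upper bound for nub} only gives that $\nub(\widetilde w)$ fixes the \emph{apartment} $\Sigma\cap R_{\Jaff}(c)$ rather than the full residue $R_{\Jaff}(c)$, and the discrepancy is exactly the group $U^{\ma}_{\Delta^{\imp}(\Jaff)}$ of imaginary root groups, which fixes the apartment but not the residue. The three preparatory lemmas are precisely tailored to this: Lemma~\ref{Lemma: Coxeter group element commutes with imaginary root group} makes $\widetilde w$ act with finite order on these imaginary root groups, Lemma~\ref{Lemma: Kac--Moody invariant} keeps $\mc{N}$ (equivalently, the relevant projection onto $U^{\ma}_{\Delta^{\imp}(\Jaff)}$) $\widetilde w$-stable, and ergodicity of $\gamma_{\widetilde w}$ on the nub then forbids any such finite-order factor, so the imaginary contribution supported in $\Jaff$ is excluded. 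Checking that the various root sets involved are closed and that the faithfulness statements hold is routine.
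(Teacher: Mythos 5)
Your overall architecture matches the paper's: first squeeze $\nub(\widetilde w)$ between $U^{\map}$ and the fixator coming from Theorem~\ref{Theorem: Upper bound for nub} to isolate a residual imaginary factor, then kill that factor by combining the finite-order action of $\widetilde w$ on it (Lemmas~\ref{Lemma: Coxeter group element with support J commutes with imaginary root with support in Jperp} and \ref{Lemma: Coxeter group element commutes with imaginary root group}, plus Lemma~\ref{Lemma: Kac--Moody invariant} for stability of the $U^{\ma}_{\Delta^+_K}$-part) with the dynamical rigidity of the nub. Your second step is sound and is essentially the paper's argument in different clothing: where you quotient by $\mc{N}$ and invoke ergodicity plus ``finite order $\Rightarrow$ trivial'', the paper intersects $\nub(\widetilde w)$ with the open subgroups $U^{\ma}_{\Psi_n}=U^{\ma}_nU^{\ma}_{\Delta^+_K}$ and invokes Lemma~\ref{Lemma: Facts about nub} (no proper relatively open $\widetilde w$-stable subgroups); these are two faces of the same ergodicity statement from \cite{Wi14}.

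The genuine gap is in your first step, in the identity $\Fix_{U^{\map}}(R_{L}(c))=U^{\ma}_{\Delta^+_{L}}$ for $L=\Jperp\cup\Jsph^w$, which you justify by asserting that $U^{\ma}_{\Delta^+(L)}$ acts faithfully on the residue $R_L(c)$. After the decomposition $U^{\map}=U^{\ma}_{\Delta^+(L)}\cdot U^{\ma}_{\Delta^+_L}$, what you actually get is $\Fix_{U^{\map}}(R_L(c))=\Fix_{U^{\ma}_{\Delta^+(L)}}(R_L(c))\cdot U^{\ma}_{\Delta^+_L}$, and the first factor is the kernel of the $U^{\ma}_{\Delta^+(L)}$-action on the building of the Levi of type $L$ --- a subgroup of $U^{\ma}_{\Delta^{\imp}(\Jperp)}$ (it fixes the apartment, so \cite[Exercise~8.112(3)]{Ma18} applies), i.e.\ precisely a Gabber--Kac-type kernel. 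Its triviality is \emph{not} routine: it is a delicate question in small characteristic (it is the reason why the various completions of $\G_A(k)$ may differ, cf.\ Example~\ref{Example: Scheme-theoretic completion}), and nothing in the paper or in \cite{Ma18} gives it to you. This is exactly why the paper never computes $\Fix(R_K(c))$ exactly and instead works with $\Fix(\Sigma\cap R_K(c))$, whose intersection with $U^{\ma}_{\Delta^+(K)}$ \emph{is} computable and equals $U^{\ma}_{\Delta^{\imp}(K)}$. The good news is that the gap is repairable without new ideas: drop the faithfulness claim, accept the weaker bound $\nub(\widetilde w)\leq U^{\ma}_{\Delta^{\imp}(K)}\cdot U^{\ma}_{\Delta^+_K}$ with $\Delta^{\imp}(K)=\Delta^{\imp}(\Jaff)\cup\Delta^{\imp}(\Jperp)$, and note that Lemma~\ref{Lemma: Coxeter group element with support J commutes with imaginary root with support in Jperp} already makes $\widetilde w$ centralize the $\Jperp$-imaginary part (you only invoked Lemma~\ref{Lemma: Coxeter group element commutes with imaginary root group} for $\Jaff$); your quotient-and-ergodicity step then goes through verbatim with $\Delta^{\imp}(K)$ in place of $\Delta^{\imp}(\Jaff)$.
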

\begin{proof}
	By Proposition~\ref{prop: Upper bound for nub} and Lemma~\ref{Lemma: Unique decomposition} we have $\nub(\widetilde{w}) \leq U^{\map} = U^{\ma}_{\Delta^+(K)} \cdot U^{\ma}_{\Delta^+_K}$. Moreover, Lemma~\ref{Lemma: Fixator of orthogonal residues} and Theorem~\ref{Theorem: Upper bound for nub} imply that $\nub(\widetilde{w}) \leq \Fix(\Sigma \cap R_K(c))$. Note that $U^{\ma}_{\Delta^+_K} \leq \Fix(R_K(c))$ by (\ref{equation:UmaDelta+subK fixes R_K(c)}). Since $U^{\ma}_{\Delta^+(K)}\cap \Fix(\Sigma\cap R_K(c))=U^{\ma}_{\Delta^{\imp}(K)}$ by \cite[Exercise~$8.112(3)$]{Ma18}, we deduce that
	\begin{equation}\label{Equation: nub contained in Deltaim(K) cdot Delta_K}
		\nub(\widetilde{w}) \leq U^{\ma}_{\Delta^{\imp}(K)} \cdot U^{\ma}_{\Delta^+_K}.
	\end{equation}
	Note also that $\Delta^{\imp}(K) = \{ \alpha \in \Delta^{\imp} \mid \supp(\alpha) \subseteq \Jaff \cup \Jperp \}$ since $\Jsph^w$ is a union of finite type components of $K$. By Lemmas~\ref{Lemma: Coxeter group element with support J commutes with imaginary root with support in Jperp} and \ref{Lemma: Coxeter group element commutes with imaginary root group}, up to replacing $\widetilde{w}$ by some nontrivial power, we may assume that $\widetilde{w}$ centralizes each root group corresponding to a root in $\Delta^{\imp}(\Jaff) \cup \Delta^{\imp}(\Jperp) = \Delta^{\imp}(K)$.

	For each $n\geq 1$, set $\Psi_n := \{ \alpha \in \Delta^+(K) \mid \height(\alpha) \geq n \} \cup \Delta^+_K=\Psi(n)\cup \Delta^+_K$. Note that $$U^{\ma}_{\Psi_n}=U^{\ma}_{\Delta^+(K)\cap\Psi(n)}\cdot U^{\ma}_{\Delta^+_K}=U^{\ma}_n U^{\ma}_{\Delta^+_K}$$ by Lemma~\ref{Lemma: Unique decomposition}, and hence $U^{\ma}_{\Psi_n}$ is open.
	
	\emph{Claim: $\nub(\widetilde{w}) \cap U^{\ma}_{\Psi_n}$ is $\widetilde{w}$-stable.}
	
	Indeed, let $g\in \nub(\widetilde{w}) \cap U^{\ma}_{\Psi_n}$ and $\epsilon\in\{\pm 1\}$. As $\nub(\widetilde{w})$ is $\widetilde{w}$-stable, we have to show that $\widetilde{w}^{\epsilon} g \widetilde{w}^{-\epsilon} \in U^{\ma}_{\Psi_n}$. By (\ref{Equation: nub contained in Deltaim(K) cdot Delta_K}) there exist $u\in U^{\ma}_{\Delta^{\imp}(K)}$ and $v\in U^{\ma}_{\Delta^+_K}$ with $g = uv$. Then $u = gv\inv \in U^{\ma}_{\Psi_n}$. Note that $\widetilde{w}^{\epsilon} g \widetilde{w}^{-\epsilon} \in \nub(\widetilde{w}) \leq U^{\map}$ as well as $\widetilde{w}^{\epsilon} u \widetilde{w}^{-\epsilon} = u$, and hence $\widetilde{w}^{\epsilon} v \widetilde{w}^{-\epsilon} \in U^{\map}$. Since $w$ normalizes $W_{\Jperp}$, $W_{\Jsph^w}$ and $W_{\Jaff}$, it normalizes $W_K$. Lemma~\ref{Lemma: Kac--Moody invariant} then implies $\widetilde{w}^{\epsilon} v \widetilde{w}^{-\epsilon} \in U^{\ma}_{\Delta^+_{K}}$.	Hence
	\[ \widetilde{w}^{\epsilon} g \widetilde{w}^{-\epsilon} = \widetilde{w}^{\epsilon} u \widetilde{w}^{-\epsilon} \cdot \widetilde{w}^{\epsilon} v \widetilde{w}^{-\epsilon} \in u U^{\ma}_{\Delta^+_K} \subseteq U^{\ma}_{\Psi_n},  \]
	yielding the claim.
	
	We infer that $\nub(\widetilde{w}) \cap U^{\ma}_{\Psi_n}$ is a relatively open, $\widetilde{w}$-stable subgroup of $\nub(\widetilde{w})$. Lemma~\ref{Lemma: Facts about nub} now implies that $\nub(\widetilde{w}) \cap U^{\ma}_{\Psi_n} = \nub(\widetilde{w})$, that is, $\nub(\widetilde{w}) \leq U^{\ma}_{\Psi_n}$ for each $n\in\NN^*$. As $\bigcap_{n\in\NN^*}U^{\ma}_{\Psi_n}=\bigcap_{n\in\NN^*} U^{\ma}_nU^{\ma}_{\Delta^+_K}=U^{\ma}_{\Delta^+_K}$, this proves the proposition.
\end{proof}

\begin{corollary}\label{cor: Main result}
	Let $w\in W$ be straight and standard and let $J = \supp(w)$. Set $L_w := \{ w^zc \mid z\in \ZZ \}\subseteq \mc{B}$ and $K := \Jperp \cup \Jsph^w \cup \Jaff$. Then
	\[ \nub(\widetilde{w}) = \Fix(L_w) \cap U^{\ma}_{\Delta^+_K}. \]
\end{corollary}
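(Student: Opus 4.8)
The plan is to prove the equality by combining the results of \S\ref{subsection:UBFTNKM} with the lower bound already established in Corollary~\ref{corollary:main_lower_bound_nubKM}: essentially all the substance lies in those earlier statements, and what remains is to check that the two inclusions $\supseteq$ and $\subseteq$ each follow by assembling pieces, together with one elementary identification of fixators.

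For the inclusion $\Fix(L_w) \cap U^{\ma}_{\Delta^+_K} \leq \nub(\widetilde{w})$, I would first note that since $U^{\ma}_{\Delta^+_K} \leq U^{\map}$, any element of $U^{\ma}_{\Delta^+_K}$ fixing $L_w$ pointwise (as a subset of $\mc{B}$) automatically lies in $\Fix_{U^{\map}}(L_w)$; hence $\Fix(L_w) \cap U^{\ma}_{\Delta^+_K} = \Fix_{U^{\map}}(L_w) \cap U^{\ma}_{\Delta^+_K}$, and this is exactly the content of Corollary~\ref{corollary:main_lower_bound_nubKM} (whose proof rests, via Proposition~\ref{prop: root groups in the nub} and Corollary~\ref{cor: imaginary root groups are contracted}, on the (FPRS) property of $G^{\pma}$ recorded in Remark~\ref{remark: schematic-completion is Kac--Moody completion}). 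For the reverse inclusion, Proposition~\ref{prop: nub contained in Delta_perp, affine, sphw} gives $\nub(\widetilde{w}) \leq U^{\ma}_{\Delta^+_K}$, so it only remains to see that $\nub(\widetilde{w}) \leq \Fix(L_w)$. For this I would apply Proposition~\ref{prop: Upper bound for nub} with $V := U^{\map}$ (admissible by Remark~\ref{remark: schematic-completion is Kac--Moody completion}), which yields $\nub(\widetilde{w}) \leq \bigcap_{z\in\ZZ}\widetilde{w}^z U^{\map}\widetilde{w}^{-z}$; since $U^{\map}$ fixes the fundamental chamber $c$, the conjugate $\widetilde{w}^z U^{\map}\widetilde{w}^{-z}$ fixes $\widetilde{w}^z c = w^z c$, whence $\bigcap_{z\in\ZZ}\widetilde{w}^z U^{\map}\widetilde{w}^{-z} \leq \bigcap_{z\in\ZZ}\Fix_H(w^z c) = \Fix_H(L_w)$. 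Combining the two inclusions gives the displayed equality. (Alternatively, $\nub(\widetilde{w}) \leq \Fix(L_w)$ follows directly from the fact that $\widetilde{w}^z H_c\widetilde{w}^{-z}$ is tidy for $\widetilde{w}$ for every $z$, by Lemmas~\ref{Lemma: auxiliary results for tidy subgroups}(3) and \ref{lemma: Stabilizer is tidy}, so that $\nub(\widetilde{w})$, lying in every tidy subgroup, lies in their intersection $\Fix_H(L_w)$.)

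I do not expect a genuine obstacle at this stage: the hard inputs — pushing real and imaginary root groups into the nub under (FPRS), and confining the nub using the ``no proper relatively open $\widetilde{w}$-stable subgroup'' property of Lemma~\ref{Lemma: Facts about nub} — are already carried out in Corollary~\ref{corollary:main_lower_bound_nubKM} and Proposition~\ref{prop: nub contained in Delta_perp, affine, sphw}. The only points requiring a little care are the identification $\Fix(L_w)\cap U^{\ma}_{\Delta^+_K} = \Fix_{U^{\map}}(L_w)\cap U^{\ma}_{\Delta^+_K}$ (so that Corollary~\ref{corollary:main_lower_bound_nubKM} applies verbatim) and the observation that a $\widetilde{w}^z$-conjugate of $U^{\map}$ fixes $w^z c$, both of which are immediate from $U^{\ma}_{\Delta^+_K}\leq U^{\map}$ and the fact that the $H$-action on $\mc{B}$ factors the section $\widetilde{\cdot}$ through $W$.
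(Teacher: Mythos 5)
Your proof is correct and follows exactly the paper's own argument: the inclusion $\supseteq$ via Corollary~\ref{corollary:main_lower_bound_nubKM}, and the inclusion $\subseteq$ by combining Proposition~\ref{prop: nub contained in Delta_perp, affine, sphw} with Proposition~\ref{prop: Upper bound for nub} applied to $V=U^{\map}$. The extra details you supply (that $\Fix(L_w)\cap U^{\ma}_{\Delta^+_K}=\Fix_{U^{\map}}(L_w)\cap U^{\ma}_{\Delta^+_K}$, and that $\bigcap_{z}\widetilde{w}^zU^{\map}\widetilde{w}^{-z}\leq\Fix(L_w)$) are accurate and merely make explicit what the paper leaves to the reader.
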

\begin{proof}
The inclusion $\supseteq$ follows from Corollary~\ref{corollary:main_lower_bound_nubKM}, while the inclusion $\subseteq$ follows from Propositions~\ref{prop: Upper bound for nub} and \ref{prop: nub contained in Delta_perp, affine, sphw}.
\end{proof}

Recall that, for $w\in W$ straight and standard with support $J\subseteq S$, we set
$$K(w) = \Delta^+ \setminus \left( \Delta_{w+} \cup \Delta_{w-} \cup \Delta^+(J^{\perp} \cup J_{\mathrm{aff}} \cup \Jsph^w) \right)$$
as well as
$$\Delta_{w\pm} = \{ \alpha \in \Delta^+ \mid \exists n\in \NN: w^{\pm n} \alpha \in \Delta^- \},$$
so that $\Delta_{w+}\cup\Delta_{w-} \subseteq \Delta^{\rep}$ is the set of positive real roots which do not contain $\{ w^z \mid z\in \ZZ \}\subseteq\Sigma(W,S)$.

\begin{lemma}\label{lemma:FixLDeltawpm}
Let $w\in W$ be straight and standard, and let $J=\supp(w)$. Set $L_w := \{ w^zc \mid z\in \ZZ \}\subseteq \mc{B}$. Then
$$\Fix(L_w)=U^{\ma}_{\Delta^{+}\setminus (\Delta_{w+}\cup\Delta_{w-})}.$$
\end{lemma}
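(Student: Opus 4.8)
The plan is to prove both inclusions of $\Fix(L_w) = U^{\ma}_{\Delta^+ \setminus (\Delta_{w+}\cup\Delta_{w-})}$ by combining the structure of $\Fix_{U^{\map}}(c)$ with the characterization of $\Delta_{w+}\cup\Delta_{w-}$ as those positive real roots whose associated half-apartment does not contain $L_w$ (viewed in $\Sigma(W,S)$).

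First I would observe that $\Fix(L_w) \leq \Fix(c) \cap U^{\map} = \Fix_{G^{\pma}}(c)$; more precisely, since $c = 1_G B^{\map}$, any element fixing $c$ lies in $B^{\map} = TU^{\map}$, and since we are looking inside a unipotent subgroup (or just inside $U^{\map}$, which is where $\nub(\widetilde w)$ lives by Proposition~\ref{prop: Upper bound for nub}), we may work entirely inside $U^{\map}$. Here I would need to be slightly careful about whether the statement is about $\Fix_{U^{\map}}(L_w)$ or $\Fix_{G^{\pma}}(L_w)$ — the surrounding usage (Corollary~\ref{cor: Main result}, Corollary~\ref{corollary:main_lower_bound_nubKM}) takes place inside $U^{\map}$, so I would state and prove it as $\Fix_{U^{\map}}(L_w) = U^{\ma}_{\Delta^+\setminus(\Delta_{w+}\cup\Delta_{w-})}$.

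For the inclusion $\supseteq$: each root group $U_\alpha$ with $\alpha\in\Delta^{\imp}$ lies in $\Fix(\Sigma) \supseteq \Fix(L_w)$... no — rather, $U_\alpha \leq \Fix(\Sigma)$ for imaginary $\alpha$, hence fixes $L_w$. For real $\alpha \in \Delta^{\rep} \setminus(\Delta_{w+}\cup\Delta_{w-})$, by definition of $\Delta_{w\pm}$ we have $w^n\alpha\in\Delta^+$ for all $n\in\ZZ$, equivalently (under the $W$-equivariant bijection $\Delta^{\rep}\leftrightarrow\Phi_+$ of Remark~\ref{Remark: Two notions of support}) the half-apartment $\alpha$ of $\Sigma(W,S)$ contains $w^z 1_W$ for all $z\in\ZZ$, i.e.\ $L_w\subseteq\alpha$; and since $U_\alpha$ fixes the root $\alpha$ of $\Sigma$ pointwise (as recalled in \S\ref{subsection:RGDsystems}, $U_\alpha$ fixes $\alpha$), it fixes $L_w = \{w^zc\mid z\in\ZZ\}$. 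Since $\Delta^+\setminus(\Delta_{w+}\cup\Delta_{w-})$ is closed (it equals $\Delta^{\imp}\cup(\Delta^{\rep}\setminus(\Delta_{w+}\cup\Delta_{w-}))$ and one checks closedness: a sum of two roots with $L_w$ in both half-apartments again has $L_w$ in its half-apartment, and sums involving imaginary roots stay imaginary or in this set), $U^{\ma}_{\Delta^+\setminus(\Delta_{w+}\cup\Delta_{w-})}$ is topologically generated by these root groups, hence fixes $L_w$.

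For the inclusion $\subseteq$: let $g\in\Fix(L_w)$. Since $\Delta_{w+}\cup\Delta_{w-}$ and its complement $\Delta^+\setminus(\Delta_{w+}\cup\Delta_{w-})$ are both closed, Lemma~\ref{Lemma: Unique decomposition} gives $U^{\map} = U^{\ma}_{\Delta_{w+}\cup\Delta_{w-}}\cdot U^{\ma}_{\Delta^+\setminus(\Delta_{w+}\cup\Delta_{w-})}$, so write $g = g_1 g_2$ accordingly. By the previous inclusion $g_2\in\Fix(L_w)$, hence $g_1 = g g_2\inv \in \Fix(L_w)\cap U^{\ma}_{\Delta_{w+}\cup\Delta_{w-}}$, and it remains to show $g_1 = 1$. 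Now $\Delta_{w+}\cup\Delta_{w-}\subseteq\Delta^{\rep}$ is a \emph{finite or locally finite in height} set; the key point is: for each $z\in\ZZ$, $\widetilde w^{-z} g_1 \widetilde w^z$ fixes $c$ and lies in $U^{\ma}_{w^{-z}(\Delta_{w+}\cup\Delta_{w-})}$. I would then argue that the set $\bigcap_{z\in\ZZ} w^z(\Delta_{w+}\cup\Delta_{w-})$, intersected with the condition of fixing $c$, forces triviality — more concretely: an element of $U^{\ma}_\Psi$ with $\Psi\subseteq\Delta^{\rep}$ that fixes $c$ must be trivial, because its standard form is a product of real root group elements $[\exp](\lambda_x x)$ with $\deg(x)\in\Delta^{\rep}$, and by examining the lowest-height term $\alpha$ with $\lambda_x\neq 0$ one finds that $g_1$ moves the chamber $r_\alpha c$ (or a chamber across the wall $\partial\alpha$), using that $U_\alpha$ acts simply transitively on the boundary panels of $\alpha$ minus the fixed chamber; but $r_\alpha c$... — this last step is the main obstacle.

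**The main obstacle** is precisely this last step: showing that a nontrivial element of $U^{\ma}_\Psi$ with $\Psi\subseteq\Delta^{\rep}$ and $L_w\not\subseteq\alpha$ for every $\alpha\in\Psi$ cannot fix every chamber of $L_w$. The clean way to handle it is to reduce to the RGD setting: $U^{\ma}_{\Delta_{w+}\cup\Delta_{w-}}$ consists of finite products of real root groups $U_\alpha$ with $\alpha\in\Phi_+$, $L_w\not\subseteq\alpha$; pick $\alpha$ of minimal height with $U_\alpha$-component nontrivial in the standard form of $g_1$, find $z$ with $w^z 1_W \notin \alpha$, i.e.\ $w^z 1_W \in -\alpha$, hence $w^z c$ lies on the far side of the wall $\partial\alpha$; then show the lowest-height term genuinely moves $w^zc$ while higher terms (supported on roots of greater height, hence not separating $c$ from $w^zc$ in the relevant range, by a careful induction on height using that the other factors in the product fix $w^zc$) cannot cancel this. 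In writing this up I would likely phrase it via: $\Fix_{U^{\map}}(L_w) \cap U^{\ma}_{\Delta^{\rep}} = \{1\}$, which is a statement purely about the action on the building and which I expect follows from \cite[Exercise~8.112]{Ma18}-type facts or from the transitivity/simple-transitivity of root group actions on panels together with the fact that for any real $\alpha$ there is $z$ with $w^zc\notin\alpha$. If such a statement is not readily available I would prove it by the minimal-height argument just sketched.
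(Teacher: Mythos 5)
Your inclusion $\supseteq$ is fine and agrees with the paper. The $\subseteq$ direction, however, has two problems. First, the two-factor decomposition $U^{\map}=U^{\ma}_{\Delta_{w+}\cup\Delta_{w-}}\cdot U^{\ma}_{\Delta^{+}\setminus(\Delta_{w+}\cup\Delta_{w-})}$ is not available: the set $\Delta_{w+}\cup\Delta_{w-}$ is in general \emph{not} closed, so $U^{\ma}_{\Delta_{w+}\cup\Delta_{w-}}$ is not even defined and Lemma~\ref{Lemma: Unique decomposition} does not apply. For instance, if $J$ has an affine component of type $\tilde{A}_1$, then (up to swapping) $\alpha_0\in\Delta_{w-}$ and $\alpha_1\in\Delta_{w+}$, while $\alpha_0+\alpha_1=\delta$ is an imaginary root, hence $W$-stable inside $\Delta^+$ and therefore \emph{not} in $\Delta_{w+}\cup\Delta_{w-}$; the same phenomenon occurs for rank-$2$ indefinite components with $\alpha_i+\alpha_j\in\Delta^{\imp}$. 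The paper instead uses the three-factor decomposition $U^{\map}=U^{\ma}_{\Delta_{w+}}\cdot U^{\ma}_{\Delta_{w-}}\cdot U^{\ma}_{\Delta^{+}\setminus(\Delta_{w+}\cup\Delta_{w-})}$, which is legitimate because $\Delta_{w+}$ and $\Delta_{w-}$ are each closed and are disjoint.

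Second, and more seriously, the step you yourself call ``the main obstacle'' is left unproved, and the minimal-height analysis of standard forms that you sketch is exactly the infinite-product bookkeeping the paper avoids. The clean tool is already available: by Lemma~\ref{Lemma: U_+ cap G_n.c} and Example~\ref{example:Deltawdecomposition}, $\Fix_{U^{\map}}(vc)=U^{\map}\cap\widetilde{v}U^{\map}\widetilde{v}^{-1}=U^{\ma}_{\Delta^+\setminus\Delta_v}$ for every $v\in W$, whence $\Fix_{U^{\map}}(L_{\pm})=U^{\ma}_{\Delta^+\setminus\Delta_{w\mp}}$ with $L_{\pm}:=\{w^{\pm n}c\mid n\in\NN\}$. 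Writing an element of $\Fix(L_w)$ as $u_+u_-u_0$ according to the three-factor decomposition, one notes that $u_+\in U^{\ma}_{\Delta_{w+}}\subseteq U^{\ma}_{\Delta^+\setminus\Delta_{w-}}=\Fix_{U^{\map}}(L_+)$ (using $\Delta_{w+}\cap\Delta_{w-}=\varnothing$), hence $u_-$ also fixes $L_+$ and so $u_-\in U^{\ma}_{\Delta_{w-}}\cap U^{\ma}_{\Delta^+\setminus\Delta_{w-}}=\{1\}$, and symmetrically $u_+=1$; no examination of lowest-height terms is needed. Your remark that the statement should be read inside $U^{\map}$ is correct, but as written the proposal does not constitute a proof.
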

\begin{proof}
Since $U_{\alpha}\subseteq\Fix(L_w)$ whenever $\alpha\in\Delta^{+}\setminus (\Delta_{w+}\cup\Delta_{w-})$, the inclusion $\supseteq$ is clear. For the reverse inclusion, note that $\Delta_{w+}\cap\Delta_{w-}=\varnothing$ by \cite[Lemma~3.2]{Ma14}, so that we have a unique decomposition $U^{\map}=U^{\ma}_{\Delta_{w+}}\cdot U^{\ma}_{\Delta_{w-}}\cdot U^{\ma}_{\Delta^{+}\setminus (\Delta_{w+}\cup\Delta_{w-})}$ (see \cite[Lemma~8.54]{Ma18}). We thus have to show that $\Fix(L_w)\cap U^{\ma}_{\Delta_{w+}}\cdot U^{\ma}_{\Delta_{w-}}=\{1\}$. Let $u_{\pm}\in U^{\ma}_{\Delta_{w\pm}}$, and suppose that $u_+u_-\in\Fix(L_w)$. 
Set $L_{\pm}:=\{ w^{\pm n}c \mid n\in \NN \}$. Note that an element $u\in U^{\map}$ belongs to $\Fix(L_{\pm})$ if and only if $u\in U^{\ma}_{\Delta^+\setminus\Delta_{w\mp}}$. Since $u_{\pm}\in U^{\ma}_{\Delta^+\setminus\Delta_{w\mp}}$, the fact that $u_+u_-\in\Fix(L_w)$ implies that $u_{\pm}$ fixes $L_{\mp}$, and hence $u_{\pm}\in U^{\ma}_{\Delta_{w\pm}}\cap U^{\ma}_{\Delta^+\setminus\Delta_{w\pm}}=\{1\}$, as desired.
\end{proof}

\begin{corollary}\label{corollary:ThmA}
Theorem~\ref{Thmintro: Main result} holds.
\end{corollary}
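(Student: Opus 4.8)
The plan is to deduce Theorem~\ref{Thmintro: Main result} by combining Corollary~\ref{cor: Main result} with Lemma~\ref{lemma:FixLDeltawpm}, once the section $\overline{\cdot}$ of $N\to W$ fixed in the introduction has been compared with the section $\widetilde{\cdot}$ used throughout \S\ref{Section: The nub in Kac--Moody groups}. For that comparison I would show that $\nub(\overline{w})=\nub(\widetilde{w})$ for every $w\in W$: writing $\overline{w}=\widetilde{w}t$ with $t\in T=\ker(N\to W)$, the subgroup $\langle \widetilde{w}^{n}t\widetilde{w}^{-n}\mid n\in\NN\rangle$ is contained in the finite group $T$ (as $\widetilde{w}\in N$ normalizes $T$) and hence is relatively compact, so Lemma~\ref{Lemma: Generalization of Lemma 3.5 in BW04} gives $\con(\overline{w})=\con(\widetilde{w}t)=\con(\widetilde{w})$; applying the same reasoning to $\overline{w}^{-1}=\widetilde{w}^{-1}(\widetilde{w}t^{-1}\widetilde{w}^{-1})$ gives $\con(\overline{w}^{-1})=\con(\widetilde{w}^{-1})$, whence $\nub(\overline{w})=\overline{\con(\overline{w})}\cap\overline{\con(\overline{w}^{-1})}=\nub(\widetilde{w})$.

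For the first assertion, I would then fix $w\in W$ straight and standard, set $J=\supp(w)$ and $K:=\Jperp\cup\Jsph^w\cup\Jaff$, and substitute directly: Corollary~\ref{cor: Main result} gives $\nub(\widetilde{w})=\Fix(L_w)\cap U^{\ma}_{\Delta^+_K}$, while Lemma~\ref{lemma:FixLDeltawpm} rewrites $\Fix(L_w)=U^{\ma}_{\Delta^+\setminus(\Delta_{w+}\cup\Delta_{w-})}$. Both $\Delta^+\setminus(\Delta_{w+}\cup\Delta_{w-})$ and $\Delta^+_K=\Delta^+\setminus\Delta^+(K)$ are closed sets of roots (for the latter, $\supp(\alpha)\subseteq\supp(\alpha+\beta)$ prevents $\alpha+\beta$ from having support in $K$ when $\alpha$ does not), so the identity $U^{\ma}_{\Psi_1}\cap U^{\ma}_{\Psi_2}=U^{\ma}_{\Psi_1\cap\Psi_2}$ recorded in \S\ref{Section: Maximal Kac--Moody groups} applies and, together with the comparison above, yields
\[ \nub(\overline{w})=U^{\ma}_{(\Delta^+\setminus(\Delta_{w+}\cup\Delta_{w-}))\cap(\Delta^+\setminus\Delta^+(K))}=U^{\ma}_{K(w)}, \]
since $\Delta^+(K)=\Delta^+(\Jperp\cup\Jaff\cup\Jsph^w)$ and $K(w)=\Delta^+\setminus(\Delta_{w+}\cup\Delta_{w-}\cup\Delta^+(\Jperp\cup\Jaff\cup\Jsph^w))$ by definition.

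For the general statement, given $g\in G^{\pma}$ not topologically periodic, I would first observe that $g$ stabilizes no spherical residue of $\mc{B}$ --- otherwise $g$ would lie in a compact residue stabilizer and hence be topologically periodic, contrary to hypothesis (Remark~\ref{Remark: topologically periodic elements have trivial nub}) --- and then apply Proposition~\ref{prop: The nub only for straight and standard elements} with $H=G^{\pma}$ to obtain $w\in W$ straight and standard and $h\in G^{\pma}$ with $\nub(g)=h\,\nub(\overline{w})\,h^{-1}$. I do not expect a real obstacle in this corollary: all the substantive content is in the results invoked, and the only point demanding any care is the passage between the sections $\overline{\cdot}$ and $\widetilde{\cdot}$ in the first paragraph, which is harmless because the torus $T$ is finite.
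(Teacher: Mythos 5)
Your proof is correct and follows essentially the same route as the paper, which simply deduces Theorem~\ref{Thmintro: Main result} from Corollary~\ref{cor: Main result} and Lemma~\ref{lemma:FixLDeltawpm} (with the second assertion handled by Proposition~\ref{prop: The nub only for straight and standard elements}, as you do). You additionally spell out two points the paper leaves implicit --- the comparison of the sections $\overline{\cdot}$ and $\widetilde{\cdot}$ via Lemma~\ref{Lemma: Generalization of Lemma 3.5 in BW04}, and the closedness needed to apply $U^{\ma}_{\Psi_1}\cap U^{\ma}_{\Psi_2}=U^{\ma}_{\Psi_1\cap\Psi_2}$ --- both of which are handled correctly.
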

\begin{proof}
This readily follows from Corollary~\ref{cor: Main result} and Lemma ~\ref{lemma:FixLDeltawpm}.
\end{proof}

\subsection{Closure of contraction groups}\label{Subsection: Closure of contraction groups}

\begin{proposition}\label{prop:trichotomy_contraction_root_groups}
Let $w\in W$ be straight and standard, and let $J=\supp(w)$. Let $K_1(w)$ be the union of $\Delta^+(J^{\perp} \cup \Jsph^w)$ and $\Delta^+(J_{\mathrm{aff}})\setminus (\Delta_{w+}\cup\Delta_{w-})$, and set $K_2(w):=\Delta_{w+}$. Then there exists $N\in\NN^*$ such that for any $\alpha\in\Delta^+$, exactly one of the following holds:
\begin{enumerate}
\item
$w^N\alpha=\alpha$. This case occurs if and only if $\alpha\in K_1(w)$.
\item
$\height(w^n\alpha)\to -\infty$ as $n\to +\infty$. This case occurs if and only if $\alpha\in K_2(w)$.
\item
$\height(w^n\alpha)\to +\infty$ as $n\to +\infty$. This case occurs if and only if $\alpha\in K_3(w):=\Delta^+\setminus (K_1(w)\cup K_2(w))$.
\end{enumerate}
In particular, the sets $K_1(w)$, $K_2(w)$ and $K_3(w)$ are closed and form a partition of $\Delta^+$. Moreover, $K(w)= K_3(w)\cap K_3(w\inv)$ is also closed. Finally, $\Delta_{w-}\subseteq K_3(w)$. 
\end{proposition}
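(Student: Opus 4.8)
The plan is to fix $\alpha\in\Delta^+$, study the dynamics of the sequence $(w^n\alpha)_{n\geq 0}$, and show it realises exactly one of three regimes --- stationary under some fixed power of $w$, $\height(w^n\alpha)\to-\infty$, or $\height(w^n\alpha)\to+\infty$ --- which I would then match with $K_1(w)$, $K_2(w)=\Delta_{w+}$ and $K_3(w)$ respectively. For real roots the only extra input I need is that, $w$ being straight, the orbit $L_w=\{w^z\mid z\in\ZZ\}$ lies on a bi-infinite minimal gallery of $\Sigma(W,S)$; consequently, for a real root $\beta$ the set $I_\beta:=\{n\in\ZZ\mid w^n\beta\in\Delta^{\rep}\}$ equals $\{-m\mid w^m\in\beta\}$, which by convexity of $\beta$ and of $-\beta$ is a subinterval of $\ZZ$ containing $0$. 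This yields at once: $\beta\supseteq L_w$ (equivalently $\beta\notin\Delta_{w+}\cup\Delta_{w-}$) iff $I_\beta=\ZZ$; $\beta\in\Delta_{w+}$ iff $I_\beta$ is bounded above; $\beta\in\Delta_{w-}$ iff $I_\beta$ is bounded below; $I_\beta$ is never a bounded interval (using $\Delta_{w+}\cap\Delta_{w-}=\varnothing$, \cite[Lemma~3.2]{Ma14}); and $w^N\beta=\beta$ for some $N\geq 1$ forces $I_\beta=\ZZ$, hence $\beta\supseteq L_w$.

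With this in hand I would run the three cases. If $I_\beta$ is bounded above, then $-w^n\beta\in\Delta^{\rep}$ for all large $n$; these roots are pairwise distinct (a coincidence would make $\beta$ periodic, hence $I_\beta=\ZZ$), and since only finitely many roots have a given height, $\height(-w^n\beta)\to+\infty$, i.e.\ $\height(w^n\beta)\to-\infty$ and $\beta\in\Delta_{w+}=K_2(w)$. If $I_\beta=[a,\infty)$, then $w^n\beta\in\Delta^{\rep}$ for all $n\geq 0$, again pairwise distinct, so $\height(w^n\beta)\to+\infty$; this is precisely the case $\beta\in\Delta_{w-}$, and since periodic roots lie in none of $\Delta_{w\pm}$ this also gives $\Delta_{w-}\cap(K_1(w)\cup\Delta_{w+})=\varnothing$, i.e.\ $\Delta_{w-}\subseteq K_3(w)$. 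If $\beta\supseteq L_w$, the whole orbit stays in $\Delta^{\rep}$; it is either finite --- so $w^N\beta=\beta$ --- or infinite, in which case $\height(w^n\beta)\to+\infty$. For imaginary $\alpha$ I would instead invoke Theorem~\ref{thm: root groups are contracted}: either $w^n\alpha\neq\alpha$ for all $n\geq 1$, and then $w^n\alpha\in\Delta^{\imp}\subseteq\Delta^+$ are distinct so $\height(w^n\alpha)\to+\infty$; or $\supp(\alpha)\subseteq J^\perp$, or $\supp(\alpha)$ is an affine component $J'$ of $J$, in which case a direct computation ($w$ fixes $\Delta^+(J^\perp)$ pointwise; $W_{J'}$ fixes the affine null root of $J'$, the remaining factor of $w$ commuting with $W_{J'}$) gives $w\alpha=\alpha$.

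The next step is to pin down $K_1(w)$ as exactly the set of periodic roots, with a uniform exponent $N$. For $\alpha\in K_1(w)$ I would exhibit $w^N\alpha=\alpha$: if $\supp(\alpha)\subseteq J^\perp$ then $w\alpha=\alpha$; if $\supp(\alpha)\subseteq\Jsph^w$, then $w$ permutes the finite set $\Delta(\Jsph^w)$ (it normalises $W_{\Jsph^w}$) and in fact preserves $\Delta^+(\Jsph^w)$ --- were $w\alpha=-\gamma$ with $\gamma\in\Delta^+(\Jsph^w)$, then $\gamma$ is periodic, hence $w^{-1}\gamma=-\alpha\in\Delta^+$ by the interval property, absurd --- so a bounded power of $w$ fixes $\alpha$; if $\supp(\alpha)\subseteq\Jaff$ with $\alpha\notin\Delta_{w\pm}$ (so $\alpha\supseteq L_w$), the support lies in one affine component $J'$, and projecting to the Euclidean complex $\Sigma(W_{J'},J')$ the wall of $\alpha$ must contain the translation direction of the straight element $w_{J'}$ (a line contained in a closed half-space is parallel to its bounding hyperplane), so a suitable power of $w_{J'}$ --- with exponent depending only on $(W_{J'},J')$, cf.\ Lemma~\ref{Lemma: affine translations} --- fixes $\alpha$; taking $N$ a common multiple of these finitely many exponents works. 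Conversely, a periodic real root $\alpha$ has $\supp(\alpha)\subseteq J^\perp\cup\Jsph^w\cup\JaffCox$ by Lemma~\ref{Lemma: root groups in the nub}; since $\supp(\alpha)$ is connected it lies in a single component, which cannot be a rank-$2$ component of $\JaffCox\setminus\Jaff$ (there $\alpha\supseteq L_w$ would mean a root of an infinite dihedral group containing the whole orbit of a straight element, impossible); hence $\supp(\alpha)\subseteq J^\perp\cup\Jsph^w\cup\Jaff$, which together with $\alpha\notin\Delta_{w\pm}$ gives $\alpha\in K_1(w)$. Periodic imaginary roots are handled identically via Theorem~\ref{thm: root groups are contracted}. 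Thus every $\alpha\in\Delta^+$ lies in exactly one of $K_1(w),K_2(w),K_3(w)$, with the stated characterisations; in particular these three sets partition $\Delta^+$.

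Finally I would record the closedness statements and the identity $K(w)=K_3(w)\cap K_3(w\inv)$. Here $K_1(w)=\Delta^+\cap\{\alpha\in\Delta\mid w^N\alpha=\alpha\}$ is an intersection of two closed sets of roots; for $K_2(w)$ and $K_3(w)$ I would use the height description: if $\alpha,\beta$ lie in $K_2(w)$ (resp.\ $K_3(w)$) and $\alpha+\beta\in\Delta$, then $\height(w^n(\alpha+\beta))=\height(w^n\alpha)+\height(w^n\beta)\to-\infty$ (resp.\ $+\infty$), so $\alpha+\beta$ stays in the same set. Since $w\inv$ is again straight and standard with $\supp(w\inv)=J$, $P_{w\inv}^{\max}=P_w^{\max}$ and $\Delta_{(w\inv)\pm}=\Delta_{w\mp}$, we get $K_1(w\inv)=K_1(w)$, hence $K_3(w)\cap K_3(w\inv)=\Delta^+\setminus(K_1(w)\cup\Delta_{w+}\cup\Delta_{w-})$; using that connectedness of supports merges $\Delta^+(J^\perp\cup\Jsph^w)$ with $\Delta^+(\Jaff)$ and that the ``$\setminus\Delta_{w\pm}$'' in $K_1(w)$ is absorbed upon re-unioning with $\Delta_{w\pm}$, this is exactly $K(w)$, which is then closed as an intersection; and $\Delta_{w-}\subseteq K_3(w)$ has been obtained above. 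The step I expect to be most delicate is the bookkeeping distinguishing ``periodic under a power of $w$'' from ``$\supseteq L_w$'': ensuring the single exponent $N$ really works for all three families forming $K_1(w)$, and handling the discrepancy between $\Jaff$ and $\JaffCox$ --- it is precisely the fact that a periodic real root supported in $\JaffCox$ is in fact supported in $\Jaff$ that makes the identification of $K_1(w)$ come out correctly.
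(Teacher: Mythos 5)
Your proof is correct and follows essentially the same route as the paper: periodic roots are pinned down via Lemma~\ref{Lemma: root groups in the nub}, Theorem~\ref{thm: root groups are contracted} and the affine translation Lemma~\ref{Lemma: affine translations}(2), while the non-periodic cases are split according to the sign of the height divergence using the structure of $\Delta_{w\pm}$ (\cite[Lemma~3.2]{Ma14}). Your ``interval'' description of $\{n\in\ZZ\mid w^n\beta\in\Delta^+\}$ is a convenient repackaging of the same facts, and the remaining bookkeeping for $K_1(w)$, $K(w)=K_3(w)\cap K_3(w\inv)$ and $\Delta_{w-}\subseteq K_3(w)$ matches the paper's.
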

\begin{proof}
Let $\alpha\in\Delta^+$. 

Assume first that $w^n\alpha=\alpha$ for some $n\in\NN^*$. Then $\supp(\alpha)\subseteq J^{\perp}\cup \Jsph^w\cup \JaffCox$ if $\alpha\in\Phi_+$ by Lemma~\ref{Lemma: root groups in the nub} and $\supp(\alpha)\subseteq J^{\perp}\cup J_{\mathrm{aff}}$ if $\alpha\in\Delta^{\imp}$ by Theorem~\ref{thm: root groups are contracted}. If $\supp(\alpha)\subseteq J^{\perp}$, then $w\alpha=\alpha$. If $\supp(\alpha)\subseteq \Jsph^w$, there is some $N_1\in\NN^*$ (depending only on $(W,S)$) such that $w^{N_1}\alpha=\alpha$. If $\alpha\in\Delta^{\imp}$ and $\supp(\alpha)\subseteq J_{\mathrm{aff}}$ then $\alpha\notin\Delta_{w+}\cup\Delta_{w-}$ and $w\alpha=\alpha$. And if $\supp(\alpha)\subseteq \JaffCox$ and $\alpha\in\Phi_+$, then $\alpha\in \Delta_+(J_{\mathrm{aff}})\setminus(\Delta_{w+}\cup\Delta_{w-})$ and there is some $N_2\in\NN^*$ (depending only on $(W,S)$) such that $w^{N_2}\alpha=\alpha$ by Lemma~\ref{Lemma: affine translations}(2). Thus, if $w^n\alpha=\alpha$ for some $n\in\NN^*$, then $w^N\alpha=\alpha$ where $N:= N_1 N_2$ depends only on $(W,S)$, and this occurs exactly when $\alpha\in K_1(w)$.

Assume next that $w^n\alpha\neq\alpha$ for all $n\in\NN^*$. Then $|\height(w^n\alpha)|\to +\infty$ as $n\to +\infty$. If $\height(w^n\alpha)\to -\infty$ as $n\to +\infty$, then $\alpha\in K_2(w)$, and conversely if $\alpha\in K_2(w)$, then $w^n\alpha\in\Phi_-$ for all large enough $n\in\NN^*$ by \cite[Lemma~3.2]{Ma14} and hence $\height(w^n\alpha)\to -\infty$ as $n\to +\infty$. It then remains the case where $\height(w^n\alpha)\to +\infty$ as $n\to +\infty$, corresponding to the case where $\alpha\in K_3(w)$. This proves the claimed trichotomy (1)--(3). The second assertion is now clear.

To see that $K(w)= K_3(w)\cap K_3(w\inv)$, note that $J_{\sph}^w=J_{\sph}^{w\inv}$, while $\Delta_{w\inv +}=\Delta_{w-}$ and $\Delta_{w\inv -}=\Delta_{w+}$. Hence 
$$K_1(w)\cup K_2(w)\cup K_1(w\inv)\cup K_2(w\inv)=\Delta_{w+} \cup \Delta_{w-} \cup \Delta^+(J^{\perp} \cup J_{\mathrm{aff}} \cup \Jsph^w),$$
yielding the claim.

Finally, $\Delta_{w-}\subseteq K_3(w)$, because if $\alpha\in\Delta_{w-}$ then $w^n\alpha\in\Delta^+$ for all $n\in\NN$ by \cite[Lemma~3.2]{Ma14} and hence $\height(w^n\alpha)\to +\infty$ as $n\to +\infty$.
\end{proof}

For $w\in W$, set 
$$U^-_{w+}:=\langle U_{\alpha} \ | \ \alpha\in -\Delta_{w+}\rangle.$$

\begin{theorem}\label{thm:overlinecon}
Let $w\in W$ be straight and standard, and let $J=\supp(w)$. Then
$$\overline{\con(\widetilde{w})}=U^-_{w+}\cdot U^{\ma}_{K_3(w)}.$$
\end{theorem}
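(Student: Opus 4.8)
The plan is to combine Corollary~\ref{cor: Main result}, which gives $\nub(\widetilde w)=U^{\ma}_{K(w)}$, with the identity $\overline{\con(\widetilde w)}=\con(\widetilde w)\cdot\nub(\widetilde w)$ (see \cite{BW04}) and the trichotomy of Proposition~\ref{prop:trichotomy_contraction_root_groups}. First I would record the combinatorial reductions. By Proposition~\ref{prop:trichotomy_contraction_root_groups}, $\Delta^+=K_1(w)\sqcup K_2(w)\sqcup K_3(w)$ is a partition into closed sets with $K_2(w)=\Delta_{w+}$, $\Delta_{w-}\subseteq K_3(w)$ and $K(w)=K_3(w)\cap K_3(w\inv)$; since $K_1(w\inv)=K_1(w)$ and $\Delta_{w\inv +}=\Delta_{w-}$, this gives $K_3(w)\setminus K(w)=\Delta_{w-}$, i.e.\ $K_3(w)=K(w)\sqcup\Delta_{w-}$, and one checks (using $K_1(w)=\{\alpha\in\Delta^+ \mid w^N\alpha=\alpha\}$ and the height behaviour of $w$) that $K_1(w)$, $K_1(w)\cup K_2(w)$, $\Delta_{w-}$ and $K(w)$ are all closed. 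Hence Lemma~\ref{Lemma: Unique decomposition} yields unique decompositions $U^{\ma}_{K_3(w)}=U^{\ma}_{\Delta_{w-}}\cdot\nub(\widetilde w)$ and $U^{\map}=U^{\ma}_{K_1(w)}\cdot U^{\ma}_{K_2(w)}\cdot U^{\ma}_{K_3(w)}$. As $U^{\ma}_{K(w)}\le U^{\ma}_{K_3(w)}$ and $U^{\ma}_{K_3(w)}\cdot U^{\ma}_{K(w)}=U^{\ma}_{K_3(w)}$, it suffices to prove the two inclusions $\con(\widetilde w)\subseteq U^-_{w+}\cdot U^{\ma}_{K_3(w)}\subseteq\overline{\con(\widetilde w)}$: the first one, combined with $\overline{\con(\widetilde w)}=\con(\widetilde w)\nub(\widetilde w)$, gives $\overline{\con(\widetilde w)}\subseteq U^-_{w+}\cdot U^{\ma}_{K_3(w)}$, and together with the second this yields the theorem (and shows en passant that $U^-_{w+}\cdot U^{\ma}_{K_3(w)}$ is a closed subgroup).

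The inclusion $U^-_{w+}\cdot U^{\ma}_{K_3(w)}\subseteq\overline{\con(\widetilde w)}$ is obtained at the level of root groups. If $\alpha\in K_3(w)$, then $w^n\alpha\in\Delta^+$ for all $n\ge 0$ and $\height(w^n\alpha)\to+\infty$, so $\widetilde w^nU_\alpha\widetilde w^{-n}=U_{w^n\alpha}\to 1$ by definition of the topology of $G^{\pma}$; hence $U_\alpha\le\con(\widetilde w)$ and $U^{\ma}_{K_3(w)}=\overline{\langle U_\alpha\mid\alpha\in K_3(w)\rangle}\le\overline{\con(\widetilde w)}$. If $\alpha\in-\Delta_{w+}$, then $-\alpha\in\Delta_{w+}=K_2(w)$, so $w^n(-\alpha)\in\Delta^-$ with $\height(w^n(-\alpha))\to-\infty$ by \cite[Lemma~3.2]{Ma14} and Proposition~\ref{prop:trichotomy_contraction_root_groups}, whence $\widetilde w^nU_\alpha\widetilde w^{-n}=U_{w^n\alpha}\to 1$ and $U^-_{w+}\le\con(\widetilde w)$. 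Since $\overline{\con(\widetilde w)}$ is a subgroup, the product lies in it. One moreover checks that $\Psi_0:=K_3(w)\cup(-\Delta_{w+})$ is closed under forming roots $m\alpha+n\beta$ and satisfies $\Psi_0\cap(-\Psi_0)=\varnothing$; the RGD commutation relations then give $U^{\ma}_{K_3(w)}\cdot U^-_{w+}\subseteq U^-_{w+}\cdot U^{\ma}_{K_3(w)}$, so $U^-_{w+}\cdot U^{\ma}_{K_3(w)}=\overline{\langle U_\gamma\mid\gamma\in\Psi_0\rangle}$ is a subgroup; and it is closed, being the product of the discrete (hence closed) subgroup $U^-_{w+}\le U_-$ — discrete because $U_-\cap U^{\map}=\{1\}$ (Example~\ref{example:Deltawdecomposition}) — and the compact group $U^{\ma}_{K_3(w)}$.

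For the reverse inclusion $\con(\widetilde w)\subseteq U^-_{w+}\cdot U^{\ma}_{K_3(w)}$, which is the core of the proof, I would first reduce to the statement $\con(\widetilde w)\cap U^{\map}\subseteq U^{\ma}_{K_3(w)}$. Indeed, given $h\in\con(\widetilde w)$, for $n$ large one has $h':=\widetilde w^nh\widetilde w^{-n}\in U^{\map}$ (small identity neighbourhoods lie in $U^{\map}$) and $h'\in\con(\widetilde w)$; assuming the reduced statement, $h'\in U^{\ma}_{K_3(w)}$, so $h=\widetilde w^{-n}h'\widetilde w^n\in\overline{\langle U_{w^{-n}\gamma}\mid\gamma\in K_3(w)\rangle}$, and for $\gamma\in K_3(w)$ either $w^{-n}\gamma\in K_3(w)$ (if positive) or $w^{-n}\gamma\in-\Delta_{w+}$ (if negative: then $w^n(-w^{-n}\gamma)=-\gamma\in\Delta^-$), so $h\in\overline{\langle U_\delta\mid\delta\in\Psi_0\rangle}=U^-_{w+}\cdot U^{\ma}_{K_3(w)}$. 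To prove $\con(\widetilde w)\cap U^{\map}\subseteq U^{\ma}_{K_3(w)}$, write $h\in\con(\widetilde w)\cap U^{\map}$ as $h=h_1h_2h_3$ with $h_i\in U^{\ma}_{K_i(w)}$. Since roots in $K_1(w)$ have entirely positive $w$-orbits, $\widetilde w^nh_1\widetilde w^{-n}$ and $\widetilde w^nh_3\widetilde w^{-n}$ stay in $U^{\map}$ for all $n\ge 0$; on the other hand, if $h_2\neq 1$, then using the standard form of $h_2$, for $n$ large $\widetilde w^nh_2\widetilde w^{-n}$ has a nontrivial ``negative part'' $a_n$ (a nonzero component in some $U_{w^n\alpha_0}$ with $\alpha_0\in\Delta_{w+}$ and $w^n\alpha_0\in\Delta^-$; distinct negative root groups cannot cancel). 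Writing $\widetilde w^nh\widetilde w^{-n}=(\widetilde w^nh_1\widetilde w^{-n})\,a_nb_n\,(\widetilde w^nh_3\widetilde w^{-n})$ with $b_n\in U^{\map}$ and solving for $a_n$ exhibits $a_n$ as a product of elements of $U^{\map}$, so $a_n\in U_-\cap U^{\map}=\{1\}$, a contradiction; thus $h_2=1$. Finally, with $h=h_1h_3$, pick $M\ge 1$ such that $\widetilde w^M$ acts trivially on the compact group $U^{\ma}_{K_1(w)}$ — such $M$ exists since $\widetilde w$ normalizes $U^{\ma}_{K_1(w)}$ and a suitable power of $\widetilde w$ centralizes $U_\alpha$ for $\alpha\in\Delta^+(\Jperp)$ (Lemma~\ref{Lemma: Coxeter group element with support J commutes with imaginary root with support in Jperp}) and for $\alpha\in\Delta^{\imp}(\Jaff)$ (Lemma~\ref{Lemma: Coxeter group element commutes with imaginary root group}), acts with finite order on the finite group $U^{\ma}_{\Delta^+(\Jsph^w)}$, and acts trivially on each $U_\alpha$ with $\alpha\in\Delta^{\rep}(\Jaff)\cap K_1(w)$ (these $\alpha$ are fixed by a uniform power of $w$ by Lemma~\ref{Lemma: affine translations}(2), and the root groups are finite). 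Then $\widetilde w^{Mk}h\widetilde w^{-Mk}=h_1\cdot(\widetilde w^{Mk}h_3\widetilde w^{-Mk})\to 1$ forces $\widetilde w^{Mk}h_3\widetilde w^{-Mk}\to h_1\inv$; as the left side lies in the closed subgroup $U^{\ma}_{K_3(w)}$ (recall $wK_3(w)\subseteq K_3(w)$), we conclude $h_1\in U^{\ma}_{K_1(w)}\cap U^{\ma}_{K_3(w)}=U^{\ma}_{\varnothing}=\{1\}$.

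I expect the main obstacle to be the bookkeeping in this last step: because the factor $h_3\in U^{\ma}_{K_3(w)}$ need not itself lie in $\con(\widetilde w)$, the vanishing of $h_1$ and $h_2$ cannot be read off factor by factor, and one must track precisely which negative-root-group components and which $K_1(w)$-components ``survive'' repeated conjugation by $\widetilde w$ and argue that these cannot be cancelled inside $U^{\map}$ — which is what forces $h_2=1$ and, via compactness of $U^{\ma}_{K_1(w)}$ and $U^{\ma}_{K_3(w)}$, $h_1=1$. The supporting verifications — that $\Psi_0$ is closed, that $U^-_{w+}\cdot U^{\ma}_{K_3(w)}$ is a closed subgroup, and that $\widetilde w$ acts with finite order on $U^{\ma}_{K_1(w)}$ — are routine given Lemmas~\ref{Lemma: Coxeter group element with support J commutes with imaginary root with support in Jperp}, \ref{Lemma: Coxeter group element commutes with imaginary root group} and \ref{Lemma: affine translations}, but require care with the unique decompositions of Lemma~\ref{Lemma: Unique decomposition} and the standard-form machinery of \S\ref{Section: Maximal Kac--Moody groups}.
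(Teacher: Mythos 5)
Your overall strategy coincides with the paper's: both factors are shown to lie in $\overline{\con(\widetilde{w})}$ by contracting root groups, and the reverse inclusion is reduced via $\overline{\con(\widetilde{w})}=\con(\widetilde{w})\cdot\nub(\widetilde{w})$ and $\nub(\widetilde{w})=U^{\ma}_{K(w)}\leq U^{\ma}_{K_3(w)}$ to the statement $\con(\widetilde{w})\cap U^{\map}\subseteq U^{\ma}_{K_3(w)}$, which is then handled with the unique decomposition $U^{\map}=U^{\ma}_{K_1(w)}\cdot U^{\ma}_{K_2(w)}\cdot U^{\ma}_{K_3(w)}$ and the trichotomy. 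Your treatment of the last step is in fact more detailed than the paper's (which only says that $u_1=u_2=1$ ``readily follows''), and your arguments for $h_2=1$ (peeling off the part of $h_2$ supported on $\Delta_{w+}\cap w^{-n}\Delta^-$ and using $U_-\cap U^{\map}=\{1\}$) and for $h_1=1$ (a power of $\widetilde{w}$ centralizes $U^{\ma}_{K_1(w)}$ by Lemmas~\ref{Lemma: Coxeter group element with support J commutes with imaginary root with support in Jperp}, \ref{Lemma: Coxeter group element commutes with imaginary root group} and \ref{Lemma: affine translations}(2), so compactness forces $h_1\in U^{\ma}_{K_1(w)}\cap U^{\ma}_{K_3(w)}=\{1\}$) are sound.

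The one soft spot is your reduction step, where you conjugate $h\in\con(\widetilde{w})$ into $U^{\map}$ and then back, and need $\widetilde{w}^{-n}U^{\ma}_{K_3(w)}\widetilde{w}^{n}\subseteq U^-_{w+}\cdot U^{\ma}_{K_3(w)}$. You justify this by asserting that $\Psi_0:=K_3(w)\cup(-\Delta_{w+})$ is closed and that ``RGD commutation relations'' make $U^-_{w+}\cdot U^{\ma}_{K_3(w)}$ a closed subgroup equal to $\overline{\langle U_\gamma\mid\gamma\in\Psi_0\rangle}$. This is not something the available machinery delivers: the groups $U^{\ma}_\Psi$ and their product decompositions (Lemma~\ref{Lemma: Unique decomposition}) are only defined for closed $\Psi\subseteq\Delta^+$, and the RGD axioms give no control over commutators of negative real root groups with \emph{imaginary} root groups, which is what $[U^{\ma}_{K_3(w)},U^-_{w+}]$ involves. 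The needed containment is nonetheless provable without it: split $K_3(w)=A_n\sqcup B_n$ with $A_n:=K_3(w)\cap\Delta_{w^n}$ (finite, real, closed) and $B_n:=K_3(w)\setminus A_n$ (closed); then $U^{\ma}_{K_3(w)}=U^{\ma}_{A_n}\cdot U^{\ma}_{B_n}$, the conjugate $\widetilde{w}^{-n}U^{\ma}_{A_n}\widetilde{w}^{n}$ lands in $U^-_{w+}$ and $\widetilde{w}^{-n}U^{\ma}_{B_n}\widetilde{w}^{n}=U^{\ma}_{w^{-n}B_n}\leq U^{\ma}_{K_3(w)}$. The paper sidesteps the issue differently: it first shows $\con(\widetilde{w})\subseteq U^-_{w+}\cdot U^{\map}$ using the decomposition $U^{\map}=U^{\ma}_{\Delta_{w^n}}\cdot U^{\ma}_{\Delta^+\setminus\Delta_{w^n}}$ of Example~\ref{example:Deltawdecomposition}, and then uses that $U^-_{w+}\subseteq\con(\widetilde{w})$ (a group) to strip off the negative factor, so no subgroup property of the product $U^-_{w+}\cdot U^{\ma}_{K_3(w)}$ is ever needed. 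I recommend replacing your commutation-relation claim by one of these two arguments; with that repair the proof is complete.
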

\begin{proof}
Note first that if $\alpha\in -\Delta_{w+}$, then $w^n\alpha\in \Delta_{w-}$ for some $n\in\NN^*$ and hence $U_{\alpha}\subseteq \con(\widetilde{w})$ by Proposition~\ref{prop:trichotomy_contraction_root_groups}. Since $U_{\alpha}\subseteq \con(\widetilde{w})$ for all $\alpha\in K_3(w)$ by Proposition~\ref{prop:trichotomy_contraction_root_groups}, the inclusion $\overline{\con(\widetilde{w})}\supseteq U^-_{w+}\cdot U^{\ma}_{K_3(w)}$ follows.

For the reverse inclusion, recall that $\overline{\con(\widetilde{w})}=\con(\widetilde{w})\cdot \nub(\widetilde{w})$ (see \cite[Corollary~3.30]{BW04}). Since $\nub(\widetilde{w})=U^{\ma}_{K(w)}\subseteq U^{\ma}_{K_3(w)}$ by Theorem~\ref{Thmintro: Main result}, it remains to check that $\con(\widetilde{w})\subseteq U^-_{w+}\cdot U^{\ma}_{K_3(w)}$.

Note that if $g\in G^{\pma}$ belongs to $\con(\widetilde{w})$, then there is some $n\in\NN$ such that $\widetilde{w}^ng\widetilde{w}^{-n}\in U^{\map}$. Recall from Example~\ref{example:Deltawdecomposition} we have a unique decomposition $U^{\map}=U^{\ma}_{\Delta_{w^n}}\cdot U^{\ma}_{\Delta^+\setminus\Delta_{w^n}}$, where $\Delta_{w^n}=\{\alpha\in\Delta^+ \ | \ w^{-n}\alpha\in\Delta^-\}$. Hence $g\in\widetilde{w}^{-n} U^{\ma}_{\Delta_{w^n}}\widetilde{w}^n\cdot U^{\map}\subseteq U^-_{w+}\cdot U^{\map}$. Thus, $\con(\widetilde{w})\subseteq U^-_{w+}\cdot U^{\map}$, and it is sufficient to show that $\con(\widetilde{w})\cap U^{\map}\subseteq U^{\ma}_{K_3(w)}$.

Let $g\in \con(\widetilde{w})\cap U^{\map}$. By \cite[Lemma~8.54]{Ma18} and Proposition~\ref{prop:trichotomy_contraction_root_groups}, we have a unique decomposition $U^{\map}=U^{\ma}_{K_1(w)}\cdot U^{\ma}_{K_2(w)}\cdot U^{\ma}_{K_3(w)}$, and we write $g=u_1u_2u_3$ accordingly, with $u_i\in U^{\ma}_{K_i(w)}$. Since $\widetilde{w}^ng\widetilde{w}^{-n}$ converges to $1$ as $n\to +\infty$, it readily follows from the trichotomy in Proposition~\ref{prop:trichotomy_contraction_root_groups} that $u_1=u_2=1$, yielding the claim.
\end{proof}

\subsection{Proof of Corollaries~\ref{corintro:nuboverline} and \ref{corintro:congbar}, concluded}\label{subsection:proofCorBC}

To prove Corollary~\ref{corintro:nuboverline}, note that we may assume that $g=\widetilde{w}$ for some straight and standard $w\in W$ by Proposition~\ref{prop: The nub only for straight and standard elements}. By Theorem~\ref{Thmintro: Main result}, it is thus sufficient to show that $U_{\alpha}\subseteq \con(\widetilde{w})\cap\con(\widetilde{w}\inv)$ for all $\alpha\in K(w)$, which follows from Proposition~\ref{prop:trichotomy_contraction_root_groups}.

Corollary~\ref{corintro:congbar} follows from Theorem~\ref{thm:overlinecon},
since $K_3(w)=K(w)\cup\Delta_{w-}$.

\subsection{Proof of Corollary~\ref{Corintro: trivial nub}}

Given a subset $J\subseteq I$, we let $J^{\infty}$ denote the \emph{essential part} of $J$, that is, the union of the components of $J$ of non-finite type.

\begin{proposition}
Let $w\in W$ be straight, and let $J = \supp(w)$. Then $\nub(\widetilde{w}) = \{1\}$ if and only if $J$ is a union of affine components of $I$.
\end{proposition}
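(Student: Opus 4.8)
The plan is to derive the statement from Theorem~\ref{Thmintro: Main result} together with Corollary~\ref{cor: imaginary root groups are contracted}, after first reducing to the case where $w$ is straight and standard. By Lemma~\ref{Lemma: infinite order and standard elements} there is $v\in W_J$ with $w_1:=v\inv wv$ straight and standard, and one checks $\supp(w_1)=J$ by comparing parabolic closures ($\Pc(w_1)=v\inv\Pc(w)v=v\inv W_Jv=W_J$ by Lemma~\ref{Lemma: straight element J essental}, so $W_{\supp(w_1)}=W_J$). Since the nub is conjugation-equivariant, $\nub(\widetilde w)=\{1\}$ iff $\nub(\widetilde v\inv\widetilde w\widetilde v)=\{1\}$; and writing $\widetilde v\inv\widetilde w\widetilde v=\widetilde{w_1}t$ with $t$ in the finite, $N$-normalized torus $T$, Lemma~\ref{Lemma: Generalization of Lemma 3.5 in BW04} applied to $\widetilde{w_1}$ and to $\widetilde{w_1}\inv$ (the relevant conjugates of $t$ remain in the compact group $T$) gives $\con(\widetilde{w_1}t)=\con(\widetilde{w_1})$ and $\con((\widetilde{w_1}t)\inv)=\con(\widetilde{w_1}\inv)$, hence $\nub(\widetilde{w_1}t)=\nub(\widetilde{w_1})$. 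So I may assume $w$ is straight and standard, in which case Theorem~\ref{Thmintro: Main result} gives $\nub(\widetilde w)=U^{\ma}_{K(w)}$; as root groups are nontrivial, it remains to prove that $K(w)=\varnothing$ if and only if $J$ is a union of affine components of $I$.

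For the ``if'' implication, suppose $J$ is a union of affine components of $I$. Then every component of $J$ is a whole component of $I$ of affine type, so $\Jaff=J$, and no edge of the Dynkin diagram joins $J$ to $I\setminus J$, i.e.\ $I\setminus J\subseteq\Jperp$. Hence $\Jperp\cup\Jaff\cup\Jsph^w\supseteq(I\setminus J)\cup J=I$, so $\Delta^+(\Jperp\cup\Jaff\cup\Jsph^w)=\Delta^+$ and therefore $K(w)=\varnothing$.

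For the converse I would argue via imaginary roots. Assume $\nub(\widetilde w)=\{1\}$. If some $\alpha\in\Delta^{\imp}$ had $\supp(\alpha)\not\subseteq\Jperp\cup\Jaff$, then Corollary~\ref{cor: imaginary root groups are contracted} would give $U_\alpha\leq\con(\widetilde w)\cap\con(\widetilde w\inv)\subseteq\nub(\widetilde w)=\{1\}$, which is absurd; so every $\alpha\in\Delta^{\imp}$ satisfies $\supp(\alpha)\subseteq\Jperp\cup\Jaff$. Note $\Jperp$ and $\Jaff$ are disjoint and joined by no edge of the Dynkin diagram. Now fix a component $I_\ell$ of $I$ with $I_\ell\cap J\neq\varnothing$. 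Since $J$ is essential (Lemma~\ref{Lemma: straight element J essental}), $I_\ell\cap J$ is a nonempty union of non-spherical components of $J$, so $W_{I_\ell\cap J}$ is infinite and $I_\ell$ is not of finite type. If $I_\ell$ were of indefinite type, then by \cite[Theorem~4.3]{Kac90} there would be $u\in\ZZ_{>0}^{I_\ell}$ with $A_{I_\ell}u<0$ (choose $u$ rational, then clear denominators), and $\gamma:=\sum_{i\in I_\ell}u_i\alpha_i$ would lie in $K_0\subseteq\Delta^{\imp}$ with $\supp(\gamma)=I_\ell$; then $I_\ell\subseteq\Jperp\cup\Jaff$, and connectedness of $I_\ell$ forces $I_\ell\subseteq\Jperp$ (impossible, since $I_\ell\cap J\neq\varnothing$ while $\Jperp\cap J=\varnothing$) or $I_\ell\subseteq\Jaff$, whence $I_\ell$ coincides with an affine component of $J$, contradicting indefiniteness. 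So $I_\ell$ is of affine type; and since a proper subdiagram of an affine Dynkin diagram has finite Weyl group whereas $W_{I_\ell\cap J}$ is infinite, we get $I_\ell\cap J=I_\ell$, i.e.\ $I_\ell\subseteq J$. Thus every component of $I$ meeting $J$ lies in $J$ and is affine, i.e.\ $J$ is a union of affine components of $I$.

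The steps I expect to require the most care are the torus bookkeeping in the reduction to standard $w$ (keeping track of the factor $t\in T$ and verifying, via Lemma~\ref{Lemma: Generalization of Lemma 3.5 in BW04}, that it does not affect the nub) and, in the converse direction, the diagram combinatorics pinning down the type of a component $I_\ell$ of $I$ that meets $J$; once an imaginary root with full support on $I_\ell$ is produced from Kac's trichotomy, the remaining implications are formal.
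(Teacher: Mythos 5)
Your proof is correct and follows essentially the same route as the paper: reduce to $w$ straight and standard, observe via Theorem~\ref{Thmintro: Main result} that triviality of the nub is equivalent to $K(w)=\varnothing$, and settle the converse by exhibiting imaginary root groups inside the nub. The only differences are that you spell out the torus bookkeeping in the reduction (which the paper leaves implicit) and construct the needed imaginary roots by hand from \cite[Theorem~4.3]{Kac90} together with the fact that proper subdiagrams of affine diagrams are of finite type, where the paper simply invokes \cite[Theorem~5.6]{Kac90} to get $I^{\infty}\subseteq J^{\perp}\cup\Jaff$.
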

\begin{proof}
	By Lemma~\ref{Lemma: infinite order and standard elements} we may assume without loss of generality that $w$ is straight and standard. Assume first that $\nub(\widetilde{w}) = \{1\}$. Then Theorem~\ref{Thmintro: Main result} yields $K(w)=\varnothing$ and hence $\Delta^{\mathrm{im+}}=\Delta^{\mathrm{im+}}\setminus K(w)=\Delta^{\mathrm{im+}}(J^{\perp}\cup J_{\mathrm{aff}})$. This implies that $I^{\infty}\subseteq J^{\perp}\cup J_{\mathrm{aff}}$ (see e.g.\ \cite[Theorem~5.6]{Kac90}) and hence that $J=J_{\mathrm{aff}}$ and $I=J^{\perp}\cup J_{\mathrm{aff}}$, as desired. Conversely, if $I=J^{\perp}\cup J_{\mathrm{aff}}$, then $K(w)=\varnothing$ and hence $\nub(\widetilde{w}) = \{1\}$ by Theorem~\ref{Thmintro: Main result}.
\end{proof}

\subsection{Proof of Corollary~\ref{Corintro: Same nub}}

\subsubsection{Preliminaries on the Davis complex}

Basics on the Davis complex and on $\mathrm{CAT}(0)$-spaces can be found in \cite{Davis08}, \cite{AB08} and \cite{Bridson_Haelfiger:Metric_spaces_of_non-positively_curvature}. 

Let $X$ be the Davis complex of $(W, S)$. Thus $X$ is a $\mathrm{CAT}(0)$ metric realization of $\Sigma := \Sigma(W, S)$, whose $\mathrm{CAT}(0)$ metric we denote by $\dcat$, on which $W$ acts by cellular isometries. For instance, if $(W, S)$ is of affine type, then $X$ is the standard geometric realization of $\Sigma$ equipped with the Euclidean distance. We identify the walls, roots and simplices of $\Sigma$ with their (closed) realization in $X$. In particular, the wall $\partial \alpha$ is the intersection of the opposite roots $\alpha$ and $-\alpha$ in $X$.

Consider the $W$-action on $X$. For an element $w\in W$ we let
\[ \vert w \vert := \inf\{ \dcat(x, wx) \mid x\in X \} \in [0, \infty) \]
denote its \emph{translation length} and we set
\[ \mathrm{Min}(w) := \{ x\in X \mid \dcat(x, wx) = \vert w \vert \}. \]
A classical result of M.~Bridson (see \cite{Bridson:On_the_semisimplicity_of_polyhedral_isometries}) asserts that for such an action $\mathrm{Min}(w) \neq \emptyset$ for any $w\in W$. More precisely, if $w$ has infinite order, then $\vert w \vert > 0$ and $\mathrm{Min}(w)$ is the union of all $w$-axes, where a \emph{$w$-axis} is a geodesic line stabilized by $w$ (on which $w$ then acts by translation).\\ Note that any nonempty subset $C \subseteq X$ which is closed, convex and $w$-invariant contains a $w$-axis by \cite[Proposition~6.2]{Bridson_Haelfiger:Metric_spaces_of_non-positively_curvature}.

Given a root $\alpha$, the wall $\partial \alpha$ as well as the closed and open half-spaces $\alpha$ and $\alpha \setminus \partial \alpha$ are convex subsets of $X$. Moreover, for any geodesic line $l$ and any wall $m$ such that $l \cap m \neq \emptyset$ either $l \subseteq m$ or $\vert l \cap m \vert = 1$ (see \cite[Lemma~3.4]{NV02}). In the latter case $l$ is called \emph{transverse} to $m$. \\
Let $w\in W$ be of infinite order. A wall $m$ is \emph{$w$-essential} if it intersects some (equivalently, any -- see \cite[Lemma~2.5]{Caprace-Marquis13}) $w$-axis in a single point.

\subsubsection{Elements with the same nub}

For $w\in W$ we define
\[ L_w := \{ w^z c \mid z\in \ZZ \}. \]

\begin{lemma}\label{lemma:Lwandessential}
Let $w\in W$ be straight, and let $\alpha\in\Phi_+$. Then the following are equivalent:
\begin{enumerate}
\item
$\alpha\in\Delta_{w+}\cup\Delta_{w-}$.
\item
$\alpha\not\supseteq L_w$.
\item
$\alpha\not\supseteq \conv(L_w)$.
\item
$\partial\alpha$ is a $w$-essential wall.
\end{enumerate} 
Moreover, if these conditions hold, then $r_{\alpha}\in\Pc(w)$ and hence $\supp(\alpha) \subseteq \supp(w)$.
\end{lemma}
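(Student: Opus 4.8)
\emph{Proof plan.}
The plan is to prove $(1)\Leftrightarrow(2)\Leftrightarrow(3)$ first, then $(2)\Leftrightarrow(4)$, and finally the \enquote{moreover} clause from $(1)$. The equivalence $(1)\Leftrightarrow(2)$ is essentially the description of $\Delta_{w+}\cup\Delta_{w-}$ recalled just before the statement: under the identification $\Sigma\cong\Sigma(W,S)$ one has $w^zc=w^z$, and for a real root $\gamma$ (viewed as a subset of $W$) one has $\gamma\in\Phi_-$ iff $1_W\notin\gamma$; hence $w^n\alpha\in\Delta^-$ iff $w^{-n}\notin\alpha$, so that (using that $\alpha\in\Phi_+$ forces $n\geq 1$) $\alpha\in\Delta_{w+}\cup\Delta_{w-}$ iff $w^z\notin\alpha$ for some $z\in\ZZ$, i.e.\ $\alpha\not\supseteq L_w$. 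The equivalence $(2)\Leftrightarrow(3)$ is immediate since a root is a convex subset of $\Sigma(W,S)$ (so it contains $L_w$ iff it contains $\conv(L_w)$).

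The core of the proof is $(2)\Leftrightarrow(4)$. First I would use that, $w$ being straight, the bi-infinite gallery $\gamma$ obtained by concatenating for each $z\in\ZZ$ a minimal gallery from $w^zc$ to $w^{z+1}c$ is itself minimal: any finite subgallery is a $W$-translate of a minimal gallery from $c$ to $w^mc$, because $\dist(c,w^mc)=\ell(w^m)=m\ell(w)$. In particular $\gamma$ crosses the wall $\partial\alpha$ at most once. Since $\alpha\in\Phi_+$ contains $c=w^0c$, condition $(2)$ says precisely that some chamber of $L_w$ lies in $-\alpha$; so $L_w$ meets both (open) sides of $\partial\alpha$, hence $\gamma$ crosses $\partial\alpha$ exactly once, and therefore $w^zc\in\alpha$ for all $z\gg 0$ and $w^zc\in -\alpha$ for all $z\ll 0$ (or vice versa). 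Passing to the Davis complex $X$: as $\mathrm{Min}(w)$ is $w$-invariant, the chambers $w^zc$ stay within uniformly bounded $\dcat$-distance of a fixed $w$-axis $L$, so the two ends of $L$ in $\partial_\infty X$ are $\xi_\pm:=\lim_{z\to\pm\infty}w^zc$, and by the previous sentence these ends lie in the two distinct open half-spaces bounded by $\partial\alpha$; a geodesic line separating its own endpoints by a wall cannot be contained in it, so $L$ meets $\partial\alpha$ transversally in a single point by \cite[Lemma~3.4]{NV02}, i.e.\ $\partial\alpha$ is $w$-essential. Conversely, if $\partial\alpha$ is $w$-essential it crosses $L$ in a single point, so each open half-space of $\partial\alpha$ contains an end of $L$ hence (as $w^zc\to\xi_\pm$) contains chambers of $L_w$, giving $L_w\not\subseteq\alpha$, i.e.\ $(2)$. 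I expect the delicate point to be making rigorous that the combinatorial statement \enquote{$L_w$ straddles $\partial\alpha$} yields the geometric statement \enquote{$\xi_+$ and $\xi_-$ lie in distinct \emph{open} half-spaces of $X$}; I would settle this by noting that, $\gamma$ crossing $\partial\alpha$ exactly once, the number of walls separating $w^{z}c$ from the panel of $\gamma$ lying on $\partial\alpha$ tends to $\infty$ as $|z|\to\infty$ (since $\dist(w^zc,w^{-z}c)=2|z|\ell(w)\to\infty$), so neither $\xi_+$ nor $\xi_-$ lies in $\partial_\infty(\partial\alpha)$, and one invokes the standard facts on walls and geodesic lines in $X$ (e.g.\ \cite{NV02}, \cite{Caprace-Marquis13}).

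Finally, suppose the equivalent conditions hold. By $(1)$ we may assume $\alpha\in\Delta_{w+}$ (the case $\alpha\in\Delta_{w-}$ being obtained by replacing $w$ with $w\inv$). Choose $n\geq 1$ minimal with $w^n\alpha\in\Delta^-$, and set $\beta:=w^{n-1}\alpha$; then $\beta\in\Phi_+$ and $w\beta\in\Phi_-$, so $\beta$ lies in the inversion set of $w$, whence $\supp(\beta)\subseteq\supp(w)=J$ (each element of that inversion set is $W_J$-conjugate to a simple root in $J$, cf.\ Lemma~\ref{Lemma: support of a root irreducible}(1)). Writing $\beta=u\alpha_s$ with $u\in W_J$ and $s\in J$, we get $\alpha=w^{-(n-1)}\beta=(w^{-(n-1)}u)\alpha_s$ with $w^{-(n-1)}u\in W_J$, so $\supp(\alpha)\subseteq J=\supp(w)$ by Lemma~\ref{Lemma: support of a root irreducible}(1), and $r_\alpha\in W_J=\Pc(w)$ by Lemma~\ref{Lemma: straight element J essental}. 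This completes the plan; the only genuinely substantial step is the combinatorial-to-$\mathrm{CAT}(0)$ translation in $(2)\Leftrightarrow(4)$, everything else being bookkeeping with results recalled in the preliminaries.
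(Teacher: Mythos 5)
Your overall architecture matches the paper's ((1)$\Leftrightarrow$(2) from the definitions, (2)$\Leftrightarrow$(3) from convexity of roots, the real content in (2)$\Leftrightarrow$(4) via a $w$-axis in the Davis complex), and your treatment of the ``moreover'' clause via the inversion set of $w$ is correct and arguably more self-contained than the paper's citation of \cite[Lemma~2.7]{Caprace-Marquis13}. However, there is a genuine gap at exactly the point you flagged as delicate, and the patch you propose does not close it. You want to deduce that the endpoints $\xi_{\pm}=\lim_{z\to\pm\infty}w^zc$ of a $w$-axis do not lie in $\partial_{\infty}(\partial\alpha)$ from the fact that the number of walls separating $w^zc$ from the fixed panel of $\gamma$ lying on $\partial\alpha$ tends to infinity. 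That count is just the gallery distance from $w^zc$ to a \emph{fixed} panel, which tends to infinity for \emph{any} sequence of chambers leaving every bounded set; it says nothing about $\dcat(w^zc,\partial\alpha)$, because $\partial\alpha$ is an unbounded convex set and a half-orbit can perfectly well escape to infinity while remaining at bounded distance from it (moving ``parallel'' to the wall) --- which is precisely the situation in which $\xi_{+}$ or $\xi_{-}$ \emph{would} lie in $\partial_{\infty}(\partial\alpha)$ and the axis would fail to cross $\partial\alpha$ transversally. So the implication (2)$\Rightarrow$(4) is not established.

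What is missing is an argument ruling out that a half-orbit $\{w^{N+\mu n}c\}_{n\in\NN}$ stays within bounded distance of $\partial\alpha$. The paper supplies exactly this via a pigeonhole count: if every chamber of that half-orbit lies within distance $d$ of $\partial\alpha$, then the chamber $w^{N+\mu r}c$ is within distance $d$ of the $r+1$ walls $\partial\alpha, w^{\mu}\partial\alpha,\dots,w^{\mu r}\partial\alpha$, where $r$ bounds the number of walls meeting any ball of radius $d$; hence two of these walls coincide and $w^{s}\partial\alpha=\partial\alpha$ for some $s>0$, so $w^{2s}\alpha=\alpha$. Since $c\in\alpha$ and $\alpha$ is convex, this forces $\alpha\supseteq L_w$, contradicting (2). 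You need this step (or an equivalent appeal to \cite[Lemmas~2.5--2.6]{Caprace-Marquis13}) inserted where you currently invoke the wall count; with it, both of your remaining bad cases ($L\subseteq\partial\alpha$ and $L\cap\partial\alpha=\varnothing$) are eliminated and your boundary-at-infinity phrasing of (2)$\Rightarrow$(4) goes through. The converse direction (4)$\Rightarrow$(2) as you state it is fine (convexity of $t\mapsto\dcat(L(t),\partial\alpha)$ forces the distance to the wall to diverge along a transverse axis), and the rest of the proposal is correct.
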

\begin{proof}
The equivalence (1)$\Leftrightarrow$(2) is clear and (2)$\Leftrightarrow$(3) follows from \cite[Lemma~3.44]{AB08}. To see that (2,3)$\Leftrightarrow$(4), note that since $\conv(L_w)$ is a closed convex $w$-invariant subspace of the Davis realization of $\Sigma(W,S)$, it contains a $w$-axis $l_w$. If $\alpha\supseteq \conv(L_w)$, then $\alpha\supseteq l_w$ and hence $\partial\alpha$ is not $w$-essential. Conversely, suppose that $\alpha\not\supseteq L_w$. Assume for a contradiction that $\partial\alpha$ is not $w$-essential. Then $l_w\subseteq\epsilon\alpha$ for some $\epsilon\in\{\pm\}$. Since $w$ is straight, there are some $N\in\ZZ$ and $\mu\in\{\pm 1\}$ such that $L'_w=\{w^{N+\mu n}c_0 \ | \ n\in\NN\}$ is contained in $-\epsilon\alpha$ while $L_w\setminus L'_w$ is contained in $\epsilon\alpha$. Since $L_w$ and $l_w$ are at bounded Hausdorff distance, there is some $d\in\NN$ such that every chamber of $L'_w$ is at distance at most $d$ from $\partial\alpha$. Let $r\in\NN$ be such that there are at most $r$ walls intersecting the ball of radius $d$ around any chamber. Since the chamber $w^{N+\mu r}c_0\in L'_w$ is at distance at most $d$ from the $r+1$ walls $\partial\alpha,w^{\mu}\partial\alpha,\dots,w^{\mu r}\partial\alpha$, we find some $s>0$ such that $w^s\partial\alpha=\partial\alpha$ (and hence $w^{2s}\alpha=\alpha$). But then $\alpha$ contains $w^{2s\ZZ}c_0$ and hence $L_w$, yielding the desired contradiction.

The last statement follows from \cite[Lemma~2.7]{Caprace-Marquis13} and Lemma~\ref{Lemma: straight element J essental}.
\end{proof}

For a straight element $w\in W$ with $J=\supp(w)$, and a union $J'$ of components of $J$, we write $w_{J'}$ for the element of $W_{J'}$ such that $w=w_{J'}w'$ for some $w'\in W_{J\setminus J'}$.

\begin{lemma}\label{lemma:conhullcomponents}
Let $w\in W$ be straight and let $J=\supp(w)$. Write $J_1,\dots,J_n$ for the components of $J$, and set $w_i:=w_{J_i}$. Then the following holds.
\begin{enumerate}
\item
$\Delta_{w_i+}\cup\Delta_{w_i-}=\{\alpha \in \Delta_{w+}\cup\Delta_{w-} \ | \ \supp(\alpha)\subseteq J_i\}$ for all $i=1,\dots,n$. 
\item
$\Delta_{w+}\cup\Delta_{w-}=\bigcup_{i=1}^n(\Delta_{w_i+}\cup\Delta_{w_i-})$.
\end{enumerate}
\end{lemma}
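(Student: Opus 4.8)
The plan is to deduce both statements from a single observation: when $\supp(\alpha)\subseteq J_i$, the element $w$ acts on the root $\alpha$ exactly as its component $w_i:=w_{J_i}$ does. Combined with the last assertion of Lemma~\ref{lemma:Lwandessential}---which says that any root lying in $\Delta_{v+}\cup\Delta_{v-}$ (for $v\in W$ straight) has support inside $\supp(v)$---and with the irreducibility of the support of a root (Lemma~\ref{Lemma: support of a root irreducible}(2)), the two displayed equalities will follow with essentially no extra work.

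First I would set up the group-theoretic input. Since $J_1,\dots,J_n$ are the distinct components of $J$, the parabolic $W_J$ is the direct product $W_{J_1}\times\cdots\times W_{J_n}$, we have $J_j\subseteq J_i^{\perp}$ for all $j\neq i$, and writing $w=w_1\cdots w_n$ with $w_i\in W_{J_i}$ (so that $w_i=w_{J_i}$ in the notation preceding the lemma), the factors pairwise commute. I then claim that $w_j$ fixes every root $\alpha$ with $\supp(\alpha)\subseteq J_i$, whenever $j\neq i$. Using Lemma~\ref{Lemma: support of a root irreducible}(1) write $\alpha=u\alpha_s$ with $u\in W_{J_i}$ and $s\in J_i$; since $w_j\in W_{J_j}$ commutes with $W_{J_i}$, one gets $w_j\alpha=u\,(w_j\alpha_s)$, and $w_j\alpha_s$ is a root whose reflection is $w_jsw_j\inv=s$, hence lies in $\{\pm\alpha_s\}$; it equals $\alpha_s$ because $1_W\in w_j\alpha_s$, i.e.\ $\ell(sw_j\inv)>\ell(w_j\inv)$, which holds as lengths add over the disjoint supports $\{s\}$ and $J_j$. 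Thus $w_j\alpha=u\alpha_s=\alpha$, and therefore $w^m\alpha=w_i^m\alpha$ for all $m\in\ZZ$ whenever $\supp(\alpha)\subseteq J_i$.

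With this in hand, part (1) is immediate. Each $w_i$ is straight with $\supp(w_i)=J_i$ by Lemma~\ref{Lemma: straight element J essental}, so $\Delta_{w_i\pm}$ makes sense and, by the last line of Lemma~\ref{lemma:Lwandessential}, every $\alpha\in\Delta_{w_i+}\cup\Delta_{w_i-}$ satisfies $\supp(\alpha)\subseteq J_i$. For a root $\alpha$ with $\supp(\alpha)\subseteq J_i$, the identity $w^m\alpha=w_i^m\alpha$ shows that the condition ``$w^{\pm m}\alpha\in\Delta^-$ for some $m\in\NN$'' is literally the same as ``$w_i^{\pm m}\alpha\in\Delta^-$ for some $m\in\NN$''; hence $\Delta_{w_i+}\cup\Delta_{w_i-}=\{\alpha\in\Delta_{w+}\cup\Delta_{w-}\mid\supp(\alpha)\subseteq J_i\}$, which is (1). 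For part (2), the inclusion $\supseteq$ is then clear from (1); for $\subseteq$, given $\alpha\in\Delta_{w+}\cup\Delta_{w-}$, Lemma~\ref{lemma:Lwandessential} gives $\supp(\alpha)\subseteq J$, and since $\supp(\alpha)$ is irreducible (Lemma~\ref{Lemma: support of a root irreducible}(2)) it is contained in a single component $J_i$ of $J$, whence $\alpha\in\Delta_{w_i+}\cup\Delta_{w_i-}$ by (1). The only point needing a little care is the sign bookkeeping in the commutation claim (verifying that $w_j$ fixes $\alpha$ rather than sending it to $-\alpha$); once that is settled, everything else is a routine translation through the two cited lemmas.
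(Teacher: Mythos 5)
Your proof is correct and follows essentially the same route as the paper's: both reduce everything to the identity $w^m\alpha=w_i^m\alpha$ for $\supp(\alpha)\subseteq J_i$, combined with the support statement in Lemma~\ref{lemma:Lwandessential} and the irreducibility of $\supp(\alpha)$ for part (2). The only difference is that you spell out the verification that $w_j$ fixes $\alpha$ (including the sign check), which the paper leaves implicit.
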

\begin{proof}
(1) Note that if $\alpha\in\Delta_{w_i\pm}$, then $\supp(\alpha)\subseteq J_i$ by Lemma~\ref{lemma:Lwandessential}. Assuming now that $\supp(\alpha)\subseteq J_i$, we have $w^n\alpha=w_i^n\alpha$ for all $n\in\ZZ$, so that (1) is clear.

(2) Since $\supp(\alpha)\subseteq J$ for any $\alpha\in\Delta_{w\pm}$ by Lemma~\ref{lemma:Lwandessential}, (2) follows from (1). 
\end{proof}

\begin{lemma}\label{lemma:convconv}
Let $w,v\in W$ be straight, and write $J=\supp(w)$ and $K=\supp(v)$. Then the following are equivalent:
\begin{enumerate}
\item
$\conv(L_w)=\conv(L_v)$.
\item
$J=K$ and $\conv(L_{w_{J'}})=\conv(L_{v_{J'}})$ for each component $J'$ of $J$.
\end{enumerate}
\end{lemma}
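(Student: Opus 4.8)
The plan is to reformulate the condition $\conv(L_w)=\conv(L_v)$ purely in terms of the root sets $\Delta_{w+}\cup\Delta_{w-}$, using Lemma~\ref{lemma:Lwandessential} as a dictionary, and then to read off from these sets both the support of $w$ and the per-component data. The starting point is the equivalence: for straight $w,v\in W$, one has $\conv(L_w)=\conv(L_v)$ if and only if $\Delta_{w+}\cup\Delta_{w-}=\Delta_{v+}\cup\Delta_{v-}$. To prove it, observe that $1_W=w^0\in L_w$, so every root containing $L_w$ is positive and hence $\conv(L_w)=\bigcap\{\alpha\in\Phi_+\mid L_w\subseteq\alpha\}$ by \cite[Proposition~3.94]{AB08}; moreover, by Lemma~\ref{lemma:Lwandessential} this family of roots is exactly $\Phi_+\setminus(\Delta_{w+}\cup\Delta_{w-})$. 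One implication is then immediate, and for the other one uses that a positive root contains $L_w$ if and only if it contains $\conv(L_w)$ (again Lemma~\ref{lemma:Lwandessential}), so that $\conv(L_w)=\conv(L_v)$ forces the two families of containing roots, hence their complements in $\Phi_+$, to coincide. Since $w_{J'}$ is again straight by Lemma~\ref{Lemma: straight element J essental}, the same equivalence applies verbatim to each element of the form $w_{J'}$ ($J'$ a component of $\supp(w)$).

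Next I would prove the support identity $\supp(w)=\bigcup_{\alpha\in\Delta_{w+}\cup\Delta_{w-}}\supp(\alpha)$ for straight $w$. The inclusion $\supseteq$ is precisely the last assertion of Lemma~\ref{lemma:Lwandessential}. For $\subseteq$, write $J'$ for the right-hand side; the key observation is that every positive root $\alpha$ with $W_{J'}\subseteq\alpha$ must contain $L_w$. Indeed, otherwise $\alpha\in\Delta_{w+}\cup\Delta_{w-}$, so $\supp(r_\alpha)=\supp(\alpha)\subseteq J'$ and hence $r_\alpha\in W_{J'}\subseteq\alpha$, whereas $r_\alpha=r_\alpha\cdot 1_W\in-\alpha$ because $1_W\in\alpha$ --- a contradiction. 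Since the residue $R_{J'}(1_W)=W_{J'}$ is convex, it equals the intersection of the roots containing it; combined with the description of $\conv(L_w)$ from the previous paragraph this yields $\conv(L_w)\subseteq W_{J'}$, whence $w\in W_{J'}$ and $\supp(w)\subseteq J'$. I expect this inclusion to be the only step requiring genuine care; everything else is formal manipulation of the tools already in place.

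With these two facts, the lemma follows by combining them with Lemma~\ref{lemma:conhullcomponents}. If $\conv(L_w)=\conv(L_v)$, then the first paragraph gives $\Delta_{w+}\cup\Delta_{w-}=\Delta_{v+}\cup\Delta_{v-}=:A$; the support identity gives $J=\bigcup_{\alpha\in A}\supp(\alpha)=K$; and for each component $J'$ of $J$, Lemma~\ref{lemma:conhullcomponents}(1) identifies both $\Delta_{w_{J'}+}\cup\Delta_{w_{J'}-}$ and $\Delta_{v_{J'}+}\cup\Delta_{v_{J'}-}$ with $\{\alpha\in A\mid\supp(\alpha)\subseteq J'\}$, so the equivalence yields $\conv(L_{w_{J'}})=\conv(L_{v_{J'}})$. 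Conversely, if $J=K$ and $\conv(L_{w_{J'}})=\conv(L_{v_{J'}})$ for every component $J'$ of $J$, then the equivalence applied in each component together with Lemma~\ref{lemma:conhullcomponents}(2) gives $\Delta_{w+}\cup\Delta_{w-}=\bigcup_{J'}(\Delta_{w_{J'}+}\cup\Delta_{w_{J'}-})=\bigcup_{J'}(\Delta_{v_{J'}+}\cup\Delta_{v_{J'}-})=\Delta_{v+}\cup\Delta_{v-}$, and hence $\conv(L_w)=\conv(L_v)$.
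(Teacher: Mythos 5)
Your proof is correct, and its core is the same as the paper's: both reduce $\conv(L_w)=\conv(L_v)$ to the equality $\Delta_{w+}\cup\Delta_{w-}=\Delta_{v+}\cup\Delta_{v-}$ via Lemma~\ref{lemma:Lwandessential}, and then pass between the global and per-component root sets using Lemma~\ref{lemma:conhullcomponents}. The one place where you genuinely diverge is the step you flag as requiring ``genuine care'', namely $J=K$: you prove the identity $\supp(w)=\bigcup_{\alpha\in\Delta_{w+}\cup\Delta_{w-}}\supp(\alpha)$ by the $r_\alpha\in\alpha\cap(-\alpha)$ trick together with convexity of $W_{J'}$, whereas the paper disposes of this in one line by observing that $w\in\conv(L_w)=\conv(L_v)\subseteq W_K$ (since $L_v\subseteq W_K$ and residues are convex), and symmetrically $K\subseteq J$. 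Your detour is sound and buys a slightly stronger statement --- that $\supp(w)$ is recoverable from the set $\Delta_{w+}\cup\Delta_{w-}$ alone, i.e.\ from the set of $w$-essential walls --- but for the lemma as stated the direct containment argument is shorter and avoids invoking Lemma~\ref{Lemma: support of a root irreducible} and \cite[Proposition~3.94]{AB08} a second time.
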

\begin{proof}
Note that if $\conv(L_w) = \conv(L_v)$, then $w \in \conv(L_w) = \conv(L_v) \subseteq W_K$, which implies $J = \supp(w) \subseteq K$, and similarly $K \subseteq J$ and hence $J = K$. Assume now that $J=K$, and write $J_1,\dots,J_n$ for the components of $J=K$. By Lemma~\ref{lemma:Lwandessential}, for any two straight elements $x,y\in W$, we have $\conv(L_x)=\conv(L_y)$ if and only if $\Delta_{x+}\cup\Delta_{x-}=\Delta_{y+}\cup\Delta_{y-}$. 

Thus, if (1) holds and $i\in\{1,\dots,n\}$, then Lemma~\ref{lemma:conhullcomponents}(1) implies that $\conv(L_{w_{J_i}})=\conv(L_{v_{J_i}})$, yielding (2). Conversely, if (2) holds, then Lemma~\ref{lemma:conhullcomponents}(2) implies (1).
\end{proof}

Recall that for $w$ straight and standard such that $\Pc(w)$ is of irreducible indefinite type, $\Delta(J^w_{\mathrm{sph}})$ is the set of roots $\alpha\in\Phi$ with $r_{\alpha}\in\Pc(w)$ that are stabilized by some nontrivial power of $w$, or equivalently such that $r_{\alpha}\in P_w^{\max}$ (see \cite[$\S$9.1]{Ma23}).

\begin{lemma}\label{lemma:descriptionPhiwsph}
Let $w\in W$ be straight and standard, and let $J=\supp(w)$. Assume that $W_J$ is of irreducible indefinite type. Then $$\Delta(J^w_{\mathrm{sph}})=\{ \alpha\in\Phi \ | \ \textrm{$\supp(\alpha)\subseteq J$ and $\partial\alpha$ intersects all $w$-essential walls}\}.$$
In particular, if $v\in W$ is straight and standard and such that $\conv(L_w)=\conv(L_v)$, then $J=\supp(v)$ and $\Delta(J^v_{\mathrm{sph}})=\Delta(J^w_{\mathrm{sph}})$.
\end{lemma}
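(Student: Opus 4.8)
The plan is to prove the displayed equality by double inclusion, then deduce the "in particular" clause from it together with Lemma~\ref{lemma:Lwandessential} and Lemma~\ref{lemma:convconv}. Write $D$ for the right-hand side, i.e.\ the set of roots $\alpha\in\Phi$ with $\supp(\alpha)\subseteq J$ such that $\partial\alpha$ meets every $w$-essential wall; the goal is $\Delta(J^w_{\mathrm{sph}})=D$. Throughout I would work in the Davis realization $X$ of $\Sigma(W,S)$ and fix a $w$-axis $l_w$ (which exists inside $\conv(L_w)$ as in the proof of Lemma~\ref{lemma:Lwandessential}); recall from \cite[Lemma~2.5]{Caprace-Marquis13} that a wall is $w$-essential iff it meets $l_w$ transversally (in exactly one point), and otherwise contains $l_w$.

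For the inclusion $\Delta(J^w_{\mathrm{sph}})\subseteq D$: if $\alpha\in\Delta(J^w_{\mathrm{sph}})$ then $r_\alpha\in\Pc(w)=W_J$ by \cite[$\S$9.1]{Ma23} and Lemma~\ref{Lemma: straight element J essental}, so $\supp(\alpha)\subseteq J$; moreover $w^N\alpha=\alpha$ for some $N\in\NN^*$, so $w^N$ stabilizes $\partial\alpha$, hence $w$ acts on $\partial\alpha$ by an isometry of bounded translation length, so $\partial\alpha$ is a convex $w^N$-invariant subspace containing a $w^N$-axis $l$. Since $P_w^{\max}=P_{w^N}^{\max}$, the line $l$ is also a $w$-axis; as $\partial\alpha\supseteq l$, the wall $\partial\alpha$ is not $w$-essential. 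I then need: a non-$w$-essential wall whose support lies in $J$ meets every $w$-essential wall. This should follow by a convexity/parallelism argument à la \cite[Lemma~2.7]{Caprace-Marquis13}: any two walls of $\Sigma$ with supports in the irreducible indefinite $W_J$ either cross or are parallel (have disjoint closures only if they bound a common strip), and a wall parallel to a $w$-essential wall $m$ would itself be $w$-essential (since $l_w$ crosses $m$ transversally, it crosses any wall parallel to $m$), contradiction; so $\partial\alpha$ crosses $m$, giving $\alpha\in D$. For the reverse inclusion $D\subseteq\Delta(J^w_{\mathrm{sph}})$: take $\alpha\in\Phi$ with $\supp(\alpha)\subseteq J$ and $\partial\alpha$ meeting every $w$-essential wall. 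Then $\partial\alpha$ cannot be $w$-essential (a wall does not meet itself in the required transverse sense — here I invoke that a $w$-essential wall has empty intersection with at least one parallel $w$-essential wall, or more simply that $\partial\alpha$ meeting every $w$-essential wall forces $\partial\alpha$ to meet $l_w$ in more than one point, i.e.\ $\partial\alpha\supseteq l_w$). So $l_w\subseteq\partial\alpha$, whence $w$ stabilizes $\partial\alpha$ up to reflection and $w^2$ stabilizes $\partial\alpha$, giving $w^{2M}\alpha=\alpha$ for a suitable $M$ (using finiteness of the relevant point stabilizers, as in Lemma~\ref{lemma:centralizerfixres}); since also $r_\alpha\in W_J=\Pc(w)$, this says exactly $\alpha\in\Delta(J^w_{\mathrm{sph}})$ by the description in \cite[$\S$9.1]{Ma23}.

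For the "in particular" clause: if $w,v$ are straight and standard with $\conv(L_w)=\conv(L_v)$, then $J=\supp(w)=\supp(v)=K$ by Lemma~\ref{lemma:convconv} (or directly: $w\in\conv(L_w)=\conv(L_v)\subseteq W_K$ and symmetrically). By Lemma~\ref{lemma:Lwandessential}, the equality $\conv(L_w)=\conv(L_v)$ is equivalent to $\Delta_{w+}\cup\Delta_{w-}=\Delta_{v+}\cup\Delta_{v-}$, i.e.\ $w$ and $v$ have the same set of essential walls among $\{\partial\alpha\mid\supp(\alpha)\subseteq J\}$; since the displayed description of $\Delta(J^\bullet_{\mathrm{sph}})$ depends only on $J$ and on which walls with support in $J$ are $w$-essential, we conclude $\Delta(J^v_{\mathrm{sph}})=\Delta(J^w_{\mathrm{sph}})$.

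I expect the main obstacle to be the parallelism step in the inclusion $\Delta(J^w_{\mathrm{sph}})\subseteq D$: namely showing that a non-$w$-essential wall with support in the irreducible indefinite $W_J$ necessarily crosses every $w$-essential wall, rather than being disjoint from some of them. The irreducibility and indefiniteness of $W_J$ should be exactly what rules out "parallel but disjoint" configurations of walls here (as in the proofs in \cite{Caprace-Marquis13} and \cite[$\S$9]{Ma23}), but locating the precise cited statement and checking that the hypotheses apply is the delicate point; everything else is bookkeeping with Lemma~\ref{lemma:Lwandessential} and the definition of $\Delta(J^w_{\mathrm{sph}})$.
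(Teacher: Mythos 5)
Your overall architecture (double inclusion for the displayed equality, then the ``in particular'' clause via Lemmas~\ref{lemma:convconv} and \ref{lemma:Lwandessential}) matches the paper's, and your treatment of the ``in particular'' clause is correct. However, both inclusions have problems, and the serious one sits exactly where you did not expect it.

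For $\Delta(J^w_{\mathrm{sph}})\subseteq D$: once you know that $\partial\alpha$ contains a $w$-axis $l$, you are done immediately, since every $w$-essential wall meets $l\subseteq\partial\alpha$ by the very definition of $w$-essential. The detour through the claim ``every non-$w$-essential wall with support in $J$ meets every $w$-essential wall'' is both unnecessary and false: in an irreducible indefinite $W_J$, disjoint walls need not bound a common strip, and the auxiliary claim ``a wall disjoint from a $w$-essential wall $m$ would itself be $w$-essential, contradiction'' fails as well --- for instance $m$ and $w^Nm$ are disjoint for suitable $N$ and \emph{both} are $w$-essential. The paper's argument for this inclusion is exactly the one-liner you already have in hand; just delete the parallelism digression.

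The genuine gap is in $D\subseteq\Delta(J^w_{\mathrm{sph}})$. From ``$\partial\alpha$ meets every $w$-essential wall'' you correctly rule out that $\partial\alpha$ is itself $w$-essential (it cannot meet the disjoint translate $w^N\partial\alpha$). But non-$w$-essential only means that $\partial\alpha$ either contains a $w$-axis or is \emph{disjoint} from every $w$-axis, and you must exclude the second case. Your parenthetical ``meeting every $w$-essential wall forces $\partial\alpha$ to meet $l_w$ in more than one point'' is precisely the statement that needs proof, and it is asserted without argument. This is where the paper does its real work: assuming $w^n\partial\alpha\neq\partial\alpha$ for all $n>0$, it produces two infinite families of pairwise disjoint walls with reflections in $W_J$ --- the $w$-essential walls $w^{Nn}m$ and the translates $w^{Nn}\partial\alpha$ --- such that every wall of one family meets every wall of the other, and derives a contradiction from \cite[Lemma~9.4]{Ma23}. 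Without this (or an equivalent) combinatorial input the implication does not follow, so the difficulty you anticipated in the forward inclusion actually lives in the reverse one. The remaining step (finitely many walls contain $l_w$ by properness of the action, hence $w^{2M}\alpha=\alpha$ and $\alpha\in\Delta(J^w_{\mathrm{sph}})$) is fine.
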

\begin{proof}
If $\alpha\in \Delta(J^w_{\mathrm{sph}})$, then $\supp(\alpha)\subseteq J$ and $\partial\alpha$ is a closed convex $w^n$-invariant subspace of the Davis realization of $\Sigma(W,S)$ for some nonzero $n\in\NN$. Thus $\partial\alpha$ contains a $w$-axis, and hence intersects each $w$-essential wall. Conversely, suppose that $r_{\alpha}\in W_J$ and that $\partial\alpha$ intersects all $w$-essential walls. Assume for a contradiction that $w^n\partial\alpha\neq\partial\alpha$ for all $n>0$. Let $W_0$ denote as before a torsion-free finite index normal subgroup of $W$ and let $N\in\NN^*$ be such that $w^N\in W_0$. Pick a $w$-essential wall $m$. Then the $w$-essential walls $m_n=w^{Nn}m$ for $n\in\ZZ$ are pairwise disjoint and such that $r_{m_n}\in W_J$ (see \cite[Lemmas~2.6 and 2.7]{Caprace-Marquis13}). Similarly, the walls $m'_n:=w^{Nn}\partial\alpha$ for $n\in\ZZ$ are pairwise disjoint and such that $r_{m'_n}=w^{Nn}r_{\alpha}w^{-Nn}\in W_J$. Moreover, $m_n$ intersects $m'_{n'}$ for all $n,n'\in\ZZ$. This yields the desired contradiction by \cite[Lemma~9.4]{Ma23}.

For the second statement, note that if $K:=\supp(v)$, then $K=J$ by Lemma~\ref{lemma:convconv} and the sets of $w$-essential and $v$-essential walls coincide by Lemma~\ref{lemma:Lwandessential}.
\end{proof}

\begin{lemma}\label{lemma:KwJaffabsorb}
Let $w\in W$ be straight and let $J=\supp(w)$. Then
$$\Delta^+\setminus K(w)=\{\alpha\in\Delta^{\rep}(J\setminus\Jaff) \ | \ \alpha\not\supseteq \conv(L_{w_{J\setminus\Jaff}})\}\cup \Delta^+( \Jperp \cup \Jaff)\cup \Delta^+(J^w_{\mathrm{sph}}).$$
\end{lemma}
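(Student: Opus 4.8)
The plan is to unfold the definition of $K(w)$ and then to check that the positive real roots in $\Delta_{w+}\cup\Delta_{w-}$ whose support meets $\Jaff$ are already accounted for by $\Delta^+(\Jaff)$, so that $\Delta_{w+}\cup\Delta_{w-}$ contributes, up to $\Delta^+(\Jaff)$, only through the factor $v:=w_{J\setminus\Jaff}$. By definition of $K(w)$ we have
$$\Delta^+\setminus K(w)=(\Delta_{w+}\cup\Delta_{w-})\cup\Delta^+(\Jperp\cup\Jaff\cup\Jsph^w),$$
so the asserted equality will follow once we establish the two identities
$$\Delta^+(\Jperp\cup\Jaff\cup\Jsph^w)=\Delta^+(\Jperp\cup\Jaff)\cup\Delta^+(\Jsph^w)$$
and
$$(\Delta_{w+}\cup\Delta_{w-})\cup\Delta^+(\Jaff)=\{\alpha\in\Delta^{\rep}(J\setminus\Jaff)\mid\alpha\not\supseteq\conv(L_{v})\}\cup\Delta^+(\Jaff),$$
since $\Delta^+(\Jaff)\subseteq\Delta^+(\Jperp\cup\Jaff\cup\Jsph^w)$ and $\Delta^+(\Jsph^w)=\Delta^+(J^w_{\mathrm{sph}})=\Delta^{\rep}(\Jsph^w)$ (as $\Jsph^w$ is spherical, all its roots are real).

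For the first identity I would use that the support of a Kac--Moody root is connected, together with the fact that $\Jperp$, $\Jaff$ and $\Jsph^w$ are pairwise orthogonal: $\Jperp$ is orthogonal to $J\supseteq\Jaff\cup\Jsph^w$ by definition of $\Jperp$, while $\Jsph^w$ is contained in a union of components of $J$ of (Coxeter-)indefinite type, hence disjoint from --- and joined by no edge to --- $\JaffCox\supseteq\Jaff$. Thus any connected subset of $\Jperp\cup\Jaff\cup\Jsph^w$ lies in exactly one of the three pieces, which gives the decomposition of $\Delta^+(\Jperp\cup\Jaff\cup\Jsph^w)$.

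The heart of the argument is the second identity, which I would prove by double inclusion. First I would record that $v=w_{J\setminus\Jaff}$ is straight with $\supp(v)=J\setminus\Jaff$ (each component-factor of a straight element is straight, by Lemma~\ref{Lemma: straight element J essental} together with additivity of length over components of $J$, and has the expected support), and that for every component $J_i$ of $J$ with $J_i\subseteq J\setminus\Jaff$ the factor $w_{\Jaff}$ acts trivially on $\Delta(J_i)$ (since $\Jaff$ is orthogonal to $J_i$), so that $w^n\alpha=v^n\alpha$ for all $n\in\ZZ$ whenever $\supp(\alpha)\subseteq J\setminus\Jaff$. Granting this, for the inclusion $\subseteq$ I take $\alpha\in\Delta_{w+}\cup\Delta_{w-}$ (the summand $\Delta^+(\Jaff)$ being common to both sides): by the last assertion of Lemma~\ref{lemma:Lwandessential}, $\supp(\alpha)$ is a connected subset of $J$, hence lies in a single component $J_i$ of $J$; if $J_i\subseteq\Jaff$ then $\alpha\in\Delta^+(\Jaff)$, and if $J_i\subseteq J\setminus\Jaff$ then $w^n\alpha=v^n\alpha$ shows $\alpha\in\Delta_{v+}\cup\Delta_{v-}$, which by Lemma~\ref{lemma:Lwandessential} applied to $v$ equals $\{\alpha\in\Delta^{\rep}(J\setminus\Jaff)\mid\alpha\not\supseteq\conv(L_v)\}$. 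Conversely, for $\supseteq$, if $\alpha\in\Delta^{\rep}(J\setminus\Jaff)$ with $\alpha\not\supseteq\conv(L_v)$, then $\alpha\in\Delta_{v+}\cup\Delta_{v-}$ by Lemma~\ref{lemma:Lwandessential}, and again $\supp(\alpha)$ lies in some component $J_i\subseteq J\setminus\Jaff$, so $w^n\alpha=v^n\alpha$ forces $\alpha\in\Delta_{w+}\cup\Delta_{w-}$.

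The only delicate point I anticipate is precisely this last bookkeeping: verifying that $v=w_{J\setminus\Jaff}$ retains straightness and support $J\setminus\Jaff$, and that powers of $w$ and of $v$ agree on roots supported away from $\Jaff$ (and, symmetrically, on the combinatorial side, that one may replace $w$ by $v$ in $\conv(L_\bullet)$ after discarding the affine components --- which is also visible through Lemma~\ref{lemma:conhullcomponents}). Everything else reduces, via Lemma~\ref{lemma:Lwandessential}, to the elementary combinatorics of root supports and of the orthogonality relations between $J$, $\Jperp$, $\Jaff$ and $\Jsph^w$.
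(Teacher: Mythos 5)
Your proof is correct and takes essentially the same route as the paper's: unfold the definition of $K(w)$, split off $\Delta^+(\Jsph^w)$ using connectedness of root supports and orthogonality, and decompose $\Delta_{w+}\cup\Delta_{w-}$ componentwise via Lemmas~\ref{lemma:conhullcomponents} and~\ref{lemma:Lwandessential}, absorbing the contribution of the affine components into $\Delta^+(\Jaff)$. The paper's proof is simply a terser version of yours (it leaves implicit the straightness of $w_{J\setminus\Jaff}$ and the splitting of $\Delta^+(\Jperp\cup\Jaff\cup\Jsph^w)$, both of which you correctly verify).
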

\begin{proof}
By definition, $\Delta^+\setminus K(w)=\Delta_{w+} \cup \Delta_{w-} \cup \Delta^+( \Jperp \cup \Jaff)\cup\Delta^+(J^w_{\mathrm{sph}})$. By Lemmas~\ref{lemma:Lwandessential} and \ref{lemma:conhullcomponents}, $$\Delta_{w+} \cup \Delta_{w-}= R\cup  \{\alpha\in\Delta^{\rep}(J\setminus\Jaff) \ | \ \alpha\not\supseteq \conv(L_{w_{J\setminus\Jaff}})\}$$
where $R:=\{\alpha \in \Delta_{w_{\Jaff}\pm} \ | \ \supp(\alpha)\subseteq \Jaff\}\subseteq\Delta^{+}(\Jaff)$. The lemma follows.
\end{proof}

\begin{proposition}
Let $w,v\in W$ be straight and standard, and let $J = \supp(w)$ and $K = \supp(v)$. Then the following are equivalent:
\begin{enumerate}
\item 
$\nub(\widetilde{w}) = \nub(\widetilde{v})$;
\item
$\Jperp \cup \Jaff = K^{\perp} \cup K_{\mathrm{aff}}$ and $\conv(L_{w_{J\setminus\Jaff}}) = \conv(L_{v_{K\setminus K_{\mathrm{aff}}}})$.
\item
$\Jperp \cup \Jaff = K^{\perp} \cup K_{\mathrm{aff}}$ and $J\setminus\Jaff=K\setminus K_{\mathrm{aff}}$ and $\conv(L_{w_{J'}}) = \conv(L_{v_{J'}})$ for every component $J'$ of $J\setminus\Jaff$ with $|J'|\geq 3$.
\end{enumerate}
\end{proposition}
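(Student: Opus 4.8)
The plan is to reduce $(1)$ to the set-theoretic equality $K(w)=K(v)$ and then to compare, via Lemma~\ref{lemma:KwJaffabsorb}, the explicit shapes of $\Delta^+\setminus K(w)$ and $\Delta^+\setminus K(v)$. By Theorem~\ref{Thmintro: Main result} we have $\nub(\widetilde w)=U^{\ma}_{K(w)}$ and $\nub(\widetilde v)=U^{\ma}_{K(v)}$; since $U^{\ma}_{\Psi}\cap U^{\ma}_{\Psi'}=U^{\ma}_{\Psi\cap\Psi'}$ and each $U_\alpha\neq\{1\}$, the uniqueness of the standard form of elements of $U^{\map}$ gives $U^{\ma}_{\Psi}=U^{\ma}_{\Psi'}\iff\Psi=\Psi'$ for closed $\Psi,\Psi'\subseteq\Delta^+$, so $(1)$ is equivalent to $K(w)=K(v)$. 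Write $X_w:=\Delta^+\setminus K(w)=A_w\cup B_w\cup C_w$ as in Lemma~\ref{lemma:KwJaffabsorb}, with $A_w=\{\alpha\in\Delta^{\rep}(J\setminus\Jaff):\alpha\not\supseteq\conv(L_{w_{J\setminus\Jaff}})\}$, $B_w=\Delta^+(\Jperp\cup\Jaff)$ and $C_w=\Delta^+(\Jsph^w)$, and similarly for $v$. For $(2)\iff(3)$: Lemma~\ref{lemma:convconv} rewrites the second clause of $(2)$ as ``$J\setminus\Jaff=K\setminus K_{\mathrm{aff}}$ and $\conv(L_{w_{J'}})=\conv(L_{v_{J'}})$ for every component $J'$ of $J\setminus\Jaff$''; as $J$ is essential (Lemma~\ref{Lemma: straight element J essental}), every such $J'$ satisfies $|J'|\geq 2$, and when $|J'|=2$ the group $W_{J'}$ is infinite dihedral, so $w_{J'},v_{J'}$ are nontrivial translations of the line $\Sigma(W_{J'},J')$ and $\conv(L_{w_{J'}})=\Sigma(W_{J'},J')=\conv(L_{v_{J'}})$ automatically; hence the size-$2$ conditions are vacuous and $(2)\iff(3)$.

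For $(2)\Rightarrow(1)$: assuming $(2)$ we get $B_w=B_v$ at once; Lemma~\ref{lemma:convconv} gives $J\setminus\Jaff=K\setminus K_{\mathrm{aff}}$, whence $A_w=A_v$ because $\conv(L_{w_{J\setminus\Jaff}})=\conv(L_{v_{K\setminus K_{\mathrm{aff}}}})$; and applying Lemma~\ref{lemma:descriptionPhiwsph} to each component of $J\setminus\Jaff$ of size $\geq 3$ (those of size $2$ contributing nothing to $\Jsph^w$, as an infinite dihedral group normalizes no nontrivial spherical parabolic subgroup) yields $\Delta(\Jsph^w)=\Delta(\Jsph^v)$, hence $\Jsph^w=\Jsph^v$ and $C_w=C_v$. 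Therefore $X_w=X_v$, i.e.\ $(1)$.

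The substance is the converse $(1)\Rightarrow(2)$, i.e.\ recovering the pair $(\Jperp\cup\Jaff,\ \conv(L_{w_{J\setminus\Jaff}}))$ from $X_w$. Since $A_w,C_w\subseteq\Delta^{\rep}$ ($\Jsph^w$ being spherical), we have $X_w\cap\Delta^{\imp}=\Delta^{\imp}(\Jperp\cup\Jaff)$, and taking the union of supports — using that the support of an imaginary root is a connected subset of affine or indefinite type and that, conversely, every affine or indefinite component carries an imaginary root of full support (the minimal imaginary root in the affine case, an integral solution of $Au<0$ in the indefinite case, cf.\ \cite[Theorem~4.3]{Kac90}) — recovers the non-spherical components of $\Jperp\cup\Jaff$. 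To recover the remaining (spherical, hence $\Jperp$-)components, which are orthogonal to and disjoint from $J\setminus\Jaff$, one exploits that $\Delta^+(\Jperp\cup\Jaff)=B_w$ lies \emph{entirely} in $X_w$, so that $\Jperp\cup\Jaff$ splits off as a union of connected components in the graph induced on $\{i\in I:\alpha_i\in X_w\}$ by the Coxeter diagram, and separates these from the (also complete, also spherical) contributions of $\Jsph^w$ and from the Coxeter-affine size-$2$ pieces of $J\setminus\Jaff$ by testing whether the full root subsystem $\Delta^+(C)$ — real \emph{and} imaginary — of a candidate component $C$ lies in $X_w$. Having recovered $\Jperp\cup\Jaff$, hence $B_w$, one gets $A_w\cup C_w=(X_w\cap\Delta^{\rep})\setminus\Delta^{\rep}(\Jperp\cup\Jaff)$, then $J\setminus\Jaff=\bigcup_{\alpha\in A_w\cup C_w}\supp(\alpha)$, then $C_w$ via Lemma~\ref{lemma:descriptionPhiwsph}, then $A_w$; by Lemma~\ref{lemma:Lwandessential}, $A_w=\Delta_{w_{J\setminus\Jaff}+}\cup\Delta_{w_{J\setminus\Jaff}-}$, so $\conv(L_{w_{J\setminus\Jaff}})=\bigcap\{\alpha\in\Phi:\alpha\notin A_w\}$ is determined, and comparison with the analogous data for $v$ yields $(2)$. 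I expect this disentangling of $A_w$, $B_w$, $C_w$ inside $X_w$ — in particular distinguishing an isolated spherical diagram component arising from $\Jperp$ from one arising from $\Jsph^w$ or from a Coxeter-affine but GCM-indefinite size-$2$ component of $J\setminus\Jaff$ — to be the principal obstacle, and to require the real- and imaginary-root content of $X_w$ together with the orthogonality relations among $\Jperp$, $\Jaff$, $\Jsph^w$ and $J\setminus\Jaff$ to be used simultaneously.
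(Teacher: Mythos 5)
Your reduction of (1) to the set-theoretic equality $K(w)=K(v)$, your proof of $(2)\Leftrightarrow(3)$ (including the observation that for a rank-$2$ component $J'$ of $J\setminus\Jaff$ the group $W_{J'}$ is infinite dihedral, so $\conv(L_{w_{J'}})$ is automatically the whole line), and your proof of $(2)\Rightarrow(1)$ via Lemmas~\ref{lemma:descriptionPhiwsph} and \ref{lemma:KwJaffabsorb} are correct and essentially the paper's argument. The gap is in $(1)\Rightarrow(2)$, and it sits exactly where you predicted. You propose to reconstruct $\Jperp\cup\Jaff$ intrinsically from $X_w=\Delta^+\setminus K(w)=A_w\cup B_w\cup C_w$, and the step that fails is the separation of the \emph{spherical} components of $\Jperp$ from the components of $\Jsph^w$. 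Both are spherical, hence carry no imaginary roots, and both contribute their \emph{entire} finite, purely real positive system to $X_w$; so your test --- ``does all of $\Delta^+(C)$, real and imaginary, lie in $X_w$?'' --- returns the same answer for both and cannot tell them apart. (It does correctly exclude the GCM-indefinite rank-$2$ pieces of $J\setminus\Jaff$, whose imaginary roots are missing from $X_w$, but that is not where the difficulty lies.) The induced-subdiagram idea does not rescue this either: nothing you say rules out that a component of $\Jsph^w$ is isolated in the graph on $\{i\mid\alpha_i\in X_w\}$, since its neighbours in $J\setminus\Jaff$ need not carry $w_{J\setminus\Jaff}$-essential walls. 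You flag this as the principal obstacle but offer no resolution, so the implication is not proved.

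The paper avoids the reconstruction problem altogether. From $K(w)=K(v)$ it first extracts only $\Delta^{\imp}(\Jperp\cup\Jaff)=\Delta^{\imp}(K^\perp\cup K_{\mathrm{aff}})$, hence only the essential parts $(\Jperp\cup\Jaff)^\infty=(K^\perp\cup K_{\mathrm{aff}})^\infty$; this already forces $A_w$ and $A_v$ to differ by at most finitely many roots, since the ambiguous spherical contributions are finite. The first idea you are missing is then an orbit argument: if $\alpha\in A_w\setminus A_v$, then $\partial\alpha$ is $w_{J\setminus\Jaff}$-essential by Lemma~\ref{lemma:Lwandessential}, and the pairwise distinct roots $w_{J\setminus\Jaff}^{nN}\alpha$ all lie in $A_w\setminus A_v$, contradicting finiteness; hence $A_w=A_v$, which gives $\conv(L_{w_{J\setminus\Jaff}})=\conv(L_{v_{K\setminus K_{\mathrm{aff}}}})$ and, by Lemma~\ref{lemma:convconv}, $J\setminus\Jaff=K\setminus K_{\mathrm{aff}}$. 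The second idea is that the spherical components of $\Jperp$ need never be located inside $X_w$ at all: they are exactly $\bigl((J\cup\Jperp)^\infty\bigr)^{\perp}$, i.e.\ $(J\cup\Jperp)\setminus(J\cup\Jperp)^\infty=\bigl((J\cup\Jperp)^\infty\bigr)^{\perp}$, and the right-hand side depends only on $(J\cup\Jperp)^\infty=(\Jperp\cup\Jaff)^\infty\cup(J\setminus\Jaff)$, which has already been shown to agree for $w$ and $v$. This yields $J\cup\Jperp=K\cup K^\perp$ and hence $\Jperp\cup\Jaff=K^\perp\cup K_{\mathrm{aff}}$. Without these two steps, or a genuine substitute for the spherical-versus-spherical separation, your proof of $(1)\Rightarrow(2)$ is incomplete.
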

\begin{proof}
(2)$\Leftrightarrow$(3): This follows from Lemma~\ref{lemma:convconv}.

(2,3)$\Rightarrow$(1): Recall that $\Jsph^w = \Jsph^{w_{J \setminus \Jaff}}$. Lemmas~\ref{lemma:descriptionPhiwsph} and \ref{lemma:KwJaffabsorb} imply that $K(w)=K(v)$, so that (1) follows from Theorem~\ref{Thmintro: Main result}.

(1)$\Rightarrow$(2): By Theorem~\ref{Thmintro: Main result}, we have $K(w)=K(v)$. In particular, $$\Delta^{\imp}(J^{\perp}\cup J_{\textrm{aff}})=\Delta^{\imp}\setminus K(w)=\Delta^{\imp}\setminus K(v)=\Delta^{\imp}(K^{\perp}\cup K_{\textrm{aff}}).$$
Hence $$(J^{\perp}\cup J_{\textrm{aff}})^{\infty}=(K^{\perp}\cup K_{\textrm{aff}})^{\infty}.$$ 
Lemma~\ref{lemma:KwJaffabsorb} then implies that $$X_J:=\{\alpha\in\Delta^{\rep}(J\setminus\Jaff) \ | \ \alpha\not\supseteq \conv(L_{w_{J\setminus\Jaff}})\}$$ and $X_K:=\{\alpha\in\Delta^{\rep}(K\setminus K_{\textrm{aff}}) \ | \ \alpha\not\supseteq \conv(L_{v_{K\setminus K_{\textrm{aff}}}})\}$ (which are respectively disjoint from $\Delta^+(J^{\perp}\cup \Jaff)$ and $\Delta^+(K^{\perp}\cup K_{\mathrm{aff}})$) only differ by finitely many roots. But this implies that $$X_J=X_K.$$ Indeed, if $X_J\neq X_K$, say there exists $\alpha\in X_J\setminus X_K$, then $\partial\alpha$ is $w_{J\setminus\Jaff}$-essential by Lemma~\ref{lemma:Lwandessential} and $\alpha\supseteq \conv(L_{v_{K\setminus K_{\textrm{aff}}}})$ --- recall from Lemma~\ref{lemma:Lwandessential} that 
$$X_K=\{\alpha\in\Delta^{\rep} \ | \ \alpha\not\supseteq \conv(L_{v_{K\setminus K_{\textrm{aff}}}})\}.$$
But then choosing $N\in\ZZ^*$ such that $\beta_n:=w_{J\setminus\Jaff}^{nN}\alpha\supseteq\alpha$ for all $n\in\NN$ (see \cite[Lemma~2.6]{Caprace-Marquis13}), the pairwise distinct roots $\beta_n$ ($n\in\NN$) belong to $X_J\setminus X_K$ by Lemma~\ref{lemma:Lwandessential}, a contradiction. 

Thus $X_J=X_K$, and hence $\Delta^{\rep} \setminus X_J = \Delta^{\rep} \setminus X_K$, that is, $$\{\alpha\in\Delta^{\rep} \ | \ \alpha\supseteq \conv(L_{w_{J\setminus\Jaff}})\} = \{\alpha\in\Delta^{\rep} \ | \ \alpha\supseteq \conv(L_{v_{K\setminus K_{\textrm{aff}}}})\},$$ so that $\conv(L_{w_{J\setminus\Jaff}})=\conv(L_{v_{K\setminus K_{\textrm{aff}}}})$. In particular, $J\setminus\Jaff=K\setminus K_{\textrm{aff}}$ by Lemma~\ref{lemma:convconv}. Therefore,
$$(J^{\perp}\cup J)^{\infty}=(K^{\perp}\cup K)^{\infty}.$$ 
As $(J\cup J^\perp)\setminus (J^{\perp}\cup J)^{\infty}= ((J^{\perp}\cup J)^{\infty})^{\perp}$ and similarly for $K\cup K^\perp$, we conclude that $J\cup J^\perp=K\cup K^\perp$, yielding (1).
\end{proof}

\subsection{Proof of Corollary~\ref{Corintro: infinite depth}}

For a compact tdlc group $G$ and $\alpha \in \Aut(G)$ recall that the pair $(G, \alpha)$ has \emph{finite depth}, if there exists an open subgroup $V \leq G$ such that $\bigcap_{z\in \ZZ} \alpha^z(V) = \{1\}$. Note that $\nub(g)$ is a compact tdlc group which is $g$-stable, hence $\gamma_g \vert_{\nub(g)} \in \Aut(\nub(g))$.

\begin{proposition}
	Let $w\in W$ be straight and such that $\nub(\widetilde{w}) \neq \{1\}$. Then $(\nub(\widetilde{w}), \gamma_{\widetilde{w}} \vert_{\nub(\widetilde{w})})$ does not have finite depth.
\end{proposition}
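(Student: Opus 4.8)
The plan is to combine Theorem~\ref{Thmintro: Main result} with Willis's analysis of the finite-depth condition in \cite[\S 5]{Wi14}.

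First I would reduce to the case where $w$ is straight and standard. Since $\nub(\widetilde{w})\neq\{1\}$, the element $\widetilde{w}$ stabilizes no spherical residue (Remark~\ref{Remark: topologically periodic elements have trivial nub}), so Proposition~\ref{prop: The nub only for straight and standard elements} provides a straight and standard $w'\in W$ and $h\in G^{\pma}$ with $\con(\widetilde{w}^{\pm 1})=h\con(\widetilde{w'}^{\pm 1})h\inv$; conjugation by $h$ is then an isomorphism of pairs $(\nub(\widetilde{w}),\gamma_{\widetilde{w}})\cong(\nub(\widetilde{w'}),\gamma_{\widetilde{w'}})$, and having finite depth is invariant under such isomorphisms. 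So assume $w$ is straight and standard, write $J=\supp(w)$, and recall from Theorem~\ref{Thmintro: Main result} that $\nub(\widetilde{w})=U^{\ma}_{K(w)}$. As in the proof of Corollary~\ref{Corintro: trivial nub}, $\nub(\widetilde{w})\neq\{1\}$ forces $I\neq \Jperp\cup\Jaff$; picking $v\in I\setminus(\Jperp\cup\Jaff)$ and letting $I_0$ be the component of $I$ containing $v$, a short inspection of the possible types (an affine or spherical component of $I$ is easily seen to lie in $\Jperp\cup\Jaff$, using that $J$ is essential and that a connected proper subdiagram of an affine diagram is of finite type) shows $I_0$ is of indefinite type and $I_0\not\subseteq \Jperp\cup\Jaff\cup\Jsph^w$. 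Choose $\beta\in\Delta^{\imp}$ with $\supp(\beta)=I_0$, which exists because $I_0$ is connected of indefinite type. Then $n\beta\in\Delta^{\imp}$ with support $I_0$ for all $n\geq 1$, and $w^m\beta\neq\beta$ for all $m\neq 0$ (otherwise Theorem~\ref{thm: root groups are contracted} would force $\supp(\beta)$ to be contained in $\Jperp$ or to be an affine component of $J$); by Proposition~\ref{prop:trichotomy_contraction_root_groups} we get $\height(w^{z}(n\beta))\to+\infty$ as $z\to\pm\infty$, so $n\beta\in K(w)$ for all $n\geq 1$. Hence $U_\beta:=U^{\ma}_{\NN^{*}\beta}\leq U^{\ma}_{K(w)}=\nub(\widetilde{w})$.

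Now suppose for contradiction that $(\nub(\widetilde{w}),\gamma_{\widetilde{w}})$ has finite depth. By Willis's study of the finite-depth condition \cite[\S 5]{Wi14}, $\nub(\widetilde{w})$ then admits a finite chain $\{1\}=G_0\lhd G_1\lhd\cdots\lhd G_m=\nub(\widetilde{w})$ of closed $\gamma_{\widetilde{w}}$-stable subgroups, each normal in the next, whose successive quotients $(G_{i+1}/G_i,\overline{\gamma})$ possess no proper nontrivial closed $\overline{\gamma}$-stable subgroup. As $\nub(\widetilde{w})$ is a closed subgroup of the pro-$p$ group $U^{\map}$, it is pro-$p$, and so is each quotient $G_{i+1}/G_i$; a pro-$p$ group that is $\gamma$-simple in this sense must be abelian (its only possibilities being $\ZZ/p\ZZ$ and a full shift over $\ZZ/p\ZZ$). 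Hence every $G_{i+1}/G_i$ is abelian, so $\nub(\widetilde{w})$ — and therefore its closed subgroup $U_\beta$ — is solvable of derived length at most $m$.

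The main obstacle, and the technical heart of the argument, is to contradict this by showing that $U_\beta=U^{\ma}_{\NN^{*}\beta}$ is \emph{not} solvable of bounded derived length. The group $U_\beta$ exponentiates the graded pro-nilpotent pro-Lie algebra $\widehat{\mathfrak{u}}:=\prod_{n\geq 1}\g_{n\beta}$ (with $[\g_{m\beta},\g_{n\beta}]\subseteq\g_{(m+n)\beta}$), and its $d$-th derived subgroup exponentiates the $d$-th derived subalgebra $\widehat{\mathfrak{u}}^{(d)}$; so it suffices to show $\widehat{\mathfrak{u}}^{(d)}\neq\{0\}$ for every $d$, i.e.\ that the derived series of $\bigoplus_{n\geq 1}\g_{n\beta}$ never stabilizes. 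Here I would use crucially that $\supp(\beta)=I_0$ is of indefinite type: then $\dim\g_{n\beta}=\dim\g_{n\beta_{\min}}$ is unbounded (in contrast with the affine case, where $\bigoplus_n\g_{n\delta}$ is abelian), and non-degeneracy of the Kac--Moody bracket among imaginary root spaces — for instance through the invariant bilinear form on $\g$, which pairs $\g_{n\beta}$ non-degenerately with $\g_{-n\beta}$ — should produce, for each $d$, a nonzero $d$-fold iterated bracket of elements of $\bigoplus_n\g_{n\beta}$. Making this precise, together with pinning down the exact form of the finite-depth dichotomy used in the previous paragraph, is where the real work lies; the remainder is bookkeeping through Theorem~\ref{Thmintro: Main result}.
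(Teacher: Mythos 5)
Your reduction to $w$ straight and standard and your identification of an imaginary root $\beta$ with indefinite support satisfying $U_{\beta}=U^{\ma}_{\NN^*\beta}\leq\nub(\widetilde{w})$ both match the paper's proof. But the route you take from there --- finite depth $\Rightarrow$ composition series with $\gamma$-simple factors $\Rightarrow$ solvability of $\nub(\widetilde{w})$, contradicted by non-solvability of $U_{\beta}$ --- has a genuine gap exactly where you flag it. The non-termination of the derived series of $\bigoplus_{n\geq 1}\g_{n\beta}$ is not a routine fact: the invariant bilinear form pairs $\g_{n\beta}$ with $\g_{-n\beta}$, not with other \emph{positive} multiples of $\beta$, so it does not by itself produce nonzero iterated brackets inside $\bigoplus_{n\geq 1}\g_{n\beta}$; unbounded multiplicities do not imply non-solvability; and even granting the Lie-algebra statement, the passage to derived subgroups of $U_{\beta}$ over $\mathbb{F}_q$ (via the $\ZZ$-form and twisted exponentials in characteristic $p$) is itself delicate. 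The reduction of finite depth to a series with abelian factors is also left at the level of ``the exact form \ldots to be pinned down.'' As it stands, the core of the argument is an unproved (and not obviously true) assertion.

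The paper avoids all of this by arguing directly from the definition of finite depth: one must only show that $\bigcap_{z\in\ZZ}\widetilde{w}^z V_n\widetilde{w}^{-z}\neq\{1\}$ for the basic open subgroups $V_n:=\nub(\widetilde{w})\cap U^{\ma}_n$. Choosing $\beta\in\Delta^{\imp}(I')$ of \emph{minimal height in its $W$-orbit} with support an indefinite component $I'$ containing a component of $J$, and replacing $\beta$ by a multiple so that $\height(\beta)\geq n$, one gets $\height(w^z\beta)\geq\height(\beta)\geq n$ for all $z\in\ZZ$, whence $\widetilde{w}^zU_{\beta}\widetilde{w}^{-z}=U_{w^z\beta}\leq V_n$ and $U_{\beta}\leq\bigcap_{z\in\ZZ}\widetilde{w}^zV_n\widetilde{w}^{-z}$. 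No structure theory of the nub and no solvability considerations are needed. I would recommend replacing the second half of your argument with this direct verification; your first half can be kept essentially as is.
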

\begin{proof}
	Note that by Lemma~\ref{Lemma: infinite order and standard elements} it suffices to show the claim $w$ straight and standard. Moreover, it is enough to show that $\bigcap_{z\in \ZZ} \widetilde{w}V_n\widetilde{w}^{-z} \neq \{1\}$ for each $n\in\NN$, where $V_n:=\nub(\widetilde{w})\cap U^{\ma}_n$. Fix $n\in\NN$. Since $\nub(\widetilde{w}) \neq \{1\}$, there exists by Corollary~\ref{Corintro: trivial nub} a component $J'$ of $J$ that is contained in a component $I'$ of $I$ of indefinite type. By \cite[Theorem~5.6]{Kac90}, there exists a root $\beta\in\Delta^{\imp}(I')$ with support $I'$ that is of minimal height in $W\beta$. Up to replacing $\beta$ by some multiple, we may further assume that $\height(\beta)\geq n$ (see \cite[Lemma~5.3]{Kac90}). Note that $U_{\beta}\subseteq\nub(\widetilde{w})$ by Theorem~\ref{Thmintro: Main result}. As $\height(w^z\beta)\geq\height(\beta)\geq n$ for all $z\in\ZZ$, we conclude that $\widetilde{w}^z U_{\beta}\widetilde{w}^{-z}=U_{w^z\beta}\in V_n$ for all $z\in\ZZ$. Hence $U_{\beta}\subseteq \bigcap_{z\in \ZZ} \widetilde{w}^z V_n\widetilde{w}^{-z}$, as desired.
\end{proof}

\bibliographystyle{amsalpha}
\bibliography{References}

\end{document}